
\documentclass[12pt]{amsart}
\usepackage{amssymb}
\usepackage{amsfonts}
\usepackage{hyperref}
\usepackage{graphicx}
\usepackage{pgf,tikz}
\usepackage{hyperref}

\setcounter{MaxMatrixCols}{10}

\usetikzlibrary{shapes.geometric}
\usetikzlibrary{trees}
\hypersetup{
    colorlinks=true,           linkcolor=blue,              citecolor=magenta,            filecolor=magenta,          urlcolor=cyan,               breaklinks=true,  	}  
\setlength{\textwidth}{16.6cm} 
\setlength{\oddsidemargin}{0cm} \setlength{\evensidemargin}{0cm}
\allowdisplaybreaks 
\theoremstyle{plain}
\newtheorem{theorem}[equation]{Theorem}
\newtheorem{2WtMax}[equation]{Theorem on Maximal Function  Inequalities}

\newtheorem{2WtSIStrongType}[equation]{Theorem on Maximal Singular Integral Strong-Type Inequalities}
\newtheorem{2WtSIWeakType}[equation]{Theorem on Maximal Singular Integral Weak-Type Inequalities}

\newtheorem{claim}{Claim}

\newtheorem{conjecture}[equation]{Conjecture}

\newtheorem{lemma}[equation]{Lemma}
\newtheorem{notat
ion}{Notation}
\newtheorem{problem}[equation]{Problem}

\numberwithin{equation}{section}
\numberwithin{figure}{section}
\theoremstyle{definition} 
\newtheorem{definition}[equation]{Definition}
\theoremstyle{remark}
\newtheorem{remark}[equation]{Remark}
\newtheorem*{acknowledgement}{Acknowledgment}
\input{tcilatex}

\begin{document}
\title[Two weight norm inequalities for maximal singular integrals]{A
characterization of two weight norm inequalities for maximal singular
integrals with one doubling measure}
\author[M.T. Lacey]{Michael T. Lacey}
\address{School of Mathematics \\
Georgia Institute of Technology \\
Atlanta GA 30332}
\email{lacey@math.gatech.edu}
\thanks{Research supported in part by the NSF, through grant DMS-0456538.}
\author[E.T. Sawyer]{Eric T. Sawyer}
\address{ Department of Mathematics \& Statistics, McMaster University, 1280
Main Street West, Hamilton, Ontario, Canada L8S 4K1 }
\email{sawyer@mcmaster.ca}
\thanks{Research supported in part by NSERC}
\author[I. Uriarte-Tuero]{Ignacio Uriarte-Tuero}
\address{ Department of Mathematics \\
Michigan State University \\
East Lansing MI }
\email{ignacio@math.msu.edu}
\thanks{Research supported in part by the NSF, through grant DMS-0901524.}
\date{}

\begin{abstract}
Let $\sigma $ and $\omega $ be positive Borel measures on $\mathbb{R}$ with $%
\sigma $ doubling. Suppose first that $1<p\leq 2$. We characterize
boundedness of certain maximal truncations of the Hilbert transform $%
T_{\natural }$ from $L^{p}\left( \sigma \right) $ to $L^{p}\left( \omega
\right) $ in terms of the strengthened $A_{p}$ condition%
\begin{equation*}
\left( \int_{\mathbb{R}}s_{Q}\left( x\right) ^{p}d\omega \left( x\right)
\right) ^{\frac{1}{p}}\left( \int_{\mathbb{R}}s_{Q}\left( x\right)
^{p^{\prime }}d\sigma \left( x\right) \right) ^{\frac{1}{p^{\prime }}}\leq
C\left\vert Q\right\vert ,
\end{equation*}%
$s_{Q}\left( x\right) =\frac{\left\vert Q\right\vert }{\left\vert
Q\right\vert +\left\vert x-x_{Q}\right\vert }$, and two testing conditions.
The first applies to a restricted class of functions and is a strong-type
testing condition, 
\begin{equation*}
\int_{Q}T_{\natural }\left( \chi _{E}\sigma \right) (x)^{p}d\omega (x)\leq
C_{1}\int_{Q}d\sigma (x),\qquad \text{for all }E\subset Q\text{,}
\end{equation*}%
and the second is a weak-type or dual interval testing condition, 
\begin{equation*}
\int_{Q}T_{\natural }\left( \chi _{Q}f\sigma \right) (x)d\omega (x)\leq
C_{2}\left( \int_{Q}\left\vert f(x)\right\vert ^{p}d\sigma (x)\right) ^{%
\frac{1}{p}}\left( \int_{Q}d\omega (x)\right) ^{\frac{1}{p^{\prime }}},
\end{equation*}%
for all intervals $Q$ in $\mathbb{R}$ and all functions $f\in L^{p}\left(
\sigma \right) $. In the case $p>2$ the same result holds if we include an
additional necessary condition, the Poisson condition%
\begin{equation*}
\int_{\mathbb{R}}\left( \sum_{r=1}^{\infty }\left\vert I_{r}\right\vert
_{\sigma }\left\vert I_{r}\right\vert ^{p^{\prime }-1}\sum_{\ell =0}^{\infty
}\frac{2^{-\ell }}{\left\vert \left( I_{r}\right) ^{\left( \ell \right)
}\right\vert }\chi _{\left( I_{r}\right) ^{\left( \ell \right) }}\left(
y\right) \right) ^{p}d\omega \left( y\right) \leq C\sum_{r=1}^{\infty
}\left\vert I_{r}\right\vert _{\sigma }\left\vert I_{r}\right\vert
^{p^{\prime }},
\end{equation*}%
for all pairwise disjoint decompositions $Q=\cup _{r=1}^{\infty }I_{r}$ of
the dyadic interval $Q$ into dyadic intervals $I_{r}$. We prove that
analogues of these conditions are sufficient for boundedness of certain
maximal singular integrals in $\mathbb{R}^{n}$ when $\sigma $ is doubling
and $1<p<\infty $. Finally, we characterize the weak-type two weight
inequality for certain maximal singular integrals $T_{\natural }$ in $%
\mathbb{R}^{n}$\ when $1<p<\infty $, without the doubling assumption on $%
\sigma $, in terms of analogues of the second testing condition and the $%
A_{p}$ condition.
\end{abstract}

\maketitle

\section{Introduction}

Two weight inequalities for Maximal Functions and other positive operators
have been characterized in \cite{Saw}, \cite{Saw1}, \cite{Saw2}, with these
characterizations being given in terms of obviously necessary conditions,
that the operators be uniformly bounded on a restricted class of functions,
namely indicators of intervals and cubes. Thus, these characterizations have
a form reminiscent of the $T1$ Theorem of David and Journ\'{e}.

Corresponding results for even the Hilbert transform have only recently been
obtained (\cite{NTV3} and \cite{LaSaUr}) and even then only for $p=2$;
evidently these are much harder to obtain. We comment in more detail on
prior results below, including the innovative work of Nazarov, Treil and
Volberg \cite{NTV1}, \cite{NTV2}, \cite{NTV3}, \cite{NTV4}.

Our focus is on providing characterizations of the boundedness of certain
maximal truncations of a fixed operator of singular integral type. The
singular integrals will be of the usual type, for example the Hilbert
transform or paraproducts. Only size and smoothness conditions on the kernel
are assumed, see \eqref{sizeandsmoothness}. The characterizations are in
terms of certain obviously necessary conditions, in which the class of
functions being tested is simplified. For such examples, we prove
unconditional characterizations of both strong-type and weak-type two weight
inequalities for certain maximal truncations of the Hilbert transform, but
with the additional assumption that $\sigma $ is \emph{doubling} for the
strong type inequality. A major point of our characterizations is that they
hold for \emph{all} $1<p<\infty $. The methods in \cite{LaSaUr}, \cite{NTV1}%
, \cite{NTV2}, \cite{NTV3}, \cite{NTV4} apply only to the case $p=2$, where
the orthogonality of measure-adapted Haar bases prove critical. The doubling
hypothesis on $\sigma $ may not be needed in our theorems, but is required
by the use of Calder\'{o}n-Zygmund decompositions in our method.

As the precise statements of our general results are somewhat complicated,
we illustrate them with an important case here. Let 
\begin{equation*}
Tf\left( x\right) =\lim_{\varepsilon \rightarrow 0}\int_{\mathbb{R}\setminus
\left( -\varepsilon ,\varepsilon \right) }\frac{1}{y}f\left( x-y\right) dy
\end{equation*}%
denote the Hilbert transform, let%
\begin{equation*}
T_{\flat }f\left( x\right) =\sup_{0<\varepsilon <\infty }\left\vert \int_{%
\mathbb{R}\setminus \left( -\varepsilon ,\varepsilon \right) }\frac{1}{y}%
f\left( x-y\right) dy\right\vert
\end{equation*}%
denote the usual maximal singular integral associated with $T$, and finally
let%
\begin{equation*}
T_{\natural }f\left( x\right) =\sup_{0<\varepsilon _{1},\varepsilon
_{2}<\infty :\frac{1}{4}<\frac{\varepsilon _{2}}{\varepsilon _{1}}%
<4}\left\vert \int_{\mathbb{R}\setminus \left( -\varepsilon _{1},\varepsilon
_{2}\right) }\frac{1}{y}f\left( x-y\right) dy\right\vert
\end{equation*}%
denote the new \emph{strongly} (or \emph{noncentered}) maximal singular
integral associated with $T$ that is defined more precisely below. Suppose $%
\sigma $ and $\omega $ are two locally finite positive Borel measures on $%
\mathbb{R}$ that have no point masses in common. Then we have the following
weak and strong type characterizations which we emphasize hold for \emph{all}
$1<p<\infty $.

\begin{itemize}
\item The operator $T_{\flat }$ is \emph{weak} type $\left( p,p\right) $
with respect to $\left( \sigma ,\omega \right) $, i.e.%
\begin{equation}
\left\Vert T_{\flat }\left( f\sigma \right) \right\Vert _{L^{p,\infty
}\left( \omega \right) }\leq C\left\Vert f\right\Vert _{L^{p}\left( \sigma
\right) },  \label{e.imposition}
\end{equation}%
for all $f$ bounded with compact support, \emph{if and only if }the two
weight $A_{p}$ condition 
\begin{equation*}
\frac{1}{\left\vert Q\right\vert }\int_{Q}d\omega \left( \frac{1}{\left\vert
Q\right\vert }\int_{Q}d\sigma \right) ^{p-1}\leq C,
\end{equation*}%
holds for all intervals $Q$; and the dual $T_{\flat }$ interval testing
condition%
\begin{equation}
\int_{Q}T_{\flat }\left( \chi _{Q}f\sigma \right) d\omega \leq C\left(
\int_{Q}\left\vert f\right\vert ^{p}d\sigma \right) ^{\frac{1}{p}}\left(
\int_{Q}d\omega \right) ^{\frac{1}{p^{\prime }}},  \label{dtc}
\end{equation}%
holds for all intervals $Q$ and $f\in L_{Q}^{p}\left( \sigma \right) $ (part 
$4$ of Theorem \ref{weaktwoweightHaar}). The same is true for $T_{\natural }$%
. It is easy to see that \eqref{dtc} is equivalent to the more familiar dual
interval testing condition%
\begin{equation}
\int_{Q}\left\vert L^{\ast }\left( \chi _{Q}\omega \right) \right\vert
^{p^{\prime }}d\sigma \leq C\int_{Q}d\omega ,
\end{equation}%
for all intervals $Q$ and linearizations $L$ of the maximal singular
integral $T_{\flat }$ (see \eqref{unif}).

\item Suppose in addition that $\sigma $ is doubling and $1<p<\infty $. Then
the operator $T_{\natural }$ is strong type $\left( p,p\right) $ with
respect to $\left( \sigma ,\omega \right) $, i.e. 
\begin{equation*}
\left\Vert T_{\natural }\left( f\sigma \right) \right\Vert _{L^{p}\left(
\omega \right) }\leq C\left\Vert f\right\Vert _{L^{p}\left( \sigma \right) },
\end{equation*}%
for all $f$ bounded with compact support, \emph{if and only if} these four
conditions hold. (\textbf{1}) the strengthened $A_{p}$ condition%
\begin{equation*}
\left( \int_{Q}s_{Q}\left( x\right) ^{p}d\omega \left( x\right) \right) ^{%
\frac{1}{p}}\left( \int_{I}s_{Q}\left( x\right) ^{p^{\prime }}d\sigma \left(
x\right) \right) ^{\frac{1}{p^{\prime }}}\leq C\left\vert Q\right\vert ,
\end{equation*}%
$s_{Q}\left( x\right) =\frac{\left\vert Q\right\vert }{\left\vert
Q\right\vert +\left\vert x-x_{Q}\right\vert }$, holds for all intervals $Q$;
(\textbf{2}) the dual $T_{\natural }$ interval testing condition%
\begin{equation*}
\int_{Q}T_{\natural }\left( \chi _{Q}f\sigma \right) d\omega \leq C\left(
\int_{Q}\left\vert f\right\vert ^{p}d\sigma \right) ^{\frac{1}{p}}\left(
\int_{Q}d\omega \right) ^{\frac{1}{p^{\prime }}},
\end{equation*}%
holds for all intervals $Q$ and $f\in L_{Q}^{p}\left( \sigma \right) $; (%
\textbf{3}) the forward $T_{\natural }$ testing condition%
\begin{equation}
\int_{Q}T_{\natural }\left( \chi _{E}\sigma \right) ^{p}d\omega \leq
C\int_{Q}d\sigma ,  \label{ftc}
\end{equation}%
holds for all intervals $Q$ and all compact subsets $E$ of $Q$; and (\textbf{%
4}) the Poisson condition%
\begin{equation*}
\int_{\mathbb{R}}\left( \sum_{r=1}^{\infty }\left\vert I_{r}\right\vert
_{\sigma }\left\vert I_{r}\right\vert ^{p^{\prime }-1}\sum_{\ell =0}^{\infty
}\frac{2^{-\ell }}{\left\vert \left( I_{r}\right) ^{\left( \ell \right)
}\right\vert }\chi _{\left( I_{r}\right) ^{\left( \ell \right) }}\left(
y\right) \right) ^{p}d\omega \left( y\right) \leq C\sum_{r=1}^{\infty
}\left\vert I_{r}\right\vert _{\sigma }\left\vert I_{r}\right\vert
^{p^{\prime }},
\end{equation*}%
for all pairwise disjoint decompositions $Q=\cup _{r=1}^{\infty }I_{r}$ of
the dyadic interval $Q$ into dyadic intervals $I_{r}$, for any fixed dyadic
grid. In the case $1<p\leq 2$, only the first three conditions are needed
(Theorem \ref{improved}). Note that in \eqref{ftc} we are required to test
over all compact subsets $E$ of $Q$ on the left side, but retain the upper
bound over the (larger) cube $Q$ on the right side.
\end{itemize}

As these results indicate, the imposition of the weight $\sigma $ on both
sides of \eqref{e.imposition} is a standard part of weighted theory, in
general necessary for the testing conditions to be sufficient. Compare to
the characterization of the two weight maximal function inequalities in
Theorem~\ref{sawyerthm} below.

\begin{problem}
In \eqref{ftc}, our testing condition is more complicated than one would
like, in that one must test over all compact $E\subset Q$ in \eqref{ftc}.
There is a corresponding feature of \eqref{dtc}, seen after one unwinds the
definition of the linearization $L ^{\ast} $. We do not know if these
testing conditions can be further simplified. The form of these testing
conditions is dictated by our use of what we call the `maximum principle,'
see Lemma~\ref{l.maxprin}.
\end{problem}

We now recall the two weight inequalities for the Maximal Function as they
are central to the new results of this paper. Define the Maximal Function 
\begin{equation*}
\mathcal{M}\nu (x)=\sup_{x\in Q}\frac{1}{\left\vert Q\right\vert }%
\int_{Q}\left\vert \nu \right\vert ,\qquad x\in \mathbb{R},
\end{equation*}%
where the supremum is taken over all cubes $Q$ (by which we mean cubes with
sides parallel to the coordinate axes) containing $x$.

\begin{2WtMax}
\label{sawyerthm}Suppose that $\sigma $ and $\omega $ are positive locally
finite Borel measures on $\mathbb{R}^{n}$, and that $1<p<\infty $. The
maximal operator $\mathcal{M}$ satisfies the two weight norm inequality (%
\cite{Saw1})%
\begin{equation}
\left\Vert \mathcal{M}(f\sigma )\right\Vert _{L^{p}\left( \omega \right)
}\leq C\left\Vert f\right\Vert _{L^{p}\left( \sigma \right) },\qquad f\in
L^{p}\left( \sigma \right) ,  \label{M2weight}
\end{equation}%
\emph{if and only if} for all cubes $Q\subset \mathbb{R}^{n}$,%
\begin{equation}
\int_{Q}\mathcal{M}\left( \chi _{Q}\sigma \right) (x)^{p}d\omega (x)\leq
C_{1}\int_{Q}d\sigma (x).  \label{testmax}
\end{equation}%
The maximal operator $\mathcal{M}$ satisfies the \emph{weak-type} two weight
norm inequality (\cite{Mu})%
\begin{equation}
\left\Vert \mathcal{M}(f\sigma )\right\Vert _{L^{p,\infty }\left( \omega
\right) }\equiv \sup_{\lambda >0}\lambda \left\vert \left\{ \mathcal{M}%
\left( f\sigma \right) >\lambda \right\} \right\vert _{\omega }^{\frac{1}{p}%
}\leq C\left\Vert f\right\Vert _{L^{p}\left( \sigma \right) },\qquad f\in
L^{p}\left( \sigma \right) ,  \label{weakM2weight}
\end{equation}%
\emph{if and only if} the two weight $A_{p}$ condition holds for all cubes $%
Q\subset \mathbb{R}^{n}$:%
\begin{equation}
\left[ \frac{1}{\left\vert Q\right\vert }\int_{Q}d\omega \right] ^{\frac{1}{p%
}}\left[ \frac{1}{\left\vert Q\right\vert }\int_{Q}d\sigma \right] ^{\frac{1%
}{p^{\prime }}}\leq C_{2}.  \label{Ap}
\end{equation}
\end{2WtMax}

The necessary and sufficient condition \eqref{testmax} for the strong type
inequality \eqref{M2weight} states that one need only test the strong type
inequality for functions of the form $\chi _{Q}\sigma $. Not only that, but
the full $L^{p}(\omega )$ norm of $\mathcal{M}(\chi _{Q}\sigma )$ need not
be evaluated. There is a corresponding weak-type interpretation of the $%
A_{p} $ condition \eqref{Ap}. Finally, the proofs given in \cite{Saw1} and 
\cite{Mu} for absolutely continuous weights carry over without difficulty
for\ the locally finite measures considered here.

\subsection{Two Weight Inequalities for Singular Integrals}

Let us set notation for our Theorems. Consider a kernel function $K(x,y)$
defined on $\mathbb{R}^{n}\times \mathbb{R}^{n}$ satisfying the following
size and smoothness conditions,%
\begin{equation}
\begin{split}
\left\vert K(x,y)\right\vert & \leq C\left\vert x-y\right\vert ^{-n}, \\
\left\vert K(x,y)-K\left( x^{\prime },y\right) \right\vert & \leq C\delta
\left( \frac{\left\vert x-x^{\prime }\right\vert }{\left\vert x-y\right\vert 
}\right) \left\vert x-y\right\vert ^{-n},\qquad \frac{\left\vert x-x^{\prime
}\right\vert }{\left\vert x-y\right\vert }\leq \frac{1}{2},
\end{split}
\label{sizeandsmoothness}
\end{equation}%
where $\delta $ is a Dini modulus of continuity, i.e. a nondecreasing
function on $\left[ 0,1\right] $ with $\delta (0)=0$ and $\int_{0}^{1}\delta
(s)\frac{ds}{s}<\infty $.

Next we describe the truncations we consider. Let $\zeta ,\eta $ be fixed
smooth functions on the real line satisfying%
\begin{eqnarray*}
&&\zeta (t)=0\text{ for }t\leq \frac{1}{2}\text{ and }\zeta (t)=1\text{ for }%
t\geq 1, \\
&&\eta (t)=0\text{ for }t\geq 2\text{ and }\eta (t)=1\text{ for }t\leq 1, \\
&&\zeta \text{ is nondecreasing and }\eta \text{ is nonincreasing}.
\end{eqnarray*}%
Given $0<\varepsilon <R<\infty $, set $\zeta _{\varepsilon }(t)=\zeta \left( 
\frac{t}{\varepsilon }\right) $ and $\eta _{R}(t)=\eta \left( \frac{t}{R}%
\right) $ and define the smoothly truncated operator $T_{\varepsilon ,R}$ on 
$L_{\QTR{up}{loc}}^{1}\left( \mathbb{R}^{n}\right) $ by the absolutely
convergent integrals 
\begin{equation*}
T_{\varepsilon ,R}f(x)=\int K(x,y)\zeta _{\varepsilon }(\left\vert
x-y\right\vert )\eta _{R}(\left\vert x-y\right\vert )f(y)dy,\qquad f\in L_{%
\QTR{up}{loc}}^{1}\left( \mathbb{R}^{n}\right) .
\end{equation*}%
Define the \emph{maximal} singular integral operator $T_{\flat }$ on $L_{%
\QTR{up}{loc}}^{1}\left( \mathbb{R}^{n}\right) $ by%
\begin{equation*}
T_{\flat }f(x)=\sup_{0<\varepsilon <R<\infty }\left\vert T_{\varepsilon
,R}f(x)\right\vert ,\qquad x\in \mathbb{R}^{n}.
\end{equation*}

We also define a corresponding \emph{new} notion of \emph{strongly maximal}
singular integral operator $T_{\natural }$ as follows. In dimension $n=1$ we
set%
\begin{equation*}
T_{\natural }f(x)=\sup_{0<\varepsilon _{i}<R<\infty ,\frac{1}{4}\leq \frac{%
\varepsilon _{1}}{\varepsilon _{2}}\leq 4}\left\vert T_{\mathbf{\varepsilon }%
,R}f(x)\right\vert ,\qquad x\in \mathbb{R},
\end{equation*}%
where $\mathbf{\varepsilon }=\left( \varepsilon _{1},\varepsilon _{2}\right) 
$ and%
\begin{equation*}
T_{\mathbf{\varepsilon },R}f(x)=\int K(x,y)\left\{ \zeta _{\varepsilon
_{1}}(x-y)+\zeta _{\varepsilon _{2}}(y-x)\right\} \eta _{R}\left( \left\vert
x-y\right\vert \right) f(y)dy.
\end{equation*}%
Thus the local singularity has been removed by a \emph{noncentered} smooth
cutoff - $\varepsilon _{1}$ to the left of $x$ and $\varepsilon _{2}$ to the
right of $x$, but with controlled eccentricity $\frac{\varepsilon _{1}}{%
\varepsilon _{2}}$. There is a similar definition of $T_{\natural }f$ in
higher dimensions involving in place of $\zeta _{\varepsilon }\left(
\left\vert x-y\right\vert \right) $, a product of smooth cutoffs,%
\begin{equation*}
\zeta _{\mathbf{\varepsilon }}(x-y)\equiv 1-\prod_{k=1}^{n}\left[ 1-\left\{
\zeta _{\varepsilon _{2k-1}}(x_{k}-y_{k})+\zeta _{\varepsilon
_{2k}}(y_{k}-x_{k})\right\} \right] ,
\end{equation*}%
satisfying $\frac{1}{4}\leq \frac{\varepsilon _{2k-1}}{\varepsilon _{2k}}%
\leq 4$ for $1\leq k\leq n$. The advantage of this larger operator $%
T_{\natural }$ is that in many cases boundedness of $T_{\natural }$ (or
collections thereof) implies boundedness of the maximal operator $\mathcal{M}
$. Our method of proving boundedness of $T_{\flat }$ and $T_{\natural }$
requires boundedness of the maximal operator $\mathcal{M}$ anyway, and as a
result we can in some cases give necessary and sufficient conditions for
strong boundedness of $T_{\natural }$. As for weak-type boundedness, we can
in many more cases give necessary and sufficient conditions for weak
boundedness of the usual truncations $T_{\flat }$.

\begin{definition}
\label{d.standard} We say that $T$ is a \emph{standard singular integral
operator with kernel $K$} if $T$ is a bounded linear operator on $%
L^{q}\left( \mathbb{R}^{n}\right) $ for some fixed $1<q<\infty $, that is 
\begin{equation}
\left\Vert Tf\right\Vert _{L^{q}\left( \mathbb{R}^{n}\right) }\leq
C\left\Vert f\right\Vert _{L^{q}\left( \mathbb{R}^{n}\right) },\qquad f\in
L^{q}\left( \mathbb{R}^{n}\right) ,  \label{Tbound}
\end{equation}%
if $K(x,y)$ is defined on $\mathbb{R}^{n}\times \mathbb{R}^{n}$ and
satisfies both \eqref{sizeandsmoothness} and the H\"{o}rmander condition, 
\begin{equation}
\int_{B\left( y,2\varepsilon \right) ^{c}}\left\vert K(x,y)-K\left(
x,y^{\prime }\right) \right\vert dx\leq C,\qquad y^{\prime }\in B\left(
y,\varepsilon \right) ,\varepsilon >0,  \label{Hormander}
\end{equation}%
and finally if $T$ and $K$ are related by%
\begin{equation}
Tf(x)=\int K(x,y)f(y)dy,\qquad \text{a.e.-}x\notin supp\ f,  \label{rel}
\end{equation}%
whenever $f\in L^{q}\left( \mathbb{R}^{n}\right) $ has compact support in $%
\mathbb{R}^{n}$. We call a kernel $K(x,y)$ \emph{standard} if it satisfies (%
\ref{sizeandsmoothness}) and \eqref{Hormander}.
\end{definition}

For standard singular integral operators, we have this classical result.
(See the appendix on truncation of singular integrals on page 30 of \cite{St}
for the case $R=\infty $; the case $R<\infty $ is similar.)

\begin{theorem}
\label{classmax}Suppose that $T$ is a standard singular integral operator.
Then the map $f\rightarrow T_{\flat }f$ is of weak-type $( 1,1) $, and
bounded on $L^{p}\left( \mathbb{R}\right) $ for $1<p<\infty $. There exist
sequences $\varepsilon _{j}\rightarrow 0$ and $R_{j}\rightarrow \infty $
such that for $f\in L^{p}\left( \mathbb{R}\right) $ with $1\leq p<\infty $,%
\begin{equation*}
\lim_{j\rightarrow \infty }T_{\varepsilon _{j},R_{j}}f(x) \equiv T_{0,\infty
}f(x)
\end{equation*}%
exists for $a.e.\ x\in \mathbb{R}$. Moreover, there is a bounded measurable
function $a(x) $ (depending on the sequences) satisfying%
\begin{equation*}
Tf(x) =T_{0,\infty }f(x) +a(x) f( x) ,\qquad x\in \mathbb{R}^{n}.
\end{equation*}
\end{theorem}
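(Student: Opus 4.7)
The plan is to reduce everything to Cotlar's inequality, which controls the maximal truncation $T_\flat f$ pointwise by the Hardy--Littlewood maximal function applied to $Tf$ and to $f$. Once this is in place, the weak-type $(1,1)$ and $L^p$ bounds on $T_\flat$ follow from the corresponding bounds on $\mathcal{M}$ and on $T$ itself; the almost-everywhere convergence of $T_{\varepsilon_j, R_j}f$ then comes from a standard density argument using the maximal inequality; and the correction term $a(x)f(x)$ emerges from comparing the pointwise limit with the original $T$ on the support of $f$.

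First, I would promote the single $L^q$ bound \eqref{Tbound} to bounds on all of $L^p(\mathbb{R}^n)$, $1<p<\infty$, and to weak-type $(1,1)$. The argument is the usual Calder\'on--Zygmund decomposition: given $f\in L^1$ and $\lambda>0$, split $f=g+b$ at height $\lambda$ with $b=\sum b_Q$ supported on a disjoint family of cubes $\{Q\}$ with $\int_Q b_Q=0$; estimate the good part by $L^q$ boundedness and Chebyshev, and estimate $Tb$ outside $\cup 2Q$ by the H\"ormander condition \eqref{Hormander} applied to each $b_Q$ around its center. Marcinkiewicz interpolation and duality (using that the transpose kernel $K(y,x)$ also satisfies \eqref{sizeandsmoothness} and \eqref{Hormander}) then give strong $L^p$ bounds on $T$ for $1<p<\infty$.

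Next I would establish Cotlar's inequality: for some constant $C$,
\begin{equation*}
T_\flat f(x)\le C\bigl[\mathcal{M}(Tf)(x)+\mathcal{M}f(x)\bigr],\qquad x\in\mathbb{R}^n.
\end{equation*}
To prove this, fix $x$ and $0<\varepsilon<R<\infty$, let $B=B(x,\varepsilon/2)$, and write $f=f\chi_{2B}+f\chi_{(2B)^c}=:f_1+f_2$. One checks that for $y\in B$, the difference $T_{\varepsilon,R}f(x)-Tf_2(y)$ is controlled by $\mathcal{M}f(x)$, using the size bound \eqref{sizeandsmoothness} on the annulus $\{|x-z|\sim\varepsilon\}\cup\{|x-z|\sim R\}$ together with the smoothness condition to move from $x$ to $y$ in the tail. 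Averaging over $y$ in a small subset of $B$ where $|Tf_2(y)|$ is not much larger than its level-median (chosen via the weak-type $(1,1)$ bound on $T$ applied to $f_1$), one obtains the claimed pointwise inequality. From Cotlar's inequality and the $L^p$/weak-type $(1,1)$ bounds on $\mathcal{M}$ and $T$, the first assertion of the theorem follows. The main obstacle here is the careful bookkeeping to extract the Hardy--Littlewood maximal function from the tail; otherwise the argument is classical.

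Finally I would handle the pointwise limit. For $f\in C_c^\infty(\mathbb{R}^n)$, the limit $T_{\varepsilon,R}f(x)$ as $\varepsilon\to 0$, $R\to\infty$ exists at every $x$ by expanding $f(y)=f(x)+(f(y)-f(x))$ in the near part (the constant piece integrates to zero because the truncation is essentially radial around $x$, up to an error controlled by the smoothness of $\zeta$ and $\eta$) and using $|K(x,y)|\lesssim|x-y|^{-n}$ in the far part. Denote this limit by $T_{0,\infty}f(x)$. For general $f\in L^p$, choose $f_k\in C_c^\infty$ with $f_k\to f$ in $L^p$, and define, for each sequence $\varepsilon_j\to 0$, $R_j\to\infty$, the oscillation
\begin{equation*}
\Omega f(x)=\limsup_{j,k\to\infty}\bigl|T_{\varepsilon_j,R_j}f(x)-T_{\varepsilon_k,R_k}f(x)\bigr|.
\end{equation*}
Triangle inequality together with $\Omega f_m(x)=0$ gives $\Omega f(x)\le 2T_\flat(f-f_m)(x)$, and then the $L^p$/weak $L^1$ bound on $T_\flat$ forces $\Omega f=0$ almost everywhere, so $T_{0,\infty}f(x)$ exists a.e. Comparing with the definition \eqref{rel} of $T$ away from the support of $f$, the difference $Tf-T_{0,\infty}f$ is supported where $f\ne 0$ and is pointwise bounded by $C(\mathcal{M}(Tf)+\mathcal{M}f)+|Tf|$; on the other hand, by a symmetrization argument for the smooth truncation, this difference is linear in $f$ and local, hence of the form $a(x)f(x)$ for some measurable $a$, with $\|a\|_\infty\lesssim 1$ inherited from the uniform bound on the truncations. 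This yields the decomposition in the last assertion.
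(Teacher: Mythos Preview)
The paper does not supply its own proof of this theorem: it is quoted as a classical result with a reference to the appendix on truncation of singular integrals in Stein \cite{St}. Your outline---Calder\'on--Zygmund to upgrade the single $L^q$ bound, Cotlar's inequality for $T_\flat$, then a density argument for a.e.\ convergence---is exactly the classical route one finds there, so in spirit you are aligned with what the paper invokes.

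There is, however, a genuine gap in your treatment of the dense class. You assert that for $f\in C_c^\infty$ the full limit $\lim_{\varepsilon\to 0,\,R\to\infty}T_{\varepsilon,R}f(x)$ exists at every $x$, justified by ``the constant piece integrates to zero because the truncation is essentially radial.'' That is false for a general standard kernel: after writing $f(y)=f(x)+(f(y)-f(x))$ near $x$, the term
\[
f(x)\int K(x,y)\,\zeta_\varepsilon(|x-y|)\,\eta_R(|x-y|)\,dy
\]
is uniformly bounded (this follows from Cotlar applied to a bump function, or directly from the $L^2$ theory) but has no reason to converge as $\varepsilon\to 0$. Radial symmetry of the cutoff is irrelevant, since $K(x,\cdot)$ need not have mean zero on spheres. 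This is precisely why the theorem only asserts the existence of \emph{some} sequences $\varepsilon_j\to 0$, $R_j\to\infty$, and why the bounded function $a(x)$ explicitly depends on the chosen sequences. The correct step is to first use the uniform bound on these ``constant pieces'' to extract (by weak-$*$ compactness in $L^\infty$, or by a diagonal argument in the translation-invariant case where they are scalars) a sequence along which they converge; only then does your oscillation argument via $\Omega f\le 2T_\flat(f-f_m)$ go through.

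Your final identification of $Tf-T_{0,\infty}f$ as multiplication by a bounded $a$ is also too quick: ``linear and local'' is not by itself enough to conclude multiplication by an $L^\infty$ function. The clean way (as in Stein) is to test the difference $T-T_{0,\infty}$ against products $\varphi f$ with $\varphi\in C_c^\infty$, observe it commutes with multiplication by $\varphi$ (both $T$ and $T_{0,\infty}$ agree with the absolutely convergent integral off the support of $f$, so the difference is supported where $f$ lives), and then invoke the standard fact that a bounded operator on $L^2$ commuting with all such multiplications is itself multiplication by an $L^\infty$ function.
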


We state a conjecture, so that the overarching goals of this subject are
clear.

\begin{conjecture}
\label{j.tooStrong} Suppose that $\sigma $ and $\omega $ are positive Borel
measures on $\mathbb{R}^{n}$, $1<p<\infty $, and $T$ is a standard singular
integral operator on $\mathbb{R}^{n}$. Then the following two statements are
equivalent: 
\begin{gather*}
\int \lvert T(f\sigma )\rvert ^{p}\omega \leq C\int \lvert f\rvert
^{p}\sigma \,,\qquad f\in C_{0}^{\infty }, \\
\left\{ 
\begin{array}{c}
\left[ \frac{1}{\left\vert Q\right\vert }\int_{Q}d\omega \right] ^{\frac{1}{p%
}}\left[ \frac{1}{\left\vert Q\right\vert }\int_{Q}d\sigma \right] ^{\frac{1%
}{p^{\prime }}}\leq C\,, \\ 
\int_{Q}\lvert T\chi _{Q}\sigma \rvert ^{p}\leq C^{\prime }\int_{Q}\sigma \,,
\\ 
\int_{Q}\lvert T^{\ast }\chi _{Q}\omega \rvert ^{p^{\prime }}\sigma \leq
C^{\prime \prime }\int_{Q}\omega \,,%
\end{array}%
\right. \ \ \ \ \ for\ all\ cubes\ Q.\qquad
\end{gather*}
\end{conjecture}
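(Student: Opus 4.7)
The necessity direction is routine: testing the strong-type inequality on $f=\chi_Q$ gives the forward testing condition immediately, dualizing against $\chi_Q\omega$ gives the dual testing condition, and a standard separation argument (placing $Q$ and a comparable cube far apart so that the kernel is bounded below by $c|Q|^{-n}$ on the product) recovers the two-weight $A_p$ condition. The content of the conjecture is therefore entirely in the sufficiency direction.

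For sufficiency, the natural plan is to run the Calder\'on--Zygmund/dyadic-bilinear paradigm that structures the present paper. Fix a dyadic grid and write the bilinear form $\int T(f\sigma)\,g\,d\omega$ using $L^p$-adapted (martingale or Haar-type) expansions of $f$ and $g$ against $\sigma$ and $\omega$ respectively. Split the resulting sum according to the relative position of the supports: (i) an on-diagonal principal part, in which a cube $Q$ carries both an $f$-increment and a significant $\omega$-mass of $T(\chi_Q\sigma)$; (ii) a short-range off-diagonal part involving nearby cubes of comparable scale; and (iii) a long-range tail of well-separated cubes. The principal part should be absorbed by the forward testing hypothesis after organizing the stopping cubes through a maximum-principle argument in the spirit of Lemma~\ref{l.maxprin}, and by symmetry its transpose is absorbed by the dual testing hypothesis. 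On well-separated cubes $Q,Q'$ the kernel is essentially $|x_Q-x_{Q'}|^{-n}$, and the long-range tail collapses to a Poisson-type pairing between $f\sigma$ and $g\omega$, which one would like to dominate by the $A_p$ constant alone.

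The main obstacle is that this plan, carried out honestly, does not close on the hypotheses of the conjecture. Away from $p=2$ there is no Haar orthogonality, so the off-diagonal and long-range sums lose the clean $\ell^2$ control exploited in \cite{NTV3} and \cite{LaSaUr}; this is precisely why the present paper must impose the Poisson-strengthened $A_p$ and, for $p>2$, the additional Poisson condition, both of which dominate exactly the long-range sums that the classical $A_p$ cannot. Dropping the doubling hypothesis on $\sigma$ compounds the problem, since doubling enters both the Calder\'on--Zygmund decomposition and the passage from testing over $Q$ to controlling interactions at sub-scales; and passing from $T_\natural$ (or $T_\flat$) to $T$ itself requires a Cotlar-type inequality that is itself a maximal estimate. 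The hardest step, and the one that in my view governs whether the conjecture as stated is true, is the replacement of the strengthened, Poisson-weighted $A_p$ by the classical $A_p$: this would demand a genuinely new $L^p$ decoupling between scales valid for non-doubling $\sigma$, and I know of no mechanism in the current methodology that achieves it. Consequently I would first attempt the $p=2$, non-doubling case (exploiting Haar orthogonality to bypass the Poisson strengthening) and only then try to extrapolate or lift the argument to $p\ne 2$.
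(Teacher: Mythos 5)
The statement you were asked to prove is Conjecture~\ref{j.tooStrong}: the paper offers no proof of it, and explicitly says it has been verified only for positive operators, with partial progress at $p=2$ for the Hilbert transform under extra hypotheses. So there is no ``paper's own proof'' to compare yours against; what the paper actually proves are the conditional results (the weak-type and strong-type theorems and Theorem~\ref{improved}), which require doubling of $\sigma$, maximal-function hypotheses, and the strengthened or Poisson-type $A_p$ conditions --- precisely the strengthenings your discussion identifies as unavoidable with the present technology. Your proposal, rightly, does not claim to close the sufficiency direction, and your diagnosis of the obstructions (loss of Haar orthogonality for $p\neq 2$, the role of doubling in the Calder\'on--Zygmund decomposition and in the maximum principle, the need for tailed/Poisson conditions to control long-range interactions) is consistent with how the paper positions its own results relative to the conjecture.

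One concrete caution about the part you do assert. The ``routine'' necessity direction is not routine at the stated level of generality: a standard singular integral operator in the sense of Definition~\ref{d.standard} may have a degenerate kernel (even $K\equiv 0$ qualifies), in which case the strong-type inequality holds trivially while the $A_p$ condition can fail. Your separation argument implicitly assumes a kernel lower bound such as \eqref{Kt} or \eqref{j}, which is not among the conjecture's hypotheses; and even granting such a bound, separation only yields the asymmetric condition \eqref{asym} for pairs of well-separated cubes of equal side length --- passing from that to the genuine $A_p$ condition \eqref{Ap} is exactly where the paper's Lemma~\ref{weakdom} has to work, using that $\sigma$ and $\omega$ share no point masses. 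Finally, testing on $\chi_Q$ is adequate for the linear $T$ of the conjecture, but if you intend to feed these conditions into the paper's machinery for $T_{\flat}$ or $T_{\natural}$, remember that there the forward testing must be taken over all bounded $g$ (equivalently all compact $E\subset Q$) and the dual condition over all $f\in L^p_Q(\sigma)$, a genuine difference dictated by the linearizations and the maximum principle.
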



\begin{remark}
\label{r.poisson} The first of the three testing conditions above is the
two-weight $A_p$ condition. We would expect that this condition can be
strengthened to a `Poisson two-weight $A_p$ condition.' See \cite{NTV3,Vo}.
\end{remark}


The most important instances of this Conjecture occur when $T$ is one of a
few canonical singular integral operators, such as the Hilbert transform,
the Beurling Transform, or the Riesz Transforms. This question occurs in
different instances, such as the Sarason Conjecture concerning the
composition of Hankel operators, or the semi-commutator of Toeplitz
operators (see \cite{CrMaPe}, \cite{Zh}), Mathematical Physics \cite{PeVoYu}%
, as well as perturbation theory of some self-adjoint operators. See
references in \cite{Vo}.

To date, this has only been verified for positive operators, such as Poisson
integrals, and fractional integral operators \cite{Saw}, \cite{Saw1} and 
\cite{Saw2}. Recently the authors have used the methods of Nazarov, Treil
and Volberg to prove a special case of the conjecture for the Hilbert
transform when $p=2$ and an energy hypothesis is assumed (\cite{LaSaUr}).
Earlier in \cite{NTV3} NTV used a stronger pivotal condition in place of the
energy hypothesis, but neither of these conditions are necessary (\cite%
{LaSaUr}). The two weight Helson-Szego Theorem was proved many years earlier
by Cotlar and Sadosky \cite{CoSa} and \cite{CoSa2}, thus the $L^{2}$ case
for the Hilbert transform is completely settled.

Nazarov, Treil and Volberg \cite{NTV1}, \cite{NTV3} have characterized those
weights for which the class of Haar multipliers is bounded when $p=2$. They
also have a result for an important special class of singular integral
operators, the `well-localized' operators of \cite{NTV2}. Citing the
specific result here would carry us too far afield, but this class includes
the important Haar shift examples, such as the one found by S. Petermichl 
\cite{Pet}, and generalized in \cite{PeVo}. Consequently, characterizations
are given in \cite{Vo} and \cite{NTV3} for the Hilbert transform and Riesz
transforms in weighted $L^{2}$ spaces under various additional hypotheses.
In particular they obtain an analogue of the case $p=2$ of the strong type
theorem below. Our results can be reformulated in the context there, which
theme we do not pursue further here.

We now characterize the weak-type two weight norm inequality for both
maximal singular integrals and strongly maximal singular integrals.

\begin{2WtSIWeakType}
\label{weaktwoweightHaar}Suppose that $\sigma $ and $\omega $ are positive
locally finite Borel measures on $\mathbb{R}^{n}$, $1<p<\infty $, and let $%
T_{\flat }$ and $T_{\natural }$ be the maximal singular integral operators
as above with kernel $K(x,y)$ satisfying \eqref{sizeandsmoothness}.

\begin{enumerate}
\item Suppose that the maximal operator $\mathcal{M}$ satisfies (\ref%
{weakM2weight}). Then $T_{\natural }$ satisfies the weak-type two weight
norm inequality%
\begin{equation}
\left\Vert T_{\natural }(f\sigma )\right\Vert _{L^{p,\infty }\left( \omega
\right) }\leq C\left\Vert f\right\Vert _{L^{p}\left( \sigma \right) },\qquad
f\in L^{p}\left( \sigma \right) ,  \label{weak2weight}
\end{equation}%
\emph{if and only if} 
\begin{equation}
\int_{Q}T_{\natural }\left( \chi _{Q}f\sigma \right) (x)d\omega (x)\leq
C_{2}\left( \int_{Q}\left\vert f(x)\right\vert ^{p}d\sigma (x)\right) ^{%
\frac{1}{p}}\left( \int_{Q}d\omega (x)\right) ^{\frac{1}{p^{\prime }}},
\label{Tsharpomega}
\end{equation}%
for all cubes $Q\subset \mathbb{R}^{n}$ and all functions $f\in L^{p}\left(
\sigma \right) $.

\item The same characterization as above holds for $T_{\flat }$ in place of $%
T_{\natural }$ everywhere.

\item Suppose that $\sigma $ and $\omega $ are absolutely continuous with
respect to Lebesgue measure, that the maximal operator $\mathcal{M}$
satisfies \eqref{weakM2weight}, and that $T$ is a {standard} singular
integral operator with kernel $K$ as above. If \eqref{weak2weight} holds for 
$T_{\natural }$ or $T_{\flat }$, then it also holds for $T$:%
\begin{equation}
\left\Vert T(f\sigma )\right\Vert _{L^{p,\infty }\left( \omega \right) }\leq
C\left\Vert f\right\Vert _{L^{p}\left( \sigma \right) },\ \ \ \ \ f\in
L^{p}\left( \sigma \right) ,f\sigma \in L^{\infty },supp\ f\sigma \ \text{%
compact}.  \label{Tabs}
\end{equation}

\item Suppose $c>0$ and that $\left\{ K_{j}\right\} _{j=1}^{J}$ is a
collection of \emph{standard} kernels such that for \emph{each} unit vector $%
\mathbf{u}$ there is $j$ satisfying 
\begin{equation}
\left\vert K_{j}\left( x,x+t\mathbf{u}\right) \right\vert \geq ct^{-n},\ \ \
\ \ t\in \mathbb{R}.  \label{Kt}
\end{equation}%
Suppose also that $\sigma $ and $\omega $ have no common point masses, i.e. $%
\sigma \left( \left\{ x\right\} \right) \cdot \omega \left( \left\{
x\right\} \right) =0$ for all $x\in \mathbb{R}^{n}$. Then%
\begin{equation*}
\left\Vert \left( T_{j}\right) _{\flat }(f\sigma )\right\Vert _{L^{p,\infty
}\left( \omega \right) }\leq C\left\Vert f\right\Vert _{L^{p}\left( \sigma
\right) },\qquad f\in L^{p}\left( \sigma \right) ,\ \ \ \ \ 1\leq j\leq J,
\end{equation*}%
if and only if the two weight $A_{p}$ condition \eqref{Ap} holds and%
\begin{eqnarray*}
\int_{Q}\left( T_{j}\right) _{\flat }\left( \chi _{Q}f\sigma \right)
(x)d\omega (x) &\leq &C_{2}\left( \int_{Q}\left\vert f(x)\right\vert
^{p}d\sigma (x)\right) ^{\frac{1}{p}}\left( \int_{Q}d\omega (x)\right) ^{%
\frac{1}{p^{\prime }}}, \\
f &\in &L^{p}\left( \sigma \right) ,\ cubes\ Q\subset \mathbb{R}^{n},\ 1\leq
j\leq J.
\end{eqnarray*}
\end{enumerate}
\end{2WtSIWeakType}

While in (1)---(3), we assume that the Maximal Function inequality holds, in
point (4), we obtain an \emph{unconditional} characterization of the
weak-type inequality for\ a large class of families of (centered) maximal
singular integral operators $T_{\flat }$. This class includes the individual
maximal Hilbert transform in one dimension, the individual maximal Beurling
transform in two dimensions, and the families of maximal Riesz transforms in
higher dimensions, see Lemma~\ref{weakdom}.

Note that in (1) above, there is only size and smoothness assumptions placed
on the kernel, so that it could for instance be a degenerate fractional
integral operator, and therefore unbounded on $L^{2}(dx)$. But, the
characterization still has content in this case, if $\omega $ and $\sigma $
are not of full dimension.

In (3), we deduce a two weight inequality for standard singular integrals $T$
without truncations when the measures are absolutely continuous. The proof
of this is easy. From \eqref{weak2weight} and the pointwise inequality $%
T_{0,\infty }f\sigma (x)\leq T_{\flat }f\sigma (x)\leq T_{\natural }f\sigma
(x)$, we obtain that for any limiting operator $T_{0,\infty }$ the map $%
f\rightarrow $ $T_{0,\infty }f\sigma $ is bounded from $L^{p}\left( \sigma
\right) $ to $L^{p,\infty }\left( \omega \right) $. By \eqref{weakM2weight} $%
f\rightarrow \mathcal{M}f\sigma $ is bounded, hence $f\rightarrow f\sigma $
is bounded, and so Theorem \ref{classmax} shows that $f\rightarrow Tf\sigma
=T_{0,\infty }f\sigma +af\sigma $ is also bounded, provided we initially
restrict attention to functions $f$ for which $f\sigma $ is bounded with
compact support.

The characterizing condition \eqref{Tsharpomega} is a weak-type condition,
with the restriction that one only needs to test the weak-type condition for
functions supported on a given cube, and test the weak-type norm over that
given cube. It also has an interpretation as a dual inequality $%
\int_{Q}\left\vert L^{\ast }\left( \chi _{Q}\omega \right) \right\vert
^{p^{\prime }}d\sigma \leq C_{2}\int_{Q}d\omega $, which we return to below,
see \eqref{unif} and \eqref{unif'}.

\bigskip

We now consider the two weight norm inequality for a strongly maximal
singular integral $T_{\natural }$, but assuming that the measure $\sigma $
is doubling.

\begin{2WtSIStrongType}
\label{twoweightHaar}Suppose that $\sigma $ and $\omega $ are positive
locally finite Borel measures on $\mathbb{R}^{n}$ with $\sigma $ doubling, $%
1<p<\infty $, and let $T_{\flat }$ and $T_{\natural }$ be the maximal
singular integral operators as above with kernel $K(x,y)$ satisfying %
\eqref{sizeandsmoothness}.\medskip

\begin{enumerate}
\item Suppose that the maximal operator $\mathcal{M}$ satisfies (\ref%
{M2weight}) and also the `dual' inequality%
\begin{equation}
\left\Vert \mathcal{M}(g\omega )\right\Vert _{L^{p^{\prime }}\left( \sigma
\right) }\leq C\left\Vert g\right\Vert _{L^{p^{\prime }}\left( \omega
\right) },\qquad g\in L^{p^{\prime }}\left( \omega \right) .
\label{M2weightdual}
\end{equation}%
Then $T_{\natural }$ satisfies the two weight norm inequality 
\begin{equation}
\int_{\mathbb{R}^{n}}T_{\natural }(f\sigma )(x)^{p}d\omega (x)\leq C\int_{%
\mathbb{R}^{n}}\left\vert f(x)\right\vert ^{p}d\sigma (x),  \label{2weight}
\end{equation}%
for all $f\in L^{p}\left( \sigma \right) $ that are bounded with compact
support in $\mathbb{R}^{n}$, \emph{if and only if } both the dual cube
testing condition \eqref{Tsharpomega} and the condition below hold: 
\begin{equation}
\int_{Q}T_{\natural }\left( \chi _{Q}g\sigma \right) (x)^{p}d\omega (x)\leq
C_{1}\int_{Q}d\sigma (x),  \label{Tsharpsigma}
\end{equation}%
for all cubes $Q\subset \mathbb{R}^{n}$ and all functions $\left\vert
g\right\vert \leq 1$.

\item The same characterization as above holds for $T_{\flat }$ in place of $%
T_{\natural }$ everywhere. In fact%
\begin{equation*}
\left\vert T_{\natural }f\sigma (x)-T_{\flat }f\sigma (x)\right\vert \leq C%
\mathcal{M}\left( f\sigma \right) (x).
\end{equation*}

\item Suppose that $\sigma $ and $\omega $ are absolutely continuous with
respect to Lebesgue measure, that the maximal operator $\mathcal{M}$
satisfies \eqref{M2weight}, and that $T$ is a {standard} singular integral
operator. If \eqref{2weight} holds for $T_{\natural }$ or $T_{\flat }$, then
it also holds for $T$:%
\begin{equation*}
\int_{\mathbb{R}^{n}}\left\vert T(f\sigma )(x)\right\vert ^{p}d\omega
(x)\leq C\int_{\mathbb{R}^{n}}\left\vert f(x)\right\vert ^{p}d\sigma (x),\ \
\ \ \ f\in L^{p}\left( \sigma \right) ,f\sigma \in L^{\infty }, \QTR{up}{supp%
}(f\sigma) \ \text{compact}.
\end{equation*}

\item Suppose that $\left\{ K_{j}\right\} _{j=1}^{n}$ is a collection of 
\emph{standard} kernels satisfying for some $c>0$, 
\begin{equation}
\pm \func{Re}K_{j}(x,y)\geq \frac{c}{\left\vert x-y\right\vert ^{n}},\qquad 
\text{for }\pm (y_{j}-x_{j})\geq \frac{1}{4}\left\vert x-y\right\vert ,
\label{j}
\end{equation}%
where $x=\left( x_{j}\right) _{1\leq j\leq n}$. If both $\omega $ and $%
\sigma $ are doubling, then \eqref{2weight} holds for $\left( T_{j}\right)
_{\natural }$ and $\left( T_{j}^{\ast }\right) _{\natural }$ for all $1\leq
j\leq n$, \emph{if and only if} both \eqref{Tsharpsigma} and %
\eqref{Tsharpomega} hold for $\left( T_{j}\right) _{\natural }$ and $\left(
T_{j}^{\ast }\right) _{\natural }$ for all $1\leq j\leq n$.
\end{enumerate}
\end{2WtSIStrongType}

Note that the second condition \eqref{Tsharpsigma} is a stronger condition
than we would like: it is the $L^{p}$ inequality, applied to functions \emph{%
bounded by }$1$ and supported on a cube $Q$, but with the $L^{p}(\sigma )$
norm of $\mathbf{1}_{Q}$ on the right side. It is easy to see that the
bounded function $g$ in \eqref{Tsharpsigma} can be replaced by $\chi _{E}$
for every compact subset $E$ of $Q$. Indeed if $L$ ranges over all
linearizations of $T_{\natural }$, then with $g_{h,Q,L}=\frac{L^{\ast
}\left( \chi _{Q}h\omega \right) }{\left\vert L^{\ast }\left( \chi
_{Q}h\omega \right) \right\vert }$ we have 
\begin{eqnarray*}
\sup_{\left\vert g\right\vert \leq 1}\int_{Q}T_{\natural }\left( \chi
_{Q}g\sigma \right) ^{p}\omega &=&\sup_{\left\vert g\right\vert \leq
1}\sup_{L}\sup_{\left\Vert h\right\Vert _{L^{p^{\prime }}\left( \omega
\right) }\leq 1}\left\vert \int_{Q}L\left( \chi _{Q}g\sigma \right) h\omega
\right\vert \\
&=&\sup_{L}\sup_{\left\Vert h\right\Vert _{L^{p^{\prime }}\left( \omega
\right) }\leq 1}\sup_{\left\vert g\right\vert \leq 1}\left\vert
\int_{Q}L^{\ast }\left( \chi _{Q}h\omega \right) g\sigma \right\vert \\
&=&\sup_{L}\sup_{\left\Vert h\right\Vert _{L^{p^{\prime }}\left( \omega
\right) }\leq 1}\int_{Q}L^{\ast }\left( \chi _{Q}h\omega \right)
g_{h,Q,L}\sigma \\
&=&\sup_{\left\Vert h\right\Vert _{L^{p^{\prime }}\left( \omega \right)
}\leq 1}\sup_{L}\int_{Q}L\left( \chi _{Q}g_{h,Q,L}\right) h\omega \sigma \\
&\leq &\sup_{\left\Vert h\right\Vert _{L^{p^{\prime }}\left( \omega \right)
}\leq 1}\sup_{L}\int_{Q}T_{\natural }\left( \chi _{Q}g_{h,Q,L}\sigma \right)
^{p}\omega .
\end{eqnarray*}%
Since $g_{h,Q,L}$ takes on only the values $\pm 1$, it is easy to see that
we can take $g=\chi _{E}$. Point (3) is again easy, just as in the previous
weak-type theorem.

And in (4), we note that the truncations in the way that we formulate them,
dominate the Maximal Function, so that our assumption on $\mathcal{M}$ in
(1)---(3) is not unreasonable. The main result of \cite{NTV3} assumes $p=2$
and that $T$ is the Hilbert transform, and makes similar kinds of
assumptions. In fact it is essentially the same as our result in the case $%
p=2$, but without doubling on $\sigma $ and only for $T$ and not $T_{\flat }$
or $T_{\natural }$. Finally, we observe that by our definition of the
truncation $T_{\natural }$, we obtain in point (4), a characterization for
doubling measures of the strong-type inequality for appropriate families of
standard singular integrals and their adjoints, including the Hilbert and
Riesz transforms, see Lemma \ref{dom}.

We do not know if the bounded function $g$ in condition \eqref{Tsharpsigma}
can be replaced by the constant function $1$.

\bigskip

We now give a characterization of the strong type weighted norm inequality
for the \emph{individual} strongly maximal Hilbert transform $T_{\natural }$
when $1<p<\infty $ and the measure $\sigma $ is \emph{doubling}. When $p>2$
we use an extra necessary condition, see \eqref{Poisson condition} below,
that involves a `dyadic' Poisson function $\sum_{\ell =0}^{\infty }\frac{%
2^{-\ell }}{\left\vert I^{\left( \ell \right) }\right\vert }\chi _{I^{\left(
\ell \right) }}\left( y\right) $ where $I$ is a dyadic interval and $%
I^{\left( \ell \right) }$ denotes its $\ell ^{th}$ ancestor in the dyadic
grid, i.e. the unique dyadic interval containing $I$ with $\left\vert
I^{\left( \ell \right) }\right\vert =2^{\ell }\left\vert I\right\vert $.
This condition is a variant of the pivotal condition of Nazarov, Treil and
Volberg in \cite{NTV3}, and in the case $1<p\leq 2$ it is a consequence of
the $A_{p}$ condition \eqref{Ap}.

\begin{theorem}
\label{improved}Suppose that $\sigma $ and $\omega $ are positive locally
finite Borel measures on $\mathbb{R}$ with $\sigma $ \emph{doubling}, $%
1<p<\infty $, and let $T_{\natural }$ be the strongly maximal Hilbert
transform. Then $T_{\natural }$ is \emph{strong} type $\left( p,p\right) $
with respect to $\left( \sigma ,\omega \right) $, i.e. 
\begin{equation*}
\left\Vert T_{\natural }\left( f\sigma \right) \right\Vert _{L^{p}\left(
\omega \right) }\leq C\left\Vert f\right\Vert _{L^{p}\left( \sigma \right) },
\end{equation*}%
for all $f$ bounded with compact support, \emph{if and only if} the
following four conditions hold. In the case $1<p\leq 2$, the fourth
condition \eqref{Poisson condition} is implied by the $A_{p}$ condition (\ref%
{Ap}), and so in this case we only need the first \emph{three} conditions
below:

\begin{enumerate}
\item the dual $T_{\natural }$ interval testing condition%
\begin{equation}
\int_{Q}T_{\natural }\left( \chi _{Q}f\sigma \right) d\omega \leq C\left(
\int_{Q}\left\vert f\right\vert ^{p}d\sigma \right) ^{\frac{1}{p}}\left(
\int_{Q}d\omega \right) ^{\frac{1}{p^{\prime }}},
\label{dual testing condition Q}
\end{equation}%
holds for all intervals $Q$ and $f\in L_{Q}^{p}\left( \sigma \right) $;

\item the forward $T_{\natural }$ testing condition%
\begin{equation}
\int_{Q}T_{\natural }\left( \chi _{E}\sigma \right) ^{p}d\omega \leq
C\int_{Q}d\sigma ,  \label{testing condition E}
\end{equation}%
holds for all intervals $Q$ and all compact subsets $E$ of $Q$;

\item the strengthened $A_{p}$ condition%
\begin{equation}
\left( \int_{\mathbb{R}}\left( \frac{\left\vert Q\right\vert }{\left\vert
Q\right\vert +\left\vert x-x_{Q}\right\vert }\right) ^{p}d\omega \left(
x\right) \right) ^{\frac{1}{p}}\left( \int_{\mathbb{R}}\left( \frac{%
\left\vert Q\right\vert }{\left\vert Q\right\vert +\left\vert
x-x_{Q}\right\vert }\right) ^{p^{\prime }}d\sigma \left( x\right) \right) ^{%
\frac{1}{p^{\prime }}}\leq C\left\vert Q\right\vert ,  \label{skirt Ap'}
\end{equation}%
holds for all intervals $Q$.

\item the Poisson condition%
\begin{equation}
\int_{\mathbb{R}}\left( \sum_{r=1}^{\infty }\left\vert I_{r}\right\vert
_{\sigma }\left\vert I_{r}\right\vert ^{p^{\prime }-1}\sum_{\ell =0}^{\infty
}\frac{2^{-\ell }}{\left\vert \left( I_{r}\right) ^{\left( \ell \right)
}\right\vert }\chi _{\left( I_{r}\right) ^{\left( \ell \right) }}\left(
y\right) \right) ^{p}d\omega \left( y\right) \leq C\sum_{r=1}^{\infty
}\left\vert I_{r}\right\vert _{\sigma }\left\vert I_{r}\right\vert
^{p^{\prime }},  \label{Poisson condition}
\end{equation}%
for all pairwise disjoint decompositions $Q=\cup _{r=1}^{\infty }I_{r}$ of
the dyadic interval $Q$ into dyadic intervals $I_{r}$, for any fixed dyadic
grid.
\end{enumerate}
\end{theorem}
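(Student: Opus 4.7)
The necessity of the dual testing condition \eqref{dual testing condition Q} follows by restricting the operator inequality to functions supported in $Q$ and pairing the output with $\chi_Q$. Condition \eqref{testing condition E} is an immediate consequence of testing against $f=\chi_E$. For the strengthened $A_p$ condition \eqref{skirt Ap'}, I would exploit the pointwise lower bound $|T_{\natural }(\chi_Q\sigma)(x)|\gtrsim \sigma(Q)/(|Q|+|x-x_Q|)$ for $x$ outside a fixed dilate of $Q$, available thanks to the explicit sign of the Hilbert kernel and the noncentered cutoff defining $T_{\natural }$; combined with \eqref{testing condition E} and its dual formulation via linearization, this forces \eqref{skirt Ap'}. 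The Poisson condition \eqref{Poisson condition} is the most delicate: test against $f=\sum_r \alpha_r\chi_{I_r}$ for a disjoint dyadic decomposition of $Q$, with coefficients $\alpha_r$ dualizing against the desired right-hand side, and use that on each ancestor $(I_r)^{(\ell)}$ the Hilbert kernel contributes a lower bound of size $2^{-\ell}/|(I_r)^{(\ell)}|$ multiplied by $|I_r|_\sigma$.

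\textbf{Sufficiency.} The plan is to deduce Theorem \ref{improved} from the general strong-type result, Theorem \ref{twoweightHaar}(1). Two things must be checked: first, that conditions (1)--(4) together imply both maximal function inequalities \eqref{M2weight} and \eqref{M2weightdual}; second, that the forward testing condition \eqref{Tsharpsigma} over $|g|\leq 1$ is equivalent to \eqref{testing condition E} over $\chi_E$. The latter is already established by the duality computation displayed immediately after Theorem \ref{twoweightHaar}. For the forward maximal function inequality \eqref{M2weight}, I would verify Sawyer's testing condition \eqref{testmax} directly from the strengthened $A_p$ condition \eqref{skirt Ap'}: the pointwise bound $\mathcal M(\chi_Q\sigma)(x)\lesssim s_Q(x)\,\sigma(Q)/|Q|$ on $\mathbb{R}$, raised to the $p$th power and integrated against $\omega$, is controlled by \eqref{skirt Ap'}.

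\textbf{Main obstacle.} The hard part will be the dual maximal function inequality \eqref{M2weightdual} when $p>2$, which is precisely where the Poisson condition enters. For $1<p\leq 2$ a standard duality/interpolation argument reduces \eqref{M2weightdual} to the strengthened $A_p$ condition (this is the reason condition (4) is superfluous in that range), but for $p>2$ such a reduction fails and we need genuine $L^p(\omega)$ control of Poisson-type sums. The approach is to Calder\'{o}n--Zygmund decompose $g\omega$ at heights $2^k$ relative to $\sigma$ (using doubling of $\sigma$ in an essential way), producing a family $\{I_r\}$ of pairwise disjoint stopping intervals on which the density of $\omega$ with respect to $\sigma$ is essentially $|I_r|_\omega/|I_r|_\sigma$. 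Dualizing $\mathcal M(g\omega)$ against an $L^p(\sigma)$ function and using the Whitney structure of the stopping intervals produces exactly the sum appearing on the left side of \eqref{Poisson condition}, whose $L^p(\omega)$ norm is controlled by \eqref{Poisson condition}. Combining with the strengthened $A_p$ gives \eqref{M2weightdual}, and then Theorem \ref{twoweightHaar}(1) yields the strong type inequality. Throughout, doubling of $\sigma$ is used both to run the Calder\'{o}n--Zygmund decompositions and to pass between averages over $I_r$ and $\sigma$-averages.
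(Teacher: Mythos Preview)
Your sufficiency plan has a genuine gap. You propose to verify both maximal inequalities \eqref{M2weight} and \eqref{M2weightdual} directly from conditions (1)--(4) and then invoke Theorem~\ref{twoweightHaar}(1) as a black box. For \eqref{M2weight} this is fine in spirit (though your pointwise bound $\mathcal{M}(\chi_Q\sigma)(x)\lesssim s_Q(x)\,\sigma(Q)/|Q|$ is false inside $Q$ for nonuniform $\sigma$; the correct route is Lemma~\ref{dom}, which gives $\mathcal{M}\nu\le C\,T_\natural\nu$, so \eqref{testmax} follows from \eqref{testing condition E}). The real problem is \eqref{M2weightdual}: there is no dual analogue of \eqref{testing condition E} among the hypotheses, and your Calder\'on--Zygmund sketch does not produce the sum in \eqref{Poisson condition}. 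That condition has the form $\int(\sum_r |I_r|_\sigma |I_r|^{p'-1}\cdot\text{Poisson}_r)^p\,d\omega\le C\sum_r |I_r|_\sigma |I_r|^{p'}$; dualizing $\mathcal{M}(g\omega)$ against a function in $L^p(\sigma)$ does not land on this expression, and your sketch does not explain how it would.

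The paper's approach is not to prove \eqref{M2weightdual} at all. Instead it goes back into the proof of Theorem~\ref{twoweightHaar} and observes that \eqref{M2weightdual} is used only through the linear Poisson-type operators $\mathbf{P}_j^k$ in the estimates for $I\!I(2)$, $V(2)$, $V\!I(2)$; at each of these points the \emph{dual Poisson inequality} \eqref{poissonweightedineq'} (for the operator $\mathbb{P}\nu=\sum_r\mathsf{P}(Q_r,\nu)\chi_{Q_r}$) suffices. The problem is then reduced to establishing \eqref{poissonweightedineq'}, which the paper does via the two-weight characterization of the Poisson integral (Theorem~\ref{testing Poisson dyadic}, from \cite{Saw2}): one testing condition is exactly \eqref{Poisson condition}, and the other follows from the half-strengthened $A_p$ together with the dual pivotal condition \eqref{dual pivotal condition}. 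The latter is in turn derived from the dual testing condition \eqref{dual testing condition Q} via the key monotonicity identity $\mathbb{P}(I;\nu)\le \frac{1}{|I|}\int_I d\nu + 2|I|\inf_{x,y\in I}\frac{H(\chi_{I^c}\nu)(x)-H(\chi_{I^c}\nu)(y)}{x-y}$ (Lemma~\ref{doub piv}), which exploits the sign structure of the Hilbert kernel and requires $\sigma$ doubling to rule out a degenerate case. For $1<p\le 2$, condition \eqref{Poisson condition} is shown to follow from $A_p$ alone by a summation-by-parts argument using $|I_r|^{p'-2}$ monotonicity. None of this machinery is in your proposal.

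Your necessity sketches also undersell the work. For \eqref{skirt Ap'} the paper uses the explicit test functions $f_{a,r}=\chi_{(a-r,a)}s_Q^{p'-1}$ and the kernel inequality $\frac{1}{x-y}\ge |Q|^{-1}s_Q(x)s_Q(y)$ for $y<x$, then splits at a median point; your pointwise lower bound on $T_\natural(\chi_Q\sigma)$ outside a dilate only gives the half-strengthened version. For \eqref{Poisson condition}, necessity comes through Lemma~\ref{doub ineq}: the dual Poisson inequality is necessary for the dual Hilbert inequality (again via the monotonicity identity above), and \eqref{Poisson condition} is one of its two testing conditions. Your test-function sketch with $f=\sum_r\alpha_r\chi_{I_r}$ does not capture this mechanism.
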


\begin{remark}
The strengthened $A_{p}$ condition \eqref{skirt Ap'} can be replaced with
the weaker `half' condition where the first factor on the left is replaced
by $\left( \int_{Q}d\omega \right) ^{\frac{1}{p}}$. We do not know if the
first three conditions suffice when $p>2$.
\end{remark}

\begin{acknowledgement}
The authors began this work during research stays at the Fields Institute,
Toronto Canada, and continued at the Centre de Recerca Matem\`{a}tica,
Barcelona Spain. They thank these institutions for their generous
hospitality. In addition, this paper has been substantially improved by the
careful attention of the referee, for which we are particularly grateful.
\end{acknowledgement}

\section{Overview of the Proofs, General Principles}

If $Q$ is a cube then $\ell (Q)$ is its side length, $\left\vert
Q\right\vert $ is its Lebesgue measure and for a positive Borel measure $\nu 
$, $\left\vert Q\right\vert _{\nu }=\int_{Q}d\nu $ is its $\nu $-measure.

\subsection{Calder\'on-Zygmund Decompositions}

Our starting place is the argument in \cite{Saw2} used to prove a two weight
norm inequality for {fractional} integral operators on Euclidean space. Of
course the fractional integral is a positive operator, with a monotone
kernel, which properties we do not have in the current setting.

A central tool arises from the observation that for any positive Borel
measure $\mu $, one has the boundedness of a maximal function associated
with $\mu $. Define the dyadic $\mu $-maximal operator $\mathcal{M}_{\mu
}^{dy}$ by 
\begin{equation}
\mathcal{M}_{\mu }^{dy}f(x)=\sup_{\substack{ Q\in \mathcal{D}  \\ x\in Q}}%
\frac{1}{\left\vert Q\right\vert _{\mu }}\int_{Q}\left\vert f\right\vert \mu
,  \label{e.MmuDef}
\end{equation}%
with the supremum taken over all dyadic cubes $Q\in \mathcal{D}$ containing $%
x$. It is immediate to check that $\mathcal{M}_{\mu }^{dy}$ satisfies the
weak-type $(1,1)$ inequality, and the $L^{\infty }(\mu )$ bound is obvious.
Hence we have 
\begin{equation}
\int \left( \mathcal{M}_{\mu }^{dy}f\right) ^{p}\mu \leq C\int f^{p}\mu
,\;\;\;\;\;f\geq 0\text{ on }\mathbb{R}^{n}.  \label{maxthm}
\end{equation}%
This observation places certain Calder\'{o}n-Zygmund decompositions at our
disposal. Exploitation of this brings in the testing condition (\ref%
{Tsharpsigma}) involving the bounded function $g$ on a cube $Q$, and indeed, 
$g$ turns out to be the \textquotedblleft good\textquotedblright\ function
in a Calder\'{o}n-Zygmund decomposition of $f$ on $Q$. The associated `bad'
function requires the dual testing condition \eqref{Tsharpomega} as well.

\subsection{Edge effects of dyadic grids}

\label{s.shift}

Our operators are not dyadic operators, nor---in contrast to the fractional
integral operators---can they be easily obtained from dyadic operators. This
leads to the necessity of considering for instance triples of dyadic cubes,
which are not dyadic.

Also, dyadic grids distinguish points by for instance making some points on
the boundary of many cubes. As our measures are arbitrary, they could
conspire to assign extra mass to some of these points. To address this
point, Nazarov-Treil-Volberg \cite{NTV3,NTV4,NTV5} use a random shift of the
grid.

A random approach would likely work for us as well, though the argument
would be different from those in the cited papers above. Instead, we will
use a non-random technique of {shifted dyadic grid} from \cite{MR1887641},
which goes back to P. Jones and J. Garnett. Define a \emph{shifted dyadic
grid} to be the collection of cubes 
\begin{equation}
\mathcal{D}^{\alpha }=\bigl\{2^{j}(k+[0,1)^{n}+(-1)^{j}\alpha )\;:\;j\in 
\mathbb{Z},k\in \mathbb{Z}^{n}\bigr\}\,,\qquad \alpha \in \{0,\tfrac{1}{3},%
\tfrac{2}{3}\}^{n}\,.  \label{e.shifted}
\end{equation}%
The basic properties of these collections are these: In the first place,
each $\mathcal{D}^{\alpha }$ is a grid, namely for $Q,Q^{\prime }\in 
\mathcal{D}^{\alpha }$ we have $Q\cap Q^{\prime }\in \{\emptyset
\,,\,Q\,,\,Q^{\prime }\}$ and $Q$ is a union of $2^{n}$ elements of $%
\mathcal{D}^{\alpha }$ of equal volume. In the second place (and this is the
novel property for us), for any cube $Q\subset \mathbb{R}^{n}$, there is a
choice of some $\alpha $ and some $Q^{\prime }\in \mathcal{D}_{\alpha }$ so
that $Q\subset \frac{9}{10}Q^{\prime }$ and $\lvert Q^{\prime }\rvert \leq
C\lvert Q\rvert $.

We define the analogs of the dyadic maximal operator in \eqref{e.MmuDef},
namely 
\begin{equation}
\mathcal{M}_{\mu }^{\alpha }f(x)=\sup_{\substack{ Q\in \mathcal{D}^{\alpha } 
\\ x\in Q}}\frac{1}{\left\vert Q\right\vert _{\mu }}\int_{Q}\left\vert
f\right\vert \mu \,.  \label{e.MmuAlphaDef}
\end{equation}%
These operators clearly satisfy \eqref{maxthm}. Shifted dyadic grids will
return in \S ~\ref{s.czd}.

\subsection{A Maximum Principle}

A second central tool is a `maximum principle' (or good $\lambda $
inequality) which will permit one to localize large values of a singular
integral, provided the Maximal Function is bounded. It is convenient for us
to describe this in conjunction with another fundamental tool of this paper,
a family of Whitney decompositions.

We begin with the Whitney decompositions. Fix a finite measure $\nu $ with
compact support on $\mathbb{R}^{n}$ and for $k\in \mathbb{Z}$ let%
\begin{equation}
\Omega _{k}=\left\{ x\in \mathbb{R}^{n}:T_{\natural }\nu (x)>2^{k}\right\} .
\label{e.OmegaK}
\end{equation}%
Note that $\Omega _{k}\neq \mathbb{R}^{n}$ has compact closure for such $\nu 
$. Fix an integer $N\geq 3$. We can choose $R_{W}\geq 3$ sufficiently large,
depending only on the dimension and $N$, such that there is a collection of
cubes $\left\{ Q_{j}^{k}\right\} _{j}$ which satisfy the following
properties: 
\begin{equation}
\left\{ 
\begin{array}{ll}
\text{(disjoint cover)} & \Omega _{k}=\bigcup_{j}Q_{j}^{k}\text{ and }%
Q_{j}^{k}\cap Q_{i}^{k}=\emptyset \text{ if }i\neq j \\ 
\text{(Whitney condition)} & R_{W}Q_{j}^{k}\subset \Omega _{k}\text{ and }%
3R_{W}Q_{j}^{k}\cap \Omega _{k}^{c}\neq \emptyset \text{ for all }k,j \\ 
\text{(bounded overlap)} & \sum_{j}\chi _{NQ_{j}^{k}}\leq C\chi _{\Omega
_{k}}\text{ for all }k \\ 
\text{(crowd control)} & \#\left\{ Q_{s}^{k}:Q_{s}^{k}\cap NQ_{j}^{k}\neq
\emptyset \right\} \leq C\text{ for all }k,j \\ 
\text{(nested property)} & Q_{j}^{k}\varsubsetneqq Q_{i}^{\ell }\text{
implies }k>\ell%
\end{array}%
\right. .  \label{Whitney}
\end{equation}

Indeed, one should choose the the $\left\{ Q_{j}^{k}\right\} _{j}$
satisfying the Whitney condition, and then show that the other properties
hold. The different combinatorial properties above are fundamental to the
proof. And alternate Whitney decompositions are constructed in \S ~\ref%
{s.whitneyShift} below.

\begin{remark}
\label{3N} Our use of the Whitney decomposition and the maximum principle
are derived from the two weight fractional integral argument of Sawyer, see {%
Sec 2 of} \cite{Saw2}. In particular, the properties above are as in \cite%
{Saw2}, aside from the the crowd control property above, which is $N=3$ in 
\emph{op. cit.}
\end{remark}


\begin{remark}
\label{r.super/sub} In our notation for the Whitney cubes, the superscript
indicates a `height' and the subscript an arbitrary enumeration of the
cubes. We will use super- and sub-scripts below in this manner consistently
throughout the paper. It is important to note that a fixed cube $Q$ can
arise in \emph{many} Whitney decompositions: There are integers $K_- (Q)\le
K_+ (Q)$ with $Q= Q ^{k} _{j(k) }$ for some choice of $j (k)$ for all $K_-
(Q)\le k \le K_+ (Q)$. (The last point follows from the nested property.)
There is no \emph{a priori} upper bound on $K_+ (Q) - K_- (Q)$.
\end{remark}


\begin{lemma}
\label{l.maxprin}[Maximum Principle] Let $\nu \ $be a finite (signed)
measure with compact support. For any cube $Q_{j}^{k}$ as above we have the
pointwise inequality 
\begin{equation}
\sup_{x\in Q_{j}^{k}}T_{\natural }\left( \chi _{(3 Q_{j}^{k})^{c}}\nu
\right) (x)\leq 2^{k}+C\mathbf{P}\left( Q_{j}^{k},\nu \right) \leq
2^{k}+CM\left( Q_{j}^{k},\nu \right) ,  \label{maxprinc}
\end{equation}%
where $\mathbf{P}\left( Q,\nu \right) $ and $M\left( Q,\nu \right) $ are
defined by 
\begin{eqnarray}
\mathbf{P}\left( Q,\nu \right) &\equiv &\frac{1}{\left\vert Q\right\vert }%
\int_{Q}d\left\vert \nu \right\vert +\sum_{\ell =0}^{\infty }\frac{\delta
\left( 2^{-\ell }\right) }{\left\vert 2^{\ell +1}Q\right\vert }\int_{2^{\ell
+1}Q\setminus 2^{\ell }Q}d\left\vert \nu \right\vert ,  \label{MQmu} \\
M\left( Q,\nu \right) &\equiv &\sup_{Q^{\prime }\supset Q}\frac{1}{%
\left\vert Q^{^{\prime }}\right\vert }\int_{Q^{\prime }}d\left\vert \nu
\right\vert \,.  \notag
\end{eqnarray}
\end{lemma}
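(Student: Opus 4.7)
The plan is to exploit the Whitney condition in \eqref{Whitney} for $Q := Q_{j}^{k}$: since $3R_{W}Q \cap \Omega_{k}^{c} \neq \emptyset$, fix a comparison point $y \in 3R_{W}Q$ with $T_{\natural}\nu(y) \le 2^{k}$, and then for each $x \in Q$ show that $T_{\natural}(\chi_{(3Q)^{c}}\nu)(x)$ differs from $T_{\natural}\nu(y)$ by at most $C\,\mathbf{P}(Q,\nu)$. More precisely, for every admissible truncation $T_{\mathbf{\varepsilon}, R}$ defining $T_{\natural}$ at $x$, I would produce an admissible pair $(\tilde{\mathbf{\varepsilon}}, \tilde{R})$ at $y$ such that
\[
\bigl| T_{\mathbf{\varepsilon},R}(\chi_{(3Q)^{c}}\nu)(x) - T_{\tilde{\mathbf{\varepsilon}},\tilde{R}}\nu(y) \bigr| \le C\,\mathbf{P}(Q,\nu),
\]
after which $|T_{\tilde{\mathbf{\varepsilon}},\tilde{R}}\nu(y)| \le T_{\natural}\nu(y) \le 2^{k}$ and a supremum on the left yield the first inequality of \eqref{maxprinc}. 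Since $|x-y| \le C_{0}\ell(Q)$ with $C_{0}$ depending only on $R_{W}$ and the dimension, $\tilde{R}$ can be taken comparable to $R$ and each component $\tilde{\varepsilon}_{i}$ of order $\ell(Q)$ while preserving $\tfrac{1}{4} \le \tilde{\varepsilon}_{1}/\tilde{\varepsilon}_{2} \le 4$, so $(\tilde{\mathbf{\varepsilon}},\tilde{R})$ is admissible and the effective support of $\zeta_{\tilde{\mathbf{\varepsilon}}}(y-\cdot)\eta_{\tilde{R}}(|y-\cdot|)$ closely matches that of $\chi_{(3Q)^{c}}(\cdot)\,\zeta_{\mathbf{\varepsilon}}(x-\cdot)\eta_{R}(|x-\cdot|)$.

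To control the difference, I decompose $(3Q)^{c}$ into the dyadic shells $A_{\ell} = 2^{\ell+1}Q \setminus 2^{\ell}Q$. On shells with $\ell$ large enough that $|x-z| \ge 2|x-y|$ throughout $A_{\ell}$ (which excludes only a bounded initial range of $\ell$, with threshold depending on $C_{0}$), the smoothness condition in \eqref{sizeandsmoothness} gives $|K(x,z)-K(y,z)| \le C\,\delta(2^{-\ell})|x-z|^{-n}$, and the smooth cutoffs $\zeta$ and $\eta$ essentially coincide there, so this shell contributes at most $C\,\delta(2^{-\ell})|2^{\ell+1}Q|^{-1}\int_{A_{\ell}}d|\nu|$; summing on $\ell$ gives the Poisson tail of $\mathbf{P}(Q,\nu)$. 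On the finitely many intermediate shells, where smoothness fails and the two cutoffs may genuinely disagree, I would apply the size bound $|K| \le C|x-z|^{-n}$ to each kernel separately, producing $C\,|2^{\ell+1}Q|^{-1}\int_{A_{\ell}}d|\nu|$; since $\ell$ ranges in a bounded set on which $\delta$ is strictly positive, this equals a bounded multiple of the $\ell$-th summand of the Poisson tail and is absorbed into $C\,\mathbf{P}(Q,\nu)$.

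The second inequality $\mathbf{P}(Q,\nu) \le C\,M(Q,\nu)$ is straightforward: each dyadic average satisfies $|2^{\ell+1}Q|^{-1}\int_{A_{\ell}}d|\nu| \le |2^{\ell+1}Q|^{-1}\int_{2^{\ell+1}Q}d|\nu| \le M(Q,\nu)$, and likewise $|Q|^{-1}\int_{Q}d|\nu| \le M(Q,\nu)$, so $\mathbf{P}(Q,\nu) \le \bigl(1 + \sum_{\ell \ge 0}\delta(2^{-\ell})\bigr)M(Q,\nu)$, with convergence of the series provided by the Dini condition $\int_{0}^{1}\delta(s)\tfrac{ds}{s} < \infty$. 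The main technical obstacle I anticipate is the careful matching of $(\mathbf{\varepsilon},R)$ with $(\tilde{\mathbf{\varepsilon}},\tilde{R})$: one must preserve the noncentered eccentricity constraint that defines admissibility for $T_{\natural}$, handle the smooth (rather than sharp) interpolation of $\zeta$ and $\eta$, and verify that all cutoff mismatches on the intermediate shells are quantitatively absorbed into the head of the Poisson sum. The noncentered design of $T_{\natural}$ is precisely what affords this flexibility; the corresponding argument for $T_{\flat}$ would demand additional care, which is why the lemma is stated for the strongly maximal operator.
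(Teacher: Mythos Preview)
Your approach is correct and follows the same overall strategy as the paper: pick a comparison point $z\in 3R_WQ_j^k\cap\Omega_k^c$ from the Whitney condition, compare the truncated integral at $x$ to one at $z$, and control the difference by the Poisson-type quantity $\mathbf{P}(Q_j^k,\nu)$.

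The one genuine simplification in the paper that you miss is this: because the measure is restricted to $(3Q_j^k)^c$, the near-optimal truncation at $x$ can be chosen with $\ell(Q_j^k)<\max_j\varepsilon_j$ from the outset (enlarging the $\varepsilon_j$ costs nothing on $(3Q_j^k)^c$). The paper then reuses the \emph{same} parameters $(\mathbf{\varepsilon},R)$ at the comparison point $z$, writing
\[
\bigl|T_{\mathbf{\varepsilon},R}(\chi_{(3Q_j^k)^c}\nu)(x)-T_{\mathbf{\varepsilon},R}\nu(z)\bigr|
\]
and treating $K(x,y)\zeta_{\mathbf{\varepsilon}}(x-y)\eta_R(|x-y|)$ as a single standard kernel to which the smoothness bound \eqref{sizeandsmoothness} applies directly. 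This entirely sidesteps the parameter-matching technicality you flag as the main obstacle; no new pair $(\tilde{\mathbf{\varepsilon}},\tilde R)$ is needed, and the eccentricity constraint is preserved automatically. Your final remark that the argument for $T_\flat$ would demand extra care is therefore not quite right: the same-parameters trick works equally well there.
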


The bound in terms of $\mathbf{P}\left( Q,\nu \right)$ should be regarded as
one in terms of a modified Poisson integral. It is both slightly sharper
than that of $M\left( Q,\nu \right)$, and a linear expression in $\lvert
\nu\rvert $, which fact will be used in the proof of the strong type
estimates.

\begin{proof}
To see this, take $x\in Q_{j}^{k}$ and note that for each $\eta >0$ there is 
$\mathbf{\varepsilon }$ with $\ell (Q_{j}^{k})<\max_{1\leq j\leq
n}\varepsilon _{j}<R<\infty $ and $\theta \in \left[ 0,2\pi \right) $ such
that%
\begin{eqnarray*}
T_{\natural }\left( \chi _{(3Q_{j}^{k})^{c}}\nu \right) (x) &\leq &\left(
1+\eta \right) \left\vert \int_{(3Q_{j}^{k})^{c}}K(x,y)\zeta _{\mathbf{%
\varepsilon }}(x-y)\eta _{R}(x-y)d\nu (y)\right\vert \\
&=&\left( 1+\eta \right) e^{i\theta }T_{\mathbf{\varepsilon },R}\left( \chi
_{(3Q_{j}^{k})^{c}}\nu \right) (x).
\end{eqnarray*}%
For convenience we take $\eta =0$ in the sequel. By the Whitney condition in %
\eqref{Whitney}, there is a point $z\in 3R_{W}Q_{j}^{k}\cap \Omega _{k}^{c}$
and it now follows that (remember that $\ell (Q_{j}^{k})<\max_{1\leq j\leq
n}\varepsilon _{j}$), 
\begin{eqnarray*}
&&\left\vert T_{\mathbf{\varepsilon },R}\left( \chi _{(3Q_{j}^{k})^{c}}\nu
\right) (x)-T_{\mathbf{\varepsilon },R}\nu (z)\right\vert \\
&\leq &C\frac{1}{\left\vert 6R_{W}Q_{j}^{k}\right\vert }%
\int_{6R_{W}Q_{j}^{k}}d\left\vert \nu \right\vert +\left\vert T_{\mathbf{%
\varepsilon },R}\left( \chi _{(6R_{W}Q_{j}^{k})^{c}}\nu \right) (x)-T_{%
\mathbf{\varepsilon },R}\left( \chi _{(6R_{W}Q_{j}^{k})^{c}}\nu \right)
(z)\right\vert \\
&=&C\frac{1}{\left\vert 6R_{W}Q_{j}^{k}\right\vert }\int_{6R_{W}Q_{j}^{k}}d%
\left\vert \nu \right\vert \\
&&{}+\int_{(6R_{W}Q_{j}^{k})^{c}}\left\vert K(x,y)\zeta _{\mathbf{%
\varepsilon }}(x-y)\eta _{R}(x-y)-K(z,y)\zeta _{\mathbf{\varepsilon }%
}(z-y)\eta _{R}(z-y)\right\vert d\left\vert \nu \right\vert (y) \\
&\leq &C\frac{1}{\left\vert 6R_{W}Q_{j}^{k}\right\vert }%
\int_{6R_{W}Q_{j}^{k}}d\left\vert \nu \right\vert
+C\int_{(6R_{W}Q_{j}^{k})^{c}}\delta \left( \frac{\left\vert x-z\right\vert 
}{\left\vert x-y\right\vert }\right) \frac{1}{\left\vert x-y\right\vert ^{n}}%
d\left\vert \nu \right\vert (y) \\
&\leq &C\mathbf{P}\left( Q_{j}^{k},\nu \right) .
\end{eqnarray*}%
Thus%
\begin{equation*}
T_{\natural }\left( \chi _{(3Q_{j}^{k})^{c}}\nu \right) (x)\leq \left\vert
T_{\natural }\nu (z)\right\vert +C\mathbf{P}\left( Q_{j}^{k},\nu \right)
\leq 2^{k}+C\mathbf{P}\left( Q_{j}^{k},\nu \right) ,
\end{equation*}%
which yields \eqref{maxprinc} since $\mathbf{P}\left( Q,\nu \right) \leq
CM\left( Q,\nu \right) $.
\end{proof}

\subsection{Linearizations}

We now make comments on the linearizations of our maximal singular integral
operators. We would like, at different points, to treat $T_{\natural }$ as a
linear operator, which of course it is not. Nevertheless $T_{\natural }$ is
a pointwise supremum of the linear truncation operators $T_{\mathbf{%
\varepsilon },R}$, and as such, the supremum can be linearized with
measurable selection of the parameters $\mathbf{\varepsilon }$ and $R$, as
was just done in the previous proof. We make this a definition.

\begin{definition}
\label{d.linearization} We say that $L$ is a linearization of $T_{\natural }$
if there are measurable functions $\mathbf{\varepsilon }(x)\in \left(
0,\infty \right) ^{n}$ and $R(x)\in \left( 0,\infty \right) $ with $\frac{1}{%
4}\leq \frac{\varepsilon _{i}}{\varepsilon _{j}}\leq 4$, $\max_{1\leq i\leq
n}\varepsilon _{i}<R(x)<\infty $ and $\theta (x)\in \left[ 0,2\pi \right) $
such that 
\begin{equation}
Lf(x)=e^{i\theta (x)}T_{\mathbf{\varepsilon }(x),R(x)}f(x),\qquad x\in 
\mathbb{R}^{n}.  \label{defLa}
\end{equation}
\end{definition}

For fixed $f$ and $\delta >0$, we can always choose a linearization $L$ so
that $T_{\natural }f(x)\leq \left( 1+\delta \right) Lf(x)$ for all $x$. In a
typical application of this Lemma, one takes $\delta $ to be one.

Note that condition \eqref{Tsharpsigma} is obtained from inequality (\ref%
{2weight}) by testing over $f$ of the form $f=\chi _{Q}g$ with $\left\vert
g\right\vert \leq 1$, and then restricting integration on the left to $Q$.
By passing to linearizations $L$, we can `dualize' \eqref{Tsharpomega} to
the testing conditions%
\begin{equation}
\int_{Q}\left\vert L^{\ast }\left( \chi _{Q}\omega \right) (x)\right\vert
^{p^{\prime }}d\sigma (x)\leq C_{2}\int_{Q}d\omega (x),  \label{unif}
\end{equation}%
or equivalently (note that in \eqref{Tsharpsigma} the presence of $g$ makes
a difference, but not here),%
\begin{equation}
\int_{Q}\left\vert L^{\ast }\left( \chi _{Q}g\omega \right) (x)\right\vert
^{p^{\prime }}d\sigma (x)\leq C_{2}\int_{Q}d\omega (x),\qquad \left\vert
g\right\vert \leq 1,  \label{unif'}
\end{equation}%
with the requirement that these inequalities hold \emph{uniformly} in all
linearizations $L$ of $T_{\natural }$.

While the smooth truncation operators $T_{\mathbf{\varepsilon },R}$ are
essentially self-adjoint, the dual of a linearization $L$ is generally
complicated. Nevertheless, the dual $L^{\ast }$ does satisfy one important
property which plays a crucial role in the proof of Theorem \ref%
{twoweightHaar}, the $L^{p}$-norm inequalities.

\begin{lemma}
\label{constant}$L^{\ast }\mu $ is $\delta $-H\"{o}lder continuous (where $%
\delta $ is the Dini modulus of continuity of the kernel $K$) with constant $%
C\mathbf{P}\left( Q,\mu \right) $ on any cube $Q$ satisfying $%
\int_{3Q}d\left\vert \mu \right\vert =0$, i.e.%
\begin{equation}
\left\vert L^{\ast }\mu (y)-L^{\ast }\mu \left( y^{\prime }\right)
\right\vert \leq C\mathbf{P}\left( Q,\mu \right) \delta \left( \frac{%
\left\vert y-y^{\prime }\right\vert }{\ell (Q)}\right) ,\ \ \ \ \
y,y^{\prime }\in Q.  \label{Lip}
\end{equation}%
Here, recall the definition \eqref{MQmu} and that $\mathbf{P}\left( Q,\mu
\right) \leq CM\left( Q,\mu \right) $.
\end{lemma}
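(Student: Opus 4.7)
The plan is to write $L^*\mu(y)$ explicitly from the formal adjoint of \eqref{defLa}, namely
\[
L^*\mu(y) = \int e^{-i\theta(x)}\,\overline{K(x,y)}\,\zeta_{\mathbf{\varepsilon}(x)}(x-y)\,\eta_{R(x)}(|x-y|)\,d\mu(x),
\]
and estimate $|L^*\mu(y) - L^*\mu(y')|$ for $y, y' \in Q$. Since $\int_{3Q} d|\mu| = 0$, the effective range of integration is $(3Q)^c$, where $|x-y| \geq \ell(Q)$, safely away from the singular regions of the cutoffs.

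Pointwise in $x$, telescope the integrand difference into three pieces: the kernel difference $[K(x,y) - K(x,y')]\,\zeta\,\eta$, a $\zeta$-difference $K(x,y')[\zeta_{\mathbf{\varepsilon}}(x-y)-\zeta_{\mathbf{\varepsilon}}(x-y')]\eta$, and an analogous $\eta$-difference. The kernel piece is bounded via the Hölder smoothness of $K$ in the second variable (the symmetric analogue of \eqref{sizeandsmoothness}) by $C\,\delta(|y-y'|/|x-y|)\,|x-y|^{-n}$. The two cutoff pieces, using the $C^1$-character of $\zeta,\eta$ at scales $\varepsilon(x),R(x)$ and the fact that their differences are nonzero only where these scales are $\asymp |x-y|$, contribute at most $C\,|y-y'|\,|x-y|^{-n-1}$, which is absorbed into the kernel estimate by monotonicity of $\delta$.

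Next, decompose $(3Q)^c = \bigcup_{\ell \geq 1} A_\ell$ with $A_\ell = 2^{\ell+1}Q \setminus 2^\ell Q$; on $A_\ell$ one has $|x-y| \asymp 2^\ell \ell(Q)$, so the integral is dominated by
\[
\sum_{\ell \geq 1} \delta\bigl(C\,2^{-\ell}\,t\bigr)\,\frac{|A_\ell|_\mu}{|2^\ell Q|}, \qquad t:=|y-y'|/\ell(Q).
\]
The main obstacle is the $\delta$-arithmetic needed to recover simultaneously the Hölder factor $\delta(t)$ and the Poisson weights $\delta(2^{-\ell})$ that constitute $\mathbf{P}(Q,\mu)$, from the single compressed argument $2^{-\ell}t$. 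I would split at $\ell^* := \lceil \log_2(1/t) \rceil$ and exploit monotonicity of $\delta$ in opposite directions on the head and tail: on the tail $\ell \geq \ell^*$ one has $2^{-\ell}\leq t$, so $\delta(2^{-\ell}t)\leq \delta(2^{-\ell})$ matches the Poisson weights of $\mathbf P$ directly while the extra $\delta(t)$ factor comes from the opposite bound $\delta(2^{-\ell})\leq\delta(t)$; on the head $\ell<\ell^*$ one has $2^{-\ell}>t$, so $\delta(2^{-\ell}t)\leq\delta(Ct)$ provides the Hölder factor directly, and conversion of the residual sum into a fragment of $\mathbf{P}$ uses $\delta(2^{-\ell})\geq\delta(t)$ to multiply and divide by the Poisson weight. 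Combining the two halves yields \eqref{Lip}.
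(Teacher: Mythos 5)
Your outline follows the same route as the paper's (very terse) proof: compute $L^{\ast}$ by Fubini as in \eqref{dualL}, telescope $L^{\ast}\mu(y)-L^{\ast}\mu(y')$ into a kernel-difference piece and a cutoff-difference piece, use the smoothness of $K$ in the second variable (as the paper itself implicitly does, although \eqref{sizeandsmoothness} is stated for the first variable), and sum over dyadic annuli around $Q$. Up to the annular decomposition your proposal and the paper's argument coincide.

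The gap is in the final $\delta$-arithmetic, exactly the step you single out as the main obstacle. Writing $a_{\ell}$ for the normalized annular masses $|\mu|(2^{\ell+1}Q\setminus 2^{\ell}Q)/|2^{\ell}Q|$, the annular decomposition reduces \eqref{Lip} to the per-annulus inequality $\delta(c\,2^{-\ell}t)\leq C\,\delta(t)\,\delta(2^{-\ell})$ (and $2^{-\ell}t\leq C\,\delta(t)\,\delta(2^{-\ell})$ for the cutoff piece), because the $a_{\ell}$ are arbitrary nonnegative numbers; this is precisely an approximate submultiplicativity of $\delta$, and monotonicity cannot supply it: monotonicity only gives $\delta(2^{-\ell}t)\leq\min\{\delta(t),\delta(2^{-\ell})\}$, which is in general far larger than the product. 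Concretely, your tail estimate ($\delta(2^{-\ell}t)\leq\delta(2^{-\ell})\leq\delta(t)$) bounds the tail sum by $\delta(t)\sum_{\ell\geq\ell^{\ast}}a_{\ell}$, losing the Poisson weights $\delta(2^{-\ell})$ that make this sum comparable to $\mathbf{P}(Q,\mu)$ in \eqref{MQmu}; your head estimate (multiply and divide by $\delta(2^{-\ell})$ and use $\delta(2^{-\ell})\geq\delta(t)$) bounds the head sum by $\sum_{\ell<\ell^{\ast}}\delta(2^{-\ell})a_{\ell}$, losing the prefactor $\delta(t)$. Each half recovers one of the two required factors but never both, and no rearrangement of monotone comparisons can do better: for the legitimate Dini modulus $\delta(s)=(\log(e/s))^{-2}$ one has $\delta(t^{2})\gg\delta(t)^{2}$, so the needed product inequality fails (and, testing with a point mass in a single annulus and a kernel whose modulus in $y$ is comparable to $\delta$, so does \eqref{Lip} in this sharp form). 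The same issue affects your absorption of the cutoff term $|y-y'|\,|x-y|^{-n-1}$, which needs $s\lesssim\delta(s)$, again not a consequence of the bare Dini hypothesis. The repair is either to assume (or observe for the kernels actually considered, $\delta(s)=s^{\alpha}$ and in particular $\delta(s)=s$ in the Hilbert transform sections) that $\delta$ is submultiplicative and $\delta(s)\gtrsim s$, in which case your computation closes immediately, or to prove only the weaker oscillation bound $|L^{\ast}\mu(y)-L^{\ast}\mu(y')|\leq C\,\mathbf{P}(Q,\mu)$, which does follow from monotonicity alone and is all that the paper uses later, since there $\delta$ is only evaluated at arguments of order one.
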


\begin{proof}
Suppose $L$ is as in \eqref{defLa}. Then for any finite measure $\nu $,%
\begin{equation*}
L\nu (x)=e^{i\theta (x)}\int \zeta _{\mathbf{\varepsilon }(x)}(x-y)\eta
_{R(x)}(x-y)K(x,y)d\nu (y).
\end{equation*}%
Fubini's theorem shows that the dual operator $L^{\ast }$ is given on a
finite measure $\mu $ by%
\begin{equation}
L^{\ast }\mu (y)=\int \zeta _{\mathbf{\varepsilon }(x)}(x-y)\eta
_{R(x)}(x-y)K(x,y)e^{i\theta (x)}d\mu (x).  \label{dualL}
\end{equation}%
For $y,y^{\prime }\in Q$ and $\left\vert \mu \right\vert (3Q)=0$, we thus
have%
\begin{eqnarray*}
&&L^{\ast }\mu (y)-L^{\ast }\mu \left( y^{\prime }\right) \\
&=&\int \left\{ \left( \zeta _{\mathbf{\varepsilon }(x)}\eta _{R(x)}\right)
(x-y)-\left( \zeta _{\mathbf{\varepsilon }(x)}\eta _{R(x)}\right) \left(
x-y^{\prime }\right) \right\} K(x,y)e^{i\theta (x)}d\mu (x) \\
&&{}+\int \left( \zeta _{\mathbf{\varepsilon }(x)}\eta _{R(x)}\right) \left(
x-y^{\prime }\right) \left\{ K(x,y)-K\left( x,y^{\prime }\right) \right\}
e^{i\theta (x)}d\mu (x),
\end{eqnarray*}%
from which \eqref{Lip} follows easily if we split the two integrals in $x$
over dyadic annuli centered at the center of $Q$.
\end{proof}

\subsection{Control of Maximal Functions}

Next we record the facts that $T$ and $T_{\natural }$ control $\mathcal{M}$
for many (collections of) standard singular integrals $T$, including the
Hilbert transform, the Beurling transform and the collection of Riesz
transforms in higher dimensions.

\begin{lemma}
\label{weakdom}Suppose that $\sigma $ and $\omega $ have no point masses in
common, and that $\left\{ K_{j}\right\} _{j=1}^{J}$ is a collection of \emph{%
standard} kernels satisfying \eqref{sizeandsmoothness} and \eqref{Kt}. If
the corresponding operators $T_{j}$ given by \eqref{rel} satisfy%
\begin{equation*}
\left\Vert \chi _{E}T_{j}(f\sigma )\right\Vert _{L^{p,\infty }\left( \omega
\right) }\leq C\left\Vert f\right\Vert _{L^{p}\left( \sigma \right) },\ \ \
\ \ E=\mathbb{R}^{n}\setminus supp\ f,
\end{equation*}%
for $1\leq j\leq J$, then the two weight $A_{p}$ condition \eqref{Ap} holds,
and hence also the weak-type two weight inequality \eqref{weakM2weight}.
\end{lemma}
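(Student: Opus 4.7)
Since the $A_p$ condition \eqref{Ap} implies the weak-type maximal inequality \eqref{weakM2weight} by the Muckenhoupt half of the Theorem on Maximal Function Inequalities stated earlier, it suffices to derive \eqref{Ap} from the hypotheses. I will do this in two stages: first a \emph{displaced} $A_p$ bound between a cube $Q$ and a well-separated twin cube, and second a reduction to the standard $A_p$ on $Q$ itself, where the no-common-point-masses hypothesis is decisive.

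\textbf{Stage 1 (displaced $A_p$).} Fix a cube $Q$ and a unit vector $\mathbf{u}$, and let $Q'=Q+M\ell(Q)\mathbf{u}$ for a large constant $M=M(n,\delta,c)$ to be chosen. By \eqref{Kt}, pick $j=j(\mathbf{u})$ with $|K_j(x,x+t\mathbf{u})|\ge c|t|^{-n}$. Combining this lower bound with the pointwise smoothness \eqref{sizeandsmoothness}, on $Q'\times Q$ the kernel $K_j(x,y)$ has magnitude at least $c'(M\ell(Q))^{-n}$ and its complex argument oscillates by at most $C\delta(1/M)$; choosing $M$ so large that $C\delta(1/M)<c/2$, I conclude
\[
|T_j(\chi_Q\sigma)(x)|\ge c''\,\frac{\sigma(Q)}{|Q|},\qquad x\in Q'.
\]
Since $Q'\subset\mathbb{R}^n\setminus\operatorname{supp}(\chi_Q)$, applying the weak-type hypothesis to $f=\chi_Q$ at level $\lambda=c''\sigma(Q)/(2|Q|)$ yields the \emph{displaced} $A_p$ bound
\[
\omega(Q')\,\sigma(Q)^{p-1}\le C_1\,|Q|^p,
\]
and (by swapping the roles of $Q,Q'$ and using the direction $-\mathbf{u}$, which is covered by \eqref{Kt} since the condition is stated for all $t\in\mathbb{R}$) the companion bound $\omega(Q)\sigma(Q')^{p-1}\le C_1|Q|^p$.

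\textbf{Stage 2 (standard $A_p$).} Two cases arise. In \emph{Case A}, some twin $Q'$ satisfies $\sigma(Q')\ge c_2\sigma(Q)$ for a dimensional constant $c_2>0$, and the companion displaced bound immediately gives $\omega(Q)\sigma(Q)^{p-1}\le c_2^{1-p}C_1|Q|^p$. In \emph{Case B}, every twin has $\sigma(Q')<c_2\sigma(Q)$, which forces $\sigma$ to concentrate near a single point $x_\sigma\in Q$. The no-common-point-masses hypothesis then guarantees $\omega(\{x_\sigma\})=0$. Working now at the finer scale $\ell(Q)/K$ with $K\to\infty$, I choose a small sub-cube $P_\sigma\ni x_\sigma$ of side length $\ell(Q)/K$ capturing most of $\sigma(Q)$, and by pigeonhole over the $K^n$ tiling sub-cubes of $Q$ (discarding the $O(M^n)$ ones near $P_\sigma$, whose $\omega$-mass vanishes as $K\to\infty$ by the no-common-point-masses hypothesis) a well-separated companion $P'\subset Q$ of the same side length with $\omega(P')\gtrsim\omega(Q)/K^n$. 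The displaced $A_p$ applied to the pair $(P_\sigma,P')$ at this scale yields
\[
\omega(Q)\,\sigma(Q)^{p-1}\ \lesssim\ |Q|^p\,K^{n(1-p)}\ \xrightarrow[K\to\infty]{}\ 0,
\]
so $\omega(Q)\sigma(Q)^{p-1}=0\le C|Q|^p$ in Case B as well. This completes the derivation of \eqref{Ap}.

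\textbf{Main obstacle.} The hard step is Stage 2, Case B: one must quantify how the failure of ``$\sigma$-mass balance between twins'' forces concentration of $\sigma$ at a point, and then exploit the no-common-point-masses hypothesis to prevent the analogous concentration of $\omega$ at the same point -- which would negate the pigeonhole bound $\omega(P')\gtrsim\omega(Q)/K^n$ and break the argument. This is precisely the failure mode that violates the weak-type hypothesis in the first place (as the classical example $\sigma=\delta_0+dx$, $\omega=\delta_0$ shows), so the hypothesis is genuinely used. A secondary technical nuisance in Stage 1 -- that only pointwise smoothness in $x$ is assumed -- is handled by taking $M$ very large so that the $x$-variation of $K_j$ across $Q'$ is dwarfed by the absolute floor provided by \eqref{Kt}, and by verifying directional coherence in the $y$-variable using the same smoothness applied along rays, which is legitimate since \eqref{Kt} holds for all $t\in\mathbb{R}$ and the directions $(y-x)/|y-x|$ for $(x,y)\in Q'\times Q$ lie in a cone of angular width $O(1/M)$ around $-\mathbf{u}$.
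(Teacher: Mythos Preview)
Your Stage~1 is essentially the paper's argument (which defers to Stein): the displaced $A_p$ bound $\omega(Q')\sigma(Q)^{p-1}\le C|Q|^p$ for well-separated equal-size cubes follows from \eqref{Kt} and the smoothness of $K_j$.

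Your Stage~2, however, has a real gap. The implication ``every twin $Q'=Q+M\ell(Q)\mathbf{u}$ has $\sigma(Q')<c_2\sigma(Q)$, hence $\sigma$ concentrates near a single point $x_\sigma\in Q$'' is simply false. The twins $Q'$ lie \emph{outside} $Q$, so smallness of $\sigma$ on all of them says nothing about the distribution of $\sigma$ \emph{inside} $Q$. Concretely, take $\sigma=\omega=$ Lebesgue measure on $Q$ and zero on $(2M+1)Q\setminus Q$. Then every twin has $\sigma(Q')=0$, so you are in Case~B, yet $\sigma$ is perfectly spread over $Q$ and there is no sub-cube $P_\sigma$ of side $\ell(Q)/K$ with $\sigma(P_\sigma)\approx\sigma(Q)$. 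Your final display would then assert $\omega(Q)\sigma(Q)^{p-1}=0$, which is wrong (it equals $|Q|^p$). The Case~A/Case~B dichotomy as written cannot be repaired by iteration, because it probes the exterior of $Q$ rather than its interior.

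The paper's Stage~2 is different in kind: it works in the product space $Q_0\times Q_0$ and runs a stopping-time (Calder\'on--Zygmund style) decomposition there, setting aside dyadic rectangles $Q\times Q'$ on which the displaced $A_p$ already holds, and showing that the leftover rectangles after $N$ generations all lie within $O(2^{-N})$ of the diagonal. Subadditivity of $\mathcal{A}_p(\omega,\sigma;\cdot)$ (for $1<p\le 2$; by duality for $p>2$) then reduces the problem to the diagonal contribution, which tends to zero precisely because $\omega$ and $\sigma$ share no point masses. This is the mechanism that replaces your concentration claim: rather than forcing $\sigma$ to a point, one shows that the \emph{only} obstruction to summing the displaced estimates is mass of $\omega\times\sigma$ on the diagonal, and the hypothesis kills that.
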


\begin{proof}
Part of the `one weight' argument on page 211 of Stein \cite{St2} yields the 
\emph{asymmetric} two weight $A_{p}$ condition%
\begin{equation}
\left\vert Q\right\vert _{\omega }\left\vert Q^{\prime }\right\vert _{\sigma
}^{p-1}\leq C\left\vert Q\right\vert ^{p},  \label{asym}
\end{equation}%
where $Q$ and $Q^{\prime }$ are cubes of equal side length $r$ and distance
approximately $C_{0}r$ apart for some fixed large positive constant $C_{0}$
(for this argument we choose the unit vector $\mathbf{u}$ in \eqref{Kt} to
point in the direction from the center of $Q$ to the center of $Q^{\prime }$%
, and then with $j$ as in \eqref{Kt}, $C_{0}$ is chosen large enough by (\ref%
{sizeandsmoothness}) that \eqref{Kt} holds for all unit vectors $\mathbf{u}$
pointing from a point in $Q$ to a point in $Q^{\prime }$). In the one weight
case treated in \cite{St2} it is easy to obtain from this (even for a \emph{%
single} direction $\mathbf{u}$) the usual (symmetric) $A_{p}$ condition (\ref%
{Ap}). Here we will instead use our assumption that $\sigma $ and $\omega $
have no point masses in common for this purpose.

So fix an open dyadic cube $Q_{0}$ in $\mathbb{R}^{n}$, say with side length 
$1$, let $\mathsf{Q}_{0}=Q_{0}\times Q_{0}$\ and set 
\begin{eqnarray*}
\Omega  &=&\left\{ \mathsf{Q}=Q\times Q^{\prime }\text{ dyadic}:\mathsf{Q}%
\subset \mathsf{Q}_{0}\text{, }\ell \left( Q\right) =\ell \left( Q^{\prime
}\right) \approx C_{0}^{-1}dist\left( Q,Q^{\prime }\right) \right.  \\
&&\ \ \ \ \ \ \ \ \ \ \ \ \ \ \ \ \ \ \ \ \ \ \ \ \ \ \ \ \ \ \ \ \ \ \
\left. \text{ and \eqref{asym} holds for }Q\text{ and }Q^{\prime }\right\} .
\end{eqnarray*}

Note that with $\mathsf{Q}=Q\times Q^{\prime }$, then \eqref{asym} can be
written%
\begin{equation}
\mathcal{A}_{p}\left( \omega ,\sigma ;\mathsf{Q}\right) \leq C\left\vert 
\mathsf{Q}\right\vert ^{\frac{p}{2}},  \label{asym'}
\end{equation}%
where%
\begin{equation*}
\mathcal{A}_{p}\left( \omega ,\sigma ;\mathsf{Q}\right) =\left\vert
Q\right\vert _{\omega }\left\vert Q^{\prime }\right\vert _{\sigma }^{p-1}.
\end{equation*}%
Here $\mathcal{A}_{2}\left( \omega ,\sigma ;\mathsf{Q}\right) =\left\vert 
\mathsf{Q}\right\vert _{\omega \times \sigma }$ where $\omega \times \sigma $
denotes product measure on $\mathbb{R}^{n}\times \mathbb{R}^{n}$. 

Suppose first that $1<p\leq 2$. Divide $\mathsf{Q}_{0}$ into $2^{n}\times
2^{n}=4^{n}$ congruent subcubes $\mathsf{Q}_{0}^{1},...\mathsf{Q}_{0}^{4^{n}}
$ of side length $\frac{1}{2}$, and set aside those $\mathsf{Q}_{0}^{j}\in
\Omega $ (those for which \eqref{asym} holds) into a collection of \emph{%
stopping cubes} $\Gamma $. Continue to divide the remaining $\mathsf{Q}%
_{0}^{j}$ into $4^{n}$ congruent subcubes $\mathsf{Q}_{0}^{j,1},...\mathsf{Q}%
_{0}^{j,4^{n}}$ of side length $\frac{1}{4}$, and again, set aside those $%
\mathsf{Q}_{0}^{j,i}\in \Omega $ into $\Gamma $, and continue subdividing
those that remain. We continue with such subdivisions for $N$ generations so
that all the cubes \emph{not }set aside into $\Gamma $ have side length $%
2^{-N}$. The important property these cubes have is that they all lie within
distance $r2^{-N}$ of the diagonal $\mathcal{D}=\left\{ \left( x,x\right)
:\left( x,x\right) \in \mathsf{Q}_{0}\right\} $ in $\mathsf{Q}%
_{0}=Q_{0}\times Q_{0}$ since \eqref{asym} holds for all pairs of cubes $Q$
and $Q^{\prime }$ of equal side length $r$ having distance approximately $%
C_{0}r$ apart. Enumerate the cubes in $\Gamma $ as $\left\{ \mathsf{Q}%
_{\alpha }\right\} _{\alpha }$ and those remaining that are not in $\Gamma $
as $\left\{ \mathsf{P}_{\beta }\right\} _{\beta }$. Thus we have the
pairwise disjoint decomposition%
\begin{equation*}
\mathsf{Q}_{0}=\left( \bigcup_{\alpha }\mathsf{Q}_{\alpha }\right) \bigcup
\left( \bigcup_{\beta }\mathsf{Q}_{\beta }\right) .
\end{equation*}%
In the case $p=2$, the countable additivity of the product measure $\omega
\times \sigma $ shows that%
\begin{equation*}
\mathcal{A}_{2}\left( \omega ,\sigma ;\mathsf{Q}_{0}\right) =\sum_{\alpha }%
\mathcal{A}_{2}\left( \omega ,\sigma ;\mathsf{Q}_{\alpha }\right)
+\sum_{\beta }\mathcal{A}_{2}\left( \omega ,\sigma ;\mathsf{P}_{\beta
}\right) .
\end{equation*}%
For the more general case $1<p\leq 2$, note that at each division described
above we have using $0<p-1\leq 1$,%
\begin{eqnarray*}
\mathcal{A}_{p}\left( \omega ,\sigma ;\mathsf{Q}_{0}\right)  &=&\left(
\sum_{i=1}^{2^{n}}\left\vert \mathsf{Q}_{0}^{j}\right\vert _{\omega }\right)
\left( \sum_{i=1}^{2^{n}}\left\vert \mathsf{Q}_{0}^{j}\right\vert _{\sigma
}\right) ^{p-1} \\
&&\ \ \ \ \ \leq \left( \sum_{i=1}^{2^{n}}\left\vert \mathsf{Q}%
_{0}^{j}\right\vert _{\omega }\right) \left( \sum_{i=1}^{2^{n}}\left\vert 
\mathsf{Q}_{0}^{j}\right\vert _{\sigma }^{p-1}\right)  \\
&&\ \ \ \ \ =\sum_{j=1}^{4^{n}}\mathcal{A}_{p}\left( \omega ,\sigma ;\mathsf{%
Q}_{0}^{j}\right) , \\
\mathcal{A}_{p}\left( \omega ,\sigma ;\mathsf{Q}_{0}^{j}\right)  &\leq
&\sum_{i=1}^{4^{n}}\mathcal{A}_{p}\left( \omega ,\sigma ;\mathsf{Q}%
_{0}^{j,i}\right) ,\ \ \ \ \ \mathsf{Q}_{0}^{j}\notin \Gamma , \\
&&etc.
\end{eqnarray*}%
It follows that%
\begin{eqnarray*}
\mathcal{A}_{p}\left( \omega ,\sigma ;\mathsf{Q}_{0}\right)  &\leq
&\sum_{\alpha }\mathcal{A}_{p}\left( \omega ,\sigma ;\mathsf{Q}_{\alpha
}\right) +\sum_{\beta }\mathcal{A}_{p}\left( \omega ,\sigma ;\mathsf{P}%
_{\beta }\right)  \\
&\leq &C\sum_{\alpha }\left\vert \mathsf{Q}_{\alpha }\right\vert ^{\frac{p}{2%
}}+\sum_{\beta }\mathcal{A}_{p}\left( \omega ,\sigma ;\mathsf{P}_{\beta
}\right)  \\
&\leq &C\left\vert Q_{0}\right\vert ^{p}+\sum_{\beta }\mathcal{A}_{p}\left(
\omega ,\sigma ;\mathsf{P}_{\beta }\right) ,
\end{eqnarray*}%
where in the last line we have used the following consequence of the
Whitney-like construction of the $\mathsf{Q}_{\alpha }$: 
\begin{eqnarray*}
\sum_{\alpha }\left\vert \mathsf{Q}_{\alpha }\right\vert ^{\frac{p}{2}}
&=&\sum_{k\in \mathbb{Z}:\ 2^{k}\leq \ell \left( Q_{0}\right) }\sum_{\alpha
:\ \ell \left( Q_{\alpha }\right) =2^{k}}\left( 2^{2nk}\right) ^{\frac{p}{2}}
\\
&\approx &\sum_{k\in \mathbb{Z}:\ 2^{k}\leq \ell \left( Q_{0}\right)
}\sum_{\alpha :\ \ell \left( Q_{\alpha }\right) =2^{k}}\left( \frac{2^{k}}{%
\ell \left( Q_{0}\right) }\right) ^{-n}\left( 2^{2nk}\right) ^{\frac{p}{2}}\
\ \ \text{(Whitney)} \\
&=&\ell \left( Q_{0}\right) ^{n}\sum_{k\in \mathbb{Z}:\ 2^{k}\leq \ell
\left( Q_{0}\right) }2^{nk\left( p-1\right) } \\
&\leq &C_{p}\ell \left( Q_{0}\right) ^{n}\ell \left( Q_{0}\right) ^{n\left(
p-1\right) }=C_{p}\left\vert Q_{0}\times Q_{0}\right\vert ^{\frac{p}{2}%
}=C_{p}\left\vert \mathsf{Q}_{0}\right\vert ^{p}.
\end{eqnarray*}

Since $\omega $ and $\sigma $ have no point masses in common, it is not hard
to show, using that the side length of $\mathsf{P}_{\beta }=P_{\beta }\times
P_{\beta }^{\prime }$ is $2^{-N}$ and $dist\left( \mathsf{P}_{\beta },%
\mathcal{D}\right) \leq C2^{-N}$, that we have the following limit:%
\begin{equation*}
\sum_{\beta }\mathcal{A}_{p}\left( \omega ,\sigma ;\mathsf{P}_{\beta
}\right) \rightarrow 0\text{ as }N\rightarrow \infty .
\end{equation*}%
Indeed, if $\sigma $ has no point masses at all, then%
\begin{eqnarray*}
\sum_{\beta }\mathcal{A}_{p}\left( \omega ,\sigma ;\mathsf{P}_{\beta
}\right) &=&\sum_{\beta }\left\vert P_{\beta }\right\vert _{\omega
}\left\vert P_{\beta }^{\prime }\right\vert _{\sigma }^{p-1} \\
&\leq &\left( \sum_{\beta }\left\vert P_{\beta }\right\vert _{\omega
}\right) \sup_{\beta }\left\vert P_{\beta }^{\prime }\right\vert _{\sigma
}^{p-1} \\
&\leq &C\left\vert Q_{0}\right\vert _{\omega }\sup_{\beta }\left\vert
P_{\beta }^{\prime }\right\vert _{\sigma }^{p-1}\rightarrow 0\text{ as }%
N\rightarrow \infty .
\end{eqnarray*}%
If $\sigma $ contains a point mass $c\delta _{x}$, then%
\begin{eqnarray*}
\sum_{\beta :x\in P_{\beta }^{\prime }}\mathcal{A}_{p}\left( \omega ,\sigma ;%
\mathsf{P}_{\beta }\right) &\leq &\left( \sum_{\beta :x\in P_{\beta
}^{\prime }}\left\vert P_{\beta }\right\vert _{\omega }\right) \sup_{\beta
:x\in P_{\beta }^{\prime }}\left\vert P_{\beta }^{\prime }\right\vert
_{\sigma }^{p-1} \\
&\leq &C\left( \sum_{\beta :x\in P_{\beta }^{\prime }}\left\vert P_{\beta
}\right\vert _{\omega }\right) \rightarrow 0\text{ as }N\rightarrow \infty
\end{eqnarray*}%
since $\omega $ has no point mass at $x$. The argument in the general case
is technical, but involves no new ideas, and we leave it to the reader. We
thus conclude that 
\begin{equation*}
\mathcal{A}_{p}\left( \omega ,\sigma ;\mathsf{Q}_{0}\right) \leq C\left\vert
Q_{0}\right\vert ^{p},
\end{equation*}%
which is \eqref{Ap}. The case $2\leq p<\infty $ is proved in the same way
using that \eqref{asym} can be written%
\begin{equation*}
\mathcal{A}_{p^{\prime }}\left( \sigma ,\omega ;\mathsf{Q}_{\alpha }\right)
\leq C^{\prime }\left\vert \mathsf{Q}_{\alpha }\right\vert ^{\frac{p^{\prime
}}{2}}.
\end{equation*}%
{}
\end{proof}

\begin{lemma}
\label{dom}If $\left\{ T_{j}\right\} _{j=1}^{n}$ satisfies \eqref{j}, then%
\begin{equation*}
\mathcal{M}\nu (x)\leq C\sum_{j=1}^{n}(T_{j})_{\natural }\nu (x),\qquad x\in 
\mathbb{R}^{n}\text{, }\nu \geq 0\text{ a finite measure with compact support%
}.
\end{equation*}
\end{lemma}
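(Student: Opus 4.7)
The plan is to reduce the pointwise inequality to showing $\nu(Q)/|Q| \leq C \sum_j (T_j)_\natural \nu(x)$ for each cube $Q \ni x$ of side length $\ell$. First I would use hypothesis \eqref{j} to observe that the $2n$ cones $C_j^\pm = \{y : \pm(y_j - x_j) \geq |y-x|/4\}$ cover $\mathbb{R}^n \setminus \{x\}$, and that on each $C_j^\pm$ the real part of $K_j$ has a definite sign with magnitude bounded below by $c|x-y|^{-n}$. Hence $\nu(Q) \leq \sum_{j,\pm} \nu(Q \cap C_j^\pm)$, and it suffices to prove, for each $j$, the one-sided bound $\nu(Q \cap C_j^+)/|Q| \leq C(T_j)_\natural\nu(x)$ (the sign analog being symmetric).

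To extract this cone mass I would compare two linearizations $L_1, L_2$ of $(T_j)_\natural$, both with $R \sim \ell$ (so $\eta_R = 1$ on $Q$) and with matching, small, cutoff parameters in coordinates $k \neq j$, differing only in the $j$-th pair $(\varepsilon_{2j-1}, \varepsilon_{2j})$. Their difference equals
\[
L_1\nu(x) - L_2\nu(x) = \int K_j(x,y)\,\bigl[\zeta_{\varepsilon^{(1)}}(x-y) - \zeta_{\varepsilon^{(2)}}(x-y)\bigr]\,\eta_R(|x-y|)\,d\nu(y),
\]
where the cutoff difference is nonnegative (taking $\varepsilon^{(1)}_{2j} < \varepsilon^{(2)}_{2j}$) and supported in a ``slab'' in which $y_j - x_j$ lies in the gap between the two $\varepsilon_{2j}$-values. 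With the off-$j$ parameters sufficiently small that the slab sits inside $C_j^+$, the integrand has positive real part of magnitude $\gtrsim 1/\ell^n$, yielding $|L_1\nu(x) - L_2\nu(x)| \gtrsim \nu(\mathrm{slab})/\ell^n$ and hence $(T_j)_\natural \nu(x) \geq \tfrac{1}{2}|L_1 - L_2|\nu(x) \gtrsim \nu(\mathrm{slab})/\ell^n$.

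Iterating this comparison over a geometric sequence of $\varepsilon_{2j}$-values ranging from tiny up to $\ell$ (each step going up by at most a factor of $16$, the range allowed when both configurations share a common $\varepsilon_{2j-1}$ under the ratio constraint $\varepsilon_{2j-1}/\varepsilon_{2j} \in [1/4,4]$), the slabs collectively decompose $Q \cap C_j^+$ into dyadic scales. A standard dyadic-shell argument then extracts a single scale at which $\nu(\mathrm{slab})/\ell^n$ is comparable to $\nu(Q \cap C_j^+)/|Q|$, finishing the bound. The main obstacle is the $n$-dimensional geometric bookkeeping: because of the product form $\zeta_\varepsilon(x-y) = 1 - \prod_k[1 - \{\zeta_{\varepsilon_{2k-1}} + \zeta_{\varepsilon_{2k}}\}]$, each slab is really a $j$-strip intersected with a box in the other coordinates, so keeping the slab inside $C_j^+$ requires the off-$j$ cutoffs to be sufficiently small relative to the scale of $\varepsilon_{2j}$; the specific constant $1/4$ in \eqref{j} is what makes this coordinated choice feasible.
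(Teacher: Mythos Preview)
Your approach is the paper's: compare two admissible truncations of $(T_j)_\natural$ differing only in one off-center cutoff parameter, use hypothesis \eqref{j} to see that the integrand in the difference has a definite sign on its support, deduce a pointwise lower bound for $(T_j)_\natural\nu(x)$ in terms of the $\nu$-mass of a slab at that scale, and then assemble the cube via a dyadic decomposition and a geometric sum. The paper writes this out only for $n=1$ (where the slab is just a half-annulus $[x+r/2,x+2r]$ and the argument is three lines) and declares the general case similar.

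One step in your $n$-dimensional sketch does not go through as stated. With off-$j$ cutoffs taken to be a small multiple $c$ of $\varepsilon_{2j}$, the slabs sweep out only a cone of aperture $\sim c$ about the $j$-axis, strictly narrower than $C_j^+$ (whose aperture is $\sqrt{15}$); so they do \emph{not} ``collectively decompose $Q\cap C_j^+$.'' The clean fix is to replace the initial decomposition into the cones $C_j^\pm$ by the narrower maximal-coordinate sectors $\tilde C_j^\pm=\{y:|y_k-x_k|\le |y_j-x_j|\ \forall k\}$, which always cover $\mathbb R^n\setminus\{x\}$ and are swept exactly by the slabs with $c=1$. These sectors sit inside $C_j^\pm$ precisely when $\sqrt n\le 4$; so the constant $\tfrac14$ in \eqref{j} is indeed what makes the coordinated choice feasible, but with an implicit restriction $n\le 16$ that neither you nor the paper spell out. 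Finally, the last step is a geometric \emph{sum} over all scales (as in the paper's displayed computation for $n=1$), not extraction of a single good scale.
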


\begin{proof}
We prove the case $n=1$, the general case being similar. Then with $T=T_{1}$
and $r>0$ we have%
\begin{eqnarray*}
&&\func{Re}\left( T_{r,\frac{r}{4},100r}\nu (x)-T_{r,4r,100r}\nu (x)\right)
\\
&=&\int \left\{ \zeta _{\frac{r}{4}}(y-x)-\zeta _{4r}(y-x)\right\} \func{Re}%
K(x,y)d\nu (y)\geq \frac{c}{r}\int_{\left[ x+\frac{r}{2},x+2r\right] }d\nu
(y).
\end{eqnarray*}%
Thus%
\begin{equation*}
T_{\natural }\nu (x)\geq \max \left\{ \left\vert T_{r,\frac{r}{4},100r}\nu
(x)\right\vert ,\left\vert T_{r,4r,100r}\nu (x)\right\vert \right\} \geq 
\frac{c}{r}\int_{\left[ x+\frac{r}{2},x+2r\right] }d\nu (y),
\end{equation*}%
and similarly%
\begin{equation*}
T_{\natural }\nu (x)\geq \frac{c}{r}\int_{\left[ x-2r,x-\frac{r}{2}\right]
}d\nu (y).
\end{equation*}%
It follows that%
\begin{eqnarray*}
\mathcal{M}\nu (x) &\leq &\sup_{r>0}\frac{1}{4r}\int_{\left[ x-2r,x+2r\right]
}d\nu (y) \\
&=&\sup_{r>0}\sum_{k=0}^{\infty }2^{-k}\frac{1}{2^{2-k}r}\int_{\left[
x-2^{1-k}r,x-2^{-1-k}r\right] \cup \left[ x+2^{-1-k}r,x+2^{1-k}r\right]
}d\nu (y) \\
&\leq &CT_{\natural }\nu (x).
\end{eqnarray*}
\end{proof}

Finally, we will use the following covering lemma of Besicovitch type for
multiples of dyadic cubes (the case of triples of dyadic cubes arises in (%
\ref{finover}) below).

\begin{lemma}
\label{Besicovitch}Let $M$ be an odd positive integer, and suppose that $%
\Phi $ is a collection of cubes $P$ with bounded diameters and having the
form $P=MQ$ where $Q$ is dyadic (a product of clopen dyadic intervals). If $%
\Phi ^{\ast }$ is the collection of \emph{maximal} cubes in $\Phi $, i.e. $%
P^{\ast }\in \Phi ^{\ast }$ provided there is no strictly larger $P$ in $%
\Phi $ that contains $P^{\ast }$, then the cubes in $\Phi ^{\ast }$ have
finite overlap at most $M^{n}$.
\end{lemma}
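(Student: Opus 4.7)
The plan is to fix $x\in\mathbb{R}^{n}$ and construct an injection from the set of $P\in\Phi^{\ast}$ containing $x$ into $\{-(M-1)/2,\ldots,(M-1)/2\}^{n}$, which has exactly $M^{n}$ elements. The definition exploits the parity: because $M$ is odd, for any clopen dyadic cube $Q$ the dilate $MQ$ is a disjoint union of $M^{n}$ clopen dyadic cubes of side $\ell(Q)$, namely the translates $Q+\vec{k}\,\ell(Q)$ with $\vec{k}\in\{-(M-1)/2,\ldots,(M-1)/2\}^{n}$. Whenever $x\in MQ$ there is thus a unique such translate, denoted $\tilde Q$, containing $x$; let $\vec{k}(Q)\in\{-(M-1)/2,\ldots,(M-1)/2\}^{n}$ be defined by $\tilde Q=Q+\vec{k}(Q)\,\ell(Q)$. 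The critical observation is that $\tilde Q$ depends only on $x$ and $\ell(Q)$: it is the unique dyadic cube of side $\ell(Q)$ containing $x$.

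Next I verify that $Q\mapsto\vec{k}(Q)$ is injective on $\{Q:MQ\in\Phi^{\ast},\;x\in MQ\}$. Suppose $Q_{1}\ne Q_{2}$ satisfy $\vec{k}(Q_{1})=\vec{k}(Q_{2})=:\vec{k}$. If $\ell(Q_{1})=\ell(Q_{2})$, then uniqueness of $\tilde Q$ gives $\tilde Q_{1}=\tilde Q_{2}$, so $Q_{i}=\tilde Q_{i}-\vec{k}\,\ell(Q_{i})$ forces $Q_{1}=Q_{2}$, a contradiction. Otherwise, after relabeling assume $\ell_{1}:=\ell(Q_{1})<\ell_{2}:=\ell(Q_{2})$; then $\tilde Q_{1}$ and $\tilde Q_{2}$ are dyadic cubes of different sides both containing $x$, so by standard nesting $\tilde Q_{1}\subset\tilde Q_{2}$, which yields the center bound $|c(\tilde Q_{1})-c(\tilde Q_{2})|_{\infty}\le(\ell_{2}-\ell_{1})/2$, where $c(\cdot)$ denotes the center.

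Substituting $c(Q_{i})=c(\tilde Q_{i})-\vec{k}\,\ell_{i}$ and using the triangle inequality in $|\cdot|_{\infty}$ yields $|c(Q_{1})-c(Q_{2})|_{\infty}\le\tfrac{1}{2}(\ell_{2}-\ell_{1})+\tfrac{M-1}{2}(\ell_{2}-\ell_{1})=\tfrac{M}{2}(\ell_{2}-\ell_{1})$, which is precisely the criterion for the cube $MQ_{1}$ of side $M\ell_{1}$ to be contained in the cube $MQ_{2}$ of side $M\ell_{2}$. Since $Q_{1}\ne Q_{2}$ forces $MQ_{1}\subsetneq MQ_{2}$, this contradicts the maximality of $MQ_{1}$ in $\Phi$. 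Injectivity follows, and hence $\sum_{P\in\Phi^{\ast}}\chi_{P}(x)\le M^{n}$.

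The main obstacle is the different-size case of injectivity, which rests on the center-distance estimate $|c(Q_{1})-c(Q_{2})|_{\infty}\le M(\ell_{2}-\ell_{1})/2$. That estimate depends on two specific features of the setup: the parity of $M$, which is what makes $MQ$ decompose exactly into dyadic cubes of side $\ell(Q)$ and legitimizes the definition of $\vec{k}(Q)$; and the dyadic nesting $\tilde Q_{1}\subset\tilde Q_{2}$, which controls the relative position of the centers. Once this estimate is in hand, the maximality in $\Phi^{\ast}$ closes the argument immediately; the rest is routine bookkeeping.
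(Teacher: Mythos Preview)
Your proof is correct and follows essentially the same approach as the paper: both assign to each $P=MQ\in\Phi^{\ast}$ containing $x$ the ``type'' (your vector $\vec{k}$) of the dyadic subcube of $MQ$ of side $\ell(Q)$ that contains $x$, and then argue that two maximal cubes cannot share the same type. The paper simply asserts the key containment $MQ_1\subset MQ_2$ with the remark ``draw a picture in the plane for example,'' whereas you have made this step explicit via the center-distance estimate, which is a welcome addition.
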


\begin{proof}
Let $Q_{0}=\left[ 0,1\right) ^{n}$ and assign labels $1,2,3 ,\dotsc, M^{n}$
to the dyadic subcubes of side length one of $MQ_{0}$. We say that the
subcube labeled $k$ is of type $k$, and we extend this definition by
translation and dilation to the subcubes of $MQ$ having side length that of $%
Q$. Now we simply observe that if $\left\{ P_{i}^{\ast }\right\} _{i}$ is a
set of cubes in $\Phi ^{\ast }$ containing the point $x$, then for a given $%
k $, there is at most one $P_{i}^{\ast }$ that contains $x$ in its subcube
of type $k$. The reason is that if $P_{j}^{\ast }$ is another such cube and $%
\ell \left( P_{j}^{\ast }\right) \leq \ell \left( P_{i}^{\ast }\right) $, we
must have $P_{j}^{\ast }\subset P_{i}^{\ast }$ (draw a picture in the plane
for example).
\end{proof}

\subsection{Preliminary Precaution}

Given a positive locally finite Borel measure $\mu $ on $\mathbb{R}^{n}$,
there exists a rotation such that all boundaries of rotated dyadic cubes
have $\mu $-measure zero (see \cite{MaMaNiOr} where they actually prove a
stronger assertion when $\mu $ has no point masses, but our conclusion is
obvious for a sum of point mass measures). We will assume that such a
rotation has been made so that all boundaries of rotated dyadic cubes have $%
\left( \omega +\sigma \right) $-measure zero, where $\omega $ and $\sigma $
are the positive Borel measures appearing in the theorems above (of course $%
\sigma $ doubling implies that $\sigma $ cannot contain any point masses,
but this argument works as well for general $\sigma $ as in the weak type
theorem). While this assumption is not essential for the proof, it relieves
the reader of having to consider the possibility that boundaries of dyadic
cubes have positive measure at each step of the argument below.

Recall also (see e.g. Theorem 2.18 in \cite{Ru}) that any positive locally
finite Borel measure on $\mathbb{R}^{n}$ is both inner and outer regular.

\section{The proof of Theorem \protect\ref{weaktwoweightHaar}: Weak-type
Inequalities}

We begin with the necessity of condition \eqref{Tsharpomega}:%
\begin{eqnarray*}
\int_{Q}T_{\natural }\left( \chi _{Q}f\sigma \right) \omega
&=&\int_{0}^{\infty }\min \left\{ \left\vert Q\right\vert _{\omega
},\left\vert \left\{ T_{\natural }\left( \chi _{Q}f\sigma \right) >\lambda
\right\} \right\vert _{\omega }\right\} d\lambda \\
&\leq &\left\{ \int_{0}^{A}+\int_{A}^{\infty }\right\} \min \left\{
\left\vert Q\right\vert _{\omega },C\lambda ^{-p}\int \left\vert
f\right\vert ^{p}d\sigma \right\} d\lambda \\
&\leq &A\left\vert Q\right\vert _{\omega }+CA^{1-p}\int \left\vert
f\right\vert ^{p}d\sigma \\
&=&(C+1)\left\vert Q\right\vert _{\omega }^{\frac{1}{p^{\prime }}}\left(
\int \left\vert f\right\vert ^{p}d\sigma \right) ^{\frac{1}{p}},
\end{eqnarray*}%
if we choose $A=\left( \frac{\int \left\vert f\right\vert ^{p}d\sigma }{%
\left\vert Q\right\vert _{\omega }}\right) ^{\frac{1}{p}}$.

\bigskip

Now we turn to proving \eqref{weak2weight}, assuming both (\ref{Tsharpomega}%
) and \eqref{weakM2weight}, and moreover that $f$ is bounded with compact
support. We will prove the quantitative estimate%
\begin{gather}
\left\Vert T_{\natural }f\sigma \right\Vert _{L^{p,\infty }(\omega )}\leq C%
\bigl\{\mathfrak{A}+\mathfrak{T}_{\ast }\bigr\}\left\Vert f\right\Vert
_{L^{p}(\sigma )}\,,  \label{quanweak} \\
\mathfrak{A}=\sup_{Q}\sup_{\lVert f\rVert _{L^{p}(\sigma )}=1}\sup_{\lambda
>0}\lambda \left\vert \left\{ \mathcal{M}\left( f\sigma \right) >\lambda
\right\} \right\vert _{\omega }^{\frac{1}{p}}\,,  \label{e.Afrak} \\
\mathfrak{T}_{\ast }=\sup_{\lVert f\rVert _{L^{p}(\sigma
)}=1}\sup_{Q}\,\lvert Q\rvert _{\omega }^{-1/p^{\prime }}\int_{Q}T_{\natural
}\left( \chi _{Q}f\sigma \right) (x)d\omega (x)\,.  \label{e.Tfrak'}
\end{gather}%
We should emphasize that the term \eqref{e.Afrak} is comparable to the two
weight $A_{p}$ condition \eqref{Ap}.

Standard considerations (\cite{Saw}, Section 2) show that it suffices to
prove the following good-$\lambda $ inequality: There is a positive constant 
$C$ so that for $\beta >0$ sufficiently small, and provided 
\begin{equation}
\sup_{0<\lambda <\Lambda }\lambda ^{p}\left\vert \left\{ x\in \mathbb{R}%
^{n}:T_{\natural }f\sigma \left( x\right) >\lambda \right\} \right\vert
_{\omega }<\infty \,,\qquad \Lambda <\infty \,,  \label{e.presume}
\end{equation}%
we have this inequality: 
\begin{eqnarray}
&&\left\vert \left\{ x\in \mathbb{R}^{n}:T_{\natural }f\sigma \left(
x\right) >2\lambda \text{ and }\mathcal{M}f\sigma \left( x\right) \leq \beta
\lambda \right\} \right\vert _{\omega }  \label{goodlam} \\
&&\ \ \ \ \ \leq C\beta \mathfrak{T}_{\ast }^{p}\left\vert \left\{ x\in 
\mathbb{R}^{n}:T_{\natural }f\sigma \left( x\right) >\lambda \right\}
\right\vert _{\omega }+C\beta ^{-p}\lambda ^{-p}\int \left\vert f\right\vert
^{p}d\sigma \,.  \notag
\end{eqnarray}%
Our presumption \eqref{e.presume} holds due to the $A_{p}$ condition %
\eqref{Ap} and the fact that%
\begin{equation*}
\left\{ x\in \mathbb{R}^{n}:T_{\natural }f\sigma \left( x\right) >\lambda
\right\} \subset B\left( 0,c\lambda ^{-\frac{1}{n}}\right) ,\qquad \lambda >0%
\text{ small},
\end{equation*}%
Hence it is enough to prove \eqref{goodlam}.

To prove \eqref{goodlam} we choose $\lambda =2^{k}$, and apply the
decomposition in \eqref{Whitney}. In this argument, we can take $k$ to be
fixed, so that we suppress its appearance as a superscript in this section.
(When we come to $L^{p}$ estimates, we will not have this luxury.)

Define 
\begin{equation*}
E_{j}=\left\{ x\in Q_{j}:T_{\natural }f\sigma \left( x\right) >2\lambda 
\text{ and }\mathcal{M}f\sigma \left( x\right) \leq \beta \lambda \right\} .
\end{equation*}%
Then for $x\in E_{j}$, we can apply Lemma~\ref{l.maxprin} to deduce 
\begin{equation}
T_{\natural }\left( \chi _{\left( 3Q_{j}\right) ^{c}}f\sigma \right) \left(
x\right) \leq \left( 1+C\beta \right) \lambda .  \label{e.maxprinc}
\end{equation}

If we take $\beta >0$ so small that $1+C\beta \leq \frac{3}{2}$, then (\ref%
{e.maxprinc}) implies that for $x\in E_{j}$ 
\begin{eqnarray*}
2\lambda &<&T_{\natural }f\sigma \left( x\right) \leq T_{\natural }\chi
_{3Q_{j}}f\sigma \left( x\right) +T_{\natural }\chi _{\left( 3Q_{j}\right)
^{c}}f\sigma \left( x\right) \\
&\leq &T_{\natural }\chi _{3Q_{j}^{k}}f\sigma \left( x\right) +\tfrac{3}{2}%
\lambda .
\end{eqnarray*}%
Integrating this inequality with respect to $\omega $ over $E_{j}$ we obtain 
\begin{equation}
\lambda \left\vert E_{j}\right\vert _{\omega }\leq 2\int_{E_{j}}\left(
T_{\natural }\chi _{3Q_{j}}f\sigma \right) \omega .  \label{ETsharp'}
\end{equation}

The disjoint cover condition in \eqref{Whitney} shows that the sets $E_{j}$
are disjoint, and this suggests we should sum their $\omega $-measures. We
split this sum into two parts, according to the size of $\lvert E_{j}\rvert
_{\omega }/\lvert 3Q_{j}\rvert _{\omega }$. The left-hand side of (\ref%
{goodlam}) satisfies%
\begin{eqnarray*}
\sum_{j}\left\vert E_{j}\right\vert _{\omega } &\leq &\beta
\sum_{j:\left\vert E_{j}\right\vert _{\omega }\leq \beta \left\vert
3Q_{j}\right\vert _{\omega }}\left\vert 3Q_{j}\right\vert _{\omega } \\
&&+\beta ^{-p}\sum_{j:\left\vert E_{j}\right\vert _{\omega }>\beta
\left\vert 3Q_{j}\right\vert _{\omega }}\left\vert E_{j}\right\vert _{\omega
}\left( \frac{2}{\lambda }\frac{1}{\left\vert 3Q_{j}\right\vert _{\omega }}%
\int_{E_{j}}\left( T_{\natural }\chi _{3Q_{j}}f\sigma \right) \omega \right)
^{p} \\
&=&I+I\!I.
\end{eqnarray*}

Now%
\begin{equation*}
I\leq \beta \sum_{j}\left\vert 3Q_{j}^{k}\right\vert _{\omega }\leq C\beta
\left\vert \Omega \right\vert _{\omega },
\end{equation*}%
by the finite overlap condition in \eqref{Whitney}. From \eqref{Tsharpomega}
with $Q=3Q_{j}$ we have 
\begin{eqnarray*}
I\!I &\leq &\left( \frac{2}{\beta \lambda }\right) ^{p}\sum_{j}\left\vert
E_{j}\right\vert _{\omega }\left( \frac{1}{\left\vert 3Q_{j}\right\vert
_{\omega }}\int_{E_{j}^{k}}\left( T_{\natural }\chi _{3Q_{j}}f\sigma \right)
\omega \right) ^{p} \\
&\leq &C\left( \frac{2}{\beta \lambda }\right) ^{p}\mathfrak{T}_{\ast
}^{p}\sum_{j}\left\vert E_{j}\right\vert _{\omega }\frac{1}{\left\vert
3Q_{j}\right\vert _{\omega }^{p}}\left\vert 3Q_{j}\right\vert _{\omega
}^{p-1}\int_{3Q_{j}}\left\vert f\right\vert ^{p}d\sigma \\
&\leq &C\left( \frac{2}{\beta \lambda }\right) ^{p}\mathfrak{T}_{\ast
}^{p}\int \left( \sum_{j}\chi _{3Q_{j}^{k}}\right) \left\vert f\right\vert
^{p}d\sigma \\
&\leq &C\left( \frac{2}{\beta \lambda }\right) ^{p}\mathfrak{T}_{\ast
}^{p}\int \left\vert f\right\vert ^{p}d\sigma ,
\end{eqnarray*}%
by the finite overlap condition in \eqref{Whitney} again. This completes the
proof of the good-$\lambda $ inequality \eqref{goodlam}.

The proof of assertion 2 regarding $T_{\flat }$ is similar. Assertion 3 was
discussed earlier and assertion 4 follows readily from assertion 2 and Lemma %
\ref{weakdom}.

\section{The proof of Theorem~\protect\ref{twoweightHaar}: Strong-type
Inequalities}

Since conditions \eqref{Tsharpsigma} and \eqref{Tsharpomega} are obviously
necessary for \eqref{2weight}, we turn to proving the weighted inequality (%
\ref{2weight}) for the strongly maximal singular integral $T_{\natural }$.

\subsection{The Quantitative Estimate}

In particular, we will prove 
\begin{align}
\left\Vert T_{\natural }f\sigma \right\Vert _{L^{p}(\omega )}& \leq C\bigl\{%
\mathfrak{M}+\gamma ^{2}\mathfrak{M}_{\ast }+\gamma ^{2}\mathfrak{T}+%
\mathfrak{T}_{\ast }\bigr\}\left\Vert f\right\Vert _{L^{p}(\sigma )}\,,
\label{quanstrong} \\
\mathfrak{M}& =\sup_{\lVert f\rVert _{L^{p}(\sigma )}=1}\lVert \mathcal{M}%
\left( f\sigma \right) \Vert _{L^{p}(\omega )}\,,  \label{e.Mfrak} \\
\mathfrak{M}_{\ast }& =\sup_{\lVert g\rVert _{L^{p^{\prime }}(\omega
)}=1}\lVert \mathcal{M}\left( g\omega \right) \Vert _{L^{p^{\prime }}(\sigma
)}\,,  \label{e.Mfrak*} \\
\mathfrak{T}& =\sup_{Q}\sup_{\lVert f\rVert _{L^{\infty }}\leq 1}\lvert
Q\rvert _{\sigma }^{-1/p}\lVert \chi _{Q}T_{\natural }\left( \chi
_{Q}f\sigma \right) \rVert _{L^{p}(\omega )}\,,  \label{e.TfrakStar} \\
\mathfrak{T}_{\ast }& =\sup_{\lVert f\rVert _{L^{p}(\sigma
)}=1}\sup_{Q}\,\lvert Q\rvert _{\omega }^{-1/p^{\prime }}\int_{Q}T_{\natural
}\left( \chi _{Q}f\sigma \right) (x)d\omega (x)\,,  \label{e.Tfrak}
\end{align}%
where $\gamma \geq 2$ is a doubling constant for the measure $\sigma $, see (%
\ref{doubling property}) below. Note that $\gamma $ appears only in
conjunction with $\mathfrak{T}$ and $\mathfrak{M}_{\ast }$. The norm
estimates on the maximal function \eqref{e.Mfrak} and \eqref{e.Mfrak*} are
equivalent to the testing conditions in \eqref{testmax} and its dual
formulation. The term $\mathfrak{T}_{\ast }$ also appeared in %
\eqref{e.Tfrak'}.

\subsection{The Initial Construction\label{initial construction}}

We suppose that both \eqref{Tsharpsigma} and \eqref{Tsharpomega} hold, i.e. (%
\ref{e.TfrakStar}) and \eqref{e.Tfrak} are finite, and that $f$ is bounded
with compact support on $\mathbb{R}^{n}$. Moreover, in the case \eqref{j}
holds, we see that \eqref{Tsharpsigma} (the finiteness of (\ref{e.TfrakStar}%
)) implies \eqref{testmax} by Lemma \ref{dom}, and so by Theorem \ref%
{sawyerthm} we may also assume that the maximal operator $\mathcal{M}$
satisfies the two weight norm inequality \eqref{M2weight}. It now follows
that $\int \left( T_{\natural }f\sigma \right) ^{p}\omega <\infty $ for $f$
bounded with compact support. Indeed, $T_{\natural }f\sigma \leq C\mathcal{M}%
f\sigma $ far away from the support of $f$, while $T_{\natural }f\sigma $ is
controlled by the finiteness of the testing condition \eqref{e.TfrakStar}
near the support of $f$.

Let $\{Q_{j}^{k}\}$ be the cubes as in \eqref{e.OmegaK} and \eqref{Whitney},
with the measure $\nu $ that appears in there being $\nu =f\sigma $. We will
use Lemma~\ref{l.maxprin} with this choice of $\nu $ as well. Now define an
`exceptional set' associated to $Q_{j}^{k}$ to be 
\begin{equation*}
E_{j}^{k}=Q_{j}^{k}\cap \left( \Omega _{k+1}\setminus \Omega _{k+2}\right) .
\end{equation*}%
See Figure~\ref{f.1}. One might anticipate the definition of the exceptional
set to be more simply $Q_{j}^{k}\cap \Omega _{k+1}$. We are guided to this
choice by the work on fractional integrals \cite{Saw2}. And indeed, the
choice of exceptional set above enters in a decisive way in the analysis of
the bad function at the end of the proof.

\begin{figure}[ptb]
\begin{center}
\includegraphics{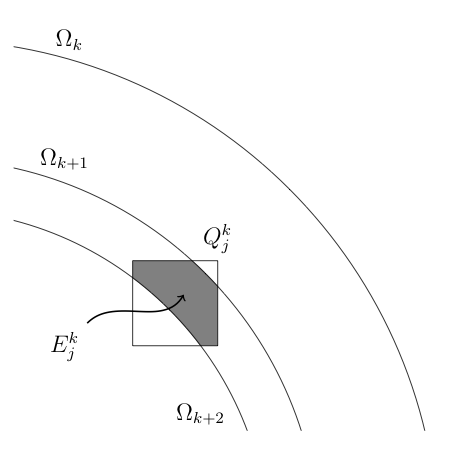}
\end{center}
\caption{The set $E_{j}^{k}\left( Q\right) $.}
\label{f.1}
\end{figure}
%

We estimate the left side of \eqref{2weight} in terms of this family of
dyadic cubes $\left\{ Q_{j}^{k}\right\} _{k,j}$ by 
\begin{align}
\int \left( T_{\natural }f\sigma \right) ^{p}\omega (dx)& \leq \sum_{k\in 
\mathbb{Z}}(2^{k+2})^{p}\left\vert \Omega _{k+1}\setminus \Omega
_{k+2}\right\vert _{\omega }  \label{leftside} \\
& \leq \sum_{k,j}(2^{k+2})^{p}\left\vert E_{j}^{k}\right\vert _{\omega }. 
\notag
\end{align}

Choose a linearization $L$ of $T_{\natural }$ as in \eqref{defLa} so that
(recall $R(x)$ is the upper limit of truncation)%
\begin{eqnarray}
R(x) &\leq &\frac{1}{2}\ell (Q_{j}^{k}),\ \ \ \ \ x\in E_{j}^{k},
\label{loc} \\
\text{\textup{and}}\quad T_{\natural }\left( \chi _{3Q_{j}^{k}}f\sigma
\right) (x) &\leq &2L\left( \chi _{3Q_{j}^{k}}f\sigma \right) (x)+C\frac{1}{%
\left\vert 3Q_{j}^{k}\right\vert }\int_{3Q_{j}^{k}}\left\vert f\right\vert
\sigma ,\ \ \ \ \ x\in E_{j}^{k}.  \notag
\end{eqnarray}%
For $x\in E_{j}^{k}$, the maximum principle \eqref{maxprinc} yields 
\begin{eqnarray*}
T_{\natural }\chi _{3Q_{j}^{k}}f\sigma (x) &\geq &T_{\natural }f\sigma
(x)-T_{\natural }\chi _{(3Q_{j}^{k})^{c}}f\sigma (x) \\
&>&2^{k+1}-2^{k}-C\mathbf{P}\left( Q_{j}^{k},f\sigma \right) \\
&=&2^{k}-C\mathbf{P}\left( Q_{j}^{k},f\sigma \right) .
\end{eqnarray*}%
From \eqref{loc} we conclude that%
\begin{equation*}
L\chi _{3Q_{j}^{k}}f\sigma (x)\geq 2^{k-1}-C\mathbf{P}\left(
Q_{j}^{k},f\sigma \right) .
\end{equation*}%
Thus either $2^{k}\leq 4\inf_{E_{j}^{k}}L\chi _{3Q_{j}^{k}}f\sigma $ or $%
2^{k}\leq 4C\mathbf{P}\left( Q_{j}^{k},f\sigma \right) \leq 4CM\left(
Q_{j}^{k},f\sigma \right) $. So we obtain either%
\begin{equation}
\left\vert E_{j}^{k}\right\vert _{\omega }\leq C2^{-k}\int_{E_{j}^{k}}(L\chi
_{3Q_{j}^{k}}f\sigma )\omega (dx),  \label{ETsharp}
\end{equation}%
or%
\begin{equation}
\left\vert E_{j}^{k}\right\vert _{\omega }\leq C2^{-pk}\left\vert
E_{j}^{k}\right\vert _{\omega }M\left( Q_{j}^{k},f\sigma \right) ^{p}\leq
C2^{-pk}\int_{E_{j}^{k}}\left( \mathcal{M}f\sigma \right) ^{p}\omega (dx).
\label{EMsigma}
\end{equation}

Now consider the following decomposition of the set of indices $(k,j)$: 
\begin{eqnarray}
\mathbb{E} &=&\left\{ (k,j):\left\vert E_{j}^{k}\right\vert _{\omega }\leq
\beta \left\vert NQ_{j}^{k}\right\vert _{\omega }\right\} ,  \notag
\label{e.Gdef} \\
\mathbb{F} &=&\left\{ (k,j):\text{\eqref{EMsigma} holds}\right\} ,  \notag \\
\mathbb{G} &=&\left\{ (k,j):\left\vert E_{j}^{k}\right\vert _{\omega }>\beta
\left\vert NQ_{j}^{k}\right\vert _{\omega }\text{ and \eqref{ETsharp} holds}%
\right\} ,
\end{eqnarray}%
where $0<\beta <1$ will be chosen sufficiently small at the end of the
argument. (It will be of the order of $c^{p}$ for a small constant $c$.) By
the `bounded overlap' condition of \eqref{Whitney}, we have 
\begin{equation}
\sum_{j}\chi _{NQ_{j}^{k}}\leq C,\qquad k\in \mathbb{Z}.  \label{Nover}
\end{equation}%
We then have the corresponding decomposition:%
\begin{eqnarray}
\int \left( T_{\natural }f\sigma \right) ^{p}\omega &\leq &\left\{
\sum_{(k,j)\in \mathbb{E}}+\sum_{(k,j)\in \mathbb{F}}+\sum_{(k,j)\in \mathbb{%
G}}\right\} (2^{k+2})^{p}\left\vert E_{j}^{k}\right\vert _{\omega }
\label{e.T33} \\
&\leq &\beta \sum_{(k,j)\in \mathbb{E}}(2^{k+2})^{p}\left\vert
NQ_{j}^{k}\right\vert _{\omega }+C\sum_{(k,j)\in \mathbb{F}%
}\int_{E_{j}^{k}}\left( \mathcal{M}f\sigma \right) ^{p}\omega  \notag \\
&&{}+C\sum_{(k,j)\in \mathbb{G}}\left\vert E_{j}^{k}\right\vert _{\omega
}\left( \frac{1}{\beta \left\vert NQ_{j}^{k}\right\vert _{\omega }}%
\int_{E_{j}^{k}}\left( L\chi _{3Q_{j}^{k}}f\sigma \right) \omega \right) ^{p}
\notag \\
&=&J(1) +J(2) +J(3)  \notag \\
&\leq &C_{0}\left\{ \beta \int \left( T_{\natural }f\sigma \right)
^{p}\omega +\beta ^{-p}\int \lvert f\rvert ^{p}\sigma \right\} ,
\end{eqnarray}%
where $C_{0}\leq C\bigl\{\mathfrak{M}+\gamma ^{2}\mathfrak{M}_{\ast }+\gamma
^{2}\mathfrak{T}+\mathfrak{T}_{\ast }\bigr\}^{p}$. The last line is the
claim that we take up in the remainder of the proof. Once it is proved, note
that if we take $0<C_{0}\beta <\tfrac{1}{2}$ and use the fact that $\int
\left( T_{\natural }f\sigma \right) ^{p}\omega <\infty $ for $f$ bounded
with compact support, we have proved assertion (1) of Theorem \ref%
{twoweightHaar}, and in particular \eqref{quanstrong}.

\smallskip

The proof of the strong type inequality requires a complicated series of
decompositions of the dominating sums, which are illustrated for the
reader's convenience as a schematic tree in Figure~\ref{f.2}.

\begin{figure}[ptb]
\begin{center}
\begin{tikzpicture}
[level distance=20mm]
\node[shape aspect=2,diamond,draw]  { $ \int ( T_{\natural }f\sigma ) ^{p}\omega $ }
  child  {node[shape aspect=2,diamond,draw] {$ J( 3)$}
  	child {node[rectangle,draw] {$  I$} 
		  	   edge from parent node[left] {$ \gamma ^2  \mathfrak T$}
		  	   edge from parent node[below,sloped] {\eqref{e.I<}}
	       } 
	  child[missing] {node {}}
	child {node[shape aspect=2,diamond,draw] {$  I\!I$}
		child {node[shape aspect=2,diamond,draw] {$  I\!I(1)$}
			child  {node[rectangle,draw] {$  I\!I\!I$}
			  	   edge from parent node[left] {$ \mathfrak T _{ \ast  }$}
			  	    edge from parent node[below,sloped] {\eqref{e.III<}} 
				}
			  child[missing] {node {}}
			child  {node[shape aspect=2,diamond,draw] (iv) {$  I\!V$}
				child {node[shape aspect=2,diamond,draw] {$  I\!V(1)$}
				child {node[rectangle,draw] {$  V\!\!I(1)$}
					  	   edge from parent node[left] {$ \mathfrak T _{\ast }$}
					  	    edge from parent node[below,sloped] {\eqref{e.VI1<}}
					  	   }
				child[missing] {node {}}
				child {node[rectangle,draw] {$  V\!\!I(2)$}
					  	   edge from parent node[right] {$ \mathfrak M _{\ast }$}
					  	    edge from parent node[below,sloped] {\eqref{e.VI2<}}
					 }
					}
				child[missing] {node {}}   
				child[missing] {node {}}   
				child[missing] {node {}}
				child {node[shape aspect=2,diamond,draw] {$I\!V(2)$}
					child {node[rectangle,draw] {$I\!V(2)[a]$}
					  	   edge from parent node[left] {$\gamma ^2 \mathfrak M _{\ast} $}
					  	    edge from parent node[below,sloped] {\eqref{e.IVts2a<}}
						}
					child[missing] {node {}}
					child[missing] {node {}}
					child {node[shape aspect=2,diamond,draw] {$I\!V(2)[b]$}
					  	 child {node[rectangle,draw] {$V(1)$}  
					  	  edge from parent node[left] {$ \gamma ^2 \mathfrak T _{\ast} $}
					  	    edge from parent node[below,sloped] {\eqref{e.V1<}}}
					  	  child[missing] {node {}}
					  	  child {node[rectangle,draw] {$V(2)$}  
					  	  edge from parent node[right] {$\gamma ^2  \mathfrak M _{\ast } $}
					  	    edge from parent node[below,sloped] {\eqref{e.V2<}}
						} } 
					}
				}
			}
		child[missing] {node {}}
		child {node[rectangle,draw] {$  I\!I(2)$}
			edge from parent node[right] {$  \gamma ^2 \mathfrak M _{\ast }$} 
						 node[sloped,below] {\eqref{e.II2<}}
			} 
			}
			}
        child[missing] {node {}}
        child  {node[rectangle,draw] {$ J( 2)$}
  	   edge from parent node[right] {$ \mathfrak M$}
  	  }
	child[missing] {node {}}
	child {node[rectangle,draw] {$ J( 1)$}
  	 edge from parent node[above,sloped] {absorb }
	 	};    
\end{tikzpicture}
\end{center}
\caption{This is a schematic tree of how the integral $\protect\int \left(
T_{\natural }f\protect\sigma \right) ^{p}\protect\omega $ has been, and will
continue to be, decomposed. We have suppressed superscripts, subscripts and
sums in the tree. Terms in diamonds are further decomposed, while terms in
rectangles are final estimates. The edges leading into rectangles are
labelled by $\mathfrak{M}$, $\mathfrak{M}_{\ast }$, $\mathfrak{I}$ or $%
\mathfrak{I}_{\ast }$ whose finiteness is used to control that term. Those
terms controlled by the doubling constant $\protect\gamma $ are also
indicated. Equation references are to where the final estimates on the term
is obtained. The word `absorb' leading into $J(1) $ indicates that this term
is a small multiple of $\protect\int \left( T_{\natural }f\protect\sigma %
\right) ^{p}\protect\omega $ and can be absorbed into the left-hand side of
the inequality. As most of the terms involve the maximal theorem (\protect
\ref{maxthm}), we do not indicate its use in the schematic tree.}
\label{f.2}
\end{figure}
%

\subsection{Two Easy Estimates}

Note that the first term $J(1) $ in \eqref{e.T33} satisfies 
\begin{equation*}
J(1) =\beta \sum_{(k,j)\in \mathbb{E}}(2^{k+2})^{p}\left\vert
NQ_{j}^{k}\right\vert _{\omega }\leq C\beta \int \left( T_{\natural }f\sigma
\right) ^{p}\omega ,
\end{equation*}%
by the finite overlap condition \eqref{Nover}. The second term $J\left(
2\right) $ is dominated by%
\begin{equation*}
C\sum_{(k,j)\in \mathbb{F}}\int_{E_{j}^{k}}\left( \mathcal{M}f\sigma \right)
^{p}\omega \leq C\mathfrak{M}^{p}\left\Vert f\right\Vert _{L^{p}(\sigma
)}^{p}\,,
\end{equation*}%
by our assumption \eqref{M2weight}. It is useful to note that this is the 
\emph{only} time in the proof that we use the maximal function inequality (%
\ref{M2weight}) - from now on we use the \emph{dual} maximal function
inequality \eqref{M2weightdual}.

\begin{remark}
In the arguments below we can use Theorem 2 of \cite{Saw2}\ to replace the
dual maximal function assumption $\mathfrak{M}_{\ast }<\infty $ with two
assumptions, namely a `Poisson two weight $A_{p}$ condition' and the
analogue of the dual pivotal condition of Nazarov, Treil and Volberg \cite%
{NTV3}. The Poisson two weight $A_{p}$ condition is in fact necessary for
the two weight inequality, but the pivotal conditions are \emph{not}
necessary for the Hilbert transform two weight inequality (\cite{LaSaUr}).
On the other hand, the assumption $\mathfrak{M}<\infty $ cannot be weakened
here, reflecting that our method requires the maximum principle in Lemma \ref%
{l.maxprin}.
\end{remark}

It is the third term $J(3) $ that is the most involved, see Figure~\ref{f.2}%
. The remainder of the proof is taken up with the proof of 
\begin{equation}
\sum_{(k,j)\in \mathbb{G}}R_{j}^{k}\left\vert \int_{E_{j}^{k}}\left( L\chi
_{3Q_{j}^{k}}f\sigma \right) \omega \right\vert ^{p}\leq C\{\gamma ^{2p}%
\mathfrak{M}_{\ast }^{p}+\gamma ^{2p}\mathfrak{T}^{p}+\mathfrak{T}_{\ast
}^{p}\}\left\Vert f\right\Vert _{L^{p}\sigma (dx)}^{p}\,,
\label{e.remainder}
\end{equation}%
where%
\begin{equation}
R_{j}^{k}=\frac{\left\vert E_{j}^{k}\right\vert _{\omega }}{\left\vert
NQ_{j}^{k}\right\vert _{\omega }^{p}}\,.  \label{Rkj'}
\end{equation}%
Once this is done, the proof of \eqref{e.T33} is complete, and the proof of
assertion (1) is finished.

\subsection{The Calder\'on-Zygmund Decompositions}

\label{s.czd}

To carry out this proof, we implement Calder\'{o}n-Zygmund Decompositions
relative to the measure $\sigma $. These Decompositions will be done at 
\emph{all heights simultaneously}. We will use the shifted dyadic grids, see %
\eqref{e.shifted}. Suppose that $\gamma \geq 2$ is a doubling constant for
the measure $\sigma $:%
\begin{equation}
\left\vert 3Q\right\vert _{\sigma }\leq \gamma \left\vert Q\right\vert
_{\sigma },\ \ \ \ \ \text{all cubes }Q.  \label{doubling condition}
\end{equation}%
For $\alpha \in \{0,\tfrac{1}{3},\tfrac{2}{3}\}^{n}$, let%
\begin{equation*}
\mathcal{M}_{\sigma }^{\alpha }f\left( x\right) =\sup_{x\in Q\in \mathcal{D}%
^{\alpha }}\frac{1}{\left\vert Q\right\vert _{\sigma }}\int_{Q}\left\vert
f\right\vert d\sigma ,
\end{equation*}%
\begin{equation}
\Gamma _{t}^{\alpha }=\left\{ x\in \mathbb{R}:\mathcal{M}_{\sigma }^{\alpha
}f(x)>\gamma ^{t}\right\} =\bigcup_{s}G_{s}^{\alpha ,t},  \label{e.GammaTDef}
\end{equation}%
where $\left\{ G_{s}^{\alpha ,t}\right\} _{\left( t,s\right) \in \mathbb{L}%
^{\alpha }}$ are the maximal $\mathcal{D}^{\alpha }$ cubes in $\Gamma
_{t}^{\alpha }$, and $\mathbb{L}^{\alpha }$ is the set of pairs we use to
label the cubes. This implies that we have the nested property: If $%
G_{s}^{\alpha ,t}\subsetneqq G_{s^{\prime }}^{\alpha ,t^{\prime }}$ then $%
t>t^{\prime }$. Moreover, if $t>t^{\prime }$ there is some $s^{\prime }$
with $G_{s}^{\alpha ,t}\subset G_{s^{\prime }}^{\alpha ,t^{\prime }}$. These
are the cubes used to make a Calder\'{o}n-Zygmund Decomposition at height $%
\gamma ^{t}$ for the grid $\mathcal{D}^{\alpha }$ with respect to the
measure $\sigma $. We will refer to the cubes $\left\{ G_{s}^{\alpha
,t}\right\} _{\left( t,s\right) \in \mathbb{L}^{\alpha }}$ as \textbf{%
principal cubes}.

Of course we have from the maximal inequality in \eqref{maxthm} 
\begin{equation}
\sum_{\left( t,s\right) \in \mathbb{L}^{\alpha }}\gamma ^{pt}\lvert
G_{s}^{\alpha ,t}\rvert _{\sigma }\leq C\left\Vert f\right\Vert
_{L^{p}(\sigma )}^{p}\,.  \label{e.MGst}
\end{equation}%
The point of these next several definitions is to associate to each dyadic
cube $Q$, a good shifted dyadic grid, and an appropriate height, at which we
will build our Calder\'{o}n-Zygmund Decomposition.

It is now that we will use the following consequence of the doubling
condition \eqref{doubling condition} for the measure $\sigma $:%
\begin{equation}
\left\vert P\left( G\right) \right\vert _{\sigma }\leq \gamma \left\vert
G\right\vert _{\sigma },\ \ \ \ \ G\in \mathcal{D}^{\alpha }.
\label{doubling property}
\end{equation}%
The average $\frac{1}{\left\vert G_{s}^{\alpha ,t}\right\vert _{\sigma }}%
\int_{G_{s}^{\alpha ,t}}\left\vert f\right\vert d\sigma $ is thus at most $%
\gamma ^{t+1}$ by \eqref{doubling property} and the maximality of the cubes
in \eqref{e.GammaTDef}:%
\begin{equation}
\gamma ^{t}<\frac{1}{\left\vert G_{s}^{\alpha ,t}\right\vert _{\sigma }}%
\int_{G_{s}^{\alpha ,t1}}\left\vert f\right\vert d\sigma \leq \frac{%
\left\vert P\left( G_{s}^{\alpha ,t}\right) \right\vert _{\sigma }}{%
\left\vert G_{s}^{\alpha ,t}\right\vert _{\sigma }}\frac{1}{\left\vert
P\left( G_{s}^{\alpha ,t}\right) \right\vert _{\sigma }}\int_{P\left(
G_{s}^{\alpha ,t}\right) }\left\vert f\right\vert d\sigma \leq \gamma \gamma
^{t}=\gamma ^{t+1}.  \label{average bound}
\end{equation}

\begin{description}
\item[Select a shifted grid] Let $\vec{\alpha}\;:\;\mathcal{D}%
\longrightarrow \{0,\tfrac{1}{3},\tfrac{2}{3}\}^{n}$ be a map so that for $%
Q\in \mathcal{D}$, there is a $\widehat{Q}\in \mathcal{D}^{\vec{\alpha}(Q)}$
so that $3Q\subset \widehat{Q}$ and $\lvert \widehat{Q}\rvert \leq C\lvert
Q\rvert $. Here, $C$ is an appropriate constant depending only on dimension.
Thus, $\vec{\alpha}(Q)$ picks a `good' shifted dyadic grid for $Q$. Moreover
we will assume that $\widehat{Q}$ is the smallest such cube. Note that we
are discarding the extra requirement that $3Q\subset \frac{9}{10}\widehat{Q}$
since this property will not be used. Also we have 
\begin{equation}
\widehat{Q}\subset MQ,  \label{QhatContained}
\end{equation}%
for some positive dimensional constant $M$. The cubes $\widehat{Q_{j}^{k}}$
will play a critical role below. See Figure~\ref{f.4}

\item[Select a principal cube] Define $\mathcal{A}(Q)$ to be the smallest
cube from the collection $\{G_{s}^{\vec{\alpha}(Q),t}\mid \left( t,s\right)
\in \mathbb{L}^{\alpha }\}$ that contains $3Q$; $\mathcal{A}(Q)$ is uniquely
determined by $Q$ and the choice of function $\vec{\alpha}$. 
Define 
\begin{equation}
\mathbb{H}_{s}^{\alpha ,t}=\left\{ (k,j):\mathcal{A}(Q_{j}^{k})=G_{s}^{%
\alpha ,t}\right\} \,,\qquad (s,t)\in \mathbb{L}^{\alpha }\,.  \label{e.HST}
\end{equation}%
This is an important definition for us. The combinatorial structure this
places on the corresponding cubes is essential for this proof to work. Note
that $3Q_{j}^{k}\subset \widehat{Q_{j}^{k}}\subset \mathcal{A}\left(
Q_{j}^{k}\right) $.

\item[Parents] For any of the shifted dyadic grids $\mathcal{D}^{\alpha }$,
a $Q\in \mathcal{D}^{\alpha }$ has a unique parent denoted as $P(Q)$, the
smallest member of $\mathcal{D}^{\alpha }$ that strictly contains $Q$. We
suppress the dependence upon $\alpha $ here.

\item[Indices] Let 
\begin{equation}
\mathcal{K}_{s}^{\alpha ,t}=\left\{ r\mid G_{r}^{\alpha ,t+1}\subset
G_{s}^{\alpha ,t}\right\} \,.  \label{e.Kst}
\end{equation}%
We use a calligraphic font $\mathcal{K}$ for sets of indices related to the
grid $\left\{ G_{s}^{\alpha ,t}\right\} $, and a blackboard font $\mathbb{H}$
for sets of indices related to the grid $\left\{ Q_{j}^{k}\right\} $.

\item[The good and bad functions] Let $A_{G_{r}^{\alpha ,t+1}}=\frac{1}{%
\left\vert G_{r}^{\alpha ,t+1}\right\vert _{\sigma }}\int_{G_{r}^{\alpha
,t+1}}f\sigma $ be the $\sigma $-average of $f$ on $G_{r}^{\alpha ,t+1}$.
Define functions $g_{s}^{\alpha ,t}$ and $h_{s}^{\alpha ,t}$ satisfying $%
f=g_{s}^{\alpha ,t}+h_{s}^{\alpha ,t}$ on $G_{s}^{\alpha ,t}$ by 
\begin{align}
g_{s}^{\alpha ,t}(x)& =%
\begin{cases}
A_{G_{r}^{\alpha ,t+1}} & x\in G_{r}^{\alpha ,t+1}\text{ with }r\in \mathcal{%
K}_{s}^{\alpha ,t} \\ 
f(x) & x\in G_{s}^{\alpha ,t}\setminus \bigcup \left\{ G_{r}^{\alpha
,t+1}:r\in \mathcal{K}_{s}^{\alpha ,t}\right\}%
\end{cases}%
,  \label{e.hst} \\
h_{s}^{\alpha ,t}(x)& =%
\begin{cases}
f(x)-A_{G_{r}^{\alpha ,t+1}} & x\in G_{r}^{\alpha ,t+1}\text{ with }r\in 
\mathcal{K}_{s}^{\alpha ,t} \\ 
0 & x\in G_{s}^{\alpha ,t}\setminus \bigcup \left\{ G_{r}^{\alpha ,t+1}:r\in 
\mathcal{K}_{s}^{\alpha ,t}\right\}%
\end{cases}%
.
\end{align}%
We extend both $g_{s}^{\alpha ,t}$ and $h_{s}^{\alpha ,t}$ to all of $%
\mathbb{R}^{n}$ by defining them to vanish outside $G_{s}^{\alpha ,t}$.
\end{description}

Now $\left\vert A_{G_{r}^{\alpha ,t+1}}\right\vert \leq \gamma ^{t+2}$ by (%
\ref{average bound}). Thus Lebesgue's differentiation theorem shows that
(any of the standard proofs can be adapted to the dyadic setting for
positive locally finite Borel measures on $\mathbb{R}^{n}$) 
\begin{equation}
\left\vert g_{s}^{\alpha ,t}(x)\right\vert \leq \gamma ^{t+2}<\frac{\gamma
^{2}}{\left\vert G_{s}^{\alpha ,t}\right\vert _{\sigma }}\int_{G_{s}^{\alpha
,t}}\left\vert f\right\vert \sigma ,\qquad \sigma \text{-a.e. }x\in
G_{s}^{\alpha ,t},\ \left( t,s\right) \in \mathbb{L}^{\alpha }.  \label{Leb}
\end{equation}%
That is, $g_{s}^{\alpha ,t}$ is the `good' function and $h_{s}^{\alpha ,t}$
is the `bad' function. 


We can now refine the final sum on the left side of \eqref{e.remainder}
according to the decomposition of $\mathcal{M}_{\sigma }^{\alpha }f$. We
carry this out in three steps. In the first step, we fix an $\alpha \in \{0,%
\tfrac{1}{3},\tfrac{2}{3}\}^{n}$, and for the remainder of the proof, we
only consider $Q_{j}^{k}$ for which $\vec{\alpha}(Q_{j}^{k})=\alpha $.
Namely, we will modify the important definition of $\mathbb{G}$ in %
\eqref{e.Gdef} to 
\begin{equation}
\mathbb{G}^{\alpha }=\left\{ (k,j):\vec{\alpha}(Q_{j}^{k})=\alpha \,,\
\left\vert E_{j}^{k}\right\vert _{\omega }>\beta \left\vert
NQ_{j}^{k}\right\vert _{\omega }\text{ and \eqref{ETsharp} holds}\right\} ,
\label{e.GdefAlpha}
\end{equation}%
In the second step, we partition the indices $(k,j)$ into the sets $\mathbb{H%
}_{s}^{\alpha ,t}$ in \eqref{e.HST} for $\left( t,s\right) \in \mathbb{L}%
^{\alpha }$. In the third step, for $(k,j)\in \mathbb{H}_{s}^{\alpha ,t}$,
we split $f$ into the corresponding good and bad parts. This yields the
decomposition 
\begin{gather}  \label{e.yyI}
\sum_{(k,j)\in \mathbb{G}^{\alpha }}R_{j}^{k}\left\vert
\int_{E_{j}^{k}}\left( L\chi _{3Q_{j}^{k}}f\sigma \right) \omega \right\vert
^{p}\leq C\left( I+I\!I\right) , \\
I=\sum_{\left( t,s\right) \in \mathbb{L}^{\alpha }}I_{s}^{t}\,,\qquad
I\!I=\sum_{\left( t,s\right) \in \mathbb{L}^{\alpha }}I\!I_{s}^{t}
\label{e.Idef} \\
I_{s}^{t}=\sum_{(k,j)\in \mathbb{I}_{s}^{\alpha ,t}}R_{j}^{k}\left\vert
\int_{E_{j}^{k}}\left( L\chi _{3Q_{j}^{k}}g_{s}^{\alpha ,t}\sigma \right)
\omega \right\vert ^{p} \\
I\!I_{s}^{t}=\sum_{(k,j)\in \mathbb{I}_{s}^{\alpha ,t}}R_{j}^{k}\left\vert
\int_{E_{j}^{k}}\left( L\chi _{3Q_{j}^{k}}h_{s}^{\alpha ,t}\sigma \right)
\omega \right\vert ^{p}  \label{e.IIst} \\
\mathbb{I}_{s}^{\alpha ,t}=\mathbb{G}\cap \mathbb{H}_{s}^{\alpha ,t}
\end{gather}%
Recall the definition of $R_{j}^{k}$ in \eqref{Rkj'}. In the definitions of $%
I$, $I_{s}^{t}$ and $I\!I$, $I\!I_{s}^{t}$ we will suppress the dependence
on $\alpha \in \{0,\tfrac{1}{3},\tfrac{2}{3}\}^{n}$. The same will be done
for the subsequent decompositions of the (difficult) term $I\!I$, although
we usually retain the superscript $\alpha $ in the quantities arising in the
estimates. In particular, we emphasize that the combinatorial properties of
the cubes associated with $\mathbb{I}_{s}^{\alpha ,t}$ are essential to
completing this proof.

Term $I$ requires only the forward testing condition \eqref{Tsharpsigma} and
the maximal theorem \eqref{maxthm}, while term $I\!I$ requires only the dual
testing condition \eqref{Tsharpomega}, along with the dual maximal function
inequality \eqref{M2weightdual} and the maximal theorem \eqref{maxthm}. The
reader is again directed to Figure \ref{f.2} for a map of the various
decompositions of the terms and the conditions used to control them.

\subsection{The Analysis of the good function}

We claim that 
\begin{equation}
I\leq C\gamma ^{2p}\mathfrak{T}^{p}\lVert f\rVert _{L^{p}(\sigma )}^{p}\,.
\label{e.I<}
\end{equation}

\begin{proof}
We use boundedness of the `good' function $g_{s}^{\alpha ,t}$, as defined in %
\eqref{e.hst}, the testing condition \eqref{Tsharpsigma} for $T_{\natural }$%
, see also \eqref{e.TfrakStar}, and finally the universal maximal function
bound \eqref{maxthm} with $\mu =\omega $. Here are the details. For $x\in
E_{j}^{k}$, \eqref{loc} implies that $L\chi _{3Q_{j}^{k}}g_{s}^{\alpha
,t}\sigma \left( x\right) =Lg_{s}^{\alpha ,t}\sigma \left( x\right) $ and so 
\begin{eqnarray*}
I &=&\sum_{\left( t,s\right) \in \mathbb{L}^{\alpha
}}I_{s}^{t}=C\sum_{\left( t,s\right) \in \mathbb{L}^{\alpha }}\sum_{(k,j)\in 
\mathbb{G}^{\alpha }\cap \mathbb{H}_{s}^{\alpha ,t}}R_{j}^{k}\left\vert
\int_{E_{j}^{k}}\left( Lg_{s}^{\alpha ,t}\sigma \right) \omega \right\vert
^{p} \\
&\leq &C\sum_{\left( t,s\right) \in \mathbb{L}^{\alpha }}\int \left\vert 
\mathcal{M}_{\omega }^{dy}\left( \chi _{G_{s}^{\alpha ,t}}Lg_{s}^{\alpha
,t}\sigma \right) \right\vert ^{p}\omega \\
&\leq &C\sum_{\left( t,s\right) \in \mathbb{L}^{\alpha }}\int_{G_{s}^{\alpha
,t}}\left\vert Lg_{s}^{\alpha ,t}\sigma \right\vert ^{p}\omega \\
&\leq &C\gamma ^{2p}\sum_{\left( t,s\right) \in \mathbb{L}^{\alpha }}\gamma
^{pt}\int_{G_{s}^{\alpha ,t}}\left( T_{\natural }\frac{g_{s}^{\alpha ,t}}{%
\gamma ^{t+2}}\sigma \right) ^{p}\omega \\
&\leq &C\gamma ^{2p}\mathfrak{T}^{p}\sum_{\left( t,s\right) \in \mathbb{L}%
^{\alpha }}\gamma ^{pt}\left\vert G_{s}^{\alpha ,t}\right\vert _{\sigma },
\end{eqnarray*}%
where we have used \eqref{Leb} and \eqref{Tsharpsigma} with $g=\frac{%
g_{s}^{\alpha ,t}}{\gamma ^{t+2}}$ in the final inequality. This last sum is
controlled by \eqref{e.MGst}, and completes the proof of \eqref{e.I<}.
\end{proof}

\subsection{The Analysis of the Bad Function: Part 1}

It remains to estimate term $I\!I$, as in \eqref{e.IIst}, but this is in
fact the harder term. Recall the definition of $\mathcal{K}_{s}^{\alpha ,t}$
in \eqref{e.Kst}. We now write%
\begin{equation}
h_{s}^{\alpha ,t}=\sum_{r\in \mathcal{K}_{s}^{\alpha ,t}}\left[
f-A_{G_{r}^{\alpha ,t+1}}\right] \chi _{G_{r}^{\alpha ,t+1}}\equiv
\sum_{r\in \mathcal{K}_{s}^{\alpha ,t}}b_{r},  \label{e.brDef}
\end{equation}%
where the `bad' functions $b_{r}$ are supported in the cube $G_{r}^{\alpha
,t+1}$ and have $\sigma $-mean zero, $\int_{G_{r}^{\alpha ,t+1}}b_{r}\sigma
=0$. To take advantage of this, we will pass to the dual $L^{\ast }$ below.

But first we must address the fact that the triples of the $\mathcal{D}%
^{\alpha }$ cubes $G_{r}^{\alpha ,t+1}$ do not form a grid. Fix $\left(
t,s\right) \in \mathbb{L}^{\alpha }$ and let 
\begin{equation}
\mathfrak{C}_{s}^{\alpha ,t}=\left\{ 3G_{r}^{\alpha ,t+1}:r\in \mathcal{K}%
_{s}^{\alpha ,t}\right\}  \label{e.K...}
\end{equation}%
be the collection of triples of the $\mathcal{D}^{\alpha }$ cubes $%
G_{r}^{\alpha ,t+1}$ with $r\in \mathcal{K}_{s}^{\alpha ,t}$. We select the 
\emph{maximal} triples 
\begin{equation}
\left\{ 3G_{r_{\ell }}^{t+1}\right\} _{\ell \in \mathcal{L}_{s}^{\alpha
,t}}\equiv \left\{ T_{\ell }\right\} _{\ell \in \mathcal{L}_{s}^{\alpha ,t}}
\label{maximal triples}
\end{equation}%
from the collection $\mathfrak{C}_{s}^{\alpha ,t}$, and assign to each $r\in 
\mathcal{K}_{s}^{\alpha ,t}$ the maximal triple $T_{\ell }=T_{\ell \left(
r\right) }$ containing $3G_{r}^{\alpha ,t+1}$ with least $\ell $. Note that $%
T_{\ell \left( r\right) }$ extends outside $G_{s}^{\alpha ,t}$ if $%
G_{r}^{\alpha ,t+1}$ and $G_{s}^{\alpha ,t}$ share a face. By Lemma \ref%
{Besicovitch} applied to $\mathcal{D}^{\alpha }$ the maximal triples $%
\left\{ T_{\ell }\right\} _{\ell \in \mathcal{L}_{s}^{\alpha ,t}}$ have
finite overlap $3^{n}$, and this will prove crucial in \eqref{e.III<}, %
\eqref{e.V1<} and \eqref{finover} below.

We will pass to the dual of the linearization.%
\begin{eqnarray}
\int_{E_{j}^{k}}\left( Lh_{s}^{\alpha ,t}\sigma \right) \omega &=&\sum_{r\in 
\mathcal{K}_{s}^{\alpha ,t}}\int_{E_{j}^{k}}\left( Lb_{r}\sigma \right)
\omega  \label{e.ML1} \\
&=&\sum_{r\in \mathcal{K}_{s}^{\alpha ,t}}\int_{G_{r}^{\alpha ,t+1}\cap
3Q_{j}^{k}}\left( L^{\ast }\chi _{E_{j}^{k}}\omega \right) b_{r}\sigma 
\notag
\end{eqnarray}%
Note that \eqref{loc} implies $L^{\ast }\nu $ is supported in $3Q_{j}^{k}$
if $\nu $ is supported in $E_{j}^{k}$, explaining the range of integration
above. Continuing, we have for fixed $\left( k,j\right) \in \mathbb{I}%
_{s}^{\alpha ,t}$,%
\begin{eqnarray}
\left\vert \eqref{e.ML1}\right\vert &\leq &\left\vert \sum_{r\in \mathcal{K}%
_{s}^{\alpha ,t}}\int_{G_{r}^{\alpha ,t+1}\cap 3Q_{j}^{k}}\left( L^{\ast
}\chi _{E_{j}^{k}\cap T_{\ell \left( r\right) }}\omega \right) b_{r}\sigma
\right\vert  \label{e.3rd} \\
&&{}+C\sum_{r\in \mathcal{K}_{s}^{\alpha ,t}}\mathbf{P}\left( G_{r}^{\alpha
,t+1},\chi _{E_{j}^{k}\setminus 3G_{r}^{\alpha ,t+1}}\omega \right)
\int_{G_{r}^{\alpha ,t+1}}\left\vert f\right\vert \sigma .  \notag
\end{eqnarray}%
To see the above inequality, note that for $r\in \mathcal{K}_{s}^{\alpha ,t}$
we are splitting the set $E_{j}^{k}$ into $E_{j}^{k}\cap T_{\ell \left(
r\right) }$ and $E_{j}^{k}\setminus T_{\ell \left( r\right) }$. On the
latter set, the hypotheses of Lemma~\ref{constant} are in force, namely the
set $E_{j}^{k}\setminus T_{\ell \left( r\right) }$ does not intersect $%
3G_{r}^{\alpha ,t+1}$, whence we have an estimate on the $\delta $-H\"{o}%
lder modulus of continuity of $L^{\ast }\chi _{E_{j}^{k}\setminus T_{\ell
\left( r\right) }}\omega $. Combine this with the fact that $b_{r}$ has $%
\sigma $-mean zero on $G_{r}^{\alpha ,t+1}$ to derive the estimate below, in
which $y_{r}^{t+1}$ is the center of the cube $G_{r}^{\alpha ,t+1}$. 
\begin{eqnarray}
&&\left\vert \int_{G_{r}^{\alpha ,t+1}}\left( L^{\ast }\chi
_{E_{j}^{k}\setminus T_{\ell \left( r\right) }}\omega \right) b_{r}\sigma
\right\vert  \notag  \label{e.1st<} \\
&=&\left\vert \int_{G_{r}^{\alpha ,t+1}}\left( L^{\ast }\chi
_{E_{j}^{k}\setminus T_{\ell \left( r\right) }}\omega (y)-L^{\ast }\chi
_{E_{j}^{k}\setminus T_{\ell \left( r\right) }}\omega (y_{r}^{t+1})\right)
\left( b_{r}\sigma \right) \right\vert  \notag \\
&\leq &\int_{G_{r}^{\alpha ,t+1}\cap 3Q_{j}^{k}}C\mathbf{P}\left(
G_{r}^{\alpha ,t+1},\chi _{E_{j}^{k}\setminus T_{\ell \left( r\right)
}}\omega \right) \delta \left( \frac{\left\vert y-y_{r}^{t+1}\right\vert }{%
\ell \left( G_{r}^{\alpha ,t+1}\right) }\right) \left\vert b_{r}\left(
y\right) \right\vert d\sigma \left( y\right) \\
&\leq &C\mathbf{P}\left( G_{r}^{\alpha ,t+1},\chi _{E_{j}^{k}\setminus
3G_{r}^{\alpha ,t+1}}\omega \right) \int_{G_{r}^{\alpha ,t+1}}\left\vert
f\right\vert d\sigma .  \notag
\end{eqnarray}

We have after application of \eqref{e.3rd}, 
\begin{align}
I\!I_{s}^{t}& =\sum_{(k,j)\in \mathbb{I}_{s}^{\alpha ,t}}R_{j}^{k}\left[
\int_{E_{j}^{k}}\left( Lh_{s}^{\alpha ,t}\sigma \right) \omega \right] ^{p}
\label{e.IIstDEF2} \\
& \leq I\!I_{s}^{t}(1)+I\!I_{s}^{t}(2)\,,  \label{IItilda} \\
I\!I_{s}^{t}(1)& =\sum_{(k,j)\in \mathbb{I}_{s}^{\alpha
,t}}R_{j}^{k}\left\vert \sum_{r\in \mathcal{K}_{s}^{\alpha
,t}}\int_{G_{r}^{\alpha ,t+1}}\left( L^{\ast }\chi _{E_{j}^{k}\cap T_{\ell
\left( r\right) }}\omega \right) b_{r}\sigma \right\vert ^{p},  \label{e.II1}
\\
I\!I_{s}^{t}(2)& =\sum_{(k,j)\in \mathbb{I}_{s}^{\alpha ,t}}R_{j}^{k}\left[
\sum_{r\in \mathcal{K}_{s}^{\alpha ,t}}\mathbf{P}\left( G_{r}^{\alpha
,t+1},\chi _{E_{j}^{k}}\omega \right) \int_{G_{r}^{\alpha ,t+1}}\left\vert
f\right\vert \sigma \right] ^{p}.  \label{e.II2}
\end{align}%
Note that we may further restrict the integration in \eqref{e.II1} to $%
G_{r}^{\alpha ,t+1}\cap 3Q_{j}^{k}$ since $L^{\ast }\chi _{E_{j}^{k}\cap
T_{\ell \left( r\right) }}\omega $ is supported in $3Q_{j}^{k}$.


\subsubsection{Analysis of $I\!I(2)$.}

We claim that%
\begin{equation}
\sum_{\left( t,s\right) \in \mathbb{L}^{\alpha }}I\!I_{s}^{t}(2)\leq C\gamma
^{2p}\mathfrak{M}_{\ast }^{p}\int \left\vert f\right\vert ^{p}\sigma \,.
\label{e.II2<}
\end{equation}%
Recall the definition of $\mathfrak{M}_{\ast }$ in \eqref{e.Mfrak*}.

\begin{proof}
We begin by defining a linear operator by 
\begin{equation}
\mathbf{P}_{j}^{k}\left( \mu \right) \equiv \sum_{r\in \mathcal{K}%
_{s}^{\alpha ,t}}\mathbf{P}\left( G_{r}^{\alpha ,t+1},\chi _{E_{j}^{k}}\mu
\right) \chi _{G_{r}^{\alpha ,t+1}}.  \label{e.Pjk}
\end{equation}%
In this notation, we have for $\left( k,j\right) \in \mathbb{I}_{s}^{\alpha
,t}$ (See \eqref{e.HST} and \eqref{e.IIst}), 
\begin{align*}
\sum_{r\in \mathcal{K}_{s}^{\alpha ,t}}\mathbf{P}\Bigl(& G_{r}^{\alpha
,t+1},\chi _{E_{j}^{k}}\omega (dx)\Bigr)\int_{G_{r}^{\alpha ,t+1}}\left\vert
f\right\vert \sigma \\
& =\sum_{r\in \mathcal{K}_{s}^{\alpha ,t}}\mathbf{P}\Bigl(G_{r}^{\alpha
,t+1},\chi _{E_{j}^{k}}\omega \Bigr)\int_{G_{r}^{\alpha ,t+1}}\sigma \\
& \times \left\{ \frac{1}{\left\vert G_{r}^{\alpha ,t+1}\right\vert _{\sigma
}}\int_{G_{r}^{\alpha ,t+1}}\left\vert f\right\vert \sigma \right\} \\
& \leq \gamma ^{t+2}\int_{G_{s}^{\alpha ,t}}\mathbf{P}_{j}^{k}(\omega
)\sigma =\gamma ^{t+2}\int_{E_{j}^{k}}(\mathbf{P}_{j}^{k})^{\ast }(\chi
_{G_{s}^{\alpha ,t}}\sigma )\omega .
\end{align*}

By assumption, the maximal function $\mathcal{M}(\omega \cdot )$ maps $%
L^{p^{\prime }}(\omega )$ to $L^{p^{\prime }}(\sigma )$, and we now note a
particular consequence of this. In the definition \eqref{e.Pjk} we were
careful to insert $\chi _{E_{j}^{k}}$ on the right hand side. These sets are
pairwise disjoint, whence we have the inequality below for measures $\mu $. 
\begin{eqnarray}
&&\sum_{\left( k,j\right) \in \mathbb{I}_{s}^{\alpha ,t}}\mathbf{P}%
_{j}^{k}\left( \mu \right) \left( x\right)  \label{sumkj} \\
&&\ \ \ \ \ \leq \sum_{\left( k,j\right) \in \mathbb{I}_{s}^{\alpha
,t}}\sum_{r\in \mathcal{K}_{s}^{\alpha ,t}}\sum_{\ell =0}^{\infty }\frac{%
\delta \left( 2^{-\ell }\right) }{\left\vert 2^{\ell }G_{r}^{\alpha
,t+1}\right\vert }\left( \int_{2^{\ell }G_{r}^{\alpha ,t+1}}\chi
_{E_{j}^{k}}\mu \right) \chi _{G_{r}^{\alpha ,t+1}}\left( x\right)  \notag \\
&&\ \ \ \ \ \leq \sum_{\ell =0}^{\infty }\sum_{r\in \mathcal{K}_{s}^{\alpha
,t}}\frac{\delta \left( 2^{-\ell }\right) }{\left\vert 2^{\ell
}G_{r}^{\alpha ,t+1}\right\vert }\left( \int_{2^{\ell }G_{r}^{\alpha
,t+1}\cap G_{s}^{\alpha ,t}}\mu \right) \chi _{G_{r}^{\alpha ,t+1}}\left(
x\right)  \notag \\
&&\ \ \ \ \ \leq C\chi _{G_{s}^{\alpha ,t}}\mathcal{M}\left( \chi
_{G_{s}^{\alpha ,t}}\mu \right) \left( x\right) .  \notag
\end{eqnarray}%
Thus the inequality%
\begin{equation}
\lVert \chi _{G_{s}^{\alpha ,t}}\sum_{\left( k,j\right) \in \mathbb{I}%
_{s}^{\alpha ,t}}{\mathbf{P}}_{j}^{k}(\lvert g\rvert \omega )\rVert
_{L^{p^{\prime }}(\sigma )}\leq C\mathfrak{M}_{\ast }\lVert \chi
_{G_{s}^{\alpha ,t}}g\rVert _{L^{p^{\prime }}(\omega )}
\label{dual pivotal substitute}
\end{equation}%
follows immediately. By duality we then have 
\begin{equation}
\bigl\lVert\chi _{G_{s}^{\alpha ,t}}\sum_{\left( k,j\right) \in \mathbb{I}%
_{s}^{\alpha ,t}}({\mathbf{P}}_{j}^{k})^{\ast }(\lvert h\rvert \sigma )%
\bigr\rVert_{L^{p}(w)}\leq C\mathfrak{M}_{\ast }\lVert \chi _{G_{s}^{\alpha
,t}}h\rVert _{L^{p}(\sigma )}\ .  \label{e.p'app}
\end{equation}%
Note that it was the linearity that we wanted in \eqref{e.Pjk}, so that we
could appeal to the dual maximal function assumption.

We thus obtain 
\begin{equation*}
I\!I_{s}^{t}(2)\leq \gamma ^{p\left( t+2\right) }\sum_{(k,j)\in \mathbb{I}%
_{s}^{\alpha ,t}}R_{j}^{k}\left[ \int_{Q_{j}^{k}}(\mathbf{P}_{j}^{k})^{\ast
}\left( \chi _{G_{s}^{\alpha ,t}}\sigma \right) d\omega \right] ^{p}.
\end{equation*}%
Summing in $(t,s)$ and using $({\mathbf{P}}_{j}^{k})^{\ast }\leq \sum_{(\ell
,i)\in \mathbb{I}_{s}^{\alpha ,t}}({\mathbf{P}}_{i}^{\ell })^{\ast }$ for $%
(k,j)\in \mathbb{I}_{s}^{\alpha ,t}$ we obtain%
\begin{eqnarray}
\sum_{\left( t,s\right) \in \mathbb{L}^{\alpha }}I\!I_{s}^{t}(2) &\leq
&C\gamma ^{2p}\sum_{\left( t,s\right) \in \mathbb{L}^{\alpha }}\gamma
^{pt}\sum_{(k,j)\in \mathbb{I}_{s}^{\alpha ,t}}R_{j}^{k}\left[
\int_{Q_{j}^{k}}(\mathbf{P}_{j}^{k})^{\ast }\left( \chi _{G_{s}^{\alpha
,t}}\sigma \right) d\omega \right] ^{p}  \label{sum in ts} \\
&=&C\gamma ^{2p}\sum_{\left( t,s\right) \in \mathbb{L}^{\alpha }}\gamma
^{pt}\sum_{(k,j)\in \mathbb{I}_{s}^{\alpha ,t}}\left\vert
E_{j}^{k}\right\vert _{\omega }\left[ \frac{1}{\left\vert
NQ_{j}^{k}\right\vert _{w}}\int_{Q_{j}^{k}}({\mathbf{P}}_{j}^{k})^{\ast
}\left( \chi _{G_{s}^{\alpha ,t}}\sigma \right) \omega \right] ^{p}  \notag
\\
&\leq &C\gamma ^{2p}\sum_{\left( t,s\right) \in \mathbb{L}^{\alpha }}\gamma
^{pt}\int \left[ \mathcal{M}_{\omega }^{dy}\left( \chi _{G_{s}^{\alpha
,t}}\sum_{(\ell ,i)\in \mathbb{I}_{s}^{\alpha ,t}}({\mathbf{P}}_{i}^{\ell
})^{\ast }\left( \chi _{G_{s}^{\alpha ,t}}\sigma \right) \right) \right]
^{p}\omega \\
&\leq &C\gamma ^{2p}\sum_{\left( t,s\right) \in \mathbb{L}^{\alpha }}\gamma
^{pt}\int_{G_{s}^{\alpha ,t}}\left[ \sum_{(\ell ,i)\in \mathbb{I}%
_{s}^{\alpha ,t}}({\mathbf{P}}_{i}^{\ell })^{\ast }\left( \chi
_{G_{s}^{\alpha ,t}}\sigma \right) \right] ^{p}\omega \\
&\leq &C\gamma ^{2p}\mathfrak{M}_{\ast }^{p}\sum_{\left( t,s\right) \in 
\mathbb{L}^{\alpha }}\gamma ^{pt}\left\vert G_{s}^{\alpha ,t}\right\vert
_{\sigma },  \notag
\end{eqnarray}%
which is bounded by $C\gamma ^{2p}\mathfrak{M}_{\ast }^{p}\int \left\vert
f\right\vert ^{p}\sigma $. In the last line we are applying \eqref{e.p'app}
with $h\equiv 1$.
\end{proof}

\subsubsection{Decomposition of $I\!I(1)$}

We note that the term $I\!I_{s}^{t}(1)$ is dominated by $I\!I_{s}^{t}(1)\leq
I\!I\!I_{s}^{t}+I\!V_{s}^{t}$, where%
\begin{equation*}
I\!I\!I_{s}^{t}=\sum_{(k,j)\in \mathbb{I}_{s}^{\alpha
,t}}R_{j}^{k}\left\vert \sum_{r\in \mathcal{K}_{s}^{\alpha
,t}}\int_{G_{r}^{\alpha ,t+1}\setminus \Omega _{k+2}}\left( L^{\ast }\chi
_{E_{j}^{k}\cap T_{\ell \left( r\right) }}\omega \right) b_{r}\sigma
\right\vert ^{p},
\end{equation*}%
\begin{equation}
I\!V_{s}^{t}=\sum_{(k,j)\in \mathbb{I}_{s}^{\alpha ,t}}R_{j}^{k}\left\vert
\sum_{r\in \mathcal{K}_{s}^{\alpha ,t}}\int_{G_{r}^{\alpha ,t+1}\cap \Omega
_{k+2}}\left( L^{\ast }\chi _{E_{j}^{k}\cap T_{\ell \left( r\right) }}\omega
\right) b_{r}\sigma \right\vert ^{p}.  \label{e.IVstDef}
\end{equation}%
The term $I\!I\!I_{s}^{t}$ includes that part of $b_{r}$ supported on $%
G_{r}^{\alpha ,t+1}\setminus \Omega _{k+2}$, and the term $I\!V_{s}^{t}$
includes that part of $b_{r}$ supported on $G_{r}^{\alpha ,t+1}\cap \Omega
_{k+2}$, which is the more delicate case.

\begin{remark}
\label{r.gettingDifficult} The key difference between the terms $%
I\!I\!I_{s}^{t}$ and $IV_{s}^{t}$ is the range of integration: $%
G_{r}^{\alpha ,t+1}\setminus \Omega _{k+2}$ for $I\!I\!I_{s}^{t}$ and $%
G_{r}^{\alpha ,t+1}\cap \Omega _{k+2}$ for $IV_{s}^{t}$. Just as for the
fractional integral case, it is the latter case that is harder, requiring
combinatorial facts, which we come to at the end of the argument. An
additional fact that we return to in different forms, is that the set $%
G_{r}^{\alpha ,t+1}\cap \Omega _{k+2}$ can be further decomposed using
Whitney decompositions of $\Omega _{k+2}$ in the grid $\mathcal{D}^{\alpha }$%
.
\end{remark}

Recall the definition of $\mathfrak{T}_{\ast }$ in \eqref{e.Tfrak}. We claim 
\begin{equation}
\sum_{\left( t,s\right) \in \mathbb{L}^{\alpha }}I\!I\!I_{s}^{t}\leq C%
\mathfrak{T}_{\ast }^{p}\int \left\vert f\right\vert ^{p}\sigma .
\label{e.III<}
\end{equation}

\begin{proof}
Let $\widetilde{E_{j}^{k}}=3Q_{j}^{k}\setminus \Omega _{k+2}$ (note that $%
\widetilde{E_{j}^{k}}$ is much larger than $E_{j}^{k}$). We will use the
definition of $R_{j}^{k}$ in \eqref{Rkj'}, and the fact that 
\begin{equation}
\sum_{\ell \in \mathcal{L}_{s}^{\alpha ,t}}\chi _{T_{\ell }}\leq 3^{n}
\label{finover}
\end{equation}%
provided $N\geq 9$. We will apply the form \eqref{unif'} of (\ref%
{Tsharpomega}) with $g=\chi _{E_{j}^{k}\cap T_{\ell }}$, also see %
\eqref{e.Tfrak}, and with%
\begin{equation*}
Q\equiv T_{\ell }\cap \widehat{Q_{j}^{k}}
\end{equation*}%
in the case $T_{\ell }\cap \widehat{Q_{j}^{k}}$ is a cube, and with%
\begin{equation*}
Q\equiv T_{\ell }
\end{equation*}%
in the case $T_{\ell }\cap \widehat{Q_{j}^{k}}$ is $\emph{not}$ a cube (this
is possible since $T_{\ell }$ is the \emph{triple} of a $\mathcal{D}^{\alpha
}$-cube). In each case we claim that%
\begin{equation*}
Q\subset T_{\ell }\cap 3\widehat{Q_{j}^{k}}.
\end{equation*}%
Indeed, recall that $\widehat{Q_{j}^{k}}$ is the cube in the shifted grid $%
\mathcal{D}^{\alpha }$ that is selected by $Q_{j}^{k}$ as in the definition `%
\textbf{Select a shifted grid'} above and satisfies $3\widehat{Q_{j}^{k}}%
\subset MQ_{j}^{k}\subset NQ_{j}^{k}$ where $N$ is as in Remark \ref{3N}, by
choosing $R_{W}$ sufficiently large in \eqref{Whitney}. Now $T_{\ell }$ is a
triple of a cube in the grid $\mathcal{D}^{\alpha }$ and $\widehat{Q_{j}^{k}}
$ is a cube in $\mathcal{D}^{\alpha }$. Thus if $T_{\ell }\cap \widehat{%
Q_{j}^{k}}$ is \emph{not} a cube, then we must have $T_{\ell }\subset 3%
\widetilde{Q_{j}^{k}}$ and this proves the claim. We then have 
\begin{eqnarray*}
I\!I\!I_{s}^{t} &\leq &\sum_{(k,j)\in \mathbb{I}_{s}^{\alpha ,t}}R_{j}^{k}%
\left[ \sum_{\ell \in \mathcal{L}_{s}^{\alpha ,t}}\sum_{r\in \mathcal{K}%
_{s}^{\alpha ,t}:\ell =\ell \left( r\right) }\int_{G_{r}^{\alpha ,t+1}\cap 
\widetilde{E_{j}^{k}}}\left\vert L^{\ast }\chi _{E_{j}^{k}\cap T_{\ell
\left( r\right) }}\omega \right\vert ^{p^{\prime }}\sigma \right]
^{p-1}\int_{\widetilde{E_{j}^{k}}}\left\vert h_{s}^{\alpha ,t}\right\vert
^{p}\sigma \\
&\leq &\sum_{(k,j)\in \mathbb{I}_{s}^{\alpha ,t}}R_{j}^{k}\left[ \sum_{\ell
\in \mathcal{L}_{s}^{\alpha ,t}}\int_{T_{\ell }\cap 3\widehat{Q_{j}^{k}}%
}\left\vert L^{\ast }\chi _{E_{j}^{k}\cap T_{\ell }}\omega \right\vert
^{p^{\prime }}\sigma \right] ^{p-1}\int_{\widetilde{E_{j}^{k}}}\left\vert
h_{s}^{\alpha ,t}\right\vert ^{p}\sigma \\
&\leq &\mathfrak{T}_{\ast }^{p}\sum_{(k,j)\in \mathbb{I}_{s}^{\alpha
,t}}R_{j}^{k}\left[ \sum_{\ell \in \mathcal{L}_{s}^{\alpha ,t}}\left\vert
T_{\ell }\cap 3\widehat{Q_{j}^{k}}\right\vert _{\omega }\right] ^{p-1}\int_{%
\widetilde{E_{j}^{k}}}\left\vert h_{s}^{\alpha ,t}\right\vert ^{p}\sigma \\
&\leq &\mathfrak{T}_{\ast }^{p}\sum_{(k,j)\in \mathbb{I}_{s}^{\alpha ,t}}%
\frac{\left\vert E_{j}^{k}\right\vert _{\omega }}{\left\vert
NQ_{j}^{k}\right\vert _{\omega }}\left\vert NQ_{j}^{k}\right\vert _{\omega
}^{p-1}\int_{\widetilde{E_{j}^{k}}}\left\vert h_{s}^{\alpha ,t}\right\vert
^{p}\sigma \\
&\leq &C\mathfrak{T}_{\ast }^{p}\sum_{(k,j)\in \mathbb{I}_{s}^{\alpha
,t}}\int_{\widetilde{E_{j}^{k}}}\left\vert h_{s}^{\alpha ,t}\right\vert
^{p}\sigma \leq C\mathfrak{T}_{\ast }^{p}\sum_{(k,j)\in \mathbb{G}^{\alpha
}\cap \mathbb{H}_{s}^{\alpha ,t}}\int_{\widetilde{E_{j}^{k}}}\left(
\left\vert f\right\vert ^{p}+\left\vert \mathcal{M}_{\sigma }^{\alpha
}f\right\vert ^{p}\right) \sigma .
\end{eqnarray*}%
Using%
\begin{equation}
\sum_{\left( t,s\right) \in \mathbb{L}^{\alpha }}\sum_{(k,j)\in \mathbb{G}%
^{\alpha }\cap \mathbb{H}_{s}^{\alpha ,t}}\chi _{\widetilde{E_{j}^{k}}%
}=\sum_{all\ k,j}\chi _{\widetilde{E_{j}^{k}}}\leq C,  \label{Edecomp}
\end{equation}%
we thus obtain \eqref{e.III<}.
\end{proof}

\subsection{The Analysis of the Bad Function: Part 2}

This is the most intricate and final case. We will prove%
\begin{equation}
\sum_{\left( t,s\right) \in \mathbb{L}^{\alpha }}I\!V_{s}^{t}\leq C\{\gamma
^{2p}\mathfrak{T}^{p}+\mathfrak{T}_{\ast }^{p}+\gamma ^{2p}\mathfrak{M}%
_{\ast }^{p}\}\int \left\vert f\right\vert ^{p}\sigma \,,  \label{e.IV<}
\end{equation}%
where $\mathfrak{T}$, $\mathfrak{T}_{\ast }$ and $\mathfrak{M}_{\ast }$ are
defined in \eqref{e.TfrakStar}, \eqref{e.Tfrak} and \eqref{e.Mfrak*}
respectively. The estimates \eqref{e.I<}, \eqref{e.II2<}, \eqref{e.III<}, %
\eqref{e.IV<} prove \eqref{e.T33}, and so complete the proof of assertion 1
of the strong type characterization in Theorem \ref{twoweightHaar}.
Assertions 2 and 3 of Theorem \ref{twoweightHaar} follow as in the weak-type
Theorem \ref{weaktwoweightHaar}. Finally, to prove assertion 4 we note that
Lemma \ref{dom} and condition \eqref{Tsharpsigma} imply \eqref{testmax},
which by Theorem \ref{sawyerthm} yields \eqref{M2weight}.

\subsubsection{Whitney decompositions with shifted grids}

\label{s.whitneyShift}

We now use the shifted grid $\mathcal{D}^{\alpha }$ in place of the dyadic
grid $\mathcal{D}$\ to form a Whitney decomposition of $\Omega _{k}$ in the
spirit of \eqref{Whitney}. However, in order to fit the $\mathcal{D}^{\alpha
}$-cubes $\widehat{Q_{j}^{k}}$ defined above in "\textbf{Select a shifted
grid}", it will be necessary to use a smaller constant than the constant $%
R_{W}$ already used for the Whitney decomposition of $\Omega _{k}$ into $%
\mathcal{D}$-cubes. Recall the dimensional constant $M$ defined in (\ref%
{QhatContained}): it satisfies $\widehat{Q}\subset MQ$. Define the new
constant 
\begin{equation*}
R_{W}^{\prime }=\frac{R_{W}}{M}.
\end{equation*}%
We now use the decomposition of $\Omega _{k}$ in \eqref{Whitney}, but with $%
\mathcal{D}$ replaced by $\mathcal{D}^{\alpha }$ and with $R_{W}$ replaced
by $R_{W}^{\prime }$. We have thus decomposed%
\begin{equation*}
\Omega _{k}=\overset{\cdot }{{\bigcup_{m}}}B_{m}^{k}
\end{equation*}%
into a Whitney decomposition of pairwise disjoint cubes $B_{m}^{k}$ in $%
\mathcal{D}^{\alpha }$ satisfying%
\begin{eqnarray}
R_{W}^{\prime }B_{m}^{k} &\subset &\Omega _{k},  \label{Whitney alpha} \\
3R_{W}^{\prime }B_{m}^{k}\cap \Omega _{k}^{c} &\neq &\emptyset ,  \notag
\end{eqnarray}%
and the following analogue of the nested property in \eqref{Whitney}:%
\begin{equation}
B_{j}^{k}\varsubsetneqq B_{i}^{\ell }\text{ implies }k>\ell .  \label{nested}
\end{equation}

Now we introduce yet another construction. For every pair $\left( k,j\right) 
$ let $\widetilde{Q_{j}^{k}}$ be the unique $\mathcal{D}^{\alpha }$-cube $%
B_{m}^{k}$ containing $\widehat{Q_{j}^{k}}$. Note that such a cube $%
\widetilde{Q_{j}^{k}}=B_{m}^{k}$ exists since $\widehat{Q_{j}^{k}}\subset
MQ_{j}^{k}$ by \eqref{QhatContained} and $R_{W}Q_{j}^{k}\subset \Omega _{k}$
by \eqref{Whitney} imply that $R_{W}^{\prime }\widehat{Q_{j}^{k}}\subset
\Omega _{k}$. Of course the cube $\widetilde{Q_{j}^{k}}=B_{m}^{k}$ satisfies 
\begin{equation}
R_{W}^{\prime }\widetilde{Q_{j}^{k}}\subset \Omega _{k}.  \label{def Qtilda}
\end{equation}%
Moreover, we can arrange to have%
\begin{equation}
3\widetilde{Q_{j}^{k}}\subset NQ_{j}^{k},  \label{Qtilda contained}
\end{equation}%
where $N$ is as in Remark \ref{3N}, by choosing $R_{W}$ sufficiently large
in \eqref{Whitney}. See Figure~\ref{f.4}.

\begin{figure}[ptb]
\begin{center}
\includegraphics{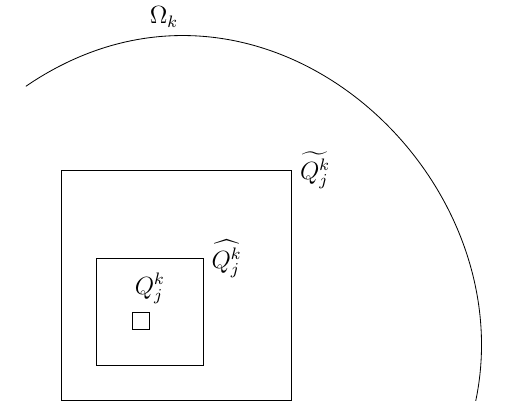}
\end{center}
\caption{The relative positions of the cubes $Q_{j}^{k}$, $\widehat{Q_{j}^{k}%
}$, and $\widetilde{Q_{j}^{k}}$ inside a set $\Omega _{k}$.}
\label{f.4}
\end{figure}
%

We will use this decomposition for the set $\Omega _{k+2}=\overset{\cdot }{%
\bigcup }_{m}B_{m}^{k+2}$ in our arguments below. The corresponding cubes $%
\widetilde{Q_{i}^{k+2}}$ that arise as certain of the $B_{m}^{k+2}$ satisfy
the conditions, 
\begin{equation}
3Q_{i}^{k+2}\subset \widehat{Q_{i}^{k+2}}\subset \widetilde{Q_{i}^{k+2}}%
\subset 3\widetilde{Q_{i}^{k+2}}\subset NQ_{i}^{k+2}\subset \Omega _{k+2}.
\label{modified}
\end{equation}
Note that the set of indices $m$ arising in the decomposition of $\Omega
_{k+2}$ into $\mathcal{D}^{\alpha }$ cubes $B_{m}^{k+2}$ is \emph{not} the
same as the set of indices $i$ arising in the decomposition of $\Omega
_{k+2} $ into $\mathcal{D}$ cubes $Q_{i}^{k+2}$, but this should not cause
confusion. So we will usually write $B_{i}^{k+2}$ with dummy index $i$
unless it is important to distinguish the cubes $B_{i}^{k+2}$ from the cubes 
$Q_{i}^{k+2}$. This distinction will be important in the proof of the
`Bounded Occurrence of Cubes' property in \S ~\ref{combinatorics} below.

Now use $\Omega _{k+2}=\cup B_{i}^{k+2}$ to split the term $I\!V_{s}^{t}$ in %
\eqref{e.IVstDef} into two pieces as follows:%
\begin{eqnarray}  \label{e.IV2Def}
I\!V_{s}^{t} &\leq &\sum_{(k,j)\in \mathbb{I}_{s}^{\alpha
,t}}R_{j}^{k}\left\vert \sum_{r\in \mathcal{K}_{s}^{\alpha ,t}}\sum_{i\in 
\mathcal{I}_{k}^{t}}\int_{G_{r}^{\alpha ,t+1}\cap B_{i}^{k+2}}\left( L^{\ast
}\chi _{E_{j}^{k}\cap T_{\ell \left( r\right) }}\omega \right) b_{r}\sigma
\right\vert ^{p}  \label{rewrite} \\
&&+\sum_{(k,j)\in \mathbb{I}_{s}^{\alpha ,t}}R_{j}^{k}\left\vert \sum_{r\in 
\mathcal{K}_{s}^{\alpha ,t}}\sum_{i\in \mathcal{J}_{k}^{t}}\int_{G_{r}^{%
\alpha ,t+1}\cap B_{i}^{k+2}}\left( L^{\ast }\chi _{E_{j}^{k}\cap T_{\ell
\left( r\right) }}\omega \right) b_{r}\sigma \right\vert ^{p}  \notag \\
&=&I\!V_{s}^{t}(1)+I\!V_{s}^{t}(2),  \notag
\end{eqnarray}%
where 
\begin{equation}
\mathcal{I}_{k}^{t}=\left\{ i:\mathsf{A}_{i}^{k+2}>\gamma ^{t+2}\right\} 
\text{ and }\mathcal{J}_{k}^{t}=\left\{ i:\mathsf{A}_{i}^{k+2}\leq \gamma
^{t+2}\right\} ,  \label{e.IJdef}
\end{equation}%
and where 
\begin{equation}
\mathsf{A}_{i}^{k+2}=\frac{1}{\left\vert B_{i}^{k+2}\right\vert _{\sigma }}%
\int_{B_{i}^{k+2}}\left\vert f\right\vert d\sigma   \label{e.Aik}
\end{equation}%
denotes the $\sigma $-average of $\left\vert f\right\vert $ on the cube $%
B_{i}^{k+2}$. We will occasionally write out $i:\mathsf{A}_{i}^{k+2}>\gamma
^{t+2}$ in place of $i\in \mathcal{I}_{k}^{t}$ for clarity, and the reader
may wish to do this throughout the argument, and similarly for $i\in 
\mathcal{J}_{k}^{t}$. Thus $I\!V(1)$ corresponds to the case where the
averages are `big' and $I\!V(2)$ where the averages are `small'. The
analysis of $I\!V_{s}^{t}(1)$ in \eqref{rewrite} is the hard case, taken up
later.


\subsubsection{A first combinatorial argument\label{combinatoric}}

\begin{description}
\item[Bounded Occurrence of Cubes] A given cube $B\in \mathcal{D}^{\alpha }$
can occur only a \emph{bounded} number of times as $B_{i}^{k+2}$ where%
\begin{equation*}
B_{i}^{k+2}\subset \widetilde{Q_{j}^{k}}\text{ with }\left( k,j\right) \in 
\mathbb{G}^{\alpha }.
\end{equation*}%
Specifically, let $(k_{1},j_{1}),\dotsc ,(k_{M},j_{M})\in \mathbb{G}^{\alpha
}$, as defined in \ref{e.GdefAlpha}, be such that $B=B_{i_{\sigma
}}^{k_{\sigma }+2}$ for some $i_{\sigma }$ and $B\subset \widetilde{%
Q_{j_{\sigma }}^{k_{\sigma }}}$ for $1\leq \sigma \leq M$. It follows that $%
M\leq C\beta ^{-1}$, where $\beta $ is the small constant chosen in the
definition of $\mathbb{G}^{\alpha }$. The constant $C$ here depends only on
dimension.
\end{description}


The Whitney structure, see \eqref{Whitney}, is decisive here, as well as the
fact that $\left\vert E_{j}^{k}\right\vert _{\omega }\geq \beta \left\vert
NQ_{j}^{k}\right\vert _{\omega }$ for $\left( k,j\right) \in \mathbb{G}%
^{\alpha }$. For this proof it will be useful to use $m$ to index the cubes $%
B_{m}^{k+2}$ and to use $i$ to index the cubes $Q_{i}^{k+2}$. The following
lemma captures the main essence of the Whitney structure, and will be
applied to cubes $B_{m}^{k+2}$ satisfying (\ref{Whitney alpha}) and cubes $%
Q_{i}^{k+2}$ satisfying (\ref{Whitney}).

\begin{lemma}
\label{Whitney comparability}Suppose that $Q$ is a member of the Whitney
decomposition of $\Omega $ with respect to the grid $\mathcal{D}$ and with
Whitney constant $R_{W}$. Suppose also that a cube $B$ is a member of a
Whitney decomposition of the same open set $\Omega $ but with respect to the
grid $\mathcal{D}^{\alpha }$ and with Whitney constant $R_{W}^{\prime }$. If 
$N<\frac{1}{2}R_{W}$ and $B\subset NQ$, then the sidelengths of $Q$ and $B$
are comparable:%
\begin{equation*}
\ell \left( Q\right) \approx \ell \left( B\right) .
\end{equation*}
\end{lemma}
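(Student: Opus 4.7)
My plan is a direct geometric argument that plays the two Whitney conditions against each other. The hypothesis $B \subset NQ$ immediately yields one direction, namely $\ell(B) \le N \ell(Q)$, so the real content is the reverse bound $\ell(Q) \lesssim \ell(B)$.

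For that bound, I will use the Whitney condition satisfied by $B$ in $\mathcal{D}^\alpha$: there exists a point $z \in 3R_W' B$ with $z \notin \Omega$. Since $Q$ is a Whitney cube with constant $R_W$, we have $R_W Q \subset \Omega$, so $z \notin R_W Q$. Working in the $\ell^\infty$ metric with $x_Q$ the center of $Q$ and $x_B$ the center of $B$, the constraint $B \subset NQ$ forces $\lvert x_B - x_Q\rvert_\infty \le \tfrac{1}{2} N \ell(Q)$ (up to the small correction coming from $\ell(B)$, which I will keep track of carefully). Therefore any point of $3R_W' B$ lies within $\ell^\infty$-distance at most $\tfrac{N}{2}\ell(Q) + \tfrac{3 R_W'}{2}\ell(B)$ of $x_Q$. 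On the other hand, $z \notin R_W Q$ gives $\lvert z - x_Q\rvert_\infty \ge \tfrac{R_W}{2}\ell(Q)$. Combining these two estimates yields
\[
\tfrac{R_W}{2}\ell(Q) \;\le\; \tfrac{N}{2}\ell(Q) + \tfrac{3R_W'}{2}\ell(B),
\]
so $(R_W - N)\ell(Q) \le 3R_W' \ell(B)$. The hypothesis $N < \tfrac{1}{2} R_W$ makes $R_W - N > 0$ (in fact $\ge \tfrac{R_W}{2}$), and we conclude $\ell(Q) \le \frac{6 R_W'}{R_W}\ell(B)$.

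The main (very mild) obstacle is simply bookkeeping: being careful that $B \subset NQ$ controls the position of $x_B$ as claimed, and checking that the hypothesis $N < \tfrac{1}{2} R_W$ is exactly what is needed to make the denominator $R_W - N$ positive (and in fact comparable to $R_W$). No deeper combinatorics of the Whitney decompositions is required; the lemma is purely a statement about how two Whitney cubes for the same open set, with comparable Whitney constants, must have comparable sidelengths when one sits well inside a fixed multiple of the other.
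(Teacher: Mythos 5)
Your proof is correct and is essentially the paper's argument made explicit: the paper compares both $\ell(Q)$ and $\ell(B)$ to the distance to $\partial \Omega$ (using $N<\tfrac{1}{2}R_{W}$ to see that every point of $NQ$ has distance to $\partial \Omega$ comparable to $\ell(Q)$), while you unwind the same two Whitney facts, $R_{W}Q\subset \Omega$ and $3R_{W}^{\prime }B\cap \Omega ^{c}\neq \emptyset$, with an explicit boundary point $z$, and get the easy direction $\ell(B)\leq N\ell(Q)$ directly from $B\subset NQ$. There is no gap; the constants you produce depend only on $N$, $R_{W}$, $R_{W}^{\prime }$, exactly as in the paper.
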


\begin{proof}[Proof of Lemma \protect\ref{Whitney comparability}]
Since $N<\frac{1}{2}R_{W}$ and $Q$ is a Whitney cube we have 
\begin{equation*}
\ell \left( Q\right) \approx dist\left( Q,\partial \Omega \right) \approx
\sup_{x\in NQ}dist\left( x,\partial \Omega \right) \approx \inf_{x\in
NQ}dist\left( x,\partial \Omega \right) .
\end{equation*}%
Then since $B\subset NQ$ and $B$ is a Whitney cube (for the other
decomposition) we have%
\begin{equation*}
\ell \left( Q\right) \approx dist\left( B,\partial \Omega \right) \approx
\ell \left( B\right) .
\end{equation*}
\end{proof}

\begin{proof}[Proof of Bounded Occurrence of Cubes]
So suppose that $(k_{1},j_{1}),\dotsc ,(k_{M},j_{M})\in \mathbb{G}^{\alpha }$
and $B=B_{i_{\sigma }}^{k_{\sigma }+2}\subset \widetilde{Q_{j_{\sigma
}}^{k_{\sigma }}}$ for $1\leq \sigma \leq M$, with the pairs of indices $%
(k_{\sigma },j_{\sigma })$ being distinct. Observe that the finite overlap
property in \eqref{Whitney} applies to the cubes $\widetilde{Q_{j_{\sigma
}}^{k_{\sigma }}}$ in the Whitney decompostion (\ref{Whitney alpha}) of $%
\Omega _{k_{\sigma }}$ with grid $\mathcal{D}^{\alpha }$ and Whitney
constant $R_{W}^{\prime }$. Thus for fixed $k$ the number of $(k_{\sigma
},j_{\sigma })$ with $k_{\sigma }=k$ is bounded by the finite overlap
constant since $B$ is inside each $\widetilde{Q_{j_{\sigma }}^{k_{\sigma }}}$%
. This gives us the observation that a single integer $k$ can occur only a 
\emph{bounded} number $C_{b}$ of times among the $k_{1},\dotsc ,k_{M}$.

After a relabeling, we can assume that all the $k_{\sigma }$ for $1\leq
\sigma \leq M^{\prime }$ are distinct, listed in increasing order, and that
the number $M^{\prime }$ of $k_{\sigma }$ satisfies $M\leq C_{b}M^{\prime }$%
. The nested property of \eqref{Whitney} assures us that $B$ is an element
of the Whitney decomposition (\ref{Whitney alpha}) of $\Omega _{k}$ for 
\emph{all} $k_{1}\leq k\leq k_{M^{\prime }}$.

\begin{remark}
\label{intermediate cubes}Note that the $k_{\sigma }$ are not necessarily
consecutive since we require that $(k_{\sigma },j_{\sigma })\in \mathbb{G}%
^{\alpha }$. Nevertheless, the cube $B$ \emph{does} occur among the $%
B_{i}^{k+2}$ for any $k$ that lies between $k_{\sigma }$ and $k_{\sigma +1}$%
. These latter occurrences of $B$ may be unbounded, but we are only
concerned with bounding those for which $(k_{\sigma },j_{\sigma })\in 
\mathbb{G}^{\alpha }$, and it is these occurrences that our argument is
treating.
\end{remark}

Thus for $3\leq \sigma \leq M^{\prime }$, we have $k_{1}\leq k_{\sigma
}-2\leq k_{M^{\prime }}$, and it follows from Remark \ref{intermediate cubes}
that the cube $B$ is a member of the Whitney decomposition (\ref{Whitney
alpha}) of the open set $\Omega _{k_{\sigma }}$ with grid $\mathcal{D}%
^{\alpha }$ and Whitney constant $R_{W}^{\prime }$. But we also have that $%
Q_{j_{\sigma }}^{k_{\sigma }}$ is a member of the Whitney decomposition (\ref%
{Whitney}) of $\Omega _{k_{\sigma }}$ with grid $\mathcal{D}$ and Whitney
constant $R_{W}$. Thus Lemma \ref{Whitney comparability} gives us the
equivalence of side lengths $\ell \left( Q_{j_{\sigma }}^{k_{\sigma
}}\right) \approx \ell \left( B\right) $. Combined with the containment $%
NQ_{j_{\sigma }}^{k_{\sigma }}\supset B$, we see that the number of possible
locations for the cubes $Q_{j_{\sigma }}^{k_{\sigma }}\in \mathcal{D}$ is
bounded by a constant $C_{b}^{\prime }$ depending only on dimension.

Apply the pigeonhole principle to the possible locations of the $%
Q_{j_{\sigma }}^{k_{\sigma }}$. After a relabeling, we can argue under the
assumption that all $Q_{j_{\sigma }}^{k_{\sigma }}$ equal the same cube $%
Q^{\prime }$ for all choices of $1\leq \sigma \leq M^{\prime \prime }$ where 
$M^{\prime }\leq C_{b}^{\prime }M^{\prime \prime }$. Now comes the crux of
the argument where the condition that the indices $(k_{\sigma },j_{\sigma })$
lie in $\mathbb{G}^{\alpha }$, as given in (\ref{e.GdefAlpha}), proves
critical. In particular we have $\lvert E_{j_{\sigma }}^{k_{\sigma }}\rvert
_{\omega }\geq \beta \lvert NQ^{\prime }\rvert _{\omega }$ where $N$ is as
in Remark \ref{3N}. The $k_{\sigma }$ are distinct, and the sets $%
E_{j_{\sigma }}^{k_{\sigma }}\subset Q^{\prime }$ are pairwise disjoint,
hence 
\begin{equation*}
M^{\prime \prime }\beta \lvert NQ^{\prime }\rvert _{\omega }\leq
\sum_{\sigma =1}^{M^{\prime \prime }}\lvert E_{j_{\sigma }}^{k_{\sigma
}}\rvert _{\omega }\leq \lvert Q^{\prime }\rvert _{\omega }
\end{equation*}%
implies $M^{\prime \prime }\leq \beta ^{-1}$. Thus \thinspace $M\leq
C_{b}C_{b}^{\prime }\beta ^{-1}$ and our proof of the claim is complete.
\end{proof}


\subsubsection{Replace bad functions by averages}

The first task in the analysis of the terms $IV_{s}^{t}(1) $ and $%
IV_{s}^{t}(2) $ will be to replace part of the `bad functions' $b_{r}$ by
their averages over $B_{i}^{k+2}$, or more exactly the averages $\mathsf{A}%
_{i}^{k+2}$. We again appeal to the H\"{o}lder continuity of $L^{\ast }\chi
_{E_{j}^{k}\cap T_{\ell }}\omega $. By construction, $3B_{i}^{k+2}$ does not
meet $E_{j}^{k}$, so that Lemma~\ref{constant} applies. If $%
B_{i}^{k+2}\subset G_{r}^{\alpha ,t+1}$ for some $r$, then there is a
constant $c_{i}^{k+2}$ satisfying $\left\vert c_{i}^{k+2}\right\vert \leq 1$
such that%
\begin{eqnarray}
\Biggl\vert\int_{B_{i}^{k+2}}\left( L^{\ast }\chi _{E_{j}^{k}\cap T_{\ell
\left( r\right) }}\omega \right) b_{r}\sigma &-&\left\{
c_{i}^{k+2}\int_{B_{i}^{k+2}}\left( L^{\ast }\chi _{E_{j}^{k}\cap T_{\ell
\left( r\right) }}\omega \right) \sigma \right\} \left( \left\vert
A_{r}^{\alpha ,t+1}\right\vert +\mathsf{A}_{i}^{k+2}\right) \Biggr\vert
\label{e.replace} \\
&\leq &C\mathbf{P}\left( B_{i}^{k+2},\chi _{E_{j}^{k}\cap T_{\ell \left(
r\right) }}\omega \right) \int_{B_{i}^{k+2}}\left\vert b_{r}\right\vert
\sigma \,.  \notag
\end{eqnarray}%
Indeed, if $z_{i}^{k+2}$ is the center of the cube $B_{i}^{k+2}$, we have%
\begin{eqnarray*}
&&\int_{B_{i}^{k+2}}\left( L^{\ast }\chi _{E_{j}^{k}\cap T_{\ell \left(
r\right) }}\omega \right) b_{r}\sigma \\
&=&L^{\ast }\left( \chi _{E_{j}^{k}\cap T_{\ell \left( r\right) }}\omega
\right) \left( z_{i}^{k+2}\right) \int_{B_{i}^{k+2}}b_{r}\sigma +O\left\{ 
\mathbf{P}\left( B_{i}^{k+2},\chi _{E_{j}^{k}\cap T_{\ell \left( r\right)
}}\omega \right) \int_{B_{i}^{k+2}}\left\vert b_{r}\right\vert \sigma
\right\} \\
&=&\left\{ \int_{B_{i}^{k+2}}\left( L^{\ast }\chi _{E_{j}^{k}\cap T_{\ell
\left( r\right) }}\omega \right) \sigma \right\} \frac{1}{\left\vert
B_{i}^{k+2}\right\vert _{\sigma }}\int_{B_{i}^{k+2}}b_{r}\sigma \\
&&+O\left\{ \mathbf{P}\left( B_{i}^{k+2},\chi _{E_{j}^{k}\cap T_{\ell \left(
r\right) }}\omega \right) \int_{B_{i}^{k+2}}\left\vert b_{r}\right\vert
\sigma \right\} .
\end{eqnarray*}%
Now, the functions $b_{r}$ are given in \eqref{e.brDef}, and by
construction, we note that%
\begin{equation*}
\frac{1}{\left\vert B_{i}^{k+2}\right\vert _{\sigma }}\left\vert
\int_{B_{i}^{k+2}}b_{r}\sigma \right\vert \leq \left\vert \frac{1}{%
\left\vert G_{r}^{\alpha ,t+1}\right\vert _{\sigma }}\int_{G_{r}^{\alpha
,t+1}}f\sigma \right\vert +\frac{1}{\left\vert B_{i}^{k+2}\right\vert
_{\sigma }}\int_{B_{i}^{k+2}}\left\vert f\right\vert \sigma =\left\vert
A_{r}^{\alpha ,t+1}\right\vert +\mathsf{A}_{i}^{k+2}.
\end{equation*}%
So with%
\begin{equation*}
c_{i}^{k+2}=\frac{1}{\left\vert A_{r}^{\alpha ,t+1}\right\vert +\mathsf{A}%
_{i}^{k+2}}\frac{1}{\left\vert B_{i}^{k+2}\right\vert _{\sigma }}%
\int_{B_{i}^{k+2}}b_{r}\sigma ,
\end{equation*}%
we have $\left\vert c_{i}^{k+2}\right\vert \leq 1$ and%
\begin{eqnarray*}
\int_{B_{i}^{k+2}}\left( L^{\ast }\chi _{E_{j}^{k}\cap T_{\ell \left(
r\right) }}\omega \right) b_{r}\sigma &=&\left\{
c_{i}^{k+2}\int_{B_{i}^{k+2}}\left( L^{\ast }\chi _{E_{j}^{k}\cap T_{\ell
\left( r\right) }}\omega \right) \sigma \right\} \left( \left\vert
A_{r}^{\alpha ,t+1}\right\vert +\mathsf{A}_{i}^{k+2}\right) \\
&&+O\left\{ \mathbf{P}\left( B_{i}^{k+2},\chi _{E_{j}^{k}\cap T_{\ell \left(
r\right) }}\omega \right) \int_{B_{i}^{k+2}}\left\vert b_{r}\right\vert
\sigma \right\} .
\end{eqnarray*}%
In the special case where $B_{i}^{k+2}$ is equal to $G_{r}^{\alpha ,t+1}$,
then $\int_{B_{i}^{k+2}}b_{r}\sigma =\int b_{r}\sigma =0$ and the above
proof shows that%
\begin{equation}
\left\vert \int_{G_{r}^{\alpha ,t+1}}\left( L^{\ast }\chi _{E_{j}^{k}\cap
T_{\ell \left( r\right) }}\omega \right) b_{r}\sigma \right\vert \leq C%
\mathbf{P}\left( G_{r}^{\alpha ,t+1},\chi _{E_{j}^{k}\cap T_{\ell \left(
r\right) }}\omega \right) \int_{G_{r}^{\alpha ,t+1}}\left\vert f\right\vert
\sigma \ ,  \label{e.replace'}
\end{equation}%
since $\int_{G_{r}^{\alpha ,t+1}}\left\vert b_{r}\right\vert \sigma
=\int_{G_{r}^{\alpha ,t+1}}\left\vert f-A_{r}^{\alpha ,t+1}\right\vert
\sigma \leq 2\int_{G_{r}^{\alpha ,t+1}}\left\vert f\right\vert \sigma $.

Our next task is to organize the sum over the cubes $B_{i}^{k+2}$ relative
to the cubes $G_{r}^{\alpha ,t+1}$. This is necessitated by the fact that
the cubes $B_{i}^{k+2}$ are \emph{not} pairwise disjoint in $k$, and we
thank Tuomas Hyt\"{o}nen for bringing this point to our attention. The cube $%
B_{i}^{k+2}$ must intersect $\bigcup_{r\in \mathcal{K}_{s}^{\alpha
,t}}G_{r}^{\alpha ,t+1}$ since otherwise%
\begin{equation*}
\int_{G_{r}^{\alpha ,t+1}\cap B_{i}^{k+2}}\left( L^{\ast }\chi
_{E_{j}^{k}\cap T_{\ell \left( r\right) }}\omega \right) b_{r}\sigma =0,\ \
\ \ \ r\in \mathcal{K}_{s}^{\alpha ,t}.
\end{equation*}%
Thus $B_{i}^{k+2}$ satisfies exactly one of the following two cases which we
indicate by writing $i\in \QTR{up}{Case}\left( a\right) $ or $i\in \QTR{up}{%
Case}\left( b\right) $:

\textbf{Case (a):} $B_{i}^{k+2}$ \emph{strictly} contains at least one of
the cubes $G_{r}^{\alpha ,t+1}$, $r\in \mathcal{K}_{s}^{\alpha ,t}$;

\textbf{Case (b):} $B_{i}^{k+2}\subset G_{r}^{\alpha ,t+1}$ for some $r\in 
\mathcal{K}_{s}^{\alpha ,t}$.

Note that the cubes $B_{i}^{k+2}$ with $i\in \mathcal{I}_{k}^{t}$ can only
satisfy case (b), while the cubes $B_{i}^{k+2}$ with $i\in \mathcal{J}%
_{k}^{t}$ can can satisfy either of the two cases above. However, we have
the following claim.

\begin{claim}
\label{bounded triples}For each fixed $r\in \mathcal{K}_{s}^{\alpha ,t}$, we
have%
\begin{equation*}
\sum_{\left( k+2,i,j\right) \text{ admissible}}\chi _{B_{i}^{k+2}}\leq C,
\end{equation*}%
where the sum is taken over all \emph{admissible} index \emph{triples} $%
\left( k+2,i,j\right) $, i.e. those for which the cube $B_{i}^{k+2}$ arises
in term $IV_{s}^{t}$ with both $B_{i}^{k+2}\subset G_{r}^{\alpha ,t+1}$ and $%
B_{i}^{k+2}\subset \widetilde{Q_{j}^{k}}$.
\end{claim}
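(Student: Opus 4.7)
The plan is to reduce the overlap bound to the \emph{Bounded Occurrence of Cubes} property just established, together with the disjointness of Whitney cubes at each fixed level. Fix $r\in\mathcal{K}_s^{\alpha,t}$ and fix an arbitrary point $x$; we must bound the number of admissible triples $(k+2,i,j)$ with $x\in B_i^{k+2}$. For each fixed $k$, the $\mathcal{D}^\alpha$-Whitney cubes $\{B_i^{k+2}\}_i$ are pairwise disjoint by the Whitney condition applied to $\Omega_{k+2}$, so at most one index $i=i(k)$ satisfies $x\in B_i^{k+2}$; moreover, for this fixed $(k,i)$, the number of admissible $j$'s with $B_{i(k)}^{k+2}\subset\widetilde{Q_j^k}$ is bounded by a dimensional constant, because the family $\{\widetilde{Q_j^k}\}_j$ consists of (pairwise disjoint) cubes from the $\mathcal{D}^\alpha$-Whitney decomposition of $\Omega_k$, and Lemma~\ref{Whitney comparability} together with the bounded overlap of $\{NQ_j^k\}_j$ in \eqref{Whitney} forces only $O(1)$ of the $\widetilde{Q_j^k}$'s to sit inside the same Whitney cube at level $k$.

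With these reductions in hand, it suffices to bound the number of admissible values of $k$. As $k$ increases through admissible values, the cubes $B^{(k)}:=B_{i(k)}^{k+2}$ form a non-increasing nested chain in $\mathcal{D}^\alpha$ (this uses the nested property of \eqref{Whitney} applied to the single Whitney family $\{B_m^{k+2}\}$: if two such cubes both contain $x$ and one is strictly contained in the other, then the smaller one has the larger height). The Bounded Occurrence of Cubes property bounds, by a constant depending only on $\beta$ (namely $C\beta^{-1}$), the number of admissible pairs $(k,j)$ for which $B^{(k)}$ equals a given cube $B$.

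The remaining, and main, task is to bound the number of \emph{distinct} cubes that can occur in this chain. Here I would use the two containments defining admissibility. On one side, $B^{(k)}\subset G_r^{\alpha,t+1}$ provides an upper endpoint for the chain. On the other side, $B^{(k)}\subset\widetilde{Q_{j(k)}^{k}}$ together with Lemma~\ref{Whitney comparability} ties the side length of $\widetilde{Q_{j(k)}^{k}}$ to that of $Q_{j(k)}^{k}\in\mathcal{D}$, so that $\ell(B^{(k)})$ is controlled by the Whitney distance from $x$ to $\partial\Omega_{k+2}$. I would then argue, using the nested property across different levels $k$ together with the fact that each distinct $B^{(k)}$ forces a strict dyadic shrinkage, that only a bounded (dimensional) number of strictly nested elements of the chain are compatible with the combined Whitney conditions for $\Omega_k$ and $\Omega_{k+2}$. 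Multiplying this bound by $C\beta^{-1}$ from the Bounded Occurrence step and by the $O(1)$ count of compatible $j$'s then yields the uniform estimate $\sum_{(k+2,i,j)\text{ admissible}}\chi_{B_i^{k+2}}\le C$.
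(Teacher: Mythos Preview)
Your reductions in the first paragraph are correct and match the paper's: disjointness of the $B_i^{k+2}$ at fixed $k$, the bound on the number of admissible $j$'s for fixed $(k,i)$ (this is \eqref{record}), and the appeal to Bounded Occurrence of Cubes to control how many $k$'s can produce the \emph{same} cube $B$. The gap is in your ``remaining, and main, task'': you assert that only a bounded number of \emph{distinct} cubes can appear in the nested chain $B^{(k)}$, but the reasoning you sketch does not supply this. Tying $\ell(\widetilde{Q_j^k})$ to $\ell(Q_j^k)$ via Lemma~\ref{Whitney comparability} gives no lower bound on either, and the Whitney distance from $x$ to $\partial\Omega_{k+2}$ can shrink to zero as $k$ grows; nothing you have written prevents the chain from containing arbitrarily many strictly nested dyadic cubes inside $G_r^{\alpha,t+1}$.

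The missing ingredient is the containment $G_r^{\alpha,t+1}\subset\widetilde{Q_j^k}$, which uses in an essential way that $(k,j)\in\mathbb{I}_s^{\alpha,t}$, i.e.\ that $\mathcal{A}(Q_j^k)=G_s^{\alpha,t}$. Both $G_r^{\alpha,t+1}$ and $\widetilde{Q_j^k}$ lie in $\mathcal{D}^\alpha$ and meet (they both contain $B_i^{k+2}$), so one contains the other; but $\widetilde{Q_j^k}\subset G_r^{\alpha,t+1}$ would force $3Q_j^k\subset\widehat{Q_j^k}\subset\widetilde{Q_j^k}\subset G_r^{\alpha,t+1}$, contradicting the definition of $\mathcal{A}(Q_j^k)=G_s^{\alpha,t}$ as the \emph{smallest} principal cube containing $3Q_j^k$. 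Once $G_r^{\alpha,t+1}\subset\widetilde{Q_j^k}\subset NQ_j^k$, one gets $\ell(Q_j^k)\gtrsim\ell(G_r^{\alpha,t+1})$, and then Lemma~\ref{Whitney comparability} (applied to $B_\ell^{k}$ or $B_\ell^{k'}$ inside $NQ_j^k$) forces all the $B^{(k)}$ in the chain, with at most two exceptions at the top levels, to have side length $\gtrsim\ell(G_r^{\alpha,t+1})$. This bounds the number of distinct dyadic locations, and Bounded Occurrence of Cubes finishes the argument. Your outline is on the right track structurally, but without this principal-cube observation the ``main task'' cannot be completed.
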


But we first establish a containment that will be useful later as well.
Recall that $\Omega _{k+2}$ decomposes as a pairwise disjoint union of cubes 
$B_{i}^{k+2}$, and thus we have%
\begin{equation*}
\int_{G_{r}^{\alpha ,t+1}\cap \Omega _{k+2}}\left( L^{\ast }\chi
_{E_{j}^{k}\cap T_{\ell \left( r\right) }}\omega \right) b_{r}\sigma
=\sum_{i:B_{i}^{k+2}\cap \widetilde{Q_{j}^{k}}\neq \emptyset
}\int_{B_{i}^{k+2}}\left( L^{\ast }\chi _{E_{j}^{k}\cap T_{\ell \left(
r\right) }}\omega \right) b_{r}\sigma ,
\end{equation*}%
since the support of $L^{\ast }\chi _{E_{j}^{k}\cap T_{\ell \left( r\right)
}}\omega $ is contained in $2Q_{j}^{k}\subset \widehat{Q_{j}^{k}}\subset 
\widetilde{Q_{j}^{k}}$ by \eqref{loc}. Since both $B_{i}^{k+2}$ and $%
\widetilde{Q_{j}^{k}}$ lie in the grid $\mathcal{D}^{\alpha }$ and have
nonempty intersection, one of these cubes is contained in the other. Now $%
B_{i}^{k+2}$ cannot \emph{strictly} contain $\widetilde{Q_{j}^{k}}$ since $%
\widetilde{Q_{j}^{k}}=B_{\ell }^{k}$ for some $\ell $ and the cubes $\left\{
B_{j}^{k}\right\} _{k,j}$ satisfy the nested property \eqref{nested}. It
follows that we must have%
\begin{equation}
B_{i}^{k+2}\subset \widetilde{Q_{j}^{k}}\text{ whenever }B_{i}^{k+2}\cap 
\widetilde{Q_{j}^{k}}\neq \emptyset .  \label{Bhat}
\end{equation}

Now we return to Claim \ref{bounded triples}, and note that for a fixed
index pair $\left( k+2,i\right) $, the bounded overlap condition in (\ref%
{Whitney}) shows that there are only a bounded number of indices $j$ such
that $B_{i}^{k+2}\subset \widetilde{Q_{j}^{k}}\subset NQ_{j}^{k}$ - see (\ref%
{Qtilda contained}). We record this observation here:%
\begin{equation}
\#\left\{ j:B_{i}^{k+2}\subset \widetilde{Q_{j}^{k}}\right\} \leq C,\ \ \ \
\ \text{for each pair }\left( k+2,i\right) .  \label{record}
\end{equation}%
Thus Claim \ref{bounded triples} is reduced to this one.

\begin{claim}
\label{bounded pairs}%
\begin{eqnarray}
&&\sum \left\{ \chi _{B_{i}^{k+2}}:B_{i}^{k+2}\subset G_{r}^{\alpha ,t+1}%
\text{ for \emph{some} }\left( k,j\right) \in \mathbb{I}_{s}^{\alpha ,t}%
\text{ with }B_{i}^{k+2}\subset \widetilde{Q_{j}^{k}}\right\}
\label{k bounded} \\
&&\ \ \ \ \ \ \ \ \ \ \ \ \ \ \ \leq C\text{ for each }r\in \mathcal{K}%
_{s}^{\alpha ,t}.  \notag
\end{eqnarray}
\end{claim}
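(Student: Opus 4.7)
The plan is to fix a point $x\in G_{r}^{\alpha,t+1}$ and bound the number of admissible pairs $(k+2,i)$ with $x\in B_{i}^{k+2}$. Since the Whitney decomposition \eqref{Whitney alpha} of $\Omega_{k+2}$ in the grid $\mathcal{D}^{\alpha}$ is disjoint, for each $k$ there is at most one such cube, which I denote $B^{k}:=B_{i}^{k+2}$; let $K\subset\mathbb{Z}$ be the set of admissible $k$'s at $x$, so the goal becomes $|K|\leq C$. The unique $\mathcal{D}^{\alpha}$-Whitney cube of $\Omega_{k}$ containing $x$ must then be $\widetilde{Q_{j_{k}}^{k}}$ for the $j_{k}$ furnished by the admissibility condition.

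The first key observation is that $\widetilde{Q_{j_{k}}^{k}}\supsetneq G_{r}^{\alpha,t+1}$ for every $k\in K$. Both cubes lie in $\mathcal{D}^{\alpha}$ and contain $x$, hence are nested. Were $\widetilde{Q_{j_{k}}^{k}}\subseteq G_{r}^{\alpha,t+1}$, then $3Q_{j_{k}}^{k}\subset\widehat{Q_{j_{k}}^{k}}\subset\widetilde{Q_{j_{k}}^{k}}\subseteq G_{r}^{\alpha,t+1}$ would contradict $\mathcal{A}(Q_{j_{k}}^{k})=G_{s}^{\alpha,t}$, since $G_{r}^{\alpha,t+1}\subsetneq G_{s}^{\alpha,t}$ would be a strictly smaller principal cube containing $3Q_{j_{k}}^{k}$.

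Next I would exploit that the cubes $\{B^{k}\}_{k\in K}$ are pairwise nested $\mathcal{D}^{\alpha}$-cubes containing $x$, all contained in $G_{r}^{\alpha,t+1}$. Ordering $K=\{k_{1}<\cdots<k_{M}\}$, the nested property \eqref{nested} yields $B^{k_{1}}\supseteq B^{k_{2}}\supseteq\cdots\supseteq B^{k_{M}}$. I then partition the indices into maximal runs where $B^{k_{\sigma}}$ is constant. By the Bounded Occurrence of Cubes property established earlier, each such run has length at most $C\beta^{-1}$, so $M\leq C\beta^{-1}L$, where $L$ is the number of distinct cubes in the nested chain.

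The main obstacle, and final step, will be bounding $L$ by a universal constant. The associated cubes $\widetilde{Q_{j_{k}}^{k}}$ for the $L$ distinct $B^{k}$'s form a nested chain in $\mathcal{D}^{\alpha}$ each strictly containing $G_{r}^{\alpha,t+1}$. Lemma \ref{Whitney comparability} then gives $\ell(Q_{j_{k}}^{k})\approx\ell(\widetilde{Q_{j_{k}}^{k}})\geq 2\ell(G_{r}^{\alpha,t+1})$, while the containment $x\in NQ_{j_{k}}^{k}$ restricts $Q_{j_{k}}^{k}\in\mathcal{D}$ to a bounded number of locations at each dyadic scale. A pigeonhole argument parallel to that in the proof of Bounded Occurrence of Cubes, combined with the pairwise disjointness of the sets $E_{j_{k}}^{k}\subset\Omega_{k+1}\setminus\Omega_{k+2}$ across distinct $k$ and the bound $|E_{j_{k}}^{k}|_{\omega}\geq\beta|NQ_{j_{k}}^{k}|_{\omega}$ coming from $(k,j_{k})\in\mathbb{G}^{\alpha}$, will then produce the required bound on $L$ and complete the proof.
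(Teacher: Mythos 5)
Your framework is sound up to the last step, and in fact mirrors the paper's: fixing $x$ and using disjointness at each level to get at most one cube $B^{k}$ per $k$; the monotonicity of the chain from \eqref{nested}; the containment $G_{r}^{\alpha ,t+1}\subset \widetilde{Q_{j_{k}}^{k}}$ (this is precisely \eqref{follows easily}); and the use of the Bounded Occurrence of Cubes to reduce everything to bounding the number $L$ of \emph{distinct} cubes in the chain. The gap is in the final step, which is the real crux of the claim.

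As sketched, the pigeonhole on the cubes $Q_{j_{k}}^{k}$ cannot bound $L$. You control the number of possible locations of $Q_{j_{k}}^{k}$ at each fixed scale, but nothing bounds the number of scales that occur: as $k$ decreases the nested cubes $\widetilde{Q_{j_{k}}^{k}}$ increase, and by \eqref{Whitney alpha} their side lengths are comparable to $dist\left( x,\Omega _{k}^{c}\right) $, which may grow without bound; they are bounded below by $\ell (G_{r}^{\alpha ,t+1})$ but not above. So the candidate set of cubes $Q^{\prime }$ for your pigeonhole is infinite, and the disjointness of the $E_{j_{k}}^{k}$ together with $\lvert E_{j_{k}}^{k}\rvert _{\omega }\geq \beta \lvert NQ_{j_{k}}^{k}\rvert _{\omega }$ only bounds the number of $k$'s attached to one fixed $Q^{\prime }$ --- that is just a rerun of Bounded Occurrence and gives nothing summed over distinct $Q^{\prime }$'s. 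What is missing, and what the paper proves, is a \emph{lower bound on the side lengths of the cubes} $B^{k}$ \emph{themselves}. Fix an admissible $k_{0}$ (say $k_{0}=\max K$; the paper argues uniformly in $k_{0}$). For every admissible $k$ with $k+2\leq k_{0}$, the $\mathcal{D}^{\alpha }$-Whitney cube $W$ of $\Omega _{k_{0}}$ containing $x$ satisfies $W\subset B^{k}$ by \eqref{nested}, and $W\subset B^{k}\subset G_{r}^{\alpha ,t+1}\subset \widetilde{Q_{j_{0}}^{k_{0}}}\subset NQ_{j_{0}}^{k_{0}}$ by \eqref{follows easily} and \eqref{Qtilda contained}; hence Lemma~\ref{Whitney comparability}, applied inside the single open set $\Omega _{k_{0}}$, gives $\ell (W)\approx \ell (Q_{j_{0}}^{k_{0}})\geq \frac{1}{N}\ell (G_{r}^{\alpha ,t+1})$. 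Thus, apart from at most the two levels $k\in \{k_{0}-1,k_{0}\}$, every $B^{k}$ is a $\mathcal{D}^{\alpha }$-cube contained in $G_{r}^{\alpha ,t+1}$ with side length bounded below by $c\,\ell (G_{r}^{\alpha ,t+1})$, so there are only a dimensional number of distinct such cubes. That bounds $L$, and together with your Bounded Occurrence step it finishes the claim --- no second measure-theoretic pigeonhole is needed or available.
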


{As is the case with similar assertations in this argument, a central
obstacle is that a given cube $B$ can arise in many different ways as a $B_i
^{k+2}$.}

\begin{proof}[Proof of Claim \protect\ref{bounded pairs}]
We will appeal to the `Bounded Occurrence of Cubes' in \S \ref{combinatoric}
above. This principle relies upon the definition of $\mathbb{G}^{\alpha }$
in (\ref{e.GdefAlpha}), and applies in this setting due to the definition of 
$\mathbb{I}_{s}^{\alpha ,t}$ in \eqref{e.yyI}. We also appeal to the
following fact:%
\begin{equation}
G_{r}^{\alpha ,t+1}\subset \widetilde{Q_{j}^{k}}\text{ whenever }%
B_{i}^{k+2}\subset G_{r}^{\alpha ,t+1}\cap \widetilde{Q_{j}^{k}}\text{ with }%
\left( k,j\right) \in \mathbb{I}_{s}^{\alpha ,t}.  \label{follows easily}
\end{equation}%
To see (\ref{follows easily}), we note that both of the cubes $G_{r}^{\alpha
,t+1}$ and $\widetilde{Q_{j}^{k}}$ lie in the grid $\mathcal{D}^{\alpha }$
and have nonempty intersection (they contain $B_{i}^{k+2}$), so that one of
these cubes must be contained in the other. However, if $\widetilde{Q_{j}^{k}%
}\subset G_{r}^{\alpha ,t+1}$, then $3Q_{j}^{k}\subset \widehat{Q_{j}^{k}}%
\subset \widetilde{Q_{j}^{k}}$ implies $\mathcal{A}\left( Q_{j}^{k}\right)
\subset G_{r}^{\alpha ,t+1}$, which contradicts $\left( k,j\right) \in 
\mathbb{I}_{s}^{\alpha ,t}$. So we must have $G_{r}^{\alpha ,t+1}\subset 
\widetilde{Q_{j}^{k}}$ as asserted in (\ref{follows easily}).

So to see that \eqref{k bounded} holds, suppose that $\mathcal{A}\left(
Q_{j}^{k}\right) =G_{s}^{\alpha ,t}$ and $B_{i}^{k+2}\subset G_{r}^{\alpha
,t+1}$ with an associated cube $\widetilde{Q_{j}^{k}}$ as in 
\eqref{follows
easily}. Then by (\ref{follows easily}) and (\ref{modified}) the side length 
$\ell \left( Q_{j}^{k}\right) $ of $Q_{j}^{k}$ satisfies 
\begin{equation}
\ell \left( Q_{j}^{k}\right) =\frac{1}{N}\ell \left( NQ_{j}^{k}\right) \geq 
\frac{1}{N}\ell \left( \widetilde{Q_{j}^{k}}\right) \geq \frac{1}{N}\ell
\left( G_{r}^{\alpha ,t+1}\right) .  \label{lb}
\end{equation}%
Also, if $B_{\ell }^{k}$ is \emph{any} Whitney cube at level $k$ that is
contained in $G_{r}^{\alpha ,t+1}$, then by (\ref{follows easily}) and (\ref%
{modified}) we have 
\begin{equation*}
B_{\ell }^{k}\subset G_{r}^{\alpha ,t+1}\subset \widetilde{Q_{j}^{k}}\subset
NQ_{j}^{k},
\end{equation*}%
so that Lemma \ref{Whitney comparability} shows that $B_{\ell }^{k}$ and $%
Q_{j}^{k}$ have comparable side lengths:%
\begin{equation}
\ell \left( B_{\ell }^{k}\right) \approx \ell \left( Q_{j}^{k}\right) .
\label{csl}
\end{equation}%
Moreover, if $B_{\ell ^{\prime }}^{k^{\prime }}$ is any Whitney cube at
level $k^{\prime }<k$ that is contained in $G_{r}^{\alpha ,t+1}$, then there
is some Whitney cube $B_{\ell }^{k}$ at level $k$ such that $B_{\ell
}^{k}\subset B_{\ell ^{\prime }}^{k^{\prime }}$. Thus we have the
containments $B_{\ell }^{k}\subset B_{\ell ^{\prime }}^{k^{\prime }}\subset
NQ_{j}^{k}$, and it follows from (\ref{csl}) that%
\begin{equation}
\ell \left( B_{\ell ^{\prime }}^{k^{\prime }}\right) \approx \ell \left(
Q_{j}^{k}\right) .  \label{csl'}
\end{equation}

Now momentarily \emph{fix} $k_{0}$ such that there is a cube $%
B_{i}^{k_{0}+2} $ satisfying the conditions in (\ref{k bounded}). Then \emph{%
all} of the cubes $B_{\ell }^{k+2}$ that arise in (\ref{k bounded}) with $%
k\leq k_{0}-2$ satisfy 
\begin{equation*}
\ell \left( B_{\ell }^{k+2}\right) \approx \ell \left( Q_{j}^{k_{0}}\right)
\geq \frac{1}{N}\ell \left( G_{r}^{\alpha ,t+1}\right) .
\end{equation*}%
Thus all of the cubes $B_{\ell }^{k+2}$ with $k\leq k_{0}$, except perhaps
those with $k\in \left\{ k_{0}-1,k_{0}\right\} $, have side lengths bounded
below by $c\ \ell \left( G_{r}^{\alpha ,t+1}\right) $, which bounds the
number of possible locations for these cubes by a dimensional constant.
However, those cubes $B_{i}^{k_{0}+1}$ at level $k_{0}+1$ are pairwise
disjoint, as are those cubes $B_{i}^{k_{0}+2}$ at level $k_{0}+2$.
Consequently, we can apply the `Bounded Occurrence of Cubes' to show that
the sum in (\ref{k bounded}), when restricted to $k\leq k_{0}$, is bounded
by a constant $C$ independent of $k_{0}$. Since $k_{0}$ is arbitrary, this
completes the proof of Claim \ref{bounded pairs}.
\end{proof}

As a result of \eqref{k bounded}, for those $i$ in either $\mathcal{I}%
_{k}^{t}$ or $\mathcal{J}_{k}^{t}$ that satisfy case (b), we will be able to
apply below the Poisson argument used to estimate term $I\!I_{s}^{t}\left(
2\right) $ in \eqref{e.II2<} above.

We now further split the sum over $i\in \mathcal{J}_{k}^{t}$ in term $%
I\!V_{s}^{t}(2)$\ into two sums according to the cases (a) and (b) above:%
\begin{eqnarray}
I\!V_{s}^{t}(2) &\leq &\sum_{(k,j)\in \mathbb{I}_{s}^{\alpha
,t}}R_{j}^{k}\left\vert \sum_{r\in \mathcal{K}_{s}^{\alpha ,t}}\sum_{i\in 
\mathcal{J}_{k}^{t}\text{ and }i\in \QTR{up}{Case}\left( a\right)
}\int_{G_{r}^{\alpha ,t+1}\cap B_{i}^{k+2}}\left( L^{\ast }\chi
_{E_{j}^{k}\cap T_{\ell \left( r\right) }}\omega \right) b_{r}\sigma
\right\vert ^{p}  \label{e.iv2b} \\
&&+\sum_{(k,j)\in \mathbb{I}_{s}^{\alpha ,t}}R_{j}^{k}\left\vert \sum_{r\in 
\mathcal{K}_{s}^{\alpha ,t}}\sum_{i\in \mathcal{J}_{k}^{t}\text{ and }i\in
Case\left( b\right) }\int_{G_{r}^{\alpha ,t+1}\cap B_{i}^{k+2}}\left(
L^{\ast }\chi _{E_{j}^{k}\cap T_{\ell \left( r\right) }}\omega \right)
b_{r}\sigma \right\vert ^{p}  \notag \\
&\equiv &I\!V_{s}^{t}(2)[a]+I\!V_{s}^{t}\left( 2\right) [b],  \notag
\end{eqnarray}%
where by $i\in \mathcal{J}_{k}^{t}$ and $i\in \QTR{up}{Case}\left( b\right) $
we mean $i\in \mathcal{J}_{k}^{t}:B_{i}^{k+2}\subset G_{r}^{\alpha ,t+1}$,
with a similar explanation for $\QTR{up}{Case}\left( a\right) $.

We apply the definition of Case (b) and \eqref{e.replace}, to decompose $%
I\!V_{s}^{t}(2)[b]$ as follows. 
\begin{eqnarray}
I\!V_{s}^{t}(2)[b] &=&\sum_{(k,j)\in \mathbb{I}_{s}^{\alpha
,t}}R_{j}^{k}\left\vert \sum_{r\in \mathcal{K}_{s}^{\alpha ,t}}\sum_{i\in 
\mathcal{J}_{k}^{t}:B_{i}^{k+2}\subset G_{r}^{\alpha ,t+1}}\int_{B_{i}^{k+2}}%
\left[ L^{\ast }\chi _{E_{j}^{k}\cap T_{\ell \left( r\right) }}\omega \right]
b_{r}\sigma \right\vert ^{p}  \notag  \label{e.V2def} \\
&\leq &\sum_{(k,j)\in \mathbb{I}_{s}^{\alpha ,t}}R_{j}^{k}\left\vert
\sum_{r\in \mathcal{K}_{s}^{\alpha ,t}}\sum_{i\in \mathcal{J}%
_{k}^{t}:B_{i}^{k+2}\subset G_{r}^{\alpha ,t+1}}\left[ \int_{B_{i}^{k+2}}%
\left( L^{\ast }\chi _{E_{j}^{k}\cap T_{\ell \left( r\right) }}\omega
\right) \sigma \right] \right.   \label{e.3456} \\
&&\qquad \left. \times c_{i}^{k+2}\left( \left\vert A_{r}^{\alpha
,t+1}\right\vert +\mathsf{A}_{i}^{k+2}\right) \right\vert ^{p}  \notag \\
&&+\sum_{(k,j)\in \mathbb{I}_{s}^{\alpha ,t}}R_{j}^{k}\left\vert \sum_{r\in 
\mathcal{K}_{s}^{\alpha ,t}}\sum_{i\in \mathcal{J}_{k}^{t}:B_{i}^{k+2}%
\subset G_{r}^{\alpha ,t+1}}\mathbf{P}\left( B_{i}^{k+2},\chi
_{E_{j}^{k}\cap T_{\ell \left( r\right) }}\omega \right)
\int_{B_{i}^{k+2}}\left\vert b_{r}\right\vert \sigma \right\vert ^{p} \\
&=&V_{s}^{t}(1)+V_{s}^{t}(2).  \notag
\end{eqnarray}

\subsubsection{The bound for $V(2)$}

We claim that 
\begin{equation}
\sum_{\left( t,s\right) \in \mathbb{L}^{\alpha }}V_{s}^{t}(2) \leq C\gamma
^{2p}\mathfrak{M}_{\ast }^{p}\lVert f\rVert _{L^{p}(\sigma )}^{p}\,.
\label{e.V2<}
\end{equation}%
Here, $\mathfrak{M}_{\ast }$ is defined in \eqref{e.Mfrak*}, and $%
V_{s}^{t}(2) $ is defined in \eqref{e.V2def}.

\begin{proof}
The estimate for term $V_{s}^{t}(2)$ is similar to that of $I\!I_{s}^{t}(2)$
above, see \eqref{e.II2<}, except that this time we use Claim \ref{bounded
triples} to handle a complication arising from the extra sum in the cubes $%
B_{i}^{k+2}$. We define 
\begin{equation}
\mathbf{P}_{j}^{k}\left( \mu \right) \equiv \sum_{\ell }\sum_{r\in \mathcal{K%
}_{s}^{\alpha ,t}:\ell \left( r\right) =\ell }\sum_{i\in \mathcal{J}%
_{k}^{t}:B_{i}^{k+2}\subset G_{r}^{\alpha ,t+1}}\mathbf{P}\left(
B_{i}^{k+2},\chi _{E_{j}^{k}\cap T_{\ell }}\mu \right) \chi _{B_{i}^{k+2}}\,.
\label{e.xPjk}
\end{equation}%
We observe that by \eqref{k bounded} the sum of these operators satisfies 
\begin{equation}
\sum_{(k,j)\in \mathbb{I}_{s}^{\alpha ,t}}\mathbf{P}_{j}^{k}\left( \mu
\right) \leq C\chi _{G_{s}^{\alpha ,t}}\mathcal{M}(\chi _{G_{s}^{\alpha
,t}}\mu ),  \label{sum of Pkj}
\end{equation}%
and hence the analogue of \eqref{e.p'app} holds with $\mathbf{P}_{j}^{k}$
defined as above:%
\begin{equation}
\bigl\lVert\chi _{G_{s}^{\alpha ,t}}\sum_{\left( k,j\right) \in \mathbb{I}%
_{s}^{\alpha ,t}}({\mathbf{P}}_{j}^{k})^{\ast }(\lvert h\rvert \sigma )%
\bigr\rVert_{L^{p}(w)}\leq C\mathfrak{M}_{\ast }\lVert \chi _{G_{s}^{\alpha
,t}}h\rVert _{L^{p}(\sigma )}\ .  \label{e.p'appsecond}
\end{equation}%
{\ For our use below, we note that this conclusion holds independent of the
assumption, imposed in \eqref{e.xPjk}, that $i\in \mathcal{J}_{k}^{t}$.}

With this notation, and using%
\begin{equation*}
\frac{1}{\left\vert B_{i}^{k+2}\right\vert _{\sigma }}\int_{B_{i}^{k+2}}%
\left\vert b_{r}\right\vert \sigma \leq \frac{1}{\left\vert
B_{i}^{k+2}\right\vert _{\sigma }}\int_{B_{i}^{k+2}}\left\vert f\right\vert
\sigma +\left\vert A_{G_{r}^{\alpha ,t+1}}\right\vert \lesssim \gamma ^{t+2},
\end{equation*}%
the summands in the definition of $V_{s}^{t}(2)$, as given in \eqref{e.V2def}%
, are%
\begin{eqnarray}
&&\sum_{\ell }\sum_{r\in \mathcal{K}_{s}^{\alpha ,t}:\ell \left( r\right)
=\ell }\sum_{i\in \mathcal{J}_{k}^{t}:B_{i}^{k+2}\subset G_{r}^{\alpha ,t+1}}%
\mathbf{P}\left( B_{i}^{k+2},\chi _{E_{j}^{k}\cap T_{\ell }}\omega \right)
\left( \int_{B_{i}^{k+2}}\sigma \right) \left\{ \frac{1}{\left\vert
B_{i}^{k+2}\right\vert _{\sigma }}\int_{B_{i}^{k+2}}\left\vert
b_{r}\right\vert \sigma \right\}   \label{e.V23} \\
&\leq &\gamma ^{t+2}\sum_{\ell }\int \sum_{r\in \mathcal{K}_{s}^{\alpha
,t}:\ell \left( r\right) =\ell }\sum_{i\in \mathcal{J}_{k}^{t}:B_{i}^{k+2}%
\subset G_{r}^{\alpha ,t+1}}\mathbf{P}\left( B_{i}^{k+2},\chi
_{E_{j}^{k}\cap T_{\ell }}\omega \right) \chi _{B_{i}^{k+2}}\sigma \ \ \ \ \ 
\text{(since }i\in \mathcal{J}_{k}^{t}\text{)}  \notag \\
&\leq &\gamma ^{t+2}\int_{G_{s}^{\alpha ,t}}\mathbf{P}_{j}^{k}\left( \omega
\right) \;\sigma =\gamma ^{t+2}\int_{E_{j}^{k}}\left( \mathbf{P}%
_{j}^{k}\right) ^{\ast }\left( \chi _{G_{s}^{\alpha ,t}}\sigma \right)
\;\omega .  \notag
\end{eqnarray}

We then have from \eqref{e.V2def} and \eqref{e.V23} by the argument for term 
$I\!I_{s}^{t}(2)$, 
\begin{eqnarray*}
\sum_{\left( t,s\right) \in \mathbb{L}^{\alpha }}V_{s}^{t}(2) &\leq &C\gamma
^{2p}\sum_{\left( t,s\right) \in \mathbb{L}^{\alpha }}\gamma
^{pt}\sum_{(k,j)\in \mathbb{I}_{s}^{\alpha ,t}}R_{j}^{k}\left\vert
\int_{Q_{j}^{k}}\left( \mathbf{P}_{j}^{k}\right) ^{\ast }\left( \chi
_{G_{s}^{\alpha ,t}}\sigma \right) \;\omega \right\vert ^{p} \\
&\leq &C\gamma ^{2p}\sum_{\left( t,s\right) \in \mathbb{L}^{\alpha }}\gamma
^{pt}\int \left\vert \mathcal{M}_{\omega }\left( \chi _{G_{s}^{\alpha
,t}}\sum_{(\ell ,i)\in \mathbb{I}_{s}^{\alpha ,t}}\left( \mathbf{P}%
_{i}^{\ell }\right) ^{\ast }\left( \chi _{G_{s}^{\alpha ,t}}\sigma \right)
\right) \right\vert ^{p}\;\omega \\
&\leq &C\gamma ^{2p}\sum_{\left( t,s\right) \in \mathbb{L}^{\alpha }}\gamma
^{pt}\int_{G_{s}^{\alpha ,t}}\left[ \sum_{(\ell ,i)\in \mathbb{I}%
_{s}^{\alpha ,t}}\left( \mathbf{P}_{i}^{\ell }\right) ^{\ast }\left( \chi
_{G_{s}^{\alpha ,t}}\sigma \right) \right] ^{p}\;\omega \\
&\leq &C\gamma ^{2p}\mathfrak{M}_{\ast }^{p}\sum_{\left( t,s\right) \in 
\mathbb{L}^{\alpha }}\gamma ^{pt}\sum_{\ell }\left\vert G_{s}^{\alpha
,t}\right\vert _{\sigma }\leq C\gamma ^{2p}\mathfrak{M}_{\ast }^{p}\int
\lvert f\rvert ^{p}\;\sigma \,.
\end{eqnarray*}%
In last lines we are using the boundedness \eqref{M2weightdual} of the
maximal operator.
\end{proof}


We will use the same method to treat term $V(1) $ and term $VI(1) $ below,
and we postpone the argument for now.

\subsubsection{The bound for $I\!V(2)[a] $}

We turn to the term defined in \eqref{e.iv2b}. In case (a) the cubes $%
B_{i}^{k+2}$ satisfy%
\begin{equation*}
G_{r}^{\alpha ,t+1}\subset B_{i}^{k+2}\text{ whenever }G_{r}^{\alpha
,t+1}\cap B_{i}^{k+2}\neq \emptyset .
\end{equation*}%
and so recalling that $i\in \mathcal{J}_{k}^{t}$ and $i\in \QTR{up}{Case}%
\left( a\right) $, we obtain from \eqref{e.replace'} that%
\begin{eqnarray*}
I\!V_{s}^{t}(2)[a] &=&\sum_{(k,j)\in \mathbb{I}_{s}^{\alpha
,t}}R_{j}^{k}\left\vert \sum_{i\in \mathcal{J}_{k}^{t}\text{ and }i\in 
\QTR{up}{Case}\left( a\right) }\sum_{r:G_{r}^{\alpha ,t+1}\subset
B_{i}^{k+2}}\int_{G_{r}^{\alpha ,t+1}}\left( L^{\ast }\chi
_{E_{j}^{k}}\omega \right) b_{r}\sigma \right\vert ^{p} \\
&\leq &C\sum_{(k,j)\in \mathbb{I}_{s}^{\alpha ,t}}R_{j}^{k}\left\vert
\sum_{i\in \QTR{up}{Case}\left( a\right) }\sum_{r:G_{r}^{\alpha ,t+1}\subset
B_{i}^{k+2}}\mathbf{P}\left( G_{r}^{\alpha ,t+1},\chi _{E_{j}^{k}}\omega
\right) \int_{G_{r}^{\alpha ,t+1}}\left\vert f\right\vert \sigma \right\vert
^{p} \\
&\leq &C\gamma ^{p\left( t+2\right) }\sum_{(k,j)\in \mathbb{I}_{s}^{\alpha
,t}}R_{j}^{k}\left\vert \sum_{i\in \QTR{up}{Case}\left( a\right)
}\sum_{r:G_{r}^{\alpha ,t+1}\subset B_{i}^{k+2}}\mathbf{P}\left(
G_{r}^{\alpha ,t+1},\chi _{E_{j}^{k}}\omega \right) \left\vert G_{r}^{\alpha
,t+1}\right\vert _{\sigma }\right\vert ^{p}.
\end{eqnarray*}%
But this last sum is identical to the estimate for the term $I\!I_{s}^{t}(2)$
used in \eqref{sum in ts} above. The estimate there thus gives%
\begin{equation}
\sum_{\left( t,s\right) \in \mathbb{L}^{\alpha }}I\!V_{s}^{t}(2)\left[ a%
\right] \leq C\gamma ^{2p}\mathfrak{M}_{\ast }^{p}\sum_{\left( t,s\right)
\in \mathbb{L}^{\alpha }}\gamma ^{pt}\left\vert G_{s}^{\alpha ,t}\right\vert
_{\sigma }\leq C\gamma ^{2p}\mathfrak{M}_{\ast }^{p}\int \left\vert
f\right\vert ^{p}\sigma ,  \label{e.IVts2a<}
\end{equation}%
which is the desired estimate.

\subsubsection{The Decomposition of $I\!V(1) $}

This term is the first term on the right hand side of \eqref{rewrite}.
Recall that for $i\in \mathcal{I}_{k}^{t}$ we have $i\in \QTR{up}{Case}(b)$
and so $B_{i}^{k+2}\subset G_{r}^{\alpha ,t+1}\subset T_{\ell \left(
r\right) }$ for some $r\in \mathcal{K}_{s}^{\alpha ,t}$. From \eqref{Bhat}
we also have $B_{i}^{k+2}\subset \widetilde{Q_{j}^{k}}$. To estimate $%
I\!V_{s}^{t}(1)$ in \eqref{rewrite}, we again apply \eqref{e.replace} to be
able to write 
\begin{eqnarray}
I\!V_{s}^{t}(1) &\leq &C\sum_{(k,j)\in \mathbb{I}_{s}^{\alpha ,t}}R_{j}^{k}%
\left[ \sum_{\ell }\sum_{i\in \mathcal{I}_{k}^{t}:B_{i}^{k+2}\subset T_{\ell
}\cap \widetilde{Q_{j}^{k}}}\left[ \int_{B_{i}^{k+2}}\left\vert L^{\ast
}\chi _{E_{j}^{k}\cap T_{\ell }}\omega \right\vert \sigma \right] \mathsf{A}%
_{i}^{k+2}\right] ^{p}  \label{e.VI2def} \\
&&{}+{}C\sum_{(k,j)\in \mathbb{I}_{s}^{\alpha ,t}}R_{j}^{k}\left[ \sum_{\ell
}\sum_{i\in \mathcal{I}_{k}^{t}:B_{i}^{k+2}\subset T_{\ell }\cap \widetilde{%
Q_{j}^{k}}}\mathbf{P}\left( B_{i}^{k+2},\chi _{E_{j}^{k}\cap T_{\ell
}}\omega \right) \int_{B_{i}^{k+2}}\left\vert f\right\vert \sigma \right]
^{p} \\
&=&V\!I_{s}^{t}(1)+V\!I_{s}^{t}(2).  \notag
\end{eqnarray}%
We comment that we are able to dominate the averages on $B_{i}^{k+2}$ of the
bad function $b_{r}$ by $\mathsf{A}_{i}^{k+2}+\left\vert A_{r}^{\alpha
,t+1}\right\vert \leq 2\mathsf{A}_{i}^{k+2}$, since in this case $i\in 
\mathcal{I}_{k}^{t}$, see \eqref{e.IJdef}, and this implies that the average
of $\left\vert b_{r}\right\vert =\left\vert f-A_{r}^{\alpha ,t+1}\right\vert 
$ over the cube $B_{i}^{k+2}$ is dominated by 
\begin{equation*}
\mathsf{A}_{i}^{k+2}+\left\vert A_{r}^{\alpha ,t+1}\right\vert \leq \mathsf{A%
}_{i}^{k+2}+\gamma ^{t+2}<2\mathsf{A}_{i}^{k+2}.
\end{equation*}

\subsubsection{The bound for $V\!I(2)$}

We claim that 
\begin{equation}
V\!I_{s}^{t}(2)\leq C\mathfrak{M}_{\ast }^{p}\sideset {} {^{s,t,\mathcal I} }%
\sum_{k,i}\left\vert B_{i}^{k+2}\right\vert _{\sigma }\left( \mathsf{A}%
_{i}^{k+2}\right) ^{p}.  \label{e.VI2<}
\end{equation}%
{\ In this display, the sum on the right is over all pairs of integers $%
k,i\in \mathcal{I}_{k}^{t}$ such that $B_{i}^{k+2}\subset T_{\ell }\cap 
\widetilde{Q_{j}^{k}}$, for some $\ell ,j$, with $(k,j)\in \mathbb{I}%
_{s}^{\alpha ,t}$. (Below, we will need a similar sum, with the condition $%
i\in \mathcal{I}_{k}^{t}$ replaced by $i\in \mathcal{J}_{k}^{t}$ and $i\in 
\QTR{up}{Case}(b)$.) This is a provisional bound, one that requires
additional combinatorial assertations in \S \ref{combinatorics} to control.}

\begin{proof}
The term $V\!I_{s}^{t}(2)$ can be handled the same way as the term $%
V_{s}^{t}(2)$, see \eqref{e.V2<}, with these two changes. {First, in the
definition of $\mathbf{P}_{j}^{k}$, we replace $\mathcal{J}_{k}^{t}$ by $%
\mathcal{I}_{k}^{t}$, and second, we use the function } 
\begin{equation*}
h={\sideset {} {^{s,t,\mathcal I} }\sum_{k,i}}\mathsf{A}_{i}^{k+2}\chi
_{B_{i}^{k+2}}
\end{equation*}%
in \eqref{e.p'appsecond}. That argument then obtains 
\begin{equation}
\left\Vert \chi _{G_{s}^{\alpha ,t}}\sum_{k,j}(\mathbf{P}_{j}^{k})^{\ast
}(\chi _{G_{s}^{\alpha ,t}}h\sigma )\right\Vert _{L^{p}(\omega )}^{p}\leq C%
\mathfrak{M}_{\ast }^{p}{\sideset {} {^{s,t,\mathcal I} }\sum_{k,i}}%
\left\vert B_{i}^{k+2}\right\vert _{\sigma }\left( \mathsf{A}%
_{i}^{k+2}\right) ^{p}.  \label{e.bnm}
\end{equation}%
Here we are using the bounded overlap of the cubes $B_{i}^{k+2}$ given in
Claim \ref{bounded triples}, along with the fact recorded in (\ref{record})
that for fixed $\left( k+2,i\right) $, only a bounded number of $j$ satisfy $%
B_{i}^{k+2}\subset \widetilde{Q_{j}^{k}}$. {\ Claim \ref{bounded triples}
applies in this setting, as we are in a subcase of the analysis of $I\!V$.}
We then use the {\ universal Maximal Function bound.} 
\begin{eqnarray*}
V\!I_{s}^{t}(2) &=&\sum_{(k,j)\in \mathbb{I}_{s}^{\alpha ,t}}R_{j}^{k}\left[
\sum_{\ell }\sum_{i\in \mathcal{I}_{k}^{t}:B_{i}^{k+2}\subset T_{\ell }\cap 
\widetilde{Q_{j}^{k}}}\mathbf{P}\left( B_{i}^{k+2},\chi _{E_{j}^{k}\cap
T_{\ell }}\omega \right) \left( \int_{B_{i}^{k+2}}\sigma \right) \mathsf{A}%
_{i}^{k+2}\right] ^{p} \\
&=&C\sum_{(k,j)\in \mathbb{I}_{s}^{\alpha ,t}}R_{j}^{k}\Bigl\lvert%
\int_{Q_{j}^{k}}(\mathbf{P}_{j}^{k})^{\ast }(h\sigma )\;\omega \Bigr\rvert%
^{p} \\
&\leq &C\int \Bigl[\mathcal{M}_{\omega }\Bigl(\chi _{G_{s}^{\alpha
,t}}\sum_{(k,j)\in \mathbb{I}_{s}^{\alpha ,t}}(\mathbf{P}_{j}^{k})^{\ast
}(\chi _{G_{s}^{\alpha ,t}}h\sigma )\Bigr)\Bigr]^{p}\;\omega  \\
&\leq &C\int \left[ \chi _{G_{s}^{\alpha ,t}}\sum_{(k,j)\in \mathbb{I}%
_{s}^{\alpha ,t}}(\mathbf{P}_{j}^{k})^{\ast }(\chi _{G_{s}^{\alpha
,t}}h\sigma )\right] ^{p}\;\omega 
\end{eqnarray*}%
{\ In view of \eqref{e.bnm}, this completes the proof of the provisional
estimate \eqref{e.VI2<}.}
\end{proof}



\subsubsection{The bound for $V\!I(1)$}

Recall the definition of $V\!I(1)$ from \eqref{e.VI2def}, and also from %
\eqref{Bhat} that $B_{i}^{k+2}\subset \widetilde{Q_{j}^{k}}$ whenever $%
B_{i}^{k+2}\cap \widetilde{Q_{j}^{k}}\neq \emptyset $. We claim that 
\begin{equation}
V\!I_{s}^{t}(1)\leq C\mathfrak{T}_{\ast }^{p}\sideset {} {^{s,t,\mathcal I} }%
\sum_{k,i}\left\vert B_{i}^{k+2}\right\vert _{\sigma }\left( \mathsf{A}%
_{i}^{k+2}\right) ^{p}.  \label{e.VI1<}
\end{equation}%
{\ The notation here is as in \eqref{e.VI2<}, but since $i\in \mathcal{I}%
_{k}^{t}$ implies $i$ belong to $\QTR{up}{Case}(b)$, the sum over the right
is over $k,i\in \mathcal{I}_{k}^{t}$ such that $B_{i}^{k+2}\subset
G_{r}^{\alpha ,t+1}\subset T_{\ell \left( r\right) }\cap \widetilde{Q_{j}^{k}%
}$, for some integers $j,r$, with $(k,j)\in \mathbb{I}_{s}^{\alpha ,t}$. As
with \eqref{e.VI2<}, this is a provisional estimate. }

\begin{proof}
We first estimate the sum in $i$ inside term $V\!I_{s}^{t}(1)$. Recall that
the sum in $i$ is over those $i$ such that $B_{i}^{k+2}\subset G_{r}^{\alpha
,t+1}\subset T_{\ell }$ for some $r$ with $\ell =\ell \left( r\right) $, and
where $\left\{ T_{\ell }\right\} _{\ell }$ is the set of maximal cubes in
the collection $\left\{ 3G_{r}^{\alpha ,t+1}:r\in \mathcal{K}_{s}^{\alpha
,t}\right\} $. See the discussion at \eqref{e.K...}, and \eqref{finover}. We
will write $\ell \left( i\right) =\ell \left( r\right) $ when $%
B_{i}^{k+2}\subset G_{r}^{\alpha ,t+1}$. It is also important to note that
the sum in $i$ deriving from term $I\!V_{s}^{t}$ is also restricted to those 
$i$ such that $B_{i}^{k+2}\subset \widetilde{Q_{j}^{k}}$ by \eqref{Bhat}, so
that altogether, $B_{i}^{k+2}\subset T_{\ell }\cap \widetilde{Q_{j}^{k}}$.
We have 
\begin{eqnarray*}
&&\left\vert \sum_{i}\left[ \int_{B_{i}^{k+2}}\left\vert L^{\ast }\chi
_{E_{j}^{k}\cap T_{\ell \left( i\right) }}\omega \right\vert \sigma \right] 
\mathsf{A}_{i}^{k+2}\right\vert ^{p} \\
&&\qquad \leq \sum_{i}\left\vert B_{i}^{k+2}\right\vert _{\sigma }(\mathsf{A}%
_{i}^{k+2})^{p}\left[ \sum_{i}\left\vert B_{i}^{k+2}\right\vert _{\sigma
}^{1-p^{\prime }}\left[ \int_{B_{i}^{k+2}}\left\vert L^{\ast }\chi
_{E_{j}^{k}\cap T_{\ell \left( i\right) }}\omega \right\vert \sigma \right]
^{p^{\prime }}\right] ^{p-1} \\
&&\qquad \leq \sum_{i}\left\vert B_{i}^{k+2}\right\vert _{\sigma }(\mathsf{A}%
_{i}^{k+2})^{p}\left[ \sum_{i}\int_{B_{i}^{k+2}}\left\vert L^{\ast }\chi
_{E_{j}^{k}\cap T_{\ell \left( i\right) }}\omega \right\vert ^{p^{\prime
}}\sigma \right] ^{p-1} \\
&&\qquad \leq C\sum_{i}\left\vert B_{i}^{k+2}\right\vert _{\sigma }(\mathsf{A%
}_{i}^{k+2})^{p}\left[ \sum_{\ell }\sum_{i:\ell \left( i\right) =\ell
}\int_{B_{i}^{k+2}}\left\vert L^{\ast }\chi _{E_{j}^{k}\cap T_{\ell \left(
i\right) }}\omega \right\vert ^{p^{\prime }}\sigma \right] ^{p-1}.
\end{eqnarray*}%
Now we will apply the form \eqref{unif'} of (\ref{Tsharpomega}) with $g=\chi
_{E_{j}^{k}\cap T_{\ell }}$ and $Q$ chosen to be either $T_{\ell }$ or $%
\widetilde{Q_{j}^{k}}$ depending on the relative positions of $T_{\ell }$
and $\widetilde{Q_{j}^{k}}$. Since $T_{\ell }$ is a triple of a cube in the
grid $\mathcal{D}^{\alpha }$ and $\widetilde{Q_{j}^{k}}$ is a cube in the
grid $\mathcal{D}^{\alpha }$, we must have either%
\begin{equation*}
\widetilde{Q_{j}^{k}}\subset T_{\ell }\text{ or }T_{\ell }\subset 3%
\widetilde{Q_{j}^{k}}.
\end{equation*}%
If $\widetilde{Q_{j}^{k}}\subset T_{\ell }$ we choose $Q$ in \eqref{unif'}
to be $\widetilde{Q_{j}^{k}}$ and note that\ by bounded overlap of Whitney
cubes, there are only a bounded number of such cases. If on the other hand $%
T_{\ell }\subset 3\widetilde{Q_{j}^{k}}$, then we choose $Q$ to be $T_{\ell }
$ and note that the cubes $T_{\ell }$ have bounded overlap. This gives%
\begin{equation*}
\sum_{\ell }\sum_{i:\ell \left( i\right) =\ell }\int_{B_{i}^{k+2}}\left\vert
L^{\ast }\chi _{E_{j}^{k}\cap T_{\ell \left( i\right) }}\omega \right\vert
^{p^{\prime }}\sigma \lesssim \mathfrak{T}_{\ast }^{p}\left\vert 3\widetilde{%
Q_{j}^{k}}\right\vert _{\omega },
\end{equation*}%
and hence%
\begin{equation*}
\left\vert \sum_{i}\left[ \int_{B_{i}^{k+2}}\left\vert L^{\ast }\chi
_{E_{j}^{k}\cap T_{\ell \left( i\right) }}\omega \right\vert \sigma \right] 
\mathsf{A}_{i}^{k+2}\right\vert ^{p}\leq C\mathfrak{T}_{\ast
}^{p}\sum_{i}\left\vert B_{i}^{k+2}\right\vert _{\sigma }(\mathsf{A}%
_{i}^{k+2})^{p}\left\vert NQ_{j}^{k}\right\vert _{\omega }^{p-1},
\end{equation*}%
since $3\widetilde{Q_{j}^{k}}\subset NQ_{j}^{k}$ by (\ref{Qtilda contained}%
). With this we obtain,%
\begin{eqnarray}
V\!I_{s}^{t}(1) &\leq &C\mathfrak{T}_{\ast }^{p}\sum_{(k,j)\in \mathbb{I}%
_{s}^{\alpha ,t}}R_{j}^{k}\sum_{i\in \mathcal{I}_{k}^{t}}\left\vert
B_{i}^{k+2}\right\vert _{\sigma }(\mathsf{A}_{i}^{k+2})^{p}\left\vert
NQ_{j}^{k}\right\vert _{\omega }^{p-1}  \label{e.VI.} \\
&\leq &C\mathfrak{T}_{\ast }^{p}\sideset {^\ast} {^{s,t,\mathcal I} }%
\sum_{k,i}\left\vert B_{i}^{k+2}\right\vert _{\sigma }(\mathsf{A}%
_{i}^{k+2})^{p},  \notag
\end{eqnarray}%
where we are using $R_{j}^{k}\left\vert NQ_{j}^{k}\right\vert _{\omega
}^{p-1}\leq 1$ and (\ref{record}) in the final line.
\end{proof}

\subsubsection{The bound for $V(1) $}

We will use the same method as in the estimate for term $V\!I(1)$ above to
obtain 
\begin{equation}
\sum_{\left( t,s\right) \in \mathbb{L}^{\alpha }}V_{s}^{t}(1)\leq C\mathfrak{%
T}_{\ast }^{p}\gamma ^{2p}\lVert f\rVert _{L^{p}(\sigma )}^{p}.
\label{e.V1<}
\end{equation}%
Recall {from \eqref{e.3456},} that $V_{s}^{t}(1)$ is given by 
\begin{equation}
\sum_{(k,j)\in \mathbb{I}_{s}^{\alpha ,t}}R_{j}^{k}\left\vert \sum_{r\in 
\mathcal{K}_{s}^{\alpha ,t}}\sum_{i\in \mathcal{J}_{k}^{t}:B_{i}^{k+2}%
\subset G_{r}^{\alpha ,t+1}}\left[ \int_{B_{i}^{k+2}}\left( L^{\ast }\chi
_{E_{j}^{k}\cap T_{\ell \left( r\right) }}\omega \right) \sigma \right]
c_{i}^{k+2}\left( \left\vert A_{r}^{\alpha ,t+1}\right\vert +\mathsf{A}%
_{i}^{k+2}\right) \right\vert ^{p}.  \notag
\end{equation}%
The main difference here, as opposed to the previous estimate, is that $i\in 
\mathcal{J}_{k}^{t}$ rather than in $\mathcal{I}_{k}^{t}$, see %
\eqref{e.IJdef}. As a result, we have the estimate%
\begin{equation}
\left\vert A_{r}^{\alpha ,t+1}\right\vert +\mathsf{A}_{i}^{k+2}\lesssim
\gamma ^{t+2},  \label{new estimate}
\end{equation}%
instead of $\left\vert A_{r}^{\alpha ,t+1}\right\vert +\mathsf{A}%
_{i}^{k+2}\lesssim \mathsf{A}_{i}^{k+2}$, which holds when $i\in \mathcal{I}%
_{k}^{t}$.

\begin{proof}[Proof of (\protect\ref{e.V1<})]
We follow the argument leading up to and including (\ref{e.VI.}) in the
estimate for term $VI(1)$ above, but using instead (\ref{new estimate}). The
result is as below, {where we are using the notation of \eqref{e.VI2<}, with
the condition $i\in \mathcal{I}_{k}^{t}$ replaced by $i\in \mathcal{J}%
_{k}^{t}$ and $i\in \QTR{up}{Case}(b)$, and so we use an asterix and $%
\mathcal{J}$ in the notation below.} 
\begin{equation*}
V_{s}^{t}(1)\leq C\mathfrak{T}_{\ast }^{p}{\sideset {^\ast} {^{s,t,\mathcal
J} }\sum_{k,i}}\left\vert B_{i}^{k+2}\right\vert _{\sigma }\left( \gamma
^{t+2}\right) ^{p}.
\end{equation*}%
Now we collect those cubes $B_{i}^{k+2}$ that lie in a given cube $%
G_{r}^{\alpha ,t+1}$ and write the right hand side above as a constant times 
\begin{equation*}
\mathfrak{T}_{\ast }^{p}\gamma ^{\left( t+2\right) p}\sum_{r\in \mathcal{K}
_{s}^{\alpha ,t}} \ \quad {\sideset {^\ast} {^{s,t,\mathcal J} }\sum_{k,i \;:\; 
B_{i}^{k+2}\subset G_{r}^{\alpha ,t+1}}}\left\vert B_{i}^{k+2}\right\vert
_{\sigma }:=\mathfrak{T}_{\ast }^{p}\gamma ^{\left( t+2\right) p}\sum_{r\in 
\mathcal{K}_{s}^{\alpha ,t}}\mathcal{S}_{s,r}^{\alpha ,t}.
\end{equation*}%
By Claim \ref{bounded triples}, {\ which applies as we are in a subcase of $%
I\!V$,} we have $\mathcal{S}_{s,r}^{\alpha ,t}\leq C\left\vert G_{r}^{\alpha
,t+1}\right\vert _{\sigma }$, and it follows that%
\begin{equation*}
V_{s}^{t}(1)\leq C\mathfrak{T}_{\ast }^{p}\gamma ^{(t+2)p}\sum_{r\in 
\mathcal{K}_{s}^{\alpha ,t}}\left\vert G_{r}^{\alpha ,t+1}\right\vert
_{\sigma }\leq C\mathfrak{T}_{\ast }^{p}\gamma ^{(t+2)p}\left\vert
G_{s}^{\alpha ,t}\right\vert _{\sigma },
\end{equation*}%
and hence from (\ref{e.MGst}) that%
\begin{equation*}
\sum_{\left( t,s\right) \in \mathbb{L}^{\alpha }}V_{s}^{t}(1)\leq C\mathfrak{%
T}_{\ast }^{p}{\gamma ^{2p}}\sum_{\left( t,s\right) \in \mathbb{L}^{\alpha
}}\gamma ^{tp}\left\vert G_{s}^{\alpha ,t}\right\vert _{\sigma }\leq C%
\mathfrak{T}_{\ast }^{p}{\gamma ^{2p}}\rVert f\rVert _{L^{p}(\sigma )}^{p}.
\end{equation*}
\end{proof}

\subsubsection{The final combinatorial arguments\label{combinatorics}}

Our final estimate in the proof of \eqref{e.IV<} is to dominate by $C\int
\left\vert f\right\vert ^{p}d\sigma $ the sum of the right hand sides of (%
\ref{e.VI2<}) and \eqref{e.VI1<} over $\left( t,s\right) \in \mathbb{L}
^{\alpha }$, namely 
\begin{equation}
\sum_{\left( t,s\right) \in \mathbb{L}^{\alpha }} {\sideset {}
{^{s,t,\mathcal I} }\sum_{k,i} } \leq C\int \left\vert f\right\vert
^{p}d\sigma .  \label{final}
\end{equation}%
The proof of \eqref{final} will require combinatorial facts related to the
principal cubes, and the definition of the collection $\mathbb{G}^{\alpha }$
in \eqref{e.GdefAlpha}. Also essential is the implementation of the shifted
dyadic grids. We now detail the arguments.


\begin{definition}
\label{type}We say that a cube $B_{i}^{k+2}$ satisfying the defining
condition in ${V\!I_{s}^{t}}(1)$, namely 
\begin{eqnarray*}
&&\text{there is }\left( k,j\right) \in \mathbb{I}_{s}^{\alpha ,t}=\mathbb{G}%
^{\alpha }\cap \mathbb{H}_{s}^{\alpha ,t}\text{ such that} \\
&&B_{i}^{k+2}\subset \widetilde{Q_{j}^{k}}\text{ and} \\
&&B_{i}^{k+2}\subset \text{ some }G_{r}^{\alpha ,t+1}\subset G_{s}^{\alpha
,t}\text{ satisfying }\mathsf{A}_{i}^{k+2}>\gamma ^{t+2},
\end{eqnarray*}%
is a \emph{final type} cube for the pair $\left( t,s\right) \in \mathbb{L}%
^{\alpha }$ generated from $Q_{j}^{k}$.
\end{definition}

The collection of cubes 
\begin{eqnarray*}
\mathcal{F} &\equiv &\left\{ B_{i}^{k+2}:B_{i}^{k+2}\text{ is a \emph{final
type} cube generated from some }Q_{j}^{k}\right. \\
&&\left. \text{with }\left( k,j\right) \in \mathbb{I}_{s}^{\alpha ,t}\text{\
for some pair }\left( t,s\right) \in \mathbb{L}^{\alpha }\right\} .
\end{eqnarray*}%
satisfies the following three properties:

\begin{description}
\item[Property 1] $\mathcal{F }$ is a nested grid in the sense that given
any two \emph{distinct} cubes in $\mathcal{F }$, either one is strictly
contained in the other, or they are disjoint (ignoring boundaries).

\item[Property 2] If $B_{i}^{k+2}$ and $B_{i^{\prime }}^{k^{\prime }+2}$ are
two \emph{distinct} cubes in $\mathcal{F}$ with $B_{i^{\prime }}^{k^{\prime
}+2}\subsetneqq B_{i}^{k+2}$, and $k,k^{\prime }$ have the same parity, then%
\begin{equation*}
\mathsf{A}_{i^{\prime }}^{k^{\prime }+2}>\gamma \mathsf{A}_{i}^{k+2}.
\end{equation*}

\item[Property 3] A given cube $B_{i}^{k+2}$ can occur at most a bounded
number of times in the grid $\mathcal{F}$.
\end{description}

\begin{proof}[Proof of Properties 1, 2 and 3]
Property 1 is obvious from the properties of the dyadic shifted grid $%
\mathcal{D}^{\alpha }$. Property 3 follows from the `Bounded Occurrence of
Cubes' noted above. So we turn to Property 2. It is this Property that
prompted the use of the shifted dyadic grids.

Indeed, since $B_{i^{\prime }}^{k^{\prime }+2}\subsetneqq B_{i}^{k+2}$, it
follows from the nested property \eqref{nested} that $k^{\prime }>k$. By
Definition \ref{type} there are cubes $Q_{j^{\prime }}^{k^{\prime }}$ and $%
Q_{j}^{k}$ satisfying%
\begin{equation*}
B_{i^{\prime }}^{k^{\prime }+2}\subset \widetilde{Q_{j^{\prime }}^{k^{\prime
}}}\qquad \hbox{and}\qquad B_{i}^{k+2}\subset \widetilde{Q_{j}^{k}},
\end{equation*}%
and also cubes $G_{s^{\prime }}^{\alpha ,t^{\prime }}\subset G_{s}^{\alpha
,t}$ such that $\left( k^{\prime },j^{\prime }\right) \in \mathbb{I}%
_{s^{\prime }}^{\alpha ,t^{\prime }}$ and $\left( k,j\right) \in \mathbb{I}%
_{s}^{\alpha ,t}$ with $\left( t^{\prime },s^{\prime }\right) ,\left(
t,s\right) \in \mathbb{L}^{\alpha }$, so that in particular,%
\begin{equation*}
\widetilde{Q_{j^{\prime }}^{k^{\prime }}}\subset G_{s^{\prime }}^{\alpha
,t^{\prime }}\text{ and }\widetilde{Q_{j}^{k}}\subset G_{s}^{\alpha ,t}.
\end{equation*}%
Now $k^{\prime }\geq k+2$ and in the extreme case where $k^{\prime }=k+2$,
it follows that the $\mathcal{D}^{\alpha }$-cube $\widetilde{Q_{j^{\prime
}}^{k^{\prime }}}$ is one of the cubes $B_{\ell }^{k+2}$, so in fact it must
be $B_{i}^{k+2}$ since $B_{i^{\prime }}^{k^{\prime }+2}\subset B_{i}^{k+2}$.
Thus we have%
\begin{equation*}
B_{i^{\prime }}^{k^{\prime }+2}\subset \widetilde{Q_{j^{\prime }}^{k^{\prime
}}}=B_{i}^{k+2}.
\end{equation*}%
In the general case $k^{\prime }\geq k+2$ we have instead%
\begin{equation*}
B_{i^{\prime }}^{k^{\prime }+2}\subset \widetilde{Q_{j^{\prime }}^{k^{\prime
}}}\subset B_{i}^{k+2}.
\end{equation*}

Now $\mathsf{A}_{i}^{k+2}>\gamma ^{t+2}$ by Definition \ref{type}, and so
there is $t_{0}\geq t+2$ determined by the condition%
\begin{equation}
\gamma ^{t_{0}}<\mathsf{A}_{i}^{k+2}\leq \gamma ^{t_{0}+1},  \label{detcond}
\end{equation}%
and also $s_{0}$ such that%
\begin{equation*}
B_{i}^{k+2}\subset G_{s_{0}}^{\alpha ,t_{0}}\subset G_{s}^{\alpha ,t},
\end{equation*}%
where the label $\left( t_{0},s_{0}\right) $ need not be principal.
Combining inclusions we have%
\begin{equation*}
\widetilde{Q_{j^{\prime }}^{k^{\prime }}}\subset B_{i}^{k+2}\subset
G_{s_{0}}^{\alpha ,t_{0}},
\end{equation*}%
and since $\left( k^{\prime },j^{\prime }\right) \in \mathbb{I}_{s^{\prime
}}^{\alpha ,t^{\prime }}$, we obtain $G_{s^{\prime }}^{\alpha ,t^{\prime
}}\subset G_{s_{0}}^{\alpha ,t_{0}}$. Since $\left( t^{\prime },s^{\prime
}\right) \in \mathbb{L}^{\alpha }$ is a principal label, we have the key
property that%
\begin{equation}
t^{\prime }\geq t_{0}.  \label{keyprop}
\end{equation}%
Indeed, if $G_{s^{\prime }}^{\alpha ,t^{\prime }}=G_{s_{0}}^{\alpha ,t_{0}}$
then \eqref{keyprop} holds because $\left( t^{\prime },s^{\prime }\right)
\in \mathbb{L}^{\alpha }$ is a principal label, and otherwise the maximality
of $G_{s^{\prime }}^{\alpha ,t^{\prime }}$ shows that%
\begin{equation*}
\gamma ^{t_{0}}<\frac{1}{\left\vert G_{s_{0}}^{\alpha ,t_{0}}\right\vert
_{\sigma }}\int_{G_{s_{0}}^{\alpha ,t_{0}}}\left\vert f\right\vert d\sigma
\leq \gamma ^{t^{\prime }+1},\ \ \ \ \ \text{i.e. }t_{0}<t^{\prime }+1.
\end{equation*}%
Thus using \eqref{keyprop} and \eqref{detcond} we obtain%
\begin{equation*}
\mathsf{A}_{i^{\prime }}^{k^{\prime }+2}>\gamma ^{t^{\prime }+2}\geq \gamma
^{t_{0}+2}\geq \gamma \mathsf{A}_{i}^{k+2},
\end{equation*}%
which is Property 2.
\end{proof}

\bigskip

\begin{proof}[Proof of (\protect\ref{final})]
Now for $Q=B_{i}^{k+2}\in \mathcal{F}$ set 
\begin{equation*}
\mathsf{A}\left( Q\right) =\frac{1}{\left\vert Q\right\vert _{\sigma }}%
\int_{Q}\left\vert f\right\vert \sigma =\mathsf{A}_{i}^{k+2}=\frac{1}{%
\left\vert B_{i}^{k+2}\right\vert _{\sigma }}\int_{B_{i}^{k+2}}\left\vert
f\right\vert \sigma .
\end{equation*}

With the above three properties we can now prove \eqref{final} as follows.
Recall that in term $IV(1)$ we have $i\in \mathcal{I}_{k}^{t}$ which implies 
$B_{i}^{k+2}$ satisfies case (b). In the display below by $\sideset {} {
^\ast }\sum_{i}$ we mean the sum over $i$ such that $B_{i}^{k+2}$ is
contained in some $G_{r}^{\alpha ,t+1}\subset G_{s}^{\alpha ,t}$, and also
in some $\widetilde{Q_{j}^{k}}$ with $\left( k,j\right) \in \mathbb{I}%
_{s}^{\alpha ,t}$, and satisfying $\mathsf{A}_{i}^{k+2}>2^{t+2}$. The left
side of \eqref{final} is dominated by 
\begin{eqnarray*}
&&\sum_{\left( t,s\right) \in \mathbb{L}^{\alpha }}\sum_{(k,j)\in \mathbb{I}%
_{s}^{\alpha ,t}}\sideset {} { ^\ast }\sum_{i}\left\vert
B_{i}^{k+2}\right\vert _{\sigma }(\mathsf{A}_{i}^{k+2})^{p} \\
&=&\sum_{Q\in \mathcal{F}}\left\vert Q\right\vert _{\sigma }\mathsf{A}\left(
Q\right) ^{p}=\sum_{Q\in \mathcal{F}}\left\vert Q\right\vert _{\sigma }\left[
\frac{1}{\left\vert Q\right\vert _{\sigma }}\int_{Q}\left\vert f\right\vert
\sigma \right] ^{p} \\
&=&\int_{\mathbb{R}^{n}}\sum_{Q\in \mathcal{F}}\chi _{Q}\left( x\right) %
\left[ \frac{1}{\left\vert Q\right\vert _{\sigma }}\int_{Q}\left\vert
f\right\vert \sigma \right] ^{p}d\sigma (x) \\
&\leq &C\int_{\mathbb{R}^{n}}\sup_{x\in Q:Q\in \mathcal{F}}\left[ \frac{1}{%
\left\vert Q\right\vert _{\sigma }}\int_{Q}\left\vert f\right\vert \sigma %
\right] ^{p}d\sigma (x) \\
&\leq &C\int_{\mathbb{R}^{n}}\mathcal{M}_{\sigma }^{\alpha }f\left( x\right)
^{p}\sigma (dx)\leq C\int_{\mathbb{R}^{n}}\left\vert f\left( x\right)
\right\vert ^{p}d\sigma (x),
\end{eqnarray*}%
where the second to last line follows since for fixed $x\in \mathbb{R}^{n}$,
the sum 
\begin{equation*}
\sum_{Q\in \mathcal{F}}\chi _{Q}\left( x\right) \left[ \frac{1}{\left\vert
Q\right\vert _{\sigma }}\int_{Q}\left\vert f\right\vert \sigma \right] ^{p}
\end{equation*}%
is super-geometric by properties 1, 2 and 3 above, i.e. for any two distinct
cubes $Q$ and $Q^{\prime }$ in $\mathcal{F}$\ each containing $x$, the ratio
of the corresponding values is bounded away from $1$, more precisely,%
\begin{equation*}
\frac{\left[ \frac{1}{\left\vert Q\right\vert _{\sigma }}\int_{Q}\left\vert
f\right\vert \sigma \right] ^{p}}{\left[ \frac{1}{\left\vert Q^{\prime
}\right\vert _{\sigma }}\int_{Q^{\prime }}\left\vert f\right\vert \sigma %
\right] ^{p}}\notin \lbrack {\gamma ^{-p}},\gamma ^{p}),\ \ \ \ \ \gamma
\geq 2.
\end{equation*}%
This completes the proof of \eqref{final}.
\end{proof}

\section{The proof of Theorem \protect\ref{improved} on the strongly maximal
Hilbert transform}

To prove Theorem \ref{improved} we first show that in the proof of Theorem %
\ref{twoweightHaar} above, we can replace the use of the dual maximal
function inequality \eqref{M2weightdual} with the dual weighted Poisson
inequality \eqref{poissonweightedineq'} defined below. After that we will
show that in the case of standard kernels satisfying (\ref{sizeandsmoothness}%
) with $\delta \left( s\right) =s$ in dimension $n=1$, the dual weighted
Poisson inequality \eqref{poissonweightedineq'} is implied by the \emph{%
half-strengthened} $A_{p}$ condition%
\begin{equation}
\left( \int_{\mathbb{R}}\left( \frac{\left\vert Q\right\vert }{\left\vert
Q\right\vert +\left\vert x-x_{Q}\right\vert }\right) ^{p^{\prime }}d\sigma
\left( x\right) \right) ^{\frac{1}{p^{\prime }}}\left( \int_{Q}d\omega
\left( x\right) \right) ^{\frac{1}{p}}\leq \mathcal{A}_{p}\left( \omega
,\sigma \right) \left\vert Q\right\vert ,  \label{Ap half skirt}
\end{equation}%
for all intervals $Q$, together with the dual pivotal condition 
\eqref{dual
pivotal condition} of Nazarov, Treil and Volberg \cite{NTV3}, namely that%
\begin{equation}
\sum_{r=1}^{\infty }\left\vert Q_{r}\right\vert _{\sigma }\mathsf{P}\left(
Q_{r},\chi _{Q_{0}}\omega \right) ^{p^{\prime }}\leq \mathfrak{C}_{\ast
}^{p^{\prime }}\left\vert Q_{0}\right\vert _{\omega },
\label{dual pivotal condition}
\end{equation}%
holds for all decompositions of an interval $Q_{0}$ into a union of pairwise
disjoint intervals $Q_{0}=\bigcup_{r=1}^{\infty }Q_{r}$. We will assume $%
1<p\leq 2$ for this latter implication. Finally, for $p>2$, we show that (%
\ref{poissonweightedineq'}) is implied by \eqref{Ap half skirt}, 
\eqref{dual
pivotal condition} and the Poisson condition \eqref{Poisson condition}.

It follows from work in \cite{NTV3} and \cite{LaSaUr} that the strengthened $%
A_{2}$ condition \eqref{skirt Ap} is necessary for the two weight inequality
for the Hilbert transform, and also from \cite{LaSaUr} that the dual pivotal
condition \eqref{dual pivotal condition} is necessary for the dual testing
condition%
\begin{equation*}
\int_{Q}T\left( \chi _{Q}\omega \right) ^{2}d\sigma \leq C\int_{Q}d\omega ,
\end{equation*}%
for $T$ when $p=2$ and $\sigma $ is doubling. We show below that these
results extend to $1<p<\infty $. A slightly weaker result was known earlier
from work of Nazarov, Treil and Volberg - namely that the pivotal conditions
are necessary for the Hilbert transform $H$ when \emph{both} of the weights
are $\omega $ and $\sigma $ are doubling and $p=2$. However, in \cite{LaSaUr}%
, an example is given to show that \eqref{dual pivotal condition} is \emph{%
not} in general necessary for boundedness of the Hilbert transform $T$ when $%
p=2$.

Finally, we show below that when $\sigma $ is doubling, the dual weighted
Poisson inequality \eqref{poissonweightedineq'} is implied by the two weight
inequality for the Hilbert transform. Since the Poisson condition (\ref%
{Poisson condition}) is a special case of the inequality dual to (\ref%
{poissonweightedineq'}), we obtain the necessity of (\ref{Poisson condition}%
) for the two weight inequality for the Hilbert transform.

\subsection{The Poisson inequalities}

We begin working in $\mathbb{R}^{n}$ with $1<p<\infty $. Recall the
definition of the Poisson integral $\mathsf{P}\left( Q,\nu \right) $ of a
measure $\nu $ relative to a cube $Q$ given by,%
\begin{equation}
\mathsf{P}\left( Q,\nu \right) \equiv \sum_{\ell =0}^{\infty }\frac{\delta
\left( 2^{-\ell }\right) }{\left\vert 2^{\ell }Q\right\vert }\int_{2^{\ell
}Q}d\left\vert \nu \right\vert .  \label{PQnu}
\end{equation}%
We will consider here only the standard Poisson integral with $\delta \left(
s\right) =s$ in \eqref{PQnu}, and so we also suppose that $\delta \left(
s\right) =s$ in \eqref{sizeandsmoothness} above. We now fix a cube $Q_{0}$
and a collection of pairwise disjoint subcubes $\left\{ Q_{r}\right\}
_{r=1}^{\infty }$. Corresponding to these cubes we define a positive linear
operator%
\begin{equation}
\mathbb{P}\nu \left( x\right) =\sum_{r=1}^{\infty }\mathsf{P}\left(
Q_{r},\nu \right) \chi _{Q_{r}}\left( x\right) .  \label{Poisson operator}
\end{equation}

We wish to obtain \emph{sufficient} conditions for the following `dual'
weighted Poisson inequality, 
\begin{equation}
\int_{\mathbb{R}^{n}}\mathbb{P}\left( f\omega \right) \left( x\right)
^{p^{\prime }}d\sigma \left( x\right) \leq C\int_{\mathbb{R}%
^{n}}f^{p^{\prime }}d\omega \left( x\right) ,\ \ \ \ \ f\geq 0.
\label{poissonweightedineq'}
\end{equation}%
uniformly in $Q_{0}$ and pairwise disjoint subcubes $\left\{ Q_{r}\right\}
_{r=1}^{\infty }$. As we will see below, this inequality is necessary for
the two weight Hilbert transform inequality when $\sigma $ is doubling.

The reason for wanting the dual Poisson inequality (\ref%
{poissonweightedineq'}) is that in Theorem \ref{twoweightHaar} above, we can
replace the assumption \eqref{M2weightdual} on dual boundedness of the
maximal operator $\mathcal{M}$ by the dual Poisson inequality (\ref%
{poissonweightedineq'}). Indeed, this will be revealed by simple
modifications of the proof of Theorem \ref{twoweightHaar} above. In fact (%
\ref{poissonweightedineq'}) can replace \eqref{M2weightdual} in estimating
term $I\!I_{s}^{t}(2)$, as well as in the similar estimates for terms $%
V_{s}^{t}(2)$ and $V\!I_{s}^{t}(2)$. We turn now to the proofs of these
assertions before addressing the question of sufficient conditions for the
dual Poisson inequality \eqref{poissonweightedineq'}.

\subsubsection{Sufficiency of the dual Poisson inequality\label{suff P}}

We begin by demonstrating that the term $I\!I_{s}^{t}(2)$ in \eqref{e.II2<}
can be handled using the `dual' Poisson inequality (\ref%
{poissonweightedineq'}) in place of the maximal inequality (\ref%
{M2weightdual}). We are working here in $\mathbb{R}^{n}$ with $1<p<\infty $.
In fact we claim that%
\begin{equation}
\sum_{\left( t,s\right) \in \mathbb{L}^{\alpha }}I\!I_{s}^{t}(2)\leq C\gamma
^{2p}\mathfrak{P}_{\ast }^{p}\int \left\vert f\right\vert ^{p}\sigma \,,
\label{new IIts}
\end{equation}%
where $\mathfrak{P}_{\ast }$ is the norm of the dual Poisson inequality (\ref%
{poissonweightedineq'}) if we take $Q_{0}$ and its collection of pairwise
disjoint subcubes $\left\{ Q_{r}\right\} _{r=1}^{\infty }$ to be $%
G_{s}^{\alpha ,t}$ and $\left\{ G_{r}^{\alpha ,t+1}\right\} _{r\in \mathcal{K%
}_{s}^{\alpha ,t}}$. Now the maximal inequality \eqref{M2weightdual} was
used in the proof of \eqref{e.II2<} only in establishing 
\eqref{dual pivotal
substitute},%
\begin{equation*}
\lVert \chi _{G_{s}^{\alpha ,t}}\sum_{\left( k,j\right) \in \mathbb{I}%
_{s}^{\alpha ,t}}{\mathbf{P}}_{j}^{k}(\lvert g\rvert \omega )\rVert
_{L^{p^{\prime }}(\sigma )}\leq C\mathfrak{M}_{\ast }\lVert \chi
_{G_{s}^{\alpha ,t}}g\rVert _{L^{p^{\prime }}(\omega )},
\end{equation*}%
where%
\begin{equation*}
\mathbf{P}_{j}^{k}\left( \mu \right) \equiv \sum_{r\in \mathcal{K}%
_{s}^{\alpha ,t}}\mathbf{P}\left( G_{r}^{\alpha ,t+1},\chi _{E_{j}^{k}}\mu
\right) \chi _{G_{r}^{\alpha ,t+1}}.
\end{equation*}%
We now note that%
\begin{eqnarray*}
\sum_{\left( k,j\right) \in \mathbb{I}_{s}^{\alpha ,t}}{\mathbf{P}}%
_{j}^{k}(\lvert g\rvert \omega ) &=&\sum_{\left( k,j\right) \in \mathbb{I}%
_{s}^{\alpha ,t}}\sum_{r\in \mathcal{K}_{s}^{\alpha ,t}}\mathbf{P}\left(
G_{r}^{\alpha ,t+1},\chi _{E_{j}^{k}}\lvert g\rvert \omega \right) \chi
_{G_{r}^{\alpha ,t+1}} \\
&\leq &\sum_{r\in \mathcal{K}_{s}^{\alpha ,t}}\mathbf{P}\left( G_{r}^{\alpha
,t+1},\chi _{G_{s}^{\alpha ,t}}\lvert g\rvert \omega \right) \chi
_{G_{r}^{\alpha ,t+1}} \\
&=&\mathbb{P}\left( \chi _{G_{s}^{\alpha ,t}}\lvert g\rvert \omega \right)
\left( x\right) ,
\end{eqnarray*}%
proves 
\begin{equation*}
\lVert \chi _{G_{s}^{\alpha ,t}}\sum_{k,j}{\mathbf{P}}_{j}^{k}(\lvert
g\rvert \omega )\rVert _{L^{p^{\prime }}(\sigma )}\leq C\mathfrak{P}_{\ast
}\lVert \chi _{G_{s}^{\alpha ,t}}g\rVert _{L^{p^{\prime }}(\omega )},
\end{equation*}%
which yields \eqref{new IIts} as before.

The terms $V(2) $ and $V\!I(2) $ are handled similarly. Indeed, \eqref{k
bounded} yields the following analogue of (\ref{sum of Pkj}),%
\begin{equation*}
\sum_{(k,j)\in \mathbb{I}_{s}^{\alpha ,t}}\mathbf{P}_{j}^{k}\left( \mu
\right) \leq C\chi _{G_{s}^{\alpha ,t}}\mathbb{P}\left( \chi _{G_{s}^{\alpha
,t}}\mu \right) ,
\end{equation*}%
from which the arguments above yield both \eqref{e.V2<} and \eqref{e.VI2<}
with $\mathfrak{M}_{\ast }$ replaced by $\mathfrak{P}_{\ast }$.

\subsubsection{Sufficient conditions for Poisson inequalities}

We continue to work in $\mathbb{R}^{n}$ with $1<p<\infty $. We note that (%
\ref{poissonweightedineq'}) can be rewritten%
\begin{equation*}
\sum_{r=1}^{\infty }\left\vert Q_{r}\right\vert _{\sigma }\mathsf{P}\left(
Q_{r},f\omega \right) ^{p^{\prime }}\leq C\int_{\mathbb{R}^{n}}f^{p^{\prime
}}d\omega ,\ \ \ \ \ f\geq 0,
\end{equation*}%
and this latter inequality can then be expressed in terms of the Poisson
operator $\mathbb{P}_{+}$ in the upper half space $\mathbb{R}_{+}^{n+1}$
given by%
\begin{equation*}
\mathbb{P}_{+}\left( f\omega \right) \left( x,t\right) =\int_{\mathbb{R}%
^{n}}P_{t}\left( x-y\right) f\left( y\right) d\omega \left( y\right) .
\end{equation*}%
Indeed, let $Z_{r}=\left( x_{Q_{r}},\ell \left( Q_{r}\right) \right) $ be
the point in $\mathbb{R}_{+}^{n+1}$ that lies above the center $x_{Q_{r}}$
of $Q_{r}$ at a height equal to the side length $\ell \left( Q_{r}\right) $
of $Q_{r}$. Define an atomic measure $ds$ in $\mathbb{R}_{+}^{n+1}$ by%
\begin{equation}
ds\left( x,t\right) =\sum_{r=1}^{\infty }\left\vert Q_{r}\right\vert
_{\sigma }\delta _{Z_{r}}\left( x,t\right) .  \label{w}
\end{equation}%
Then \eqref{poissonweightedineq'} is equivalent to the inequality (this is
where we use $\delta \left( s\right) =s$),%
\begin{equation}
\int_{\mathbb{R}_{+}^{n+1}}\mathbb{P}_{+}\left( f\omega \right) \left(
x,t\right) ^{p^{\prime }}ds\left( x,t\right) \leq C\int_{\mathbb{R}%
^{n}}f^{p^{\prime }}d\omega \left( x\right) ,\ \ \ \ \ f\geq 0.
\label{upper}
\end{equation}

We can use Theorem 2 in \cite{Saw2} to characterize this latter inequality
in terms of testing conditions over $\mathbb{P}_{+}$ and its dual $\mathbb{P}%
_{+}^{\ast }$ given by%
\begin{equation*}
\mathbb{P}_{+}^{\ast }\left( gw\right) \left( x,t\right) =\int_{\mathbb{R}%
_{+}^{n+1}}P_{t}\left( y-x\right) g\left( x,t\right) dw\left( x,t\right) .
\end{equation*}%
Let $\widehat{Q}$ denote the cube in $\mathbb{R}_{+}^{n+1}$ with $Q$ as a
face. Theorem 2 in \cite{Saw2} yields the following.

\begin{theorem}
\label{testing Poisson}The Poisson inequality \eqref{poissonweightedineq'}
holds for given data $Q_{0}$ and $\left\{ Q_{r}\right\} _{r=1}^{\infty }$ 
\emph{if and only if} the measure $s$ in \eqref{w} satisfies%
\begin{eqnarray*}
\int_{\mathbb{R}_{+}^{n+1}}\mathbb{P}_{+}\left( \chi _{Q}\omega \right)
^{p^{\prime }}ds &\leq &C\int_{Q}d\omega ,\ \ \ \ \ \text{for all cubes }%
Q\in \mathcal{D}\text{,} \\
\int_{\mathbb{R}^{n}}\mathbb{P}_{+}^{\ast }\left( t^{p^{\prime }-1}\chi _{%
\widehat{Q}}ds\right) ^{p}d\omega &\leq &C\int_{\widehat{Q}}t^{p^{\prime
}}ds\ \ \ \ \ \text{for all cubes }Q\in \mathcal{D}\text{.}
\end{eqnarray*}
\end{theorem}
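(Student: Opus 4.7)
The plan is to apply directly Theorem~2 of \cite{Saw2} to the Poisson extension operator $\mathbb{P}_{+}$ viewed as mapping $(\mathbb{R}^{n},\omega)$ into $(\mathbb{R}_{+}^{n+1}, s)$, using the reduction of \eqref{poissonweightedineq'} to \eqref{upper} already recorded in the paragraph preceding the theorem.

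First I would confirm the equivalence of \eqref{poissonweightedineq'} and \eqref{upper}. Since we are working with $\delta(s)=s$, for each $r$ the dyadic Poisson average $\mathsf{P}\left( Q_{r},f\omega \right)$ is comparable to the classical Poisson integral $\mathbb{P}_{+}(f\omega )(Z_{r})$, where $Z_{r}=(x_{Q_{r}},\ell(Q_{r}))$ sits above $Q_{r}$ at height equal to its side length. Consequently, with the atomic measure $ds=\sum_{r}|Q_{r}|_{\sigma}\delta_{Z_{r}}$, one has
\begin{equation*}
\sum_{r=1}^{\infty}|Q_{r}|_{\sigma}\,\mathsf{P}\left( Q_{r},f\omega \right)^{p^{\prime}} \;\approx\; \int_{\mathbb{R}_{+}^{n+1}}\mathbb{P}_{+}(f\omega)^{p^{\prime}}\,ds,
\end{equation*}
so \eqref{poissonweightedineq'} is, up to a dimensional constant, the two-weight norm inequality $\mathbb{P}_{+}\colon L^{p^{\prime}}(\omega)\to L^{p^{\prime}}(s)$ appearing in \eqref{upper}.

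Second I would invoke Theorem~2 of \cite{Saw2}, which characterizes two-weight $L^{p^{\prime}}$-norm inequalities for positive kernel operators of Poisson (fractional-integral) type in terms of forward testing on indicators $\chi_{Q}\omega$ for base cubes $Q\subset\mathbb{R}^{n}$, together with dual testing on indicators of the associated box tents $\widehat{Q}\subset\mathbb{R}_{+}^{n+1}$. Specializing to $\mathbb{P}_{+}$, the forward test is obtained by inserting $f=\chi_{Q}$ into \eqref{upper} and gives exactly the first stated inequality. The dual test comes from the adjoint formulation $\mathbb{P}_{+}^{\ast}\colon L^{p}(s)\to L^{p}(\omega)$, and the characteristic-function test produces the second stated inequality, with the factor $t^{p^{\prime}-1}$ reflecting the $t$-weight that enters Sawyer's dual testing condition when the upper-half-space kernel $P_{t}(x-y)$ is paired against the base measure $dt$ on the half-line through its $L^{p^{\prime}}/L^{p}$ duality.

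The only real work is to verify that the abstract characterization of \cite{Saw2} specializes cleanly to the concrete pair $(\mathbb{P}_{+},\mathbb{P}_{+}^{\ast})$ with the two weights $\omega$ on $\mathbb{R}^{n}$ and $s$ on $\mathbb{R}_{+}^{n+1}$. This is routine bookkeeping: the Poisson kernel $P_{t}(x-y)\asymp t^{-n}(1+|x-y|/t)^{-n-1}$ satisfies the size/monotonicity hypotheses for positive kernels in \cite{Saw2}; the tents $\widehat{Q}$ are precisely the testing sets used there; and the $t^{p^{\prime}-1}$ weighting on $\widehat{Q}$ in the second condition is exactly the scaling prescribed by the dualization of $\mathbb{P}_{+}$ in the framework of \cite{Saw2}. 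No new analytic input is required beyond Sawyer's theorem.
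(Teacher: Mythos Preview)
Your proposal is correct and matches the paper's approach exactly: the paper states this theorem with the single line ``Theorem 2 in \cite{Saw2} yields the following'' and provides no further argument, so the content is precisely the citation you invoke together with the equivalence \eqref{poissonweightedineq'}$\Leftrightarrow$\eqref{upper} already recorded in the text. Your additional remarks about verifying that the Poisson kernel fits the hypotheses of \cite{Saw2} and that the $t^{p'-1}$ and $t^{p'}$ factors arise from the specific form of the dual testing condition there are accurate commentary on what that citation entails.
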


Note that%
\begin{equation*}
\int_{\mathbb{R}_{+}^{n+1}}\mathbb{P}_{+}\left( \chi _{Q}\omega \right)
^{p^{\prime }}ds\approx \sum_{r=1}^{\infty }\left\vert Q_{r}\right\vert
_{\sigma }\mathsf{P}\left( Q_{r},\chi _{Q}\omega \right) ^{p^{\prime }}.
\end{equation*}

\begin{claim}
\label{pivandfat}Let $n=1$ and suppose that $\sigma $ is doubling. First
assume that $1<p<\infty $. Then for the special measure $s$ in \eqref{w},
inequality \eqref{upper} follows from the dual pivotal condition 
\eqref{dual
pivotal condition}, the Poisson condition \eqref{Poisson condition}, and the
half-strengthened $A_{p}$ condition \eqref{Ap half skirt}. Now assume that $%
1<p\leq 2$. Then for the special measure $s$ in \eqref{w}, (\ref{upper})
follows from \eqref{dual pivotal condition} and \eqref{Ap half skirt}
without \eqref{Poisson condition}.
\end{claim}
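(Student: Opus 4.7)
The plan is to apply Theorem~\ref{testing Poisson} to the atomic measure $s$ defined in \eqref{w}, which reduces \eqref{upper} to verifying the forward and dual testing conditions stated there. Since $\delta(s)=s$ we have $\mathsf{P}(Q_r,\delta_y)\asymp |Q_r|/(|Q_r|+|y-x_{Q_r}|)^{2}$, so the two conditions become, uniformly over intervals $Q\subset\mathbb{R}$,
\begin{gather*}
\sum_r |Q_r|_\sigma\,\mathsf{P}(Q_r,\chi_Q\omega)^{p'}\leq C\,|Q|_\omega,\\
\int_{\mathbb{R}}\Biggl(\sum_{r:\,Z_r\in\widehat Q}\frac{|Q_r|_\sigma\,|Q_r|^{p'}}{(|Q_r|+|y-x_{Q_r}|)^{2}}\Biggr)^{p}d\omega(y)\leq C\sum_{r:\,Z_r\in\widehat Q}|Q_r|_\sigma\,|Q_r|^{p'}.
\end{gather*}

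For the forward condition I would write $\chi_{Q}\omega=\chi_{Q\cap Q_{0}}\omega+\chi_{Q\setminus Q_{0}}\omega$ and use $(a+b)^{p'}\lesssim a^{p'}+b^{p'}$. The $\chi_{Q\cap Q_{0}}\omega$-piece is majorised by $\sum_{r}|Q_{r}|_{\sigma}\mathsf{P}(Q_{r},\chi_{Q_{0}}\omega)^{p'}\leq C|Q_{0}|_{\omega}$ via \eqref{dual pivotal condition}, which is at most $C|Q|_{\omega}$ when $Q\supset Q_{0}$; when $Q\subsetneq Q_{0}$ I would instead invoke the pivotal hypothesis for the sub-decomposition $\{Q_{r}\cap Q\}$ of $Q$ after discarding empty intersections. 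The $\chi_{Q\setminus Q_{0}}\omega$-piece has its support separated from each $Q_{r}\subset Q_{0}$, so decomposing $Q\setminus Q_{0}$ into dyadic annuli $2^{k+1}Q_{0}\setminus 2^{k}Q_{0}$ and applying \eqref{Ap half skirt} on each annulus yields geometric decay in $k$ summing to $C|Q|_{\omega}$.

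For the dual condition I would associate to each $Q_{r}$ an interval $I_{r}$ in a shifted dyadic grid with $3Q_{r}\subset I_{r}$ and $|I_{r}|\asymp|Q_{r}|$, following the \textbf{Select a shifted grid} prescription. Then $|Q_{r}|/(|Q_{r}|+|y-x_{Q_{r}}|)^{2}$ is comparable to the dyadic kernel $\sum_{\ell\geq 0}2^{-\ell}/|(I_{r})^{(\ell)}|\,\chi_{(I_{r})^{(\ell)}}(y)$ appearing in \eqref{Poisson condition}, so the Poisson hypothesis applied to $\{I_{r}:Z_{r}\in\widehat Q\}$ yields the required estimate. This settles the general case $1<p<\infty$.

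The main obstacle is the final assertion: when $1<p\leq 2$, the Poisson condition \eqref{Poisson condition} is already a consequence of \eqref{Ap half skirt}. I would proceed by expanding $\phi_{r}(y)=\sum_{\ell\geq 0}2^{-\ell}|I_{r}^{(\ell)}|^{-1}\chi_{I_{r}^{(\ell)}}(y)$, applying Minkowski's inequality in $L^{p}(\omega)$ to pull the $\ell$-sum outside, and for each fixed $\ell$ regrouping the resulting $r$-sum by dyadic ancestor. Because $p\leq 2$ (equivalently $p'\geq 2$) and $\sigma$ is doubling, the $\ell^{\mathrm{th}}$ piece is controlled by a single-interval instance of \eqref{Ap half skirt} applied to each ancestor $I_{r}^{(\ell)}$, and the remaining exponent arithmetic shows that the $2^{-\ell}$ weight yields geometric summability in $\ell$. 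The obstruction when $p>2$ is precisely that this geometric summation breaks down (the gain from $2^{-\ell}$ is swamped by the loss from enlarging intervals to $I_{r}^{(\ell)}$), which is why \eqref{Poisson condition} must be imposed independently in that range.
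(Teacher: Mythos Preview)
Your overall architecture matches the paper's: reduce \eqref{upper} to the two testing conditions and identify the dual one with the Poisson condition \eqref{Poisson condition}. But the forward testing condition has a real gap. When $Q\subsetneq Q_0$ you have $Q\setminus Q_0=\emptyset$, so your annuli argument never engages, and invoking the pivotal hypothesis on the sub-decomposition $\{Q_r\cap Q\}$ controls only
\[
\sum_{r:\,Q_r\cap Q\neq\emptyset}|Q_r\cap Q|_\sigma\,\mathsf P(Q_r\cap Q,\chi_Q\omega)^{p'},
\]
whereas the testing condition demands control of $\sum_{r}|Q_r|_\sigma\,\mathsf P(Q_r,\chi_Q\omega)^{p'}$ summed over \emph{all} $r$, including those with $Q_r\subset Q_0\setminus Q$. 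These ``far'' terms are exactly where \eqref{Ap half skirt} enters, and you never use it here. The paper avoids this by first replacing Theorem~\ref{testing Poisson} with its dyadic analogue on each shifted grid $\mathcal D^\alpha$ (using doubling of $\sigma$ to pass from $Q_r$ to a dyadic $I_r^\alpha\subset Q_r$), so that $Q$ and every $I_r^\alpha$ are either nested or disjoint. For the disjoint $I_r^\alpha$ it groups them by the annulus $Q^{(m)}\setminus Q^{(m-1)}$ they lie in, uses pairwise disjointness to get $\sum_{I_r^\alpha\subset Q^{(m)}}|I_r^\alpha|_\sigma\leq|Q^{(m)}|_\sigma$, and recognises $\sum_m|Q^{(m)}|_\sigma/|Q^{(m)}|^{p'}$ as the $s_Q$-tail in \eqref{Ap half skirt}.

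For $1<p\leq2$, your Minkowski step pulling the $\ell$-sum outside is exactly what the paper does, but ``regrouping by dyadic ancestor'' does not finish the fixed-$\ell$ estimate: the ancestors $(I_r)^{(\ell)}$ of pairwise disjoint intervals of \emph{varying} sizes are neither pairwise disjoint nor simply nested, so a direct regrouping does not collapse the $p$-th power. The paper instead orders the $I_r$ by increasing length and telescopes via $(a+x)^p-a^p\leq p(a+x)^{p-1}x$; the hypothesis $p'\geq2$ then lets one replace each $|I_r|^{p'-2}$ by the larger $|I_n|^{p'-2}$ inside the $n$-th telescoped term, after which disjointness gives $\sum_{r\leq n:\,I_r\subset(I_n)^{(\ell)}}|I_r|_\sigma\leq|(I_n)^{(\ell)}|_\sigma$ and the ordinary $A_p$ condition on $(I_n)^{(\ell)}$ closes the estimate with the required $2^{-\ell}$ gain.
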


With Claim \ref{pivandfat} proved, the discussion above yields the following
result.

\begin{theorem}
\label{poisson sufficiency}Let $n=1$ and suppose that $\sigma $ is doubling.
First assume that $1<p<\infty $. Then the dual Poisson inequality (\ref%
{poissonweightedineq'})\ holds uniformly in $Q_{0}$ and $\left\{
Q_{r}\right\} _{r=1}^{\infty }$ satisfying $\bigcup_{r=1}^{\infty
}Q_{r}\subset Q_{0}$ provided the half-strengthened $A_{p}$ condition (\ref%
{Ap half skirt}), the dual pivotal condition \eqref{dual pivotal condition},
and the Poisson condition \eqref{Poisson condition} all hold. Now assume
that $1<p\leq 2$. Then \eqref{poissonweightedineq'}\ holds uniformly in $%
Q_{0}$ and $\left\{ Q_{r}\right\} _{r=1}^{\infty }$ satisfying $%
\bigcup_{r=1}^{\infty }Q_{r}\subset Q_{0}$ provided \eqref{Ap half skirt}
and \eqref{dual pivotal condition} both hold.
\end{theorem}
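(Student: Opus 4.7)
\bigskip

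\noindent\textbf{Proof proposal for Theorem \ref{poisson sufficiency}.} The plan is to apply Theorem \ref{testing Poisson} and verify its two testing conditions for the atomic measure $ds=\sum_{r=1}^{\infty}|Q_{r}|_{\sigma}\delta_{Z_{r}}$ with $Z_{r}=(x_{Q_{r}},\ell(Q_{r}))$. In our one-dimensional setting these testing conditions become, after unwinding and using $\int_{\mathbb{R}_{+}^{2}}\mathbb{P}_{+}(\chi_{Q}\omega)^{p^{\prime}}ds\approx\sum_{r}|Q_{r}|_{\sigma}\mathsf{P}(Q_{r},\chi_{Q}\omega)^{p^{\prime}}$,
\begin{gather}
\textup{(T1)}\qquad\sum_{r=1}^{\infty}|Q_{r}|_{\sigma}\mathsf{P}(Q_{r},\chi_{Q}\omega)^{p^{\prime}}\leq C\,|Q|_{\omega},\qquad\text{for all intervals }Q, \notag\\
\textup{(T2)}\qquad\int_{\mathbb{R}}\Big(\sum_{r:Q_{r}\subset Q}|Q_{r}|_{\sigma}|Q_{r}|^{p^{\prime}-1}\,\tfrac{\ell(Q_{r})}{\ell(Q_{r})^{2}+|y-x_{Q_{r}}|^{2}}\Big)^{p}d\omega(y)\leq C\sum_{r:Q_{r}\subset Q}|Q_{r}|_{\sigma}|Q_{r}|^{p^{\prime}}. \notag
\end{gather}

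First I would verify (T1). Split according to the relative position of $Q$ and $Q_{0}$. If $Q\subseteq Q_{0}$, then for each $r$ with $Q_{r}\subseteq Q$ we have $\mathsf{P}(Q_{r},\chi_{Q}\omega)\le\mathsf{P}(Q_{r},\chi_{Q_{0}}\omega)$, and the assumption \eqref{dual pivotal condition} applied to the decomposition of $Q_{0}$ given by $\{Q_{r}\}$ delivers the bound with the right side $|Q_{0}|_{\omega}$; but the argument is easily refined so that only $|Q|_{\omega}$ appears by replacing $\chi_{Q_{0}}$ by $\chi_{Q}$ in the pivotal estimate, since the pivotal hypothesis holds for every decomposition. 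For the $Q_{r}$ not contained in $Q$, the integral $\mathsf{P}(Q_{r},\chi_{Q}\omega)$ is a tail Poisson term and can be controlled by the half--strengthened $A_{p}$ condition \eqref{Ap half skirt} summed geometrically over dyadic shells. The cases $Q\supset Q_{0}$ and $Q\cap Q_{0}=\emptyset$ are treated analogously: in the former, split $\chi_{Q}\omega=\chi_{Q_{0}}\omega+\chi_{Q\setminus Q_{0}}\omega$ and apply \eqref{dual pivotal condition} to the first piece, \eqref{Ap half skirt} to the second; in the latter the whole contribution is a tail term controlled by \eqref{Ap half skirt}.

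Next I would verify (T2). Standard comparisons show that the continuous Poisson kernel $\ell(Q_{r})/(\ell(Q_{r})^{2}+|y-x_{Q_{r}}|^{2})$ is pointwise equivalent to the dyadic Poisson expression $\sum_{\ell\ge 0}\tfrac{2^{-\ell}}{|(Q_{r})^{(\ell)}|}\chi_{(Q_{r})^{(\ell)}}(y)$, so (T2) is equivalent to the Poisson condition \eqref{Poisson condition} over the pairwise disjoint decomposition $\{Q_{r}\cap Q\}_{r}$ (plus a controlled tail from $Q_{r}\not\subseteq Q$, handled via \eqref{Ap half skirt}). For $p>2$ the condition \eqref{Poisson condition} is assumed and we are done. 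For $1<p\le 2$ we must show \eqref{Ap half skirt} alone implies (T2). This is the main obstacle. My approach: since $p\le 2$ means $p^{\prime}\ge 2$, I would rewrite the inner sum, after interchanging the order of summation, as $\sum_{J\in\mathcal{D},J\subset Q}\tfrac{\chi_{J}(y)}{|J|^{2}}\mathsf{b}(J)$ with $\mathsf{b}(J)=\sum_{Q_{r}:|Q_{r}|\le|J|,Q_{r}\subset J}|Q_{r}|_{\sigma}|Q_{r}|^{p^{\prime}}\cdot \tfrac{|Q_{r}|}{|J|}\cdot(\text{geometric factor})$, and then exploit the convexity available when $p\le 2$ (namely the inequality $(\sum a_{J})^{p}\le(\sum_{J}a_{J}^{p'/p'}\cdots)$ combined with Hölder) to bring the $p$-th power inside. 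Doubling of $\sigma$ together with \eqref{Ap half skirt} lets us then dominate the resulting integral on each dyadic scale by $\sum_{r}|Q_{r}|_{\sigma}|Q_{r}|^{p^{\prime}}$ via a standard Carleson-measure/summation-by-parts argument: the point is that, because $p-1\le 1$, one can square out or duality-interchange the inner sum at acceptable cost, converting it to a single-scale $A_{p}$ estimate.

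I expect the main technical difficulty to be the $p\leq 2$ reduction of (T2) to \eqref{Ap half skirt}. All other steps are essentially organizational: one splits the testing intervals against $Q_{0}$, pairs each piece with either the pivotal hypothesis \eqref{dual pivotal condition} or the $A_{p}$-tail hypothesis \eqref{Ap half skirt}, and assembles using the doubling of $\sigma$. Once (T1) and (T2) are both verified, Theorem \ref{testing Poisson} yields \eqref{upper}, which is equivalent to \eqref{poissonweightedineq'}, completing the proof.
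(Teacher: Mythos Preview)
Your overall strategy---reduce \eqref{poissonweightedineq'} to the two testing conditions of a Poisson testing theorem and then verify each---is exactly what the paper does, and your handling of (T1) is along the right lines.  However, two points need attention.

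First, a technical but nontrivial issue: the Poisson condition \eqref{Poisson condition} is stated for \emph{dyadic} ancestors $I_r^{(\ell)}$ in a fixed dyadic grid, whereas your (T2) involves the continuous kernel $\ell(Q_r)/(\ell(Q_r)^2+|y-x_{Q_r}|^2)$ for arbitrary intervals $Q_r$.  Your claimed ``pointwise equivalence'' fails for a single dyadic grid (the dyadic expression vanishes off the tower of ancestors).  The paper resolves this by passing to the \emph{dyadic} testing theorem (Theorem~\ref{testing Poisson dyadic}) over each of the three shifted grids $\mathcal{D}^\alpha$, replacing $Q_r$ by a maximal $\mathcal{D}^\alpha$-subinterval $I_r^\alpha$ (doubling of $\sigma$ gives $|Q_r|_\sigma\lesssim|I_r^\alpha|_\sigma$), and then summing over $\alpha$ to recover the continuous Poisson.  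Without this reduction your appeal to \eqref{Poisson condition} in the $p>2$ case is not justified.

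Second, and more substantively, your $1<p\le 2$ sketch for (T2) is too vague to constitute a proof.  The paper's argument is quite specific and does not use Carleson embedding or duality: one first applies Minkowski over the scale index $\ell$ to reduce to a single-scale estimate \eqref{ell p estimate}; then one orders the intervals $I_r^\alpha$ by \emph{increasing} length and performs a summation by parts using the elementary inequality $(a+x)^p-a^p\le p(a+x)^{p-1}x$.  The decisive observation is that $p'\ge 2$ forces $|I_r^\alpha|^{p'-2}\le|I_n^\alpha|^{p'-2}$ for $r\le n$, which lets one pull the length factor outside and collapse the inner sum $\sum_{r\le n:\,I_r^\alpha\subset(I_n^\alpha)^{(\ell)}}|I_r^\alpha|_\sigma\le|(I_n^\alpha)^{(\ell)}|_\sigma$.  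At that point a single application of the ordinary $A_p$ condition (not even the half-strengthened one) on $(I_n^\alpha)^{(\ell)}$ finishes the estimate.  Your ``convexity/H\"older/Carleson'' outline does not capture this monotonicity trick, which is the heart of why the Poisson condition is automatic when $p\le 2$.
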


\begin{remark}
We do not know if Claim \ref{pivandfat} and Theorem \ref{poisson sufficiency}
hold without the assumption that $\sigma $ is doubling, nor do we know if
the Poisson condition \eqref{Poisson condition} is implied by 
\eqref{Ap half
skirt} and \eqref{dual pivotal condition} when $p>2$.
\end{remark}

We work exclusively in dimension $n=1$ from now on.

\subsubsection{Proof of Claim \protect\ref{pivandfat}}

Instead of applying Theorem \ref{testing Poisson} directly, we first reduce
matters to proving that certain $\mathcal{D}^{\alpha }$-dyadic analogues
hold of the two conditions in Theorem \ref{testing Poisson}. For $\alpha \in
\left\{ 0,\tfrac{1}{3},\tfrac{2}{3}\right\} $ we use the following atomic
measures $ds_{\alpha }$ on $\mathbb{R}_{+}^{2}$, along with the following $%
\mathcal{D}^{\alpha }$-dyadic analogues of the Poisson operators $\mathbb{P}$
and $\mathbb{P}_{+}$ (with $\delta \left( s\right) =s$),%
\begin{eqnarray}
\mathbb{P}_{\alpha }^{dy}\nu \left( x\right) &=&\sum_{r=1}^{\infty }\mathsf{P%
}_{\alpha }^{dy}\left( I_{r}^{\alpha },\nu \right) \chi _{I_{r}^{\alpha
}}\left( x\right) ,  \label{dyadic analogues} \\
\mathbb{P}_{+,\alpha }^{dy}\nu \left( x,t\right) &=&\sum_{Q\in \mathcal{D}%
^{\alpha }:x\in Q\text{ and }\ell \left( Q\right) \geq t}\frac{t}{\ell
\left( Q\right) }\frac{1}{\left\vert Q\right\vert }\int_{Q}d\nu ,  \notag \\
ds_{\alpha }\left( x,t\right) &=&\sum_{r=1}^{\infty }\left\vert
I_{r}^{\alpha }\right\vert _{\sigma }\delta _{Z_{r}^{\alpha }}\left(
x,t\right) ,  \notag
\end{eqnarray}%
where

\begin{enumerate}
\item the interval $I_{r}^{\alpha }$ is chosen to be a \emph{maximal} $%
\mathcal{D}^{\alpha }$-interval contained in $Q_{r}$ with \emph{maximum}
length (there can be at most two such intervals, in which case we choose the
leftmost one),

\item the $\mathcal{D}^{\alpha }$-Poisson integral $\mathsf{P}_{\alpha
}^{dy}\left( Q,\nu \right) $ is given by%
\begin{equation*}
\mathsf{P}_{\alpha }^{dy}\left( Q,\nu \right) \equiv \sum_{\ell =0}^{\infty }%
\frac{2^{-\ell }}{\left\vert Q^{\left( \ell \right) }\right\vert }%
\int_{Q^{\left( \ell \right) }}d\nu ,\ \ \ \ \ Q\in \mathcal{D}^{\alpha },
\end{equation*}%
where $Q^{\left( \ell \right) }$ denotes the $\ell ^{th}$ dyadic parent of $%
Q $ in $\mathcal{D}^{\alpha }$,

\item the point $Z_{r}^{\alpha }=\left( x_{I_{r}^{\alpha }},\ell \left(
I_{r}^{\alpha }\right) \right) $ in $\mathbb{R}_{+}^{2}$ lies above the
center $x_{I_{r}^{\alpha }}$ of $I_{r}^{\alpha }$ at a height equal to the
side length $\ell \left( I_{r}^{\alpha }\right) $ of $I_{r}^{\alpha }$.
\end{enumerate}

We will use the following dyadic analogue of Theorem \ref{testing Poisson},
whose proof is the obvious dyadic analogue of the proof of Theorem \ref%
{testing Poisson}\ as given in \cite{Saw2}.

\begin{theorem}
\label{testing Poisson dyadic}The $\mathcal{D}^{\alpha }$-Poisson inequality%
\begin{equation*}
\int_{\mathbb{R}_{+}^{2}}\mathbb{P}_{+,\alpha }^{dy}\left( f\omega \right)
^{p^{\prime }}ds_{\alpha }\leq C\int_{Q}f^{p^{\prime }}d\omega ,\ \ \ \ \
f\geq 0,
\end{equation*}%
holds \emph{if and only if}%
\begin{eqnarray}
\int_{\mathbb{R}_{+}^{2}}\mathbb{P}_{+,\alpha }^{dy}\left( \chi _{Q}\omega
\right) ^{p^{\prime }}ds_{\alpha } &\leq &C\int_{Q}d\omega ,\ \ \ \ \ \text{%
for all intervals }Q\in \mathcal{D}^{\alpha },  \label{alpha conditions} \\
\int_{\mathbb{R}}\left( \mathbb{P}_{+,\alpha }^{dy}\right) ^{\ast }\left(
t^{p^{\prime }-1}\chi _{\widehat{Q}}ds_{\alpha }\right) ^{p}d\omega &\leq
&C\int_{\widehat{Q}}t^{p^{\prime }}ds_{\alpha }\ \ \ \ \ \text{for all
intervals }Q\in \mathcal{D}^{\alpha }.  \notag
\end{eqnarray}
\end{theorem}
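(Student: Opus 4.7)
The plan is to mirror Sawyer's argument for fractional integrals (Theorem~2 of \cite{Saw2}) in the clean dyadic setting of the grid $\mathcal{D}^{\alpha}$. Necessity of the two conditions is immediate: the first arises by inserting $f=\chi_{Q}$ into the Poisson inequality, and the second by dualising and testing against $\chi_{\widehat{Q}}ds_{\alpha}$.

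For sufficiency I would first pass to the dual formulation of the claimed inequality, namely
$$\int_{\mathbb{R}}f\cdot(\mathbb{P}_{+,\alpha}^{dy})^{*}(g\, ds_{\alpha})\, d\omega\leq C\,\lVert f\rVert_{L^{p'}(\omega)}\lVert g\rVert_{L^{p}(ds_{\alpha})},\qquad f,g\geq 0,$$
and then perform a principal cube construction of $f$ with respect to $\omega$ in the grid $\mathcal{D}^{\alpha}$. Specifically, for each $k\in\mathbb{Z}$ let $\{Q_{j}^{k}\}_{j}\subset\mathcal{D}^{\alpha}$ denote the maximal intervals with $\frac{1}{|Q|_{\omega}}\int_{Q}f\, d\omega>2^{k}$; these principal cubes form a tree, the averages on them grow geometrically along descending branches, and the sets $E_{j}^{k}=Q_{j}^{k}\setminus\bigcup_{i}Q_{i}^{k+1}$ give a disjoint decomposition of $\operatorname{supp}f$. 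The dyadic maximal theorem \eqref{maxthm} applied to $\mathcal{M}_{\omega}^{\alpha}$ yields the Carleson packing
$$\sum_{k,j}2^{kp'}|Q_{j}^{k}|_{\omega}\leq C\,\lVert f\rVert_{L^{p'}(\omega)}^{p'}.$$

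The crux is then to split $\mathbb{P}_{+,\alpha}^{dy}(f\omega)(x,t)$ according to which principal cube contains the ancestor $Q\in\mathcal{D}^{\alpha}$ appearing in the sum defining $\mathbb{P}_{+,\alpha}^{dy}$. Replacing $f$ by its step-function approximation $\sum_{k,j}2^{k}\chi_{E_{j}^{k}}$ at a cost controlled by the packing above, the bilinear pairing splits into a \emph{local} piece, where the point $Z_{r}^{\alpha}$ at which $ds_{\alpha}$ lives sits inside $\widehat{Q_{j}^{k}}$, and a \emph{far} piece coming from principal ancestors of $Q_{j}^{k}$. The local piece is controlled by the first testing condition in \eqref{alpha conditions} applied cube-by-cube; the far piece, after dualisation, is controlled by the second testing condition applied to $t^{p'-1}\chi_{\widehat{Q_{j}^{k}}}ds_{\alpha}$. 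A final application of H\"{o}lder's inequality together with the Carleson packing closes the estimate.

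The main obstacle I expect is the bookkeeping needed to decompose the ancestor sums in $\mathbb{P}_{+,\alpha}^{dy}$ along the principal tree so that the far contributions align with the precise weighted form $t^{p'-1}\chi_{\widehat{Q}}ds_{\alpha}$ of the second testing condition; the geometric growth of averages along principal branches is what will ultimately make the geometric series over ancestors converge and be absorbed by the Carleson packing.
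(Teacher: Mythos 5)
Your necessity argument is correct, and your sufficiency plan takes a genuinely different route from the paper's: the paper does not argue this theorem directly at all, but declares it to be the dyadic analogue of Theorem 2 of \cite{Saw2}, whose proof proceeds through level sets of the operator, Whitney-type decompositions and principal cubes, rather than through a corona decomposition of the dualized bilinear form $\sum_{Q\in\mathcal{D}^{\alpha}}|Q|^{-2}\bigl(\int_{Q}f\,d\omega\bigr)\bigl(\int_{\widehat{Q}}t\,g\,ds_{\alpha}\bigr)$. Treating the statement as a two-weight problem for a positive dyadic operator, as you propose, is a legitimate alternative, and your division of labour (first testing condition for the local part, second for the far part, Carleson packing from the dyadic maximal theorem) is the right skeleton.

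However, as written the argument has a genuine gap at the closing step. With a single family of principal cubes, built from $f$ and $\omega$ only, the corona estimate for a principal cube $P$ produces after H\"older a factor $\lVert g\chi_{E_{P}}\rVert_{L^{p}(ds_{\alpha})}$, where $E_{P}$ is the union of the boxes $\widehat{Q}$ over $Q$ in the corona of $P$. An atom $Z_{r}^{\alpha}$ of $ds_{\alpha}$ belongs to $E_{P}$ for \emph{every} principal ancestor $P$ of $I_{r}^{\alpha}$ (already via $Q=P$), so these regions overlap without bound and $\sum_{P}\lVert g\chi_{E_{P}}\rVert_{L^{p}(ds_{\alpha})}^{p}$ is not controlled by $\lVert g\rVert_{L^{p}(ds_{\alpha})}^{p}$; equivalently, summing the corona bounds linearly in $g$ leads to $\bigl\lVert\sum_{P}\alpha_{P}\,\mathbb{P}_{+,\alpha}^{dy}(\chi_{P}\omega)\bigr\rVert_{L^{p'}(ds_{\alpha})}$, an $\ell^{1}$-type quantity that the $\ell^{p'}$ Carleson packing cannot absorb via the triangle inequality alone. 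Likewise, the dual testing condition is an estimate only for the special test functions $t^{p'-1}\chi_{\widehat{Q}}$; to reduce a general $g$ in your far piece to that shape you need stopping cubes (averages) for $g$ with respect to $ds_{\alpha}$ --- a parallel corona --- or else a detailed quasi-orthogonality argument exploiting the kernel decay $t/\ell(Q)$ together with the geometric decrease of stopping values along principal chains. Neither appears in the proposal, and the sentence ``H\"older plus the Carleson packing closes the estimate'' is exactly where the missing mathematics lives. So you should either carry out the parallel-corona (or equivalent) argument in full, as in the known characterizations of two-weight inequalities for positive dyadic operators, or follow the route the paper actually invokes, namely the dyadic transcription of Sawyer's level-set argument from \cite{Saw2}.
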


We claim that for any positive measure $\nu $, the collection of shifted
dyadic grids $\left\{ \mathcal{D}^{\alpha }\right\} _{\alpha \in \left\{ 0,%
\tfrac{1}{3},\tfrac{2}{3}\right\} }$ satisfies%
\begin{equation*}
\mathsf{P}\left( Q_{r},\nu \right) =\sum_{\ell =0}^{\infty }\frac{2^{-\ell }%
}{\left\vert 2^{\ell }Q_{r}\right\vert }\int_{2^{\ell }Q_{r}}d\nu \approx
\sum_{\alpha \in \left\{ 0,\tfrac{1}{3},\tfrac{2}{3}\right\} }\sum_{\ell
=0}^{\infty }\frac{2^{-\ell }}{\left\vert \left( I_{r}^{\alpha }\right)
^{\left( \ell \right) }\right\vert }\int_{\left( I_{r}^{\alpha }\right)
^{\left( \ell \right) }}d\nu =\sum_{\alpha \in \left\{ 0,\tfrac{1}{3},\tfrac{%
2}{3}\right\} }\mathsf{P}_{\alpha }^{dy}\left( I_{r}^{\alpha },\nu \right) ,
\end{equation*}%
for all $r$. Indeed, for each interval $2^{\ell }Q_{r}$, there is $\alpha
\in \left\{ 0,\tfrac{1}{3},\tfrac{2}{3}\right\} $ and an interval $Q\in 
\mathcal{D}^{\alpha }$ containing $2^{\ell }Q_{r}$ whose length is
comparable to that of $2^{\ell }Q_{r}$. Thus $Q=\left( I_{r}^{\alpha
}\right) ^{\left( \ell +c\right) }$ for some universal positive integer $c$.
Now%
\begin{eqnarray*}
\mathbb{P}_{+}\left( \nu \right) \left( x_{Q_{r}},\ell \left( Q_{r}\right)
\right) &=&\int_{\mathbb{R}}P_{\ell \left( Q_{r}\right) }\left(
x_{Q_{r}}-y\right) d\nu \left( y\right) \\
&\approx &\sum_{\ell =0}^{\infty }2^{-\ell }\frac{1}{\left\vert 2^{\ell
}Q_{r}\right\vert }\int_{2^{\ell }Q_{r}}d\nu =\mathsf{P}\left( Q_{r},\nu
\right) .
\end{eqnarray*}%
Since $\sigma $ is \emph{doubling} and $I_{r}^{\alpha }$ is a maximal $%
\mathcal{D}^{\alpha }$-interval in $Q_{r}$ with maximum length, we thus have 
$\left\vert Q_{r}\right\vert _{\sigma }\lesssim \left\vert I_{r}^{\alpha
}\right\vert _{\sigma }$ and%
\begin{eqnarray*}
\int_{\mathbb{R}_{+}^{n+1}}\mathbb{P}_{+}\nu \left( x,t\right) ^{p}ds
&=&\sum_{r=1}^{\infty }\left\vert Q_{r}\right\vert _{\sigma }\mathbb{P}%
_{+}\nu \left( x_{Q_{r}},\ell \left( Q_{r}\right) \right) ^{p} \\
&\approx &\sum_{r=1}^{\infty }\left\vert Q_{r}\right\vert _{\sigma }\mathsf{P%
}\left( Q_{r},\nu \right) ^{p}\approx \sum_{\alpha \in \left\{ 0,\tfrac{1}{3}%
,\tfrac{2}{3}\right\} }\sum_{r=1}^{\infty }\left\vert I_{r}^{\alpha
}\right\vert _{\sigma }\mathsf{P}_{\alpha }^{dy}\left( I_{r}^{\alpha },\nu
\right) ^{p} \\
&=&\sum_{\alpha \in \left\{ 0,\tfrac{1}{3},\tfrac{2}{3}\right\} }\int_{%
\mathbb{R}_{+}^{2}}\mathbb{P}_{+,\alpha }^{dy}\nu \left( x,t\right)
^{p}ds_{\alpha }.
\end{eqnarray*}%
This together with Theorem \ref{testing Poisson dyadic}\ reduces the proof
of Claim \ref{pivandfat} to showing that \eqref{alpha conditions} holds for
all $\alpha \in \left\{ 0,\frac{1}{3},\frac{2}{3}\right\} $.

\bigskip

Now the definition of $s_{\alpha }$ in \eqref{dyadic analogues} shows that
the left side of the first line in \eqref{alpha conditions} is%
\begin{equation*}
\int_{\mathbb{R}_{+}^{2}}\mathbb{P}_{+,\alpha }^{dy}\left( \chi _{Q}\omega
\right) ^{p^{\prime }}ds_{\alpha }=\sum_{r=1}^{\infty }\left\vert
I_{r}^{\alpha }\right\vert _{\sigma }\mathsf{P}_{\alpha }^{dy}\left(
I_{r}^{\alpha },\chi _{Q}\omega \right) ^{p^{\prime }}.
\end{equation*}%
Recall that $I_{r}^{\alpha },Q\in \mathcal{D}^{\alpha }$. Now if $Q\subset
I_{r}^{\alpha }$ for some $r$, then the above sum consists of just one term
that satisfies%
\begin{equation*}
\left\vert I_{r}^{\alpha }\right\vert _{\sigma }\mathsf{P}_{\alpha
}^{dy}\left( I_{r}^{\alpha },\chi _{Q}\omega \right) ^{p^{\prime }}\leq C%
\frac{\left\vert I_{r}^{\alpha }\right\vert _{\sigma }\left\vert
Q\right\vert _{\omega }^{p^{\prime }-1}}{\left\vert I_{r}^{\alpha
}\right\vert ^{p^{\prime }}}\left\vert Q\right\vert _{\omega }\leq C\mathcal{%
A}_{p}\left( \omega ,\sigma \right) ^{p^{\prime }}\left\vert Q\right\vert
_{\omega }.
\end{equation*}%
Otherwise we have%
\begin{eqnarray*}
\int_{\mathbb{R}_{+}^{2}}\mathbb{P}_{+,\alpha }^{dy}\left( \chi _{Q}\omega
\right) ^{p^{\prime }}ds_{\alpha } &\lesssim &\sum_{I_{r}^{\alpha }\subset
Q}\left\vert I_{r}^{\alpha }\right\vert _{\sigma }\mathsf{P}_{\alpha
}^{dy}\left( I_{r}^{\alpha },\chi _{Q}\omega \right) ^{p^{\prime
}}+\sum_{I_{r}^{\alpha }\cap Q=\emptyset }\left\vert I_{r}^{\alpha
}\right\vert _{\sigma }\mathsf{P}_{\alpha }^{dy}\left( I_{r}^{\alpha },\chi
_{Q}\omega \right) ^{p^{\prime }} \\
&\leq &\mathfrak{C}_{\ast }^{p^{\prime }}\int_{Q}d\omega
+\sum_{I_{r}^{\alpha }\cap Q=\emptyset }\left\vert I_{r}^{\alpha
}\right\vert _{\sigma }\left\{ \sum_{\ell =0}^{\infty }\frac{2^{-\ell }}{%
\left\vert \left( I_{r}^{\alpha }\right) ^{\left( \ell \right) }\right\vert }%
\int_{Q\cap \left( I_{r}^{\alpha }\right) ^{\left( \ell \right) }}d\omega
\right\} ^{p^{\prime }},
\end{eqnarray*}%
where the local term has been estimated by the dual pivotal condition (\ref%
{dual pivotal condition}) applied to $Q$.

Now if $I_{r}^{\alpha }\subset Q^{\left( m\right) }\setminus Q^{\left(
m-1\right) }$, then $Q\cap Q_{r}^{\left( \ell \right) }\neq \emptyset $ only
if $Q^{\left( m\right) }\subset \left( I_{r}^{\alpha }\right) ^{\left( \ell
\right) }$. Thus the second term on the right can be estimated by%
\begin{eqnarray*}
&&\sum_{m=1}^{\infty }\sum_{I_{r}^{\alpha }\subset Q^{\left( m\right)
}\setminus Q^{\left( m-1\right) }}\left\vert I_{r}^{\alpha }\right\vert
_{\sigma }\left\{ \sum_{\ell =0}^{\infty }\frac{2^{-\ell }}{\left\vert
\left( I_{r}^{\alpha }\right) ^{\left( \ell \right) }\right\vert }%
\int_{Q\cap \left( I_{r}^{\alpha }\right) ^{\left( \ell \right) }}d\omega
\right\} ^{p^{\prime }} \\
&\leq &\sum_{m=1}^{\infty }\sum_{I_{r}^{\alpha }\subset Q^{\left( m\right)
}\setminus Q^{\left( m-1\right) }}\left\vert I_{r}^{\alpha }\right\vert
_{\sigma }\sum_{\ell =0}^{\infty }2^{-\ell }\left( \frac{\int_{Q\cap \left(
I_{r}^{\alpha }\right) ^{\left( \ell \right) }}d\omega }{\left\vert \left(
I_{r}^{\alpha }\right) ^{\left( \ell \right) }\right\vert }\right)
^{p^{\prime }} \\
&\leq &C\sum_{m=1}^{\infty }\sum_{I_{r}^{\alpha }\subset Q^{\left( m\right)
}\setminus Q^{\left( m-1\right) }}\left\vert I_{r}^{\alpha }\right\vert
_{\sigma }\sum_{\ell =0}^{\infty }2^{-\ell }\left( \frac{\int_{Q}d\omega }{%
\left\vert Q^{\left( m\right) }\right\vert }\right) ^{p^{\prime }} \\
&\leq &\left( \sum_{m=1}^{\infty }\frac{\left\vert Q^{\left( m\right)
}\right\vert _{\sigma }}{\left\vert Q^{\left( m\right) }\right\vert
^{p^{\prime }}}\right) \left\vert Q\right\vert _{\omega }^{p^{\prime
}-1}\int_{Q}d\omega \\
&=&\left\{ \frac{1}{\left\vert Q\right\vert ^{p^{\prime }}}\left( \int
s_{Q,\alpha }^{dy}\left( x\right) ^{p^{\prime }}d\sigma \left( x\right)
\right) \left\vert Q\right\vert _{\omega }^{p^{\prime }-1}\right\}
\int_{Q}d\omega \leq C\mathcal{A}_{p}\left( \omega ,\sigma \right)
^{p^{\prime }}\int_{Q}d\omega ,
\end{eqnarray*}%
where we have used%
\begin{equation*}
s_{Q,\alpha }^{dy}\left( x\right) \equiv \sum_{m=0}^{\infty }\frac{%
\left\vert Q\right\vert }{\left\vert Q^{\left( m\right) }\right\vert }\chi
_{Q^{\left( m\right) }}\left( x\right) \lesssim s_{Q}\left( x\right) ,
\end{equation*}%
and the half-strengthened $A_{p}$ condition \eqref{Ap half skirt} in the
final inequality.

\bigskip

Now we turn to showing that the second line in \eqref{alpha conditions}
holds using only the $A_{p}$ condition \eqref{Ap}. First we compute the dual
operator $\left( \mathbb{P}_{+,\alpha }^{dy}\right) ^{\ast }$. Since the
kernel of $\mathbb{P}_{+,\alpha }^{dy}$ is%
\begin{equation*}
\mathbb{P}_{+,\alpha }^{dy}\left[ \left( x,t\right) ,y\right] \equiv
\sum_{I\in \mathcal{D}^{\alpha }:\ell \left( I\right) \geq t}\chi _{I}\left(
x\right) \frac{t}{\ell \left( I\right) }\frac{1}{\left\vert I\right\vert }%
\chi _{I}\left( y\right) ,
\end{equation*}%
we have for any positive measure $\mu \left( x,t\right) $ on the upper half
space $\mathbb{R}_{+}^{2}$,%
\begin{eqnarray*}
\left( \mathbb{P}_{+,\alpha }^{dy}\right) ^{\ast }\mu \left( y\right)
&=&\int_{\mathbb{R}_{+}^{2}}\left\{ \sum_{I\in \mathcal{D}^{\alpha }:\ell
\left( I\right) \geq t}\chi _{I}\left( x\right) \frac{t}{\ell \left(
I\right) }\frac{1}{\left\vert I\right\vert }\chi _{I}\left( y\right)
\right\} d\mu \left( x,t\right) \\
&=&\sum_{I\in \mathcal{D}^{\alpha }:y\in I}\frac{1}{\left\vert I\right\vert }%
\int_{\widehat{I}}\frac{t}{\ell \left( I\right) }d\mu \left( x,t\right) .
\end{eqnarray*}%
Using the third line in \eqref{dyadic analogues} we compute that%
\begin{equation*}
\int_{\widehat{Q}}t^{p^{\prime }}ds_{\alpha }=\sum_{I_{r}^{\alpha }\subset
Q}\left\vert I_{r}^{\alpha }\right\vert _{\sigma }\left\vert I_{r}^{\alpha
}\right\vert ^{p^{\prime }},
\end{equation*}%
and%
\begin{eqnarray*}
\left( \mathbb{P}_{+,\alpha }^{dy}\right) ^{\ast }\left( t^{p^{\prime
}-1}\chi _{\widehat{Q}}ds_{\alpha }\right) \left( y\right) &=&\sum_{I\in 
\mathcal{D}^{\alpha }:y\in I}\frac{1}{\left\vert I\right\vert }\int_{%
\widehat{I}\cap \widehat{Q}}\frac{t}{\ell \left( I\right) }t^{p^{\prime
}-1}ds_{\alpha }\left( x,t\right) \\
&=&\sum_{I_{r}^{\alpha }\subset Q}\left\vert I_{r}^{\alpha }\right\vert
_{\sigma }\left\vert I_{r}^{\alpha }\right\vert ^{p^{\prime }-1}\sum_{\ell
=0}^{\infty }\frac{2^{-\ell }}{\left\vert \left( I_{r}^{\alpha }\right)
^{\left( \ell \right) }\right\vert }\chi _{\left( I_{r}^{\alpha }\right)
^{\left( \ell \right) }}\left( y\right) .
\end{eqnarray*}

Thus we must prove%
\begin{equation}
\int_{\mathbb{R}}\left( \sum_{I_{r}^{\alpha }\subset Q}\left\vert
I_{r}^{\alpha }\right\vert _{\sigma }\left\vert I_{r}^{\alpha }\right\vert
^{p^{\prime }-1}\sum_{\ell =0}^{\infty }\frac{2^{-\ell }}{\left\vert \left(
I_{r}^{\alpha }\right) ^{\left( \ell \right) }\right\vert }\chi _{\left(
I_{r}^{\alpha }\right) ^{\left( \ell \right) }}\left( y\right) \right)
^{p}d\omega \left( y\right) \leq C\mathcal{A}_{p}\left( \omega ,\sigma
\right) ^{p}\sum_{I_{r}^{\alpha }\subset Q}\left\vert I_{r}^{\alpha
}\right\vert _{\sigma }\left\vert I_{r}^{\alpha }\right\vert ^{p^{\prime }};
\label{Poisson condition'}
\end{equation}%
but this is the Poisson condition \eqref{Poisson condition} in Theorem \ref%
{improved} for the shifted dyadic grid $\mathcal{D}^{\alpha }$. This
completes the proof of the first assertion in Claim \ref{pivandfat}
regarding the case $1<p<\infty $. We now assume that $1<p\leq 2$ for the
remainder of the proof.

To obtain \eqref{Poisson condition'} it suffices to show that for each $\ell
\geq 0$:%
\begin{equation}
\int_{\mathbb{R}}\left( \sum_{I_{r}^{\alpha }\subset Q}\left\vert
I_{r}^{\alpha }\right\vert _{\sigma }\left\vert I_{r}^{\alpha }\right\vert
^{p^{\prime }-2}2^{-2\ell }\chi _{\left( I_{r}^{\alpha }\right) ^{\left(
\ell \right) }}\left( y\right) \right) ^{p}d\omega \left( y\right) \leq
C2^{-p\ell }\mathcal{A}_{p}\left( \omega ,\sigma \right)
^{p}\sum_{I_{r}^{\alpha }\subset Q}\left\vert I_{r}^{\alpha }\right\vert
_{\sigma }\left\vert I_{r}^{\alpha }\right\vert ^{p^{\prime }}.
\label{ell p estimate}
\end{equation}%
Indeed, with this in hand, Minkowski's inequality yields%
\begin{eqnarray}
\left\Vert \left( \mathbb{P}_{+,\alpha }^{dy}\right) ^{\ast }\left( t\chi _{%
\widehat{Q}}ds_{\alpha }\right) \right\Vert _{L^{p}\left( \omega \right) }
&=&\left\Vert \sum_{\ell =0}^{\infty }\sum_{I_{r}^{\alpha }\subset
Q}\left\vert I_{r}^{\alpha }\right\vert _{\sigma }\left\vert I_{r}^{\alpha
}\right\vert ^{p^{\prime }-2}2^{-2\ell }\chi _{\left( I_{r}^{\alpha }\right)
^{\left( \ell \right) }}\right\Vert _{L^{p}\left( \omega \right) }
\label{Mink} \\
&\leq &\sum_{\ell =0}^{\infty }\left\Vert \sum_{I_{r}^{\alpha }\subset
Q}\left\vert I_{r}^{\alpha }\right\vert _{\sigma }\left\vert I_{r}^{\alpha
}\right\vert ^{p^{\prime }-2}2^{-2\ell }\chi _{\left( I_{r}^{\alpha }\right)
^{\left( \ell \right) }}\right\Vert _{L^{p}\left( \omega \right) }  \notag \\
&\leq &C\sum_{\ell =0}^{\infty }2^{-\ell }\mathcal{A}_{p}\left( \omega
,\sigma \right) \left( \sum_{I_{r}^{\alpha }\subset Q}\left\vert
I_{r}^{\alpha }\right\vert _{\sigma }\left\vert I_{r}^{\alpha }\right\vert
^{p^{\prime }}\right) ^{\frac{1}{p}},  \notag
\end{eqnarray}%
as required.

Note that for $a>0$ and $p>1$, 
\begin{equation*}
h\left( x\right) \equiv \left( a+x\right) ^{p}-a^{p}-p\left( a+x\right)
^{p-1}x,
\end{equation*}%
is decreasing on $\left[ 0,\infty \right) $ since%
\begin{equation*}
h^{\prime }\left( x\right) =-p\left( p-1\right) \left( a+x\right)
^{p-2}x<0,\ \ \ \ \ x>0.
\end{equation*}%
Since $h(0) =0$ we have $h\left( x\right) \leq 0$ for $x\geq 0$, i.e.%
\begin{equation}
\left( a+x\right) ^{p}-a^{p}\leq p\left( a+x\right) ^{p-1}x,\ \ \ \ \ \text{%
for }a,x>0\text{ and }p>1.  \label{ineq}
\end{equation}%
Now fix an interval $Q$ in \eqref{ell p estimate} and arrange the intervals $%
I_{r}^{\alpha }$ that are contained in $Q$ into a sequence $\left\{
I_{r}^{\alpha }\right\} _{r=1}^{N}$ in which the lengths $\left\vert
I_{r}^{\alpha }\right\vert $ are increasing (we may suppose without loss of
generality that $N$ is finite). Recall we are now assuming $1<p\leq 2$.
Integrate by parts and apply (\ref{ineq}) to estimate the left side of %
\eqref{ell p estimate} by%
\begin{eqnarray*}
&&2^{-2p\ell }\int_{\mathbb{R}}\left( \sum_{r=1}^{N}\left\vert I_{r}^{\alpha
}\right\vert _{\sigma }\left\vert I_{r}^{\alpha }\right\vert ^{p^{\prime
}-2}\chi _{\left( I_{r}^{\alpha }\right) ^{\left( \ell \right) }}\left(
y\right) \right) ^{p}d\omega \left( y\right) \\
&=&2^{-2p\ell }\int_{\mathbb{R}}\sum_{n=1}^{N}\left\{ \left(
\sum_{r=1}^{n}\left\vert I_{r}^{\alpha }\right\vert _{\sigma }\left\vert
I_{r}^{\alpha }\right\vert ^{p^{\prime }-2}\chi _{\left( I_{r}^{\alpha
}\right) ^{\left( \ell \right) }}\left( y\right) \right) ^{p}-\left(
\sum_{r=1}^{n-1}\left\vert I_{r}^{\alpha }\right\vert _{\sigma }\left\vert
I_{r}^{\alpha }\right\vert ^{p^{\prime }-2}\chi _{\left( I_{r}^{\alpha
}\right) ^{\left( \ell \right) }}\left( y\right) \right) ^{p}\right\}
d\omega \left( y\right) \\
&\leq &2^{-2p\ell }\int_{\mathbb{R}}\sum_{n=1}^{N}\left\{ p\left(
\sum_{r=1}^{n}\left\vert I_{r}^{\alpha }\right\vert _{\sigma }\left\vert
I_{r}^{\alpha }\right\vert ^{p^{\prime }-2}\chi _{\left( I_{r}^{\alpha
}\right) ^{\left( \ell \right) }}\left( y\right) \right) ^{p-1}\left\vert
I_{n}^{\alpha }\right\vert _{\sigma }\left\vert I_{n}^{\alpha }\right\vert
^{p^{\prime }-2}\chi _{\left( I_{n}^{\alpha }\right) ^{\left( \ell \right)
}}\left( y\right) \right\} d\omega \left( y\right) \\
&\leq &2^{-2p\ell }p\sum_{n=1}^{N}\int_{\mathbb{R}}\left\{ \left(
\sum_{r=1}^{n}\left\vert I_{r}^{\alpha }\right\vert _{\sigma }\chi _{\left(
I_{r}^{\alpha }\right) ^{\left( \ell \right) }}\left( y\right) \right)
^{p-1}\left\vert I_{n}^{\alpha }\right\vert _{\sigma }\left\vert
I_{n}^{\alpha }\right\vert ^{p^{\prime }-2}\left\vert I_{n}^{\alpha
}\right\vert ^{\left( p^{\prime }-2\right) \left( p-1\right) }\chi _{\left(
I_{n}^{\alpha }\right) ^{\left( \ell \right) }}\left( y\right) \right\}
d\omega \left( y\right) ,
\end{eqnarray*}%
where we have used \eqref{ineq} with $a=\sum_{r=1}^{n-1}\left\vert
I_{r}^{\alpha }\right\vert _{\sigma }\left\vert I_{r}^{\alpha }\right\vert
^{p^{\prime }-2}\chi _{\left( I_{r}^{\alpha }\right) ^{\left( \ell \right)
}}\left( y\right) $ and $x=\left\vert I_{n}^{\alpha }\right\vert _{\sigma
}\left\vert I_{n}^{\alpha }\right\vert ^{p^{\prime }-2}\chi _{\left(
I_{n}^{\alpha }\right) ^{\left( \ell \right) }}\left( y\right) $, and then
used $\left\vert I_{r}^{\alpha }\right\vert ^{p^{\prime }-2}\leq \left\vert
I_{n}^{\alpha }\right\vert ^{p^{\prime }-2}$ for $1\leq r\leq n$, which
follows from $\left\vert I_{r}^{\alpha }\right\vert \leq \left\vert
I_{n}^{\alpha }\right\vert $ and $p^{\prime }\geq 2$. If $\left(
I_{r}^{\alpha }\right) ^{\left( \ell \right) }\cap \left( I_{n}^{\alpha
}\right) ^{\left( \ell \right) }\neq \emptyset $ and $1\leq r\leq n$, then $%
I_{r}^{\alpha }\subset \left( I_{n}^{\alpha }\right) ^{\left( \ell \right) }$
and so 
\begin{eqnarray*}
&&\int_{\mathbb{R}}\left( \sum_{I_{r}^{\alpha }\subset Q}\left\vert
I_{r}^{\alpha }\right\vert _{\sigma }\left\vert I_{r}^{\alpha }\right\vert
^{p^{\prime }-2}2^{-2\ell }\chi _{\left( I_{r}^{\alpha }\right) ^{\left(
\ell \right) }}\left( y\right) \right) ^{p}d\omega \left( y\right) \\
&\leq &2^{-2p\ell }p\sum_{n=1}^{N}\left\vert I_{n}^{\alpha }\right\vert
_{\sigma }\left\vert I_{n}^{\alpha }\right\vert ^{p^{\prime }p-2p}\int_{%
\mathbb{R}}\left( \sum_{1\leq r\leq n:I_{r}^{\alpha }\subset \left(
I_{n}^{\alpha }\right) ^{\left( \ell \right) }}^{n}\left\vert I_{r}^{\alpha
}\right\vert _{\sigma }\right) ^{p-1}\chi _{\left( I_{n}^{\alpha }\right)
^{\left( \ell \right) }}\left( y\right) d\omega \left( y\right) \\
&\leq &2^{-2p\ell }p\sum_{n=1}^{N}\left\vert I_{n}^{\alpha }\right\vert
_{\sigma }\left\vert I_{n}^{\alpha }\right\vert ^{p^{\prime }p-2p}\left\vert
\left( I_{n}^{\alpha }\right) ^{\left( \ell \right) }\right\vert _{\sigma
}^{p-1}\left\vert \left( I_{n}^{\alpha }\right) ^{\left( \ell \right)
}\right\vert _{\omega } \\
&\leq &2^{-2p\ell }p\mathcal{A}_{p}\left( \omega ,\sigma \right)
^{p}\sum_{n=1}^{N}\left\vert I_{n}^{\alpha }\right\vert _{\sigma }\left\vert
I_{n}^{\alpha }\right\vert ^{p^{\prime }p-2p}\left\vert \left( I_{n}^{\alpha
}\right) ^{\left( \ell \right) }\right\vert ^{p} \\
&=&2^{-p\ell }p\mathcal{A}_{p}\left( \omega ,\sigma \right)
^{p}\sum_{n=1}^{N}\left\vert I_{n}^{\alpha }\right\vert _{\sigma }\left\vert
I_{n}^{\alpha }\right\vert ^{p^{\prime }}=2^{-p\ell }p\mathcal{A}_{p}\left(
\omega ,\sigma \right) ^{p}\sum_{I_{r}^{\alpha }\subset Q}\left\vert
I_{n}^{\alpha }\right\vert _{\sigma }\left\vert I_{n}^{\alpha }\right\vert
^{p^{\prime }}.
\end{eqnarray*}

Thus we have proved \eqref{ell p estimate} for $p\in \left( 1,2\right] $,
which completes the proof of \eqref{alpha conditions}. This finishes the
proof of Claim \ref{pivandfat}, and hence also that of Theorem \ref{poisson
sufficiency}.

\subsection{Necessity of the conditions}

Here we consider the two weight Hilbert transform inequality for $1<p<\infty 
$. We show the necessity of the strengthened $A_{p}$ condition for general
weights, as well as the necessity of the dual pivotal condition for the dual
testing condition, and the dual Poisson inequality for the dual Hilbert
transform inequality, when $\sigma $ is doubling.

\subsubsection{The strengthened $A_{p}$ condition}

Here we derive a necessary condition for the weighted inequality (\ref%
{2weight}) but with the Hilbert transform $T$ in place of $T_{\natural }$,%
\begin{equation}
\int_{\mathbb{R}\setminus supp\ f}T(f\sigma )\left( x\right) ^{p}d\omega
\left( x\right) \leq C\int_{\mathbb{R}^{n}}\left\vert f\left( x\right)
\right\vert ^{p}d\sigma \left( x\right) ,  \label{2weightflat}
\end{equation}%
that is stronger than the two weight $A_{p}$ condition \eqref{Ap}, namely
the \emph{strengthened} $A_{p}$ condition%
\begin{equation}
\left( \int_{\mathbb{R}}\left( \frac{\left\vert Q\right\vert }{\left\vert
Q\right\vert +\left\vert x-x_{Q}\right\vert }\right) ^{p}d\omega \left(
x\right) \right) ^{\frac{1}{p}}\left( \int_{\mathbb{R}}\left( \frac{%
\left\vert Q\right\vert }{\left\vert Q\right\vert +\left\vert
x-x_{Q}\right\vert }\right) ^{p^{\prime }}d\sigma \left( x\right) \right) ^{%
\frac{1}{p^{\prime }}}\leq C\left\vert Q\right\vert ,  \label{skirt Ap}
\end{equation}%
for all intervals $Q$.

Preliminary results in this direction were obtained by Muckenhoupt and
Wheeden, and in the setting of fractional integrals by Gabidzashvili and
Kokilashvili, and here we follow the argument proving (1.9) in Sawyer and
Wheeden \cite{SaWh}, where `two-tailed' inequalities of the type 
\eqref{skirt
Ap} originated in the fractional integral setting. A somewhat different
approach to this for the conjugate operator in the disk when $p=2$ uses
conformal invariance and appears in \cite{NTV3}, and provides the first
instance of a strengthened $A_{2}$ condition being proved necessary for a
two weight inequality for a singular integral.

Fix an interval $Q$ and for $a\in \mathbb{R}$ and $r>0$ let 
\begin{eqnarray*}
s_{Q}\left( x\right) &=&\frac{\left\vert Q\right\vert }{\left\vert
Q\right\vert +\left\vert x-x_{Q}\right\vert }, \\
f_{a,r}\left( y\right) &=&\chi _{\left( a-r,a\right) }\left( y\right)
s_{Q}\left( y\right) ^{p^{\prime }-1},
\end{eqnarray*}%
where $x_{Q}$ is the center of the interval $Q$. For convenience we assume
that neither $\omega $ nor $\sigma $ have any point masses - see \cite%
{LaSaUr} for the modifications necessary when point masses are present. For $%
y<x$ we have%
\begin{eqnarray*}
\left\vert Q\right\vert \left( x-y\right) &=&\left\vert Q\right\vert \left(
x-x_{Q}\right) +\left\vert Q\right\vert \left( x_{Q}-y\right) \\
&\leq &\left( \left\vert Q\right\vert +\left\vert x-x_{Q}\right\vert \right)
\left( \left\vert Q\right\vert +\left\vert x_{Q}-y\right\vert \right) ,
\end{eqnarray*}%
and so%
\begin{equation*}
\frac{1}{x-y}\geq \left\vert Q\right\vert ^{-1}s_{Q}\left( x\right)
s_{Q}\left( y\right) ,\ \ \ \ \ y<x.
\end{equation*}%
Thus for $x>a$ we obtain that%
\begin{eqnarray*}
H\left( f_{a,r}\sigma \right) \left( x\right) &=&\int_{a-r}^{a}\frac{1}{x-y}%
s_{Q}\left( y\right) ^{p^{\prime }-1}d\sigma \left( y\right) \\
&\geq &\left\vert Q\right\vert ^{-1}s_{Q}\left( x\right)
\int_{a-r}^{a}s_{Q}\left( y\right) ^{p^{\prime }}d\sigma \left( y\right) ,
\end{eqnarray*}%
and hence by \eqref{2weightflat} for the Hilbert transform $H$,%
\begin{eqnarray*}
&&\left\vert Q\right\vert ^{-p}\int_{a}^{\infty }s_{Q}\left( x\right)
^{p}\left( \int_{a-r}^{a}s_{Q}\left( y\right) ^{p^{\prime }}d\sigma \left(
y\right) \right) ^{p}d\omega \left( x\right) \\
&\leq &\int \left\vert H\left( f_{a,r}\sigma \right) \left( x\right)
\right\vert ^{p}d\omega \left( x\right) \leq C\int \left\vert f_{a,r}\left(
y\right) \right\vert ^{p}d\sigma \left( y\right) =C\int_{a-r}^{a}s_{Q}\left(
y\right) ^{p^{\prime }}d\sigma \left( y\right) .
\end{eqnarray*}

From this we obtain%
\begin{equation*}
\left\vert Q\right\vert ^{-p}\left( \int_{a}^{\infty }s_{Q}\left( x\right)
^{p}d\omega \left( x\right) \right) \left( \int_{a-r}^{a}s_{Q}\left(
y\right) ^{p^{\prime }}d\sigma \left( y\right) \right) ^{p-1}\leq C,
\end{equation*}%
and upon letting $r\rightarrow \infty $ and taking $p^{th}$ roots, we get%
\begin{equation*}
\left( \int_{a}^{\infty }s_{Q}\left( x\right) ^{p}d\omega \left( x\right)
\right) ^{\frac{1}{p}}\left( \int_{-\infty }^{a}s_{Q}\left( y\right)
^{p^{\prime }}d\sigma \left( y\right) \right) ^{\frac{1}{p^{\prime }}}\leq
C\left\vert Q\right\vert .
\end{equation*}%
Similarly we have%
\begin{equation*}
\left( \int_{-\infty }^{a}s_{Q}\left( x\right) ^{p}d\omega \left( x\right)
\right) ^{\frac{1}{p}}\left( \int_{a}^{\infty }s_{Q}\left( y\right)
^{p^{\prime }}d\sigma \left( y\right) \right) ^{\frac{1}{p^{\prime }}}\leq
C\left\vert Q\right\vert .
\end{equation*}%
Now we choose $a$ so that 
\begin{equation*}
\int_{-\infty }^{a}s_{Q}\left( y\right) ^{p^{\prime }}d\sigma \left(
y\right) =\int_{a}^{\infty }s_{Q}\left( y\right) ^{p^{\prime }}d\sigma
\left( y\right) =\frac{1}{2}\int s_{Q}\left( y\right) ^{p^{\prime }}d\sigma
\left( y\right) ,
\end{equation*}%
and conclude that%
\begin{eqnarray*}
&&\left( \int s_{Q}\left( x\right) ^{p}d\omega \left( x\right) \right) ^{%
\frac{1}{p}}\left( \int s_{Q}\left( y\right) ^{p^{\prime }}d\sigma \left(
y\right) \right) ^{\frac{1}{p^{\prime }}} \\
&\leq &\left( \int_{-\infty }^{a}s_{Q}\left( x\right) ^{p}d\omega \left(
x\right) \right) ^{\frac{1}{p}}\left( \int s_{Q}\left( y\right) ^{p^{\prime
}}d\sigma \left( y\right) \right) ^{\frac{1}{p^{\prime }}} \\
&&+\left( \int_{a}^{\infty }s_{Q}\left( x\right) ^{p}d\omega \left( x\right)
\right) ^{\frac{1}{p}}\left( \int s_{Q}\left( y\right) ^{p^{\prime }}d\sigma
\left( y\right) \right) ^{\frac{1}{p^{\prime }}} \\
&\leq &2^{\frac{1}{p^{\prime }}}\left( \int_{-\infty }^{a}s_{Q}\left(
x\right) ^{p}d\omega \left( x\right) \right) ^{\frac{1}{p}}\left(
\int_{a}^{\infty }s_{Q}\left( y\right) ^{p^{\prime }}d\sigma \left( y\right)
\right) ^{\frac{1}{p^{\prime }}} \\
&&+2^{\frac{1}{p^{\prime }}}\left( \int_{a}^{\infty }s_{Q}\left( x\right)
^{p}d\omega \left( x\right) \right) ^{\frac{1}{p}}\left( \int_{-\infty
}^{a}s_{Q}\left( y\right) ^{p^{\prime }}d\sigma \left( y\right) \right) ^{%
\frac{1}{p^{\prime }}} \\
&\leq &2^{1+\frac{1}{p^{\prime }}}C\left\vert Q\right\vert .
\end{eqnarray*}

\subsubsection{Necessity of the dual pivotal condition and the dual Poisson
inequality for a doubling measure}

Here we show first that if $\sigma $ is a \emph{doubling} measure, then the
dual pivotal condition \eqref{dual pivotal condition} with $\delta \left(
s\right) =s$ is implied by the $A_{p}$ condition \eqref{Ap} and the dual
testing condition for the Hilbert transform $H$,%
\begin{equation}
\int_{I}\left\vert H\left( \chi _{I}\omega \right) \left( x\right)
\right\vert ^{p^{\prime }}d\sigma \left( x\right) \leq C_{\omega ,\sigma
,p}\left\vert I\right\vert _{\omega },\ \ \ \ \ \text{for all intervals }I.
\label{Hilbert test}
\end{equation}%
After this we show that the dual Poisson inequality (\ref%
{poissonweightedineq'}) is implied by the $A_{p}$ condition \eqref{Ap} and
the dual Hilbert transform inequality,%
\begin{equation}
\int_{I}\left\vert H\left( \chi _{I}g\omega \right) \left( x\right)
\right\vert ^{p^{\prime }}d\sigma \left( x\right) \leq C_{\omega ,\sigma
,p}\int_{I}g\left( x\right) ^{p^{\prime }}d\omega \left( x\right) ,\ \ \ \ \ 
\text{for all }g\geq 0\text{ and intervals }I.  \label{Hilbert partial}
\end{equation}

\begin{lemma}
\label{doub piv}Suppose that $\sigma $ is doubling and $T=H$ is the Hilbert
transform. Then the dual pivotal condition \eqref{dual pivotal condition} is
implied by the $A_{p}$ condition \eqref{Ap} and the dual testing condition (%
\ref{Hilbert test}).
\end{lemma}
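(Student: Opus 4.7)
The plan is to derive the dual pivotal inequality
\[
\sum_r |Q_r|_\sigma\,\mathsf{P}(Q_r,\chi_{Q_0}\omega)^{p'}\le C\,|Q_0|_\omega
\]
by exhibiting, for each $Q_r$, a subset $E_r\subset Q_r$ of comparable $\sigma$-measure on which the Hilbert transform $H(\chi_{Q_0}\omega)$ is pointwise bounded from below by a fixed multiple of the Poisson quantity $\mathsf{P}(Q_r,\chi_{Q_0}\omega)$. Pairwise disjointness of the $E_r$ inside $Q_0$ then reduces matters to $\int_{Q_0}|H(\chi_{Q_0}\omega)|^{p'}d\sigma\le C|Q_0|_\omega$, i.e.\ to the dual testing condition \eqref{Hilbert test}.

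The central ingredient is a Hilbert-kernel oscillation estimate. For $y\notin 3Q_r$ and $x,x'\in Q_r$, the factors $x-y$ and $x'-y$ lie on the same side of zero and both have magnitude $\approx|y-x_{Q_r}|$; the identity
\[
\frac{1}{x-y}-\frac{1}{x'-y}=\frac{x'-x}{(x-y)(x'-y)}
\]
is therefore of constant sign in $y$, so for $|x-x'|\approx|Q_r|$ the contributions add without cancellation and
\[
\bigl|H(\chi_{Q_0\setminus 3Q_r}\omega)(x)-H(\chi_{Q_0\setminus 3Q_r}\omega)(x')\bigr|\;\gtrsim\;|Q_r|\!\int_{Q_0\setminus 3Q_r}\!\frac{d\omega(y)}{(y-x_{Q_r})^2}\;\approx\;\mathsf{P}(Q_r,\chi_{Q_0\setminus 3Q_r}\omega).
\]

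The second ingredient is a pigeonhole argument exploiting doubling of $\sigma$. Split $Q_r$ into equal thirds $Q_r^L, Q_r^M, Q_r^R$; since any $x\in Q_r^L$, $x'\in Q_r^R$ satisfy $|x-x'|\gtrsim|Q_r|$, the oscillation estimate forces $|H(\chi_{Q_0\setminus 3Q_r}\omega)|\gtrsim\mathsf{P}(Q_r,\chi_{Q_0\setminus 3Q_r}\omega)$ either on all of $Q_r^L$ or on all of $Q_r^R$. Taking $E_r$ to be whichever third works, doubling of $\sigma$ gives $|E_r|_\sigma\gtrsim\gamma^{-1}|Q_r|_\sigma$, and the trivial tail bound $\mathsf{P}(Q_r,\chi_{3Q_r\cap Q_0}\omega)\le C|3Q_r\cap Q_0|_\omega/|Q_r|$ yields the decomposition $\mathsf{P}(Q_r,\chi_{Q_0}\omega)^{p'}\lesssim\mathsf{P}(Q_r,\chi_{Q_0\setminus 3Q_r}\omega)^{p'}+(|3Q_r\cap Q_0|_\omega/|Q_r|)^{p'}$.

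Multiplying by $|E_r|_\sigma$ and summing, the first piece dominates $\sum_r\int_{E_r}|H(\chi_{Q_0\setminus 3Q_r}\omega)|^{p'}d\sigma$; writing $H(\chi_{Q_0\setminus 3Q_r}\omega)=H(\chi_{Q_0}\omega)-H(\chi_{3Q_r\cap Q_0}\omega)$, disjointness of the $E_r$ gives $\int_{Q_0}|H(\chi_{Q_0}\omega)|^{p'}d\sigma\le C|Q_0|_\omega$ by the testing condition on $Q_0$, while the local piece and the second term are handled using the testing condition on each interval $3Q_r\cap Q_0$ together with the $A_p$ condition on $3Q_r$, which by doubling of $\sigma$ converts each $|Q_r|_\sigma(|3Q_r|_\omega/|Q_r|)^{p'}$ into $C|3Q_r\cap Q_0|_\omega$. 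The principal obstacle is then bounding the residual $\sum_r|3Q_r\cap Q_0|_\omega$ by $C|Q_0|_\omega$: since the pointwise overlap of the triples $\{3Q_r\}$ need not be uniformly bounded for a general disjoint partition, this final estimate requires an additional stopping-time / Calder\'on--Zygmund grouping of the $Q_r$'s by scale, iterating on maximal well-separated subfamilies to recover the claimed bound.
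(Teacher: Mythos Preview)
Your approach is close in spirit to the paper's, but there is a genuine gap precisely where you flag it: the sum $\sum_r |3Q_r\cap Q_0|_\omega$ need not be controlled by $|Q_0|_\omega$. For a general disjoint decomposition $Q_0=\bigcup_r Q_r$ the triples $\{3Q_r\}$ can have unbounded overlap (e.g.\ $Q_r=[2^{-r},2^{-r+1}]$ all have triples containing $0$), and if $\omega$ places mass near such an accumulation point the sum diverges. Your proposed ``stopping-time grouping'' is too vague here, and it is not clear any iteration of maximal subfamilies closes this.

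The paper avoids this entirely by two adjustments. First, the oscillation estimate actually holds with $Q_r^c$ in place of $(3Q_r)^c$: for $I=(-a,a)$, $x,x'\in I$ and $z\notin I$ one has $(z-x)(z-x')\le 4z^2$, so
\[
H(\chi_{I^c}\nu)(x)-H(\chi_{I^c}\nu)(x')=(x-x')\int_{I^c}\frac{d\nu(z)}{(z-x)(z-x')}\ge \frac{x-x'}{4}\int_{I^c}\frac{d\nu(z)}{z^2}
\]
with no need to excise $3I$. Second, instead of a pigeonhole on thirds, pick $\alpha_r\in Q_r$ to \emph{minimize} $|H(\chi_{Q_0\setminus Q_r}\omega)|$; then for any $x\in Q_r$ with $|x-\alpha_r|\ge\varepsilon|Q_r|$,
\[
\mathsf{P}(Q_r,\chi_{Q_0}\omega)\lesssim \frac{1}{\varepsilon}\bigl(|H(\chi_{Q_0\setminus Q_r}\omega)(x)|+|H(\chi_{Q_0\setminus Q_r}\omega)(\alpha_r)|\bigr)\le \frac{2}{\varepsilon}|H(\chi_{Q_0\setminus Q_r}\omega)(x)|.
\]
Writing $H(\chi_{Q_0\setminus Q_r}\omega)=H(\chi_{Q_0}\omega)-H(\chi_{Q_r}\omega)$ now gives local pieces $\int_{Q_r}|H(\chi_{Q_r}\omega)|^{p'}d\sigma\le C|Q_r|_\omega$, and the $Q_r$ (not $3Q_r$) are disjoint, so $\sum_r|Q_r|_\omega\le|Q_0|_\omega$. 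Doubling of $\sigma$ is then used only to rule out the possibility that $\sigma$ concentrates in the $\varepsilon$-neighborhood of $\alpha_r$, and the $A_p$ condition handles the case where the local average $|Q_r|^{-1}\int_{Q_r}d\omega$ dominates the tail.
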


\textbf{Proof}: We begin by proving that for any interval $I$ and any
positive measure $\nu $ supported in $\mathbb{R}\setminus I$, we have%
\begin{equation}
\mathbb{P}\left( I;\nu \right) \leq \frac{1}{\left\vert I\right\vert }%
\int_{I}d\nu +2\left\vert I\right\vert \inf_{x,y\in I}\frac{H\left( \chi
_{I^{c}}\nu \right) \left( x\right) -H\left( \chi _{I^{c}}\nu \right) \left(
y\right) }{x-y},  \label{neccinequ}
\end{equation}%
where we here redefine%
\begin{equation}
\mathbb{P}\left( I;\nu \right) \equiv \frac{1}{\left\vert I\right\vert }%
\int_{I}d\nu +\frac{\left\vert I\right\vert }{2}\int_{\mathbb{R}\setminus I}%
\frac{1}{\left\vert z-z_{I}\right\vert ^{2}}d\nu \left( z\right) ,
\label{redef}
\end{equation}%
with $z_{I}$ the center of $I$. Note that this definition of $\mathbb{P}%
\left( I;\nu \right) $ is comparable to that in \eqref{PQnu} with $\delta
\left( s\right) =s$. Note also that $H\left( \chi _{I^{c}}\nu \right) $ is
defined by \eqref{2weightflat} on $I$, and increasing on $I$ when $\nu $ is
positive, so that the infimum in \eqref{neccinequ} is nonnegative.

To see \eqref{neccinequ}, we suppose without loss of generality that $%
I=\left( -a,a\right) $, and a calculation then shows that for $-a\leq
x<y\leq a$,%
\begin{eqnarray*}
H\left( \chi _{I^{c}}\nu \right) \left( y\right) -H\left( \chi _{I^{c}}\nu
\right) \left( x\right) &=&\int_{\mathbb{R}\setminus I}\left\{ \frac{1}{z-y}-%
\frac{1}{z-x}\right\} d\nu \left( z\right) \\
&=&\left( y-x\right) \int_{\mathbb{R}\setminus I}\frac{1}{\left( z-y\right)
\left( z-x\right) }d\nu \left( z\right) \\
&\geq &\frac{1}{4}\left( y-x\right) \int_{\mathbb{R}\setminus I}\frac{1}{%
z^{2}}d\nu \left( z\right) ,
\end{eqnarray*}%
since $\frac{1}{\left( z-y\right) \left( z-x\right) }$ is positive and
satisfies%
\begin{equation*}
\frac{1}{\left( z-y\right) \left( z-x\right) }\geq \frac{1}{4z^{2}}
\end{equation*}%
on each interval $\left( -\infty ,-a\right) $ and $\left( a,\infty \right) $
in $\mathbb{R}\setminus I$ when $-a\leq x<y\leq a$. Thus we have from (\ref%
{redef}),%
\begin{eqnarray*}
\mathbb{P}\left( I;\nu \right) &=&\frac{1}{\left\vert I\right\vert }%
\int_{I}d\nu +\frac{\left\vert I\right\vert }{2}\int_{\mathbb{R}\setminus I}%
\frac{1}{z^{2}}d\nu \left( z\right) \\
&\leq &\frac{1}{\left\vert I\right\vert }\int_{I}d\nu +2\left\vert
I\right\vert \inf_{x,y\in I}\frac{H\left( \chi _{I^{c}}\nu \right) \left(
y\right) -H\left( \chi _{I^{c}}\nu \right) \left( x\right) }{y-x}.
\end{eqnarray*}

Now we return to the dual pivotal condition \eqref{dual pivotal condition},
and let $C_{\omega ,\sigma ,p}$ be the best constant in the dual testing
condition \eqref{Hilbert test} for $H$. Let $Q_{0}=\bigcup_{r=1}^{\infty
}Q_{r}$ be a pairwise disjoint decomposition of $Q_{0}$ and consider $%
\varepsilon ,\delta >0$ which will be chosen at the end of the proof (we
will take $\delta =\frac{1}{2}$ and $\varepsilon >0$ very small). For each
interval $Q_{r}$ let $\alpha _{r}\in Q_{r}$ minimize $\left\vert H\left(
\chi _{Q_{r}^{c}}\omega \right) \right\vert $ on $Q_{r}$, i.e.%
\begin{equation*}
\left\vert H\left( \chi _{Q_{r}^{c}}\omega \right) \left( \alpha _{r}\right)
\right\vert =\min_{x\in I}\left\vert H\left( \chi _{Q_{r}^{c}}\omega \right)
\left( x\right) \right\vert ,
\end{equation*}%
and set%
\begin{equation*}
J_{r,\varepsilon }\equiv \left( \alpha _{r}-\varepsilon \left\vert
Q_{r}\right\vert ,\alpha _{r}+\varepsilon \left\vert Q_{r}\right\vert
\right) \bigcap Q_{r}.
\end{equation*}%
Now for each interval $Q_{r}$, consider the following three mutually
exclusive and exhaustive cases:

\begin{description}
\item[Case \#1] $\frac{1}{\left\vert Q_{r}\right\vert }\int_{Q_{r}}d\omega >%
\frac{\left\vert Q_{r}\right\vert }{4}\int_{\mathbb{R}\setminus Q_{r}}\frac{1%
}{\left\vert z-z_{Q_{r}}\right\vert ^{2}}d\omega \left( z\right) $,

\item[Case \#2] $\frac{1}{\left\vert Q_{r}\right\vert }\int_{Q_{r}}d\omega
\leq \frac{\left\vert Q_{r}\right\vert }{4}\int_{\mathbb{R}\setminus Q_{r}}%
\frac{1}{\left\vert z-z_{Q_{r}}\right\vert ^{2}}d\omega \left( z\right) $
and $\left\vert Q_{r}\setminus J_{r,\varepsilon }\right\vert _{\sigma }\geq
\delta \left\vert Q_{r}\right\vert _{\sigma }$,

\item[Case \#3] $\frac{1}{\left\vert Q_{r}\right\vert }\int_{Q_{r}}d\omega
\leq \frac{\left\vert Q_{r}\right\vert }{4}\int_{\mathbb{R}\setminus Q_{r}}%
\frac{1}{\left\vert z-z_{Q_{r}}\right\vert ^{2}}d\omega \left( z\right) $
and $\left\vert J_{r,\varepsilon }\right\vert _{\sigma }>\left( 1-\delta
\right) \left\vert Q_{r}\right\vert _{\sigma }$.
\end{description}

If $Q_{r}$ is a Case \#1 interval we have $\mathbb{P}\left( Q_{r},\chi
_{Q_{0}}\omega \right) \leq 3\frac{1}{\left\vert Q_{r}\right\vert }%
\int_{Q_{r}}d\omega $ and so%
\begin{eqnarray*}
\sum_{Q_{r}\text{ satisfies Case \#1}}\left\vert Q_{r}\right\vert _{\sigma }%
\mathbb{P}\left( Q_{r},\chi _{Q_{0}}\omega \right) ^{p^{\prime }} &\leq
&3^{p}\sum_{r=1}^{\infty }\left\vert Q_{r}\right\vert _{\sigma }\left( \frac{%
1}{\left\vert Q_{r}\right\vert }\int_{Q_{r}}d\omega \right) ^{p^{\prime }} \\
&\leq &C_{p}\sum_{r=1}^{\infty }\frac{\left\vert Q_{r}\right\vert _{\sigma
}\left\vert Q_{r}\right\vert _{\omega }^{p^{\prime }-1}}{\left\vert
Q_{r}\right\vert ^{p^{\prime }}}\int_{Q_{r}}d\omega \\
&\leq &C_{p}\left\Vert \left( \omega ,\sigma \right) \right\Vert
_{A_{p}}^{p^{\prime }}\int_{Q_{0}}d\omega .
\end{eqnarray*}%
If $Q_{r}$ is a Case \#2 or Case \#3 interval we have from \eqref{neccinequ}
with $\nu =\chi _{Q_{0}}\omega $ that for all $x\in Q_{r}\setminus
J_{r,\varepsilon }$, 
\begin{eqnarray*}
\mathbb{P}\left( Q_{r};\chi _{Q_{0}}\omega \right) &\leq &6\left\vert
Q_{r}\right\vert \frac{H\left( \chi _{Q_{0}\cap Q_{r}^{c}}\omega \right)
\left( x\right) -H\left( \chi _{Q_{0}\cap Q_{r}^{c}}\omega \right) \left(
\alpha _{r}\right) }{x-\alpha _{r}} \\
&\leq &6\left\vert Q_{r}\right\vert \frac{1}{\varepsilon \left\vert
Q_{r}\right\vert }\left\{ \left\vert H\left( \chi _{Q_{0}\cap
Q_{r}^{c}}\omega \right) \left( x\right) \right\vert +\left\vert H\left(
\chi _{Q_{0}\cap Q_{r}^{c}}\omega \right) \left( \alpha _{r}\right)
\right\vert \right\} \\
&\leq &\frac{12}{\varepsilon }\left\vert H\left( \chi _{Q_{0}\cap
Q_{r}^{c}}\omega \right) \left( x\right) \right\vert .
\end{eqnarray*}%
If now $Q_{r}$ is a Case \#2 interval we also have $\left\vert
Q_{r}\right\vert _{\sigma }\leq \frac{1}{\delta }\left\vert Q_{r}\setminus
J_{r,\varepsilon }\right\vert _{\sigma }$ and so%
\begin{eqnarray}
&&\sum_{Q_{r}\text{ satisfies Case \#2}}\left\vert Q_{r}\right\vert _{\sigma
}\mathbb{P}\left( Q_{r},\chi _{Q_{0}}\omega \right) ^{p^{\prime }}
\label{Case 2} \\
&\leq &\frac{1}{\delta }\sum_{Q_{r}\text{ satisfies Case \#2}}\left\vert
Q_{r}\setminus J_{r,\varepsilon }\right\vert _{\sigma }\mathbb{P}\left(
Q_{r},\chi _{Q_{0}}\omega \right) ^{p^{\prime }}  \notag \\
&\leq &\frac{1}{\delta }\sum_{r=1}^{\infty }\left( \frac{12}{\varepsilon }%
\right) ^{p^{\prime }}\int_{Q_{r}\setminus J_{r,\varepsilon }}\left\vert
H\left( \chi _{Q_{0}\cap Q_{r}^{c}}\omega \right) \left( x\right)
\right\vert ^{p^{\prime }}d\sigma \left( x\right)  \notag \\
&\leq &C_{\varepsilon ,\delta ,p}\sum_{r=1}^{\infty }\int_{Q_{r}\setminus
J_{r,\varepsilon }}\left\{ \left\vert H\left( \chi _{Q_{0}}\omega \right)
\left( x\right) \right\vert ^{p^{\prime }}+\left\vert H\left( \chi
_{Q_{r}}\omega \right) \left( x\right) \right\vert ^{p^{\prime }}\right\}
d\sigma \left( x\right)  \notag \\
&\leq &C_{\varepsilon ,\delta ,p}\left\{ \int_{Q_{0}}\left\vert H\left( \chi
_{Q_{0}}\omega \right) \left( x\right) \right\vert ^{p^{\prime }}d\sigma
\left( x\right) +\sum_{r=1}^{\infty }\int_{Q_{r}}\left\vert H\left( \chi
_{Q_{r}}\omega \right) \left( x\right) \right\vert ^{p^{\prime }}d\sigma
\left( x\right) \right\}  \notag \\
&\leq &C_{\varepsilon ,\delta ,p}\left\{ C\left\vert Q_{0}\right\vert
_{\omega }+\sum_{r=1}^{\infty }C\left\vert Q_{r}\right\vert _{\omega
}\right\} =C_{\varepsilon ,\delta ,p}\left\vert Q_{0}\right\vert _{\omega },
\notag
\end{eqnarray}%
where the final inequality follows from \eqref{Hilbert test} with $I=Q_{0}$
and then $I=Q_{r}$.

Now we use our assumption that $\sigma $\ is doubling. There are $C,\eta >0$
such that%
\begin{equation*}
\left\vert J\right\vert _{\sigma }\leq C\left( \frac{\left\vert J\right\vert 
}{\left\vert Q\right\vert }\right) ^{\eta }\left\vert Q\right\vert _{\sigma }
\end{equation*}%
whenever $J$ is a subinterval of an interval $Q$. If $Q_{r}$ is a Case \#3
interval we have both%
\begin{equation*}
\frac{\left\vert J_{r,\varepsilon }\right\vert }{\left\vert Q_{r}\right\vert 
}\leq 2\varepsilon \text{ and }\left\vert J_{r,\varepsilon }\right\vert
_{\sigma }>\left( 1-\delta \right) \left\vert Q_{r}\right\vert _{\sigma },
\end{equation*}%
which altogether yields%
\begin{equation*}
\left( 1-\delta \right) \left\vert Q_{r}\right\vert _{\sigma }<\left\vert
J_{r,\varepsilon }\right\vert _{\sigma }\leq C\left( \frac{\left\vert
J_{r,\varepsilon }\right\vert }{\left\vert Q_{r}\right\vert }\right) ^{\eta
}\left\vert Q_{r}\right\vert _{\sigma }\leq C\left( 2\varepsilon \right)
^{\eta }\left\vert Q_{r}\right\vert _{\sigma },
\end{equation*}%
which is a contradiction if $\delta =\frac{1}{2}$ and $\varepsilon >0$ is
chosen sufficiently small, $\varepsilon <\frac{1}{2}\left( \frac{1}{2C}%
\right) ^{\frac{1}{\eta }}$. With this choice, there are no Case \#3
intervals, and so we are done.

\begin{lemma}
\label{doub ineq}Suppose that $\sigma $ is doubling and $T=H$ is the Hilbert
transform. Then the dual Poisson inequality \eqref{poissonweightedineq'} is
implied by the $A_{p}$ condition \eqref{Ap} and the dual Hilbert transform
inequality \eqref{Hilbert partial}.
\end{lemma}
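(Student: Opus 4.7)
The plan is to repeat the three-case argument of Lemma \ref{doub piv}, but with the measure $\chi_{Q_0}\omega$ replaced by $\chi_{Q_0}f\omega$ and with the dual Hilbert testing condition \eqref{Hilbert test} replaced by its quantitative upgrade \eqref{Hilbert partial}. Note that the pointwise comparison \eqref{neccinequ} was established for \emph{any} positive measure $\nu$ supported outside $I$, so with $\nu = \chi_{Q_0}f\omega$ we still have
\begin{equation*}
\mathsf{P}(Q_r, \chi_{Q_0}f\omega) \lesssim \frac{1}{|Q_r|}\int_{Q_r} f\,d\omega + 2|Q_r| \inf_{x,y \in Q_r} \frac{H(\chi_{Q_0 \cap Q_r^c}f\omega)(x) - H(\chi_{Q_0 \cap Q_r^c}f\omega)(y)}{x-y}.
\end{equation*}

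Fix $\delta = \tfrac{1}{2}$ and $\varepsilon > 0$ sufficiently small (depending only on the doubling constant of $\sigma$), and sort the $Q_r$ into the same three cases as in Lemma \ref{doub piv}, with $\alpha_r \in Q_r$ now minimizing $|H(\chi_{Q_0 \cap Q_r^c}f\omega)|$ and $J_{r,\varepsilon} = (\alpha_r - \varepsilon|Q_r|, \alpha_r + \varepsilon|Q_r|) \cap Q_r$. For Case~\#1 intervals the local term dominates, and Hölder's inequality together with the $A_p$ condition \eqref{Ap} yields
\begin{equation*}
|Q_r|_\sigma \mathsf{P}(Q_r, \chi_{Q_0}f\omega)^{p'} \lesssim \frac{|Q_r|_\sigma |Q_r|_\omega^{p'-1}}{|Q_r|^{p'}} \int_{Q_r} f^{p'} d\omega \leq C \int_{Q_r} f^{p'} d\omega,
\end{equation*}
whose sum over $r$ is controlled by $C\int_{Q_0} f^{p'} d\omega$ because the $Q_r$ are pairwise disjoint.

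For Case~\#2 intervals the inequality above and the definition of $\alpha_r$ give the pointwise bound $\mathsf{P}(Q_r, \chi_{Q_0}f\omega) \leq \tfrac{12}{\varepsilon}|H(\chi_{Q_0 \cap Q_r^c}f\omega)(x)|$ for every $x \in Q_r \setminus J_{r,\varepsilon}$; combining this with $|Q_r|_\sigma \leq \tfrac{1}{\delta}|Q_r \setminus J_{r,\varepsilon}|_\sigma$ and splitting $\chi_{Q_0 \cap Q_r^c} = \chi_{Q_0} - \chi_{Q_r}$ reduces us to
\begin{equation*}
\sum_{\text{Case \#2}} |Q_r|_\sigma \mathsf{P}(Q_r,\chi_{Q_0}f\omega)^{p'} \leq C_{\varepsilon,\delta,p} \Bigl\{ \int_{Q_0} |H(\chi_{Q_0}f\omega)|^{p'}d\sigma + \sum_r \int_{Q_r} |H(\chi_{Q_r}f\omega)|^{p'}d\sigma \Bigr\},
\end{equation*}
which by the dual Hilbert transform inequality \eqref{Hilbert partial} applied on $Q_0$ and on each $Q_r$ is bounded by $C\int_{Q_0} f^{p'}d\omega$. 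Finally, Case~\#3 is vacuous: the doubling of $\sigma$ forces $(1-\delta)|Q_r|_\sigma < C(2\varepsilon)^\eta |Q_r|_\sigma$, a contradiction for our chosen $\varepsilon$. Summing the two surviving cases gives the dual Poisson inequality \eqref{poissonweightedineq'}.

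The only conceptual step is verifying that \eqref{Hilbert partial}, as opposed to merely the testing condition \eqref{Hilbert test}, is what allows the passage from $\chi_{Q_0}\omega$ to $\chi_{Q_0}f\omega$; the rest is a transcription of the earlier argument. The only technical point requiring care will be to ensure that the constants produced in Case \#2 depend only on $p$, the $A_p$ constant, the doubling constant of $\sigma$, and the best constant in \eqref{Hilbert partial}, so that the bound is genuinely uniform over $Q_0$ and the decomposition $\{Q_r\}$.
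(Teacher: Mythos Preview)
Your proposal is correct and matches the paper's proof essentially line for line: the paper also declares the argument ``virtually identical'' to that of Lemma~\ref{doub piv}, replaces $\chi_{Q_0}\omega$ by $\chi_{Q_0}g\omega$, handles Case~\#1 via H\"older and the $A_p$ condition, Case~\#2 via the pointwise bound and \eqref{Hilbert partial} applied on $Q_0$ and each $Q_r$, and notes Case~\#3 is vacuous by doubling.
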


\textbf{Proof}: The proof is virtually identical to that of Lemma \ref{doub
piv} but with $d\nu =\chi _{Q_{0}}gd\omega $ in place of $\chi
_{Q_{0}}d\omega $ where $g\geq 0$. Indeed, if $Q_{r}$ is a Case \#1 interval
we then have $\mathbb{P}\left( Q_{r},\chi _{Q_{0}}g\omega \right) \leq 3%
\frac{1}{\left\vert Q_{r}\right\vert }\int_{Q_{r}}gd\omega $ and so%
\begin{eqnarray*}
\sum_{Q_{r}\text{ satisfies Case \#1}}\left\vert Q_{r}\right\vert _{\sigma }%
\mathbb{P}\left( Q_{r},\chi _{Q_{0}}g\omega \right) ^{p^{\prime }} &\leq
&3^{p}\sum_{r=1}^{\infty }\left\vert Q_{r}\right\vert _{\sigma }\left( \frac{%
1}{\left\vert Q_{r}\right\vert }\int_{Q_{r}}gd\omega \right) ^{p^{\prime }}
\\
&\leq &C_{p}\sum_{r=1}^{\infty }\frac{\left\vert Q_{r}\right\vert _{\sigma
}\left\vert Q_{r}\right\vert _{\omega }^{p^{\prime }-1}}{\left\vert
Q_{r}\right\vert ^{p^{\prime }}}\int_{Q_{r}}g^{p^{\prime }}d\omega \\
&\leq &C_{p}\left\Vert \left( \omega ,\sigma \right) \right\Vert
_{A_{p}}^{p^{\prime }}\int_{Q_{0}}g^{p^{\prime }}d\omega .
\end{eqnarray*}%
If $Q_{r}$ is a Case \#2 interval, then $\left\vert Q_{r}\right\vert
_{\sigma }\leq \frac{1}{\delta }\left\vert Q_{r}\setminus J_{r,\varepsilon
}\right\vert _{\sigma }$ and%
\begin{eqnarray*}
&&\sum_{Q_{r}\text{ satisfies Case \#2}}\left\vert Q_{r}\right\vert _{\sigma
}\mathbb{P}\left( Q_{r},\chi _{Q_{0}}g\omega \right) ^{p^{\prime }} \\
&\leq &\frac{1}{\delta }\sum_{Q_{r}\text{ satisfies Case \#2}}\left\vert
Q_{r}\setminus J_{r,\varepsilon }\right\vert _{\sigma }\mathbb{P}\left(
Q_{r},\chi _{Q_{0}}g\omega \right) ^{p^{\prime }} \\
&\leq &\frac{1}{\delta }\sum_{r=1}^{\infty }\left( \frac{12}{\varepsilon }%
\right) ^{p^{\prime }}\int_{Q_{r}\setminus J_{r,\varepsilon }}\left\vert
H\left( \chi _{Q_{0}\cap Q_{r}^{c}}g\omega \right) \left( x\right)
\right\vert ^{p^{\prime }}d\sigma \left( x\right) \\
&\leq &C_{\varepsilon ,\delta ,p}\sum_{r=1}^{\infty }\int_{Q_{r}\setminus
J_{r,\varepsilon }}\left\{ \left\vert H\left( \chi _{Q_{0}}g\omega \right)
\left( x\right) \right\vert ^{p^{\prime }}+\left\vert H\left( \chi
_{Q_{r}}g\omega \right) \left( x\right) \right\vert ^{p^{\prime }}\right\}
d\sigma \left( x\right) \\
&\leq &C_{\varepsilon ,\delta ,p}\left\{ \int_{Q_{0}}\left\vert H\left( \chi
_{Q_{0}}g\omega \right) \left( x\right) \right\vert ^{p^{\prime }}d\sigma
\left( x\right) +\sum_{r=1}^{\infty }\int_{Q_{r}}\left\vert H\left( \chi
_{Q_{r}}g\omega \right) \left( x\right) \right\vert ^{p^{\prime }}d\sigma
\left( x\right) \right\} \\
&\leq &C_{\varepsilon ,\delta ,p}\left\{ C\int_{Q_{0}}g^{p^{\prime }}d\omega
+\sum_{r=1}^{\infty }C\int_{Q_{r}}g^{p^{\prime }}d\omega \right\}
=C_{\varepsilon ,\delta ,p}\int_{Q_{0}}g^{p^{\prime }}d\omega ,
\end{eqnarray*}%
upon using \eqref{Hilbert partial} with $Q_{0}$ and $Q_{r}$, which is (\ref%
{poissonweightedineq'}). As before, Case \#3 intervals don't exist if $%
\sigma $ is doubling and $\varepsilon >0$ is sufficiently small.

\subsection{Proof of Theorem \protect\ref{improved}}

Theorem \ref{poisson sufficiency} shows that the dual Poisson inequality (%
\ref{poissonweightedineq'})\ holds uniformly in $Q_{0}$ and pairwise
disjoint $\left\{ Q_{r}\right\} _{r=1}^{\infty }$ satisfying $%
\bigcup_{r=1}^{\infty }Q_{r}\subset Q_{0}$, provided both the
half-strengthened $A_{p}$ condition \eqref{Ap half skirt} and the dual
pivotal condition \eqref{dual pivotal condition} hold when $1<p\leq 2$ - and
provided \eqref{Ap half skirt}, \eqref{dual pivotal condition} and the
Poisson condition \eqref{Poisson condition} hold when $p>2$. Since $\sigma $
is doubling, Lemma \ref{doub piv} shows that the dual pivotal condition (\ref%
{dual pivotal condition}) follows from the dual testing condition 
\eqref{dual
testing condition Q} - and Lemma \ref{doub ineq} shows that the dual Poisson
inequality \eqref{poissonweightedineq'}, hence also the Poisson condition %
\eqref{Poisson condition}, follows from the dual Hilbert transform
inequality \eqref{Hilbert partial}. Thus Theorem \ref{improved} now follows
from the claim proved in Subsubsection \ref{suff P} that (\ref%
{poissonweightedineq'}) can be substituted for \eqref{M2weightdual} in the
proof of Theorem \ref{twoweightHaar}.

\end{document}